\newcommand{\arah}{{\text{\usefont{U}{xnsh}{m}{n}\symbol{104}}}}       
\newcommand{\A}{{\mathbb{A}}}
\newcommand{\C}{{\mathbb{C}}}
\newcommand{\F}{{\mathbb{F}}}
\newcommand{\N}{{\mathbb{N}}}
\newcommand{\Q}{{\mathbb{Q}}}
\newcommand{\oQ}{\overline{\Q}}
\newcommand{\ovQ}{\overline{Q}}
\newcommand{\Sa}{{\mathbb{S}}}
\newcommand{\uC}{\underline{\C}}
\newcommand{\R}{{\mathbb{R}}}
\newcommand{\Z}{{\mathbb{Z}}}
\newcommand{\uR}{\underline{\R}}
\newcommand{\uQ}{\underline{\Q}}
\newcommand{\uZ}{\underline{\Z}}
\newcommand{\hZ}{\hat{\Z}}
\newcommand{\hK}{\hat{K}}
\newcommand{\hP}{\hat{P}}
\newcommand{\oZ}{\overline{\Z}}
\newcommand{\adele}{\mathrm{adele}}
\newcommand{\car}{\mathrm{char}\,}
\newcommand{\card}{\mathrm{card}\,}
\newcommand{\cont}{\mathrm{cont}}
\newcommand{\colim}{\mathrm{colim}}
\newcommand{\ddiv}{\mathrm{div}\,}
\newcommand{\et}{\mathrm{\acute{e}t}}
\newcommand{\oE}{\overline{E}}
\newcommand{\of}{\overline{f}}
\newcommand{\fin}{\mathrm{fin}}
\newcommand{\id}{\mathrm{id}}
\newcommand{\ind}{\mathrm{ind}}
\newcommand{\perf}{\mathrm{perf}}
\newcommand{\pr}{\mathrm{pr}}
\newcommand{\hpr}{\hat{\pr}}
\newcommand{\rat}{\mathrm{rat}}
\newcommand{\imm}{\mathrm{im}\,}
\newcommand{\oP}{\overline{P}}
\newcommand{\oS}{\overline{S}}
\renewcommand{\mod}{\;\mathrm{mod}\;}
\newcommand{\Mor}{\mathrm{Mor}}
\newcommand{\ord}{\mathrm{ord}}
\newcommand{\per}{\mathrm{per}}
\newcommand{\Quot}{\mathrm{Quot}}
\newcommand{\ring}{\mathrm{ring}}
\newcommand{\res}{\mathrm{res}}
\newcommand{\spec}{\mathrm{spec}\,}
\newcommand{\supp}{\mathrm{supp}\,}
\newcommand{\Aut}{\mathrm{Aut}}
\newcommand{\End}{\mathrm{End}\,}
\newcommand{\Gal}{\mathrm{Gal}}
\newcommand{\Hom}{\mathrm{Hom}}
\newcommand{\Imm}{\mathrm{Im}\,}
\newcommand{\inj}{\mathrm{inj}}
\newcommand{\Ker}{\mathrm{Ker}\,}
\newcommand{\Map}{\mathrm{Map}\,}
\newcommand{\map}{\mathrm{map}}
\newcommand{\cP}{\check{P}}
\newcommand{\tP}{\tilde{P}}
\newcommand{\tQ}{\tilde{Q}}
\newcommand{\tU}{\tilde{U}}
\newcommand{\tz}{\tilde{z}}
\newcommand{\talpha}{\tilde{\alpha}}
\newcommand{\calpha}{\check{\alpha}}
\newcommand{\tchi}{\tilde{\chi}}
\newcommand{\tLambda}{\tilde{\Lambda}}
\newcommand{\tvarphi}{\tilde{\varphi}}
\newcommand{\tpsi}{\tilde{\psi}}
\newcommand{\tPsi}{\tilde{\Psi}}
\newcommand{\htvarphi}{\hat{\tvarphi}}
\newcommand{\czeta}{\check{\zeta}}
\newcommand{\hotimes}{\hat{\otimes}}
\newcommand{\tG}{\tilde{G}}
\newcommand{\RRe}{\mathrm{Re}\,}
\newcommand{\sep}{\mathrm{sep}}
\newcommand{\Spf}{\mathrm{Spf}\,}
\renewcommand{\top}{\mathrm{top}}
\newcommand{\tors}{\mathrm{tors}}
\newcommand{\Ch}{{\mathcal C}}
\newcommand{\Eh}{{\mathcal E}}
\newcommand{\Fh}{{\mathcal F}}
\newcommand{\Gh}{{\mathcal G}}
\newcommand{\Mh}{\mathcal{M}}
\newcommand{\hMh}{\hat{\Mh}}
\newcommand{\Nh}{\mathcal{N}}
\newcommand{\Oh}{{\mathcal O}}
\newcommand{\hOh}{\hat{\Oh}}
\newcommand{\Rh}{{\mathcal R}}
\newcommand{\Sh}{{\mathcal S}}
\newcommand{\Zh}{\mathcal{Z}}
\newcommand{\eb}{\mathfrak{b}}
\newcommand{\eq}{{\mathfrak{q}}}
\newcommand{\emm}{{\mathfrak{m}}}
\newcommand{\oemm}{\overline{\emm}}
\newcommand{\hemm}{\hat{\emm}}
\newcommand{\en}{\mathfrak{n}}
\newcommand{\eo}{\mathfrak{o}}
\newcommand{\heo}{\hat{\eo}}
\newcommand{\hoE}{\hat{\overline{E}}}
\newcommand{\ep}{\mathfrak{p}}
\newcommand{\eU}{\mathfrak{U}}
\newcommand{\eX}{\mathfrak{X}}
\newcommand{\ex}{\text{\normalfont\Fontauri x}}
\newcommand{\ez}{\mathfrak{z}}
\newcommand{\ey}{\text{\normalfont\Fontauri y}}
\newcommand{\oeX}{\overline{\eX}}
\newcommand{\hlambda}{\hat{\lambda}}
\newcommand{\ochi}{\overline{\chi}}
\newcommand{\hochi}{\hat{\ochi}}
\newcommand{\onu}{\overline{\nu}}
\newcommand{\tkappa}{\tilde{\kappa}}
\newcommand{\oeta}{\overline{\eta}}
\newcommand{\opi}{\overline{\pi}}
\newcommand{\osigma}{\overline{\sigma}}
\newcommand{\ozeta}{\overline{\zeta}}
\newcommand{\oK}{\overline{K}}
\newcommand{\onQ}{\overline{Q}}
\newcommand{\oR}{\overline{R}}
\newcommand{\barr}{\overline{r}}
\newcommand{\os}{\overline{s}}
\newcommand{\ot}{\overline{t}}
\newcommand{\ou}{\overline{u}}
\newcommand{\ox}{\overline{x}}
\newcommand{\oX}{\overline{X}}
\newcommand{\oF}{\overline{\mathbb{F}}}
\newcommand{\hA}{\hat{A}}
\newcommand{\ohA}{\overline{\hA}}
\newcommand{\oA}{\overline{A}}
\newcommand{\oa}{\overline{a}}
\newcommand{\ta}{\tilde{a}}
\newcommand{\tX}{\tilde{X}}
\newcommand{\cQ}{\check{Q}}
\newcommand{\cH}{\check{H}}
\newcommand{\ceX}{\check{\eX}}
\newcommand{\cW}{\check{W}}
\newcommand{\cY}{\check{Y}}
\newcommand{\df}{\overset{_{\,\hullet}}{f}}
\newcommand{\dH}{\overset{_{\,\hullet}}{H}}
\newcommand{\deX}{\overset{_{\,\hullet}}{\eX}}
\newcommand{\ddeX}{\overset{_{\,\hullet\!\hullet}}{\eX}}
\newcommand{\hceX}{\overset{\diamond}{\eX}}
\newcommand{\hcY}{\overset{\diamond}{Y}}
\newcommand{\hcP}{\overset{\diamond}{P}}
\newcommand{\cpi}{\check{\pi}}
\newcommand{\dpi}{\overset{_{\,\hullet}}{\pi}}
\newcommand{\oxi}{\overline{\xi}}
\newcommand{\multmap}{\varprojlim_{(\,)^p}}
\newcommand{\tf}{\tilde{f}}
\newcommand{\hoQ}{\hat{\oQ}}
\newcommand{\vK}{\overleftarrow{K}}
\newcommand{\vG}{\overleftarrow{\Gamma}}
\newcommand{\umu}{\underline{\mu}}
\newcommand{\silo}{\xrightarrow{\sim}}
\newcommand{\ent}{\;\widehat{=}\;}
\newcommand{\hullet}{{\scriptscriptstyle \bullet\,}}
\newcommand{\verk}{\mbox{\scriptsize $\,\circ\,$}}
\newcommand{\strich}{\,\!'}
\newcommand{\dcup}{\dot{\cup}}
\newtheorem{theorem}{Theorem}[section]
\newtheorem{lemma}[theorem]{Lemma}
\newtheorem{prop}[theorem]{Proposition}
\newtheorem{defn}[theorem]{Definition}
\newtheorem{cor}[theorem]{Corollary}
\newtheorem{exmp}[theorem]{Example}
\newtheorem{remark}[theorem]{Remark}
\newtheorem*{rem}{Remark}
\newtheorem*{rems}{Remarks}
\newtheorem{claim}[theorem]{Claim}
\newtheorem{supple}[theorem]{Supplement}
\newenvironment{example}{\noindent {\bf Example.}}{}
\newenvironment{proofof}{\noindent {\it Proof of}}{\mbox{}\hfill$\Box$}
\begin{document}
\title{Dynamical systems for arithmetic schemes\\
{\large \it  Dedicated to the memory of Jacob Murre}}
\author{Christopher Deninger\footnote{Funded by the Deutsche Forschungsgemeinschaft (DFG, German Research Foundation) under Germany's Excellence Strategy EXC 2044--390685587, Mathematics M\"unster: Dynamics--Geometry--Structure, the CRC 878 Groups, Geometry \& Actions and the CRC 1442 Geometry: Deformations and Rigidity} }
\date{\ }
\maketitle
\section*{Introduction}

The complex points of a smooth algebraic curve over $\C$ have the structure of a Riemann surface. This is an object with a rich structure on which substantial real and complex analysis can be done. In number theory we have the arithmetic curve $\spec \Z$ to which such profound objects as the Riemann zeta function and its $p$-adic counterpart, the Kubota Leopold zeta function can be associated. The definition and the study of the Riemann zeta function require analytic methods. It would be very useful to have a space of $\C$-valued points of $\spec \Z$ on which serious real and complex analysis on ``manifolds'' could be used to understand the Riemann zeta function. Similarly, one could hope for an interesting space of $\C_p$-valued points of $\spec \Z$. However, classically $(\spec \Z) (\C)$ is just a one point space and even with the most benevolent view, it is only supposed to be responsible for the Euler factor at the infinite place $\pi^{-\frac{s}{2}} \Gamma (\frac{s}{2})$. Thus there are far too few classical $\C$-valued points of $\spec \Z$ for the aim of an ``analysis on manifolds'' approach to the zeta function. 

More generally, for each arithmetic scheme $\eX_0$ there is the Hasse-Weil zeta function $\zeta_{\eX_0} (s)$ which generalizes the Riemann zeta function and one would hope for interesting analytic spaces of $\C$ and $\C_p$-valued points which could help to prove the many difficult conjectures about $\zeta_{\eX_0} (s)$ and its $p$-adic analogue. In fact with a view towards function field arithmetic one might even wonder about an interesting analytic space of $\C_{\infty}$-valued points of $\eX_0$ where $\C_{\infty}$ is the $t$-adic completion of $\overline{\F_p ((t))}$. Let us denote these hypothetical new spaces of $\C$-valued points by $\eX_0 [\C]$, where $\C$ denotes either the complex number field or $\C_p$ or $\C_{\infty}$. Of course $\eX_0 [\C]$ should contain the classical points $\eX_0 (\C)$. For reasons explained below, we also expect the spaces $\eX_0 [\C]$ to carry commuting Frobenius endomorphisms $F_p$ for every prime number $p$. 

For the complex numbers $\C$, the existence of $\eX_0 [\C]$ is suggested by the analogies of arithmetic schemes $\eX_0$ with certain types of one-codimensionally foliated $\R$-dynamical systems $(X_0, \phi^t , \Fh)$ studied in \cite{D1,D2,D3,D4,D5,den-hilb-polya,den-arxiv1,den-arxiv2,AKL,KMNT}. Under some assumptions, it had turned out that a dynamical system $X_0$ conjecturally corresponding to $\eX_0$ had to be a suspension  (quotient by the diagonal $\Q^{> 0}$-action) of the form
\[
X_0 = \ceX_0 [\C] \times_{\Q^{>0}} \R^{>0} \; .
\]
Here $\ceX_0 [\C]$ is a space with an action by the group of positive rational numbers $\Q^{> 0}$ and $\phi^t$ acts on $X_0$ by right multiplication with $e^t$. Such a suspension $X_0$ has a ``foliation'' $\Fh$ by the one-codimensional images of $\ceX_0 [\C] \times \{ u \}$ for $u \in \R^{> 0}$. The leaves of $X_0$ are isomorphic to the connected components of $\ceX_0 [\C]$ and $\Q^{>0}$ is the group of Poincar\'e return maps on $\ceX_0 [\C]$. 

For schemes over a finite field the Frobenius morphism is not invertible in general, so it is natural to assume that there is a space $\eX_0 [\C]$ equipped with commuting (Frobenius\nobreakdash-) endomorphisms for all primes $p$ such that $\ceX_0 [\C]$ with its $\Q^{> 0}$-action is obtained from $\eX_0 [\C]$ by a colimit construction which inverts all Frobenii (on the ring of functions on $\eX_0 [\C]$ this may be viewed as ``tilting''). 

In the papers quoted above we specified many properties that the expected system $(X_0 , \phi^t , \Fh)$ and hence the $\Q^{> 0}$-action on $\ceX_0 [\C]$ should have for arithmetic schemes $\eX_0$. These expected properties are more or less dictated by known or conjectural properties of Hasse-Weil and motivic zeta- and $L$-functions and the idea that these functions should be represented by alternating products of zeta-regularised determinants on suitable (foliation) cohomologies of $(X_0 , \Fh)$.\\
For example, the zeroes and poles of $\zeta_{\eX_0} (s)$ should be the eigenvalues of the infinitesimal generator of the induced $\R$-action on foliation cohomology groups of $(X_0 , \Fh)$ with compact supports. As another example, for any non-empty open $\eX_0 \subset \spec \Z$ the pro-algebraic fundamental group $\pi (X_0 , x_0)$ of $\eX_0$ in the sense of \cite{den-pi} should be closely related to the motivic Galois group of $\eX_0$. In particular, the maximal pro-etale quotient $\pi (X_0 , x_0)^{\et}$ should be the \'etale fundamental group of $\eX_0$. Moreover, for general arithmetic schemes $\eX_0$, the motivic cohomology groups of $\eX_0$ should be expressible by topological cohomology groups of $X_0$ with coefficients in certain local systems.

A much more basic property would be a correspondence between the periodic orbits $\gamma$ of $(X_0 , \phi^t)$ and the closed points $\ex_0$ of $\eX_0$. Here the length of $\gamma$ should be the logarithm of the norm $N (\ex_0)$ of $\ex_0$. Thus the Hasse--Weil zeta function of $\eX_0$ would be the Ruelle zeta function of $(X_0 , \phi^t)$. In particular, for $\eX_0 = \spec \Z$ the periodic orbits $\gamma$ of $X_0$ should correspond to the prime numbers $p$, the length being $l (\gamma) = \log p$. The conditions on the $\Q^{> 0}$-action on a space $\ceX_0 [\C]$ for this to happen were written down a long time ago in \cite[3.1 and 4.9]{D1}, but for too many years I had no idea how to construct natural $\Q^{> 0}$-spaces realizing these conditions.  

In the present paper we construct certain spaces $\deX_0 (\C)$ with commuting injective ``Frobenius'' endomorphisms $F_p$ for all prime numbers $p$. These spaces are an approximation to the expected spaces $\eX_0 [\C]$. For example, if $\ceX_0 (\C)$ denotes the $\Q^{> 0}$-space obtained as colimit over the $F_p$'s on $\deX_0 (\C)$, then the closed points $\ex_0$ of $\eX_0$ correspond bijectively to compact packets $\Gamma_{\ex_0}$ of periodic orbits of length $\log N \ex_0$ on \[
X_0 = \ceX_0 (\C) \times_{\Q^{> 0}} \R^{> 0} \; .
\]
If $p = \car \kappa (\ex_0)$, then $\Gamma_{\ex_0}$ is fibred over the compact group
\[
\Aut (\oF^{\times}_p) / \Aut (\oF_p) = \hZ^{\times}_{(p)} / p^{\hZ} \; ,
\]
with fibres the periodic orbits in $\Gamma_{\ex_0}$. Each periodic orbit of $X_0$ lies in exactly one packet $\Gamma_{\ex_0}$. Thus the correspondence between closed points $\ex_0$ of $\eX_0$ and periodic orbits of $(X_0 , \phi^t)$ is not one-to-one but a bit more involved. The compact packets $\Gamma_{\ex_0}$ are reminiscient of invariant tori. There are no fixed points of the flow. In our construction, the existence of periodic orbits is due to the existence of Frobenius elements for $\ex_0$ in the Galois group of the function field $K_0$ of $\eX_0$. 

One of the main theorems of the present paper asserts that for every integral normal scheme $\eX_0$ which is flat of finite type over $\spec \Z$, contrary to $\deX_0 (\C)$ and $\ceX_0 (\C)$, the space $X_0$ is connected as well. Even this first topological insight about $X_0$ is not easy to show. For $\dim \eX_0 \ge 2$ our proof requires de~Jong's theory of alterations. Moreover the ``bad'' properties of the ad\`ele topology on the id\`eles work to our advantage. We also show the somewhat stronger statement that the zeroth ``foliation cohomology'' of $(X_0 , \Fh)$ is one dimensional. Theorem \ref{t710n} elucidates the general structure of the systems $X$ and $X_0$.

We now make some remarks about the construction of our dynamical systems. There are many considerations in arithmetic geometry leading to relaxing the condition of additivity but keeping multiplicativity: For example the Teichm\"uller lift in Witt vector theory which is only multiplicative, the work on $\F_1$-geometry for which we refer to the survey \cite{L}, ideas in arithmetic topology \cite{Mo} and its dynamical enhancement \cite{D2}, and the work of Kucharczyk and Scholze \cite{KS}. In the first version of this paper we constructed $\deX_0 (\C)$ as a Galois quotient of a space of certain multiplicative maps. This was inspired by the idea to use Frobenius elements to generate periodic orbits of $X_0 = \ceX_0 (\C) \times_{\Q^{> 0}} \R^{> 0}$. This first construction of $\deX_0 (\C)$ was ad hoc and extrinsic, whereas an intrinsic definition only in terms of $\eX_0$ was missing. The following arguments eventually led to the new intrinsic description that we will describe below.

In the desired correspondence $\eX_0 \mapsto \eX_0 [\C]$ to which $\eX_0 \mapsto \deX_0 (\C)$ is an approximation, regular functions on $\eX_0$ should give rise to $\C$-valued functions on $\eX_0 [\C]$. This correspondence between functions can only be multiplicative and not additive, since otherwise we would only obtain constant functions on $\eX_0 [\C]$ if $\eX_0 = \spec \Z$ for example. Thus we expect a ring homomorphism
\[
\C \Gamma (\eX_0 , \Oh) \longrightarrow \Gamma (\eX_0 [\C] , \Oh)
\]
from the complex monoid algebra of $(\Gamma (\eX_0 , \Oh) , \hullet)$ to a $\C$-algebra of functions on $\eX_0 [\C]$. Assume that $\eX_0$ is normal with integral closure $\eX$ in an algebraic closure $K$ of the function field $K_0$ of $\eX_0$. Taking Galois descent into account one sees that there should even be a ring homomorphism
\[
(\C \Gamma (\eX, \Oh))^G \longrightarrow \Gamma (\eX_0 [\C] , \Oh)
\]
where $G = \Aut_{K_0} (K)$. The ring on the left or more precisely its quotient by the ideal $\C (0)$ has an intrinsic description as the complexified rational Witt vectors of the ring $\Gamma (\eX_0 , \Oh)$ as I finally understood when reading \cite{KS}. In that very interesting and beautifully written paper, Kucharczyk and Scholze use the ring of rational Witt vectors of a field to define a certain topological space $X_F$ for every field $F$ of characteristic zero containing all roots of unity. They show that the pro-\'etale group $\pi^{\et}_1 (X_F , x)$ classifying finite covers of $X_F$ is isomorphic to the absolute Galois group of $F$. Moreover they establish isomorphisms
\[
H^{\hullet} (X_F , \Z / n) = H^{\hullet}_{\et} (\spec F , \Z / n) \quad \text{for} \; n \ge 2
\]
and
\[
H^{\hullet} (X_F , \Z) = \Lambda^{\hullet} (F^{\times} \otimes \Q) \; .
\]
Kucharczyk and Scholze also suggest to look for a space $X^M_F$ mapping to $X_F$ such that $H^{\hullet} (X^M_F , \Z)$ is the Milnor $K$-ring of $F$ modulo torsion. Our space $\deX_0 (\C)$ is an approximation to the ``true'' space $\eX_0 [\C]$ in the same way as $X_F$ is an approximation to the ``true'' space $X^M_F$. Motivated by the above argument and by the contruction of $X_F$ in \cite{KS} we propose new ringed spaces and their points with values in schemes as follows:

Let $X = (X_{\top} , \Oh_X)$ be a scheme and denote by $W_{\rat} (\Oh_X)$ the sheafification of the separated presheaf $U \mapsto W_{\rat} (\Oh_X (U))$ of rational Witt vectors of $\Oh_X (U)$. Consider the ringed space
\[
W_{\rat} (X) = (X_{\top} , W_{\rat} (\Oh_X)) \; .
\]
It is equipped with commuting Frobenius endomorphisms for every prime $p$ which are the identity on $X_{\top}$. The definition of $W_{\rat} (X)$ looks somewhat familiar: Since the pioneering work of Serre \cite{serre} the ringed spaces $(X , W_{p^n} (\Oh_X))$ play an important role in the $p$-adic cohomology theory of characteristic $p$ schemes $X$. See \cite{borger1,borger2} for a comprehensive account of the geometric aspects of Witt vectors.

Morphisms of schemes induce local homomorphisms on stalks and this is a very important requirement in the theory. Contrary to a scheme, $W_{\rat} (X)$ is not a locally ringed space. For a scheme $S$ we define a morphism $S \to W_{\rat} (X)$ as a morphism $(f , f^{\sharp})$ of ringed spaces which is ``local'' in the following sense. For every point $s \in S$ the stalk of the sheaf map $f^{\sharp} : f^{-1} W_{\rat} (\Oh_X) \to \Oh_S$ fits into a commutative diagram
\[
\xymatrix{
W_{\rat} (\Oh_{X , f(s)}) \ar[r]^-{f^{\sharp}_s} \ar@{>>}[d] & \Oh_{S,s} \ar@{>>}[d] \\
W_{\rat} (\kappa (f (s))) \ar[r]^-{\tf^{\sharp}_s} & \kappa (s) \; .
}
\]
The existence of the (uniquely determined) maps $\tf^{\sharp}_s$ is our replacement for locality. For example there is a canonical morphism $X \to W_{\rat} (X)$. Let $W_{\rat} (X) (S)$ denote the set of such morphisms $S \to W_{\rat} (X)$. It is equipped with commuting Frobenius endomorphisms for all prime numbers $p$. There is a natural inclusion $X (S) \subset W_{\rat} (X) (S)$. If $S = \spec A$ is affine, we denote these $S$-valued points of $W_{\rat} (X)$ also by $W_{\rat} (X) (A)$. If $\eX_0$ is an integral normal scheme, it turns out that $W_{\rat} (\eX_0) (\C) = \deX_0 (\C)$ as defined in the first version of this paper. The connected components of $W_{\rat} (\spec F) (\C)$ are the spaces $\eX_F$ of \cite{KS} of which the $X_F$ above are retractions. There is a natural multiplicative map
\begin{equation} \label{eq:i1}
\Gamma (X , \Oh) \longrightarrow \Map (W_{\rat} (X) (\C) , \C) \; .
\end{equation}
For example, taking $X = \spec \Z$ we see that numbers i.e.~the elements $n$ of $\Z$ become (highly non-trivial) complex valued functions $f_n$ on $W_{\rat} (X) (\C)$. The zero set of $f_n$ is the union of the packets of $\Q^{> 0}$-orbits which correspond to the prime divisors of $n$. While regular functions on $\eX_0$ induce $\C$-valued functions on $\eX_0 (\C)$, they only give rise to certain ``generalized functions'' on $\ceX_0 (\C)$ and $X_0$. This is explained in sections \ref{sec11n} and \ref{sec:12}. 

While $W_{\rat} (\eX_0) (\C) = \deX_0 (\C)$ satisfies some of the requirements of the ``true'' space $\eX_0 [\C]$, it is not yet the right object. Firstly in the interpretation via multiplicative maps and Galois descent one has to impose additional constraints. We formalize this by adding a condition $\Eh$ on the multiplicative maps which has to satisfy certain axioms. In this way we obtain subsystems $\deX_0 (\C)_{\Eh}$ and $\ceX_0 (\C)_{\Eh}$ and the results on periodic orbits mentioned above apply more precisely to $X_0 = \ceX_0 (\C)_{\Eh} \times_{\Q^{> 0}} \R^{> 0}$. These subsystems are still connected. There is a minimal condition $\Eh$ for which our theorems hold but it does not look natural. So this aspect is unsatisfactory. For all conditions $\Eh$ that we consider the space $\ceX_0 (\C)_{\Eh}$ is infinite dimensional whereas ideally we would want it to be of dimension $2 \dim \eX_0$ if e.g. $\eX_0$ is flat, normal and of finite type over $\spec \Z$. 

We therefore studied the closure of the union of all periodic orbits of $\ceX_0 (\C)_{\Eh}$ in $\ceX_0 (\C)$. Using results by Perucca \cite{perucca} we determined this closure if $\eX_0$ is the spectrum of the ring of integers in a number field. Assuming her results extend to higher dimensions which we formulate as a conjecture we can describe the closure of the union of the periodic orbits in general. The resulting dynamical system is still infinite dimensional. This is the present situation in the search for $\eX_0 [\C]$. 

The second problem with $\deX_0 (\C)$ is that the cohomological problem pointed out in \cite{KS} and which we recalled above persists in our more general context. The first foliation cohomology of $X_0$ could be more or less the right one though. If there is a natural map $\alpha : \eX_0 [\C] \to \deX_0 (\C)$ then the cohomology of $\eX_0 [\C]$ with coefficients in a sheaf $\Gh$ can be expressed as the cohomology of $\deX_0 (\C)$ with coefficients in $R \alpha_* \Gh$. Working with $\deX_0 (\C)$ instead of the ``true'' space $\eX_0 [\C]$ would then involve finding the relevant complexes of sheaves on $\deX_0 (\C)$. 

Since $p$-adic number theory is much better understood than global number theory, in our search for $\eX_0 [\C]$ we looked at the situation for arithmetic schemes $\eX_0$ over $\Z_p$ and $\C_p$-valued points instead of $\C$-valued points. Here we found a very natural $F_p$-invariant subsystem $\hcY\!_0$ of (a version of) $\deX_0 (\C_p) = W_{\rat} (\eX_0) (\C_p)$. For $\eX_0 = \spec \eo_{K_0}$ with $K_0 / \Q_p$ finite, $\hcY\!_0$ is in canonical bijection with the set of closed points of the scheme theoretic Fargues-Fontaine curve, \cite{FF}. However, functions on $\hcY\!_0$ take values in $\C_p$ whereas the functions on the Fargues-Fontaine curve take values in the untilts of $\C^b_p$ -- and those are not all isomorphic \cite[Theorem 1.3]{kedlaya}. We should stress though that we have yet to define analytical structures on our spaces and in the $p$-adic case even a topology. Incidentally, note that the actions of $G$ on $\spec \eo_K$ and $\C_p$ induce a $G \times G$-action on $W_{\rat} (\spec \eo_K) (\C_p)$. This is related to the $G \times G$-action on the universal cover of the Fargues-Fontaine curve. Let us now explain our constructions in the $p$-adic case and the motivation for them in more detail. 

Consider a normal $\Z_p$-scheme $\eX_0$ which maps surjectively to $\spec \Z_p$ and let $\eX$ be its normalization in an algebraic closure $K$ of the function field $K_0$ of $\eX_0$. Let $G$ be the Galois group of $K$ over $K_0$. Let $\eo$ be a $p$-adically complete rank one valuation ring e.g. $\eo = \eo_p$ the valuation ring of $\C_p$. Since reduction $\mod p$ will play an important role it is necessary to study $W_{\rat} (\eX_0) (\eo)$ and not only $W_{\rat} (\eX_0) (\C_p)$. After some work, it turns out that 
\[
W_{\rat} (\eX_0) (\eo) = W_{\rat} (\eX) (\eo) / G \; .
\]
The elements of $\deX (\eo) := W_{\rat} (\eX) (\eo)$ are certain commutative diagrams of multiplicative maps. The classical $\eo$-valued points of $\eX$ over $\spec \Z$ and $\spec \Z_p$ agree because ring-homomorphisms are automatically $p$-adically continuous. This is not the case for maps that are only multiplicative. Hence it is natural to replace $\deX (\eo)$ by a subset $\deX_c (\eo)$ of commutative diagrams involving a $p$-adic continuity condition. Next, the points of $\ceX_c (\eo) = \colim_{F_p} \deX_c (\eo)$ correspond to diagrams of multiplicative maps on projective limits which factor over a projection. It turns out that in these diagrams we have to consider not only those maps but all continuous maps on the projective limits. We view the resulting set of diagrams as a completion of $\ceX_c (\eo)$ and therefore denote it by $\hceX_c (\eo)$. Incidentally, if we consider points of $W_{\rat} (X)$ with values in rings without ``small multiplicative subgroups'' like the complex number field $\C$ this process does not give more points. 

As above the monoid algebra $\Z \Gamma (\eX , \Oh)$ maps naturally to the ring of $\eo$-valued functions on $\deX (\eo) = W_{\rat} (\eX) (\eo)$. Correspondingly the monoid algebra of $\varprojlim_{F_p} \Gamma (\eX, \Oh)^{\wedge}$ becomes an $\eo$-algebra of functions on $\hceX_c (\eo)$. Here $\wedge$ denotes $p$-adic completion. The monoid $\varprojlim_{F_p} \Gamma (\eX , \Oh)^{\wedge}$ is isomorphic to the tilt $\Gamma (\eX , \Oh)^{\wedge b}$ of $\Gamma (\eX , \Oh)^{\wedge}$ and it carries the richer structure of a perfect $\F_p$-algebra. The rings $ A_{\inf} (\eX) := W_p (\Gamma  (\eX , \Oh)^{\wedge b})$ are basic objects in $p$-adic Hodge theory, and we asked ourself if there was a canonical $F_p$-subsystem $\hcY$ of $\hceX_c (\eo)$ on which not only the elements of $\Z \Gamma (\eX , \Oh)^{\wedge b}$ but even the elements of $A_{\inf} (\eX)$ could be viewed as $\eo$-valued functions. The answer is simple, $\hcY$ consists of all the diagrams in $\hceX_c (\eo)$ whose maps are not only multiplicative but $\mod p$ also additive. 

In general, it is very difficult to work with maps on rings that are only multiplicative. The condition of additivity $\mod p$ fortunately turns the diagrams in $\hcY$ into diagrams of {\it ring} homomorphisms on their tilted domains. This fact makes $\hcY$ more amenable to calculation than $\hceX_c (\eo)$. For $\eX_0 = \spec \eo_{K_0}$ with $K_0$ a finite extension of $\Q_p$ we have determined $\hcY \subset \hceX (\eo)$ and $\hcY_0 := \hcY / G$. The argument uses fundamental results about the tilting correspondence, and as mentioned above one obtains the closed points of the scheme theoretic Fargues-Fontaine curve (for $\hcY_0)$ and of its universal cover (for $\hcY$). The precise formulation is given in Theorem \ref{t136}. 

For $\eX_0 = \spec \Z_p$ and $\eo = \eo_p$ we have also determined the subsystem $\cY_0 = \hcY \cap \ceX_0 (\eo_p)$. It consists of the (infinite) Frobenius orbit of the one point space $\eX_0 (\eo) \subset \ceX_0 (\eo)$ and of one additional fixed point under Frobenius. Thus the process of ``completion'' to pass from $\ceX_0 (\eo)$ to $\hceX_0 (\eo)$ was necessary to obtain something interesting. 

In \cite{KS} Kucharczyk and Scholze expressed the hope that the two topological realizations of Galois groups in \cite{KS} and the one by Scholze in \cite{weinstein} using the Fargues-Fontaine curve  should be related. As explained, the present paper puts both constructions into a similar context. 

I would like to thank Maria L\"unnemann, Urs Hartl, Masanori Morishita, Peter Scholze and Wilhelm Singhof very much for helpful conversations and comments at various stages of this work. Moreover I am very grateful to Umberto Zannier for the invitation to the Scuola Normale in Pisa where the first construction of the dynamical systems was found, and to Pedro F. dos Santos for giving me the opportunity to pursue the work at the T\'ecnico in Lissabon. I would also like thank the HIM in Bonn for supporting a fruitful stay. \newpage
\tableofcontents
\newpage
\section{Rational Witt vectors} \label{sec:2_neu}
For a commutative unital ring $A$, the ring $W_{\rat} (A)$ of rational Witt vectors is a subring of the big Witt ring $W (A) = 1 + tA [[t ]]$. It consists of power series $f$ of the form $f = P / Q$ with polynomials $P , Q$ in $A [t]$ such that $P (0) = 1 = Q (0)$. There is a multiplicative unital injective (Teichm\"uller) map
\[ 
[ \, ] : A \longrightarrow W_{\rat} (A) \quad \text{where} \; [a] = 1 - at \; .
\]
It maps $0 \in A$ to $0 \in W_{\rat} (A)$ (which corresponds to the constant power series $1$). The Teichm\"uller map splits the surjective ring homomorphism
\[
W_{\rat} (A) \longrightarrow A \; , \; f \longmapsto -f' (0) / f (0) = - f' (0) \; .
\]
The Frobenius and Verschiebung maps of $W (A)$ leave $W_{\rat} (A)$ invariant and we have $F_{\nu} ([a]) = [a^{\nu}]$ for all $a \in A$. For an injective (surjective) ring map $f : A \to B$ the induced ring map $W_{\rat} (f) : W_{\rat} (A) \to W_{\rat} (B)$ is again injective (surjective). Injectivity follows since $W (f) : W (A) \to W (B)$ is injective. We refer to Almkvist \cite{almkvist} for a conceptual way to understand $W_{\rat} (A)$ in terms of $K$-theory. 

Let $\Z A$ be the monoid ring on $(A, \cdot)$. The elements of $\Z A$ are written as $\sum_a n_a a$ or $\sum_a n_a (a)$ where $a$ runs over $A$ and $n_a \in \Z$ with $n_a = 0$ for almost all $a$. We have $(1) = 1$ and $(0) \neq 0$ in $\Z A$. Let $\uZ A$ be the reduced monoid ring, i.e. the quotient of $\Z A$ by the ideal $\Z (0)$. For another commutative ring $R$, homomorphisms $P : \uZ A \to R$ correspond bijectively to multiplicative maps $P : A \to R$ sending $1$ to $1$ and $0$ to $0$. The Teichm\"uller map \cite{} induces a functorial ring homomorphism
\begin{equation} \label{eq:1n}
\omega = \omega (A) : \uZ A \longrightarrow W_{\rat} (A) \; .
\end{equation}

\begin{prop} \label{t21nn}
a) If $A$ is an integral domain, $\omega$ is injective
\[
\omega : \uZ A \hookrightarrow W_{\rat} (A) \; .
\]
b) For an integrally closed domain $A$ with algebraically closed quotient field, we have an isomorphism:
\[
\omega : \uZ A \silo W_{\rat} (A) \; .
\]
\end{prop}

\begin{remark} \label{t22}
In general, $\omega$ may not be injective. For example, if $A$ contains an element $a \neq 0$ with $a^2 = 0$, then the nonzero element $2 (a) - (2a)$ in $\uZ A$ is mapped to
\[
\frac{(1 - at)^2}{1 - 2at} = 1 \quad \text{the zero element in} \quad W_{\rat} (A) \; .
\]
\end{remark}

\begin{proof}
Let $K$ be an algebraically closed field. Since $K [t]$ is a unique factorization domain in which all polynomials split into linear factors, $\omega (K)$ is an isomorphism. Now a) follows by passing to the algebraic closure of the quotient field of $A$. As for b), injectivity is a special case of a). Consider a polynomial $P (t) = a_n t^n + \ldots + 1 \in A [t]$ with $a_n \neq 0$. Since $A$ is integrally closed in its algebraically closed quotient field, the polynomial 
\[
P^* (t) = t^n P (t^{-1}) = a_n + a_{n-1} t + \ldots + t^n
\]
decomposes into a product
\[
P^* (t) = (t - \alpha_1) \cdots (t - \alpha_n) \quad \text{with} \; \alpha_1 , \ldots , \alpha_n \in A \; .
\]
The $\alpha_i$ are non-zero since $\alpha_1 \cdots \alpha_n = (-1)^n a_n \neq 0$. Hence we have
\[
P (t) = t^n P^* (t^{-1}) = (1 - t \alpha_1) \cdots (1 - t \alpha_n) \quad \text{with} \; \alpha_i \; \text{in} \; A \setminus \{ 0 \} \; .
\]
Thus the map $\omega (A)$ is also surjective. 
\end{proof}

\begin{prop}\label{t21n}
Let $G$ be a group acting on a ring $A$ by ring automorphisms. Assume that one of the following two assumptions holds: \\
a) All orbits of the $G$-action on $A$ are finite.\\
b) The ring $A$ is an integrally closed domain.\\
Then the natural inclusion $W_{\rat} (A^G) \hookrightarrow W_{\rat} (A)^G$ is an isomorphism:
\begin{equation}
\label{eq:21n}
W_{\rat} (A^G) \silo W_{\rat} (A)^G 
\end{equation}
\end{prop}

\begin{proof}
a) Let $f = P / Q$ be in $W_{\rat} (A)^G$ and let $A_0$ be the subring of $A$ generated by the finitely many $G$-translates of the finitely many coefficients of $P$ and $Q$. The group $G$ acts on $A_0$ via its finite (!) image $G_0$ under the map 
\[
G \to \Aut A_0 , \sigma \mapsto \sigma \, |_{A_0} \; .
\]
Replacing $A$ and $G$ by $A_0$ and $G_0$ it therefore suffices to prove the surjectivity of \eqref{eq:21n} in the case where $G$ is finite. This is done as follows in \cite[Example 1.14]{dotto}: $G$-invariance of $f$ means that $\,^{\sigma}\!P Q = P \,^{\sigma}\!Q$ for all $\sigma \in G$. Using this, one checks that the polynomial
\[
\alpha = P \prod_{e \neq \sigma \in G} \,^{\sigma}\!Q
\]
is $G$-invariant. Since $\beta = \prod_{\sigma \in G} \,^{\sigma}\!Q$ is obviously $G$-invariant, it follows that $f = \alpha / \beta$ is a quotient of $G$-invariant polynomials with $\alpha (0) = 1 = \beta (0)$. \\
b) We first consider the special case where $A$ is a field. Given $f = P / Q$ in $W_{\rat} (A)^G$ we may assume that $P$ and $Q$ are coprime. Writing
\[
aP + bQ = 1
\]
for polynomials $a,b \in A [t]$ and using $\,^{\sigma}\!P Q = P\,^{\sigma}\!Q$ we find
\[
\,^{\sigma}\!P = \,^{\sigma}\!P (aP + bQ) = a\,^{\sigma}\!P P + bP\,^{\sigma}\!Q = P (a\,^{\sigma}\!P + b\,^{\sigma}\!Q) \; .
\]
By degree reasons and since $P (0) = 1$, it follows that $\,^{\sigma}\!P = P$ and similarly $\,^{\sigma}\!Q = Q$. Now let $A$ be an integral domain which is integrally closed in its quotient field $K$. The group action by $G$ on $A$ extends uniquely to a $G$-action on $K$. Since $W_{\rat} (K)^G = W_{\rat} (K^G)$ as we saw, it follows that
\[
W_{\rat} (A)^G \subset W_{\rat} (A) \cap W_{\rat} (K^G) \; .
\]
It suffices to show that the right hand side is contained in $W_{\rat} (A^G)$. So let $P / Q = P_1 / Q_1$ where $P , Q \in A [t]$ and $P_1 , Q_1 \in K^G [t]$ are polynomials with contant term $1$ and where we may assume $P_1$ and $Q_1$ to be coprime. Hence there are polynomials $a,b \in K^G [t]$ with $aP_1 + bQ_1 = 1$. Setting $S = aP + bQ \in K[t]$ it follows that $P = P_1 S$ and $Q = Q_1 S$, and we have $S (0) = 1$. Writing $P^* (t) := t^{\deg P} P (t^{-1})$ etc, we obtain decompositions in $K[t]$ of monic polynomials $P^* = P^*_1 S^*$ and $Q^* = Q^*_1 S^*$ where $P^*, Q^* \in A [t]$. Let $\oA$ be the integral closure of $A$ in an algebraic closure of $K$. It follows that all roots of the monic polynomials $P^*_1 , Q^*_1 , S^*$ lie in $\oA$ and therefore $P^*_1 , Q^*_1 \in \oA [t]$. Since
\[
\oA \cap K^G = (\oA \cap K) \cap K^G = A \cap K^G = A^G \; ,
\]
it follows that $P_1 , Q_1 \in A^G [t]$ and hence
\[
P / Q = P_1 / Q_1 \in W_{\rat} (A^G) \; .
\]
\end{proof}

For a field $\kappa$ we can identify $\uZ \kappa$ with the group algebra $\Z \kappa^{\times}$. For an algebraic field extension $\kappa / \kappa_0$ we set
\[
\uZ \kappa / \kappa_0 := \Big\{ \sum_{r \in \kappa^{\times}} n_r (r) \mid d_r \; \text{divides} \; n_r \; \text{for} \; r \in \kappa^{\times} \Big\} \subset \uZ \kappa \; .
\]
Here $d_r$ is the degree of inseparability of $r$ over $\kappa_0$, i.e. the smallest prime power $d_r = p^n$ such that $r^{p^n}$ is separable over $\kappa_0$ if $\car \kappa_0 = p > 0$, and $d_r = 1$ if $\car \kappa_0 = 0$. Alternatively $d_r = [\kappa_0 (r) : \kappa_0]_i$ is the inseparability degree of the extension $\kappa_0 (r) / \kappa_0$. Hence $d_{r_1 r_2}$ divides $d_{r_1} d_{r_2}$ for any $r_1 , r_2 \in \kappa$ and it follows that $\uZ \kappa / \kappa_0$ is a subring of $\uZ \kappa$. For perfect fields $\kappa_0$ we have $\uZ \kappa / \kappa_0 = \uZ \kappa$ by definition. 

Recall that for an algebraically closed field $\kappa$ the Teichm\"uller map induces an isomorphism
\begin{equation}
\label{eq:23n}
\uZ \kappa \silo W_{\rat} (\kappa) \; .
\end{equation}
It commutes with the canonical $\Aut (\kappa)$-actions on both sides. The following result is a restatement of results in \cite[\S\,2]{bryden}.

\begin{prop}
\label{t26n}
If $\kappa$ is an algebraic closure of the field $\kappa_0$ and $H = \Aut_{\kappa_0} (\kappa)$ then \eqref{eq:23n} induces a commutative diagram with horizontal isomorphisms
\[
\xymatrix{
(\uZ \kappa)^H \ar[r]^{\sim} & W_{\rat} (\kappa)^H \ar@{=}[r] & W_{\rat} (\kappa^{\perf}_0) \\
(\uZ \kappa / \kappa_0)^H \ar@{_{(}->}[u] \ar[rr]^{\sim} & & W_{\rat} (\kappa_0) \; . \ar@{_{(}->}[u]
}
\]
Here $\kappa^{\perf}_0$ is the perfect closure of $\kappa_0$ in $\kappa$. If $p = \car \kappa_0$ is positive, the vertical arrows become isomorphisms after tensoring with $\Z [1 / p] \subset \Q$. 
\end{prop}

\begin{proof}
The equality $W_{\rat} (\kappa)^H = W_{\rat} (\kappa^{\perf}_0)$ follows from either a) or b) of Proposition \ref{t21n} since $\kappa^{\perf}_0 = \kappa^H$. The lower isomorphism follows because the irreducible polynomials $\alpha \in \kappa_0 [t]$ with $\alpha (0) = 1$ correspond bijectively to the irreducible polynomials $\talpha \in \kappa^{\perf}_0 [t]$ with $\talpha (0) = 1$ via the relation $\alpha (t) = \talpha (t^d)$ where $d$ is the degree of inseparability of any zero of $\alpha$ in $\kappa$. The last assertion is clear.
\end{proof}

The following result will be needed several times:

\begin{theorem}[\cite{KS}]
\label{t27n}
Let $A$ be a ring on which a profinite group $H$ acts continuously. Then for every algebraically closed field $\C$, the natural map
\[
\Hom (A, \C) / H \longrightarrow \Hom (A^H , \C) \; , \; \alpha \mod H \longmapsto \alpha \, |_{A^H}
\]
is bijective. Moreover, the inclusion $W_{\rat} (A^H) \hookrightarrow W_{\rat} (A)$ induces a bijection
\[
\Hom (W_{\rat} (A) , \C) / H \silo \Hom (W_{\rat} (A^H) , \C) \; .
\]
\end{theorem}

\begin{proof}
The first assertion is \cite{KS} Lemma 4.9. For the second assertion, note that $W_{\rat} (A^H) = W_{\rat} (A)^H$ by Proposition \ref{t21n} a). Since $H$ acts with finite orbits on $A$ it acts with finite orbits on $W_{\rat} (A)$ as well (!) Hence the second assertion follows from the first applied to $W_{\rat} (A)$ instead of $A$. 
\end{proof}

Combining this result with Proposition \ref{t26n} we obtain the following

\begin{cor}
\label{t28n}
Let $\kappa_0$ be a field, $\kappa$ an algebraic closure of $\kappa_0$ and $\kappa^{\sep}_0$ the separable closure of $\kappa_0$ in $\kappa$. Set $H = \Aut_{\kappa_0} (\kappa) = \Gal (\kappa^{\sep}_0 / \kappa_0)$ and let $\C$ be an algebraically closed field. \\
1) The inclusions $W_{\rat} (\kappa_0) \subset \uZ \kappa / \kappa_0 \subset \uZ \kappa$ induce bijections
\begin{align*}
& \Hom (W_{\rat} (\kappa_0) , \C) = \Hom (\uZ \kappa / \kappa_0 , \C) / H = \Hom (\uZ \kappa , \C) / H\\
= & \{ P : \kappa \to \C \mid P \; \text{multiplicative} \; , \; P (1) = 1 , P (0) = 0 \} / H \; .
\end{align*}
2) If $\kappa_0$ and $\C$ both have positive characteristic $p$, the inclusions 
\[
W_{\rat} (\kappa_0) \hookrightarrow \uZ \kappa \hookleftarrow \uZ \kappa_0^{\sep}
\]
also induce bijections
\begin{align*}
\Hom (W_{\rat} (\kappa_0) , \C) & = \Hom (\uZ \kappa , \C) / H = \Hom (\uZ \kappa^{\sep}_0 , \C) / H \\
= & \{ P : \kappa^{\sep}_0 \to \C \mid P \; \text{multiplicative} \; , \; P (1) = 1 , P (0) = 0 \} / H \; .
\end{align*}
\end{cor}

\begin{proof}
For a prime number $p$ set $\Lambda_p = \Z [1 / p]$ and set $\Lambda_0 = \Z$. For $p = \car \kappa_0 \ge 0$ we then have
\[
\uZ \kappa / \kappa_0 \otimes_{\Z} \Lambda_p = \uZ \kappa \otimes_{\Z} \Lambda_p \; .
\]
Using Proposition \ref{t26n} and Theorem \ref{t27n} we have
\begin{align} \label{eq:4nx}
\Hom (W_{\rat} (\kappa_0) , \C) & = \Hom ((\uZ \kappa / \kappa_0)^H , \C) \\
& = \Hom (\uZ \kappa / \kappa_0 , \C) / H \; . \nonumber
\end{align}
If $\car \kappa_0 = 0$ or if $\car \kappa_0 \neq \car \C$ we have
\begin{align*}
\Hom (\uZ \kappa / \kappa_0 , \C) & = \Hom (\uZ \kappa / \kappa_0 \otimes \Lambda_p , \C) \\
& = \Hom (\uZ \kappa \otimes \Lambda_p , \C) \\
& = \Hom (\uZ \kappa , \C) \; .
\end{align*}
Thus assertion 1) follows for such $\kappa_0$ and $\C$. We now assume that $\kappa_0$ and $\C$ both have positive characteristic $p$ and show that 1) and 2) hold in this case. By \eqref{eq:4nx} it suffices to show that the inclusions
\[
\uZ \kappa^{\sep}_0 \hookrightarrow \uZ \kappa / \kappa_0 \hookrightarrow \uZ \kappa
\]
induce bijections
\begin{equation}
\label{eq:4nn}
\Hom (\uZ \kappa , \C) \overset{\alpha}{\silo} \Hom (\uZ \kappa / \kappa_0 , \C) \overset{\beta}{\silo} \Hom (\uZ \kappa^{\sep}_0 , \C) \; .
\end{equation}
The composition $\beta \verk \alpha$ can be identified with the restriction map
\[
\Hom (\kappa^{\times} , \C^{\times}) \longrightarrow \Hom ((\kappa^{\sep}_0)^{\times} , \C^{\times}) \; .
\]
It is a bijection since $\kappa^{\times} / (\kappa^{\sep}_0)^{\times}$ is a $p$-power torsion group and $\C^{\times}$ is uniquely $p$-divisible. Hence it suffices to show that $\beta$ is injective. Let $P : \uZ \kappa / \kappa_0 \to \C$ be a ring homomorphism and for $r \neq 0$ set $\varphi (r) = P (d_r (r))$ where $d_r = [\kappa_0 (r) : \kappa_0]_i$ is the degree of inseparability of $r$. We have
\begin{equation}
\label{eq:5nn}
\varphi (1) = 1 \quad \text{and} \quad \varphi (r) \varphi (s) = \frac{d_r \, d_s} {d_{rs}} \varphi (rs) \quad \text{for} \; r,s \in \kappa^{\times} \; .
\end{equation}
Giving $P$ is equivalent to giving the map $\varphi : \kappa^{\times} \to \C$ satisfying \eqref{eq:5nn}. The relations \eqref{eq:5nn} imply that 
\[
\varphi (r)^2 = \frac{d^2_r}{d_{r^2}} \varphi (r^2) \quad \text{for all} \; r \in \kappa^{\times} \; .
\]
Since $\kappa_0 (r^2) \subset \kappa_0 (r)$ the number $d_{r^2}$ divides $d_r$. Thus for $r \in \kappa \setminus \kappa^{\sep}_0$ i.e. $d_r > 1$ the quotient $d^2_r / d_{r^2}$ is a positive power of $p$. Since $\car \C = p$ by assumption, it follows that $\varphi (r)^2 = 0$ and hence $\varphi (r) = 0$. Hence $P$ is uniquely determined by its restriction to $\uZ \kappa^{\sep}_0$. 
\end{proof}

We now extend Theorem \ref{t27n} to certain valuation rings $\eo$ instead of fields $\C$. This requires the following result:

\begin{lemma}
\label{t212n}
Let $\eo$ be a rank one valuation ring with algebraically closed quotient field $\C$. \\
1) For any integral morphism of schemes $\pi : X \to X_0$ the induced map
\[
X (\eo) \longrightarrow X_0 (\eo) \times_{X_0 (\C)} X (\C)
\]
is surjective. If $\pi$ is surjective, then $X (\eo) \to X_0 (\eo)$ is surjective as well.\\
2) Let $B$ be a ring on which a profinite group $G$ acts continuously. Then the morphism
\[
\pi : X = \spec B \longrightarrow X_0 = \spec B^G
\]
induces a bijection
\[
X (\eo) / G \silo X_0 (\eo) \; .
\]
\end{lemma}

\begin{proof}
1) If the integral morphism $\pi$ is surjective then the map $X (\C) \to X_0 (\C)$ is surjective as well. Namely, given $x_0 \in X_0$ there is some $x \in X$ with $\pi (x) = x_0$, and $\kappa (x)$ is algebraic over $\kappa (x_0)$ since $\pi$ is integral. Since $\C$ is algebraically closed we can extend any embedding $\kappa (x_0) \hookrightarrow \C$ to an embedding $\kappa (x) \hookrightarrow \C$. Hence it suffices to prove the first assertion in 1). A point
\[
(f,g) \in X_0 (\eo) \times_{X_0 (\C)} X (\C)
\]
is a commutative diagram
\[
\xymatrix{
\spec \C \ar[r]^-g \ar[d] & X \ar[d]^{\pi} \\
\spec \eo \ar[r]^-f & X_0
}
\]
Let $\emm$ and $k$ be the maximal ideal and the residue field of $\eo$. Then $\spec \eo = \{ (0) , \emm \}$ by the rank one assumption on $\eo$. Let $x_0$ and $y_0$ be the images under $f$ of $(0)$ and $\emm$. Let $Z_0 =  \overline{\{ x_0 \}} \subset X_0$ with the reduced scheme structure. Then $y_0 \in Z_0$ and the residue field $\kappa (x_0)$ of $x_0$ in $X_0$ is the function field of $Z_0$. The morphism $f$ factors canonically as
\[
f : \spec \eo \longrightarrow Z_0 \hookrightarrow X_0 \; .
\]
The morphism $g$ is given by a point $x \in X$ with $\pi (x) = x_0$ and an embedding $\kappa (x) \hookrightarrow \C$ extending the embedding $\kappa (x_0) \hookrightarrow \C$ given by $\spec \C \to \spec \eo \xrightarrow{f} X_0$. Set $Z = \overline{\{ x \}} \subset X$. Since $\pi$ is integral, the base change $\pi^{-1} (Z_0) \to Z_0$ is integral as well. Since closed immersions are integral, it follows that the composition
\[
\pi_Z : Z \hookrightarrow \pi^{-1} (Z_0) \longrightarrow Z_0
\]
is integral as well. Let $U_0 = \spec A_0$ be an open affine neighborhood of $y_0$ in $Z_0$. Then $x_0 \in U_0$ and the quotient field of $A_0$ is $\kappa (x_0)$. Since $\pi_Z$ is integral, the inverse image $U = \pi^{-1}_Z (U_0) = \spec A$ is affine, for an integral $A_0$-algebra $A$ with quotient field $\kappa (x)$. The morphism $f$ factors as 
\[
f : \spec \eo \longrightarrow U_0 \longrightarrow X_0
\]
since $U_0$ contains the image of $f$. We obtain a commutative diagram
\[
\xymatrix{
 & \kappa (x) \ar@{^{(}->}[ddrr] & & \\
A \ar@{^{(}->}[ur] & & & \\
 & \kappa (x_0) \ar@{^{(}->}[uu] \ar@{^{(}->}[rr] & & \C \\
A_0 \ar@{^{(}->}[rr]\ar@{^{(}->}[uu] \ar@{^{(}->}[ur] &  & \eo \ar@{^{(}->}[ur] & 
}
\]
Here the embedding $A_0 \hookrightarrow \eo$ corresponds to the morphism $f : \spec \eo \to U_0 = \spec A_0$. Every element $a \in A$ satisfies a monic polynomial equation over $A_0$ since $A$ is an integral $A_0$-algebra. Hence the image of $a$ in $\C$ satsifies such an equation over $\eo$ and is therefore an element of $\eo$ since $\eo$ is integrally closed in $\C$. Hence we get commutative diagrams
\[
\vcenter{\xymatrix{
A \ar@{^{(}->}[dr] & \\
A_0 \ar@{^{(}->}[u] \ar@{^{(}->}[r] & \eo
} }
\qquad \text{and} \qquad 
\vcenter{\xymatrix{
 & U \ar[r] \ar[d] & X \ar[d] \\
\spec \eo \ar[ur] \ar[r] & U_0 \ar[r] & X_0 \; .
}}
\]
This defines a point in $X (\eo)$ mapping to $(f,g)$. \\
2) According to \cite{KS} Lemma 4.9 and its proof, $B$ is an integral $B^G$-algebra, $\pi$ is surjective and the induced map
\[
X (\C) / G \silo X_0 (\C)
\]
is bijective. It follows from 1) that the map
\[
X (\eo) / G \longrightarrow X_0 (\eo)
\]
is surjective. Since $X (\eo) \subset X (\C)$ we have a commutative diagram
\[
\xymatrix{
X (\eo) / G \ar@{->>}[r] \ar@{_{(}->}[d] & X_0 (\eo) \ar[d] \\
X (\C) / G \ar[r]^{\sim} & X_0 (\C) \; .
}
\]
Hence the map $X (\eo) / G \to X_0 (\eo)$ is a bijection. 
\end{proof}

\begin{cor} 
\label{t213n}
Let $\eo$ be a rank one valuation ring with algebraically closed quotient field $\C$ and let $A$ be a ring on which a profinite group $G$ acts continuously.\\
1) The inclusion $W_{\rat} (A^G) \to W_{\rat} (A)$ induces a bijection
\[
\Hom (W_{\rat} (A) , \eo) / G \silo \Hom (W_{\rat} (A^G) , \eo) \; .
\]
Any extension $P : W_{\rat} (A) \to \C$ of a homomorphism $P_0 : W_{\rat} (A^G) \to \eo$ takes values in $\eo$. \\
2) For an integrally closed domain $A$ with algebraically closed quotient field we have identifications
\begin{align*}
& \Hom (W_{\rat} (A^G) , \eo) = \Hom (\uZ A, \eo) / G \\
= & \{ P : A \to \eo \mid P \; \text{multiplicative} \; , \; P (1) = 1 , P (0) = 0 \} / G \; .
\end{align*}
\end{cor}

\begin{proof}
1) Set $B = W_{\rat} (A)$. By assumption $G$ acts with finite orbits on $A$ and hence on $B$. Proposition \ref{t21n} a) gives $B^G = W_{\rat} (A^G)$. The bijection in 1) now follows from part 2 of Lemma \ref{t212n}.  The final claim follows because any two extensions $P$ of $P_0$ with values in $\C$ are conjugate under $G$ by the second part of Theorem \ref{t27n}. Since we know that an extension with values in $\eo$ exists, all extensions $P$ must take values in $\eo$. \\
2) This follows from 1) and Proposition \ref{t21nn} b).
\end{proof}

An integral domain $A$ with quotient field $K$ is called Fatou if
\[
W_{\rat} (K) \cap W (A) = W_{\rat} (A) \quad \text{in} \; W (K) \; .
\]
A Noetherian integral domain is Fatou \cite[section 3]{hazewinkel}. 

\begin{theorem} \label{t29}
Let $A$ be a Fatou domain. Then for any $N \ge 1$ we have
\[
V_N (W (A)) \cap W_{\rat} (A) = V_N (W_{\rat} (A)) \; .
\]
\end{theorem}

\begin{proof}
We first show that if $K$ is a field and $f \in W (K)$ satisfies $V_N (f) \in W_{\rat} (K)$ then $f \in W_{\rat} (K)$. If $\car K = 0$ one can check this using a Galois argument. The following argument works in general. Recall that a Laurent series $g \in K ((t))$ is rational if and only if its continued fraction expansion is finite. Explicitely: For $g = \sum^{\infty}_{\nu > - \infty} a_{\nu} t^{\nu}$ set
\[
Tg = \big( \textstyle{\sum^{\infty}_{\nu = 1 }} a_{\nu} t^{\nu} \big)^{-1} \quad \text{if} \quad  \textstyle{\sum^{\infty}_{\nu = 1}} a_{\nu} t^{\nu} \neq 0 
\]
and $Tg = 0$ otherwise. Then $g \in K (t)$ if and only if $T^n g = 0$ for some $n \ge 0$. Namely, for $g \in K (t) = \Quot K [t^{-1}]$, a repeated application of the Euclidean algorithm in $K [t^{-1}]$ shows that $T^n g = 0$ for some $n \ge 0$. The other direction is clear. Extend $V_N$ to a map 
\[
V_N : K ((t)) \longrightarrow K ((t)) \quad \text{by} \; (V_N (g)) (t) = g (t^N) \; .
\]
Then $V_N$ is injective and $T \verk V_N = V_N \verk T$. Hence $g$ is rational if and only if $V_N (g)$ is rational. The above assertion on $f \in W (K)$ follows since
\[
W (K) \cap K (t) = W_{\rat} (K) \quad \text{in} \; K ((t)) \; .
\]
Now let $A$ be a Fatou domain and assume that $f \in W (A)$ satisfies $V_N (f) \in W_{\rat} (A)$. Let $K$ be the quotient field of $A$. Then by what we have seen it follows that $f \in W_{\rat} (K)$ and hence $f \in W_{\rat} (A)$ since $A$ is Fatou. The theorem follows. 
\end{proof}

\section{Rational Witt spaces} \label{sec3nn}

We call an integral scheme $X$ Fatou if for every open affine subset $U$ of $X$ the ring $\Oh_X (U)$ is Fatou. Every integral, locally Noetherian scheme is Fatou. The rational Witt vectors commute with filtered colimits since firstly, a rational function is determined by finitely many coefficients; secondly, for polynomials $P_i , Q_i \in A [t]$ with constant terms $1$ we have $P_1 / Q_1 = P_2 / Q_2$ in $W_{\rat} (A)$ if and only if $P_1 Q_2 = P_2 Q_1$ in $A [t]$. For a scheme $X$ and a point $x \in X$ the stalk of the Zariski presheaf $U \mapsto W_{\rat} (\Oh_X (U))$ in $x$ is therefore given by $W_{\rat} (\Oh_{X,x})$. We denote by $W_{\rat} (\Oh_X)$ the {\it sheafification} of the presheaf $U \mapsto W_{\rat} (\Oh_X (U))$. Stalks of presheaves and of associated sheaves coincide. Hence we have
\[
W_{\rat} (\Oh_X)_x = W_{\rat} (\Oh_{X,x}) \quad \text{for} \; x \in X \; .
\]
The sheaf $W_{\rat} (\Oh_X)$ is naturally a subsheaf of the sheaf $W (\Oh_X)$ defined by $W (\Oh_X) (U) = W (\Oh_X (U))$. 

\begin{prop} \label{t22n}
a) For any scheme $X$ the presheaf $U \mapsto W_{\rat} (\Oh_X (U))$ is separated and hence we have for all open $U \subset X$ that
\[
W_{\rat} (\Oh_X (U)) \subset W_{\rat} (\Oh_X) (U) \; .
\]
b) If $X$ is integral and the domain $\Oh_X (U)$ is Fatou then we have equality
\[
W_{\rat} (\Oh_X (U)) = W_{\rat} (\Oh_X) (U) \; .
\]
c) Let $X$ be Fatou (e.g. integral and locally Noetherian) with function field $K$. Then for every open $U \subset X$ we have
\[
W_{\rat} (\Oh_X) (U) = W_{\rat} (K) \cap W (\Oh_X (U)) \quad \text{in} \;\, W (K) \; .
\]
\end{prop}

\begin{proof}
a) The presheaf $U \mapsto W_{\rat} (\Oh_X (U))$ is a sub-presheaf of the sheaf $W (\Oh_X)$. Hence it is separated.\\
b) Since $U \mapsto W_{\rat} (\Oh_X (U))$ is separated, a section of $W_{\rat} (\Oh_X) (U)$ is given by a family $(f_i)$ of elements $f_i \in W_{\rat} (\Oh_X (U_i))$ for some open cover $\eU = (U_i)$ of $U$ such that $f_i$ and $f_j$ have the same image in $W_{\rat} (\Oh_X (U_i \cap U_j))$ for all indices $i$ and $j$. Let $K$ be the field of rational functions on the integral scheme $X$. It follows that the $f_i$ have the same image $f$ in $W_{\rat} (K)$ for all $i$. On the other hand, the $f_i$ define a section of the sheaf $W (\Oh_X)$ over $U$ i.e. an element of $W (\Oh_X (U))$. Hence using the Fatou property of $\Oh_X (U)$, we find
\[
f \in W_{\rat} (K) \cap W (\Oh_X (U)) = W_{\rat} (\Oh_X (U)) \; .
\]
Thus the inclusion
\[
W_{\rat} (\Oh_X (U)) \subset W_{\rat} (\Oh_X) (U)
\]
is an equality. \\
c) The right hand side defines a sheaf on $X$ which contains the presheaf $U \mapsto W_{\rat} (\Oh_X (U))$ and hence also $W_{\rat} (\Oh_X)$. To show the reverse inclusion, let $f$ be in $W_{\rat} (K) \cap W (\Oh_X (U))$. Choose an open affine covering $U = \bigcup_i U_i$. By assumption each $\Oh_X (U_i)$ is Fatou. Let $f_i$ be the restriction of $f$ to
\[
W_{\rat} (K) \cap W (\Oh_X (U_i)) = W_{\rat} (\Oh_X (U_i)) \subset W_{\rat} (\Oh_X) (U_i) \; .
\]
The restrictions of $f_i$ and $f_j$ to $W_{\rat} (\Oh_X) (U_i \cap U_j)$ are equal because this is true in $W (\Oh_X) (U_i \cap U_j) = W (\Oh_X (U_i \cap U_j))$. Since $W_{\rat} (\Oh_X)$ is a sheaf, $f$ is in $W_{\rat} (\Oh_X) (U)$.
\end{proof}

For any scheme $X$, the Frobenius and Verschiebung maps on the rings $W_{\rat} (\Oh_X (U))$ for $U$ open in $X$ induce Frobenius and Verschiebung morphisms on the presheaf $U \mapsto W_{\rat} (\Oh_X (U))$ and hence on the associated sheaf $W_{\rat} (\Oh_X)$. They satisfy the usual relations and will be denoted by $F_{\nu}$ resp. $V_{\nu}$ for all integers $\nu \ge 1$. 
The Teichm\"uller maps $[\, ] : \Oh_X (U) \to W_{\rat} (\Oh_X (U))$ induce an injective multiplicative map of sheaves sending $1$ to $1$ and $0$ to $0$
\[
[\, ] : \Oh_X \to W_{\rat} (\Oh_X) \; .
\]

\begin{defn} \label{t23n}
Let $X$ be a scheme with underlying topological space $X_{\top}$. The rational Witt space of $X$ is the ringed space
\[
W_{\rat} (X) := (X_{\top} , W_{\rat} (\Oh_X)) \; .
\]
\end{defn}

A morphism of schemes is a special type of morphism of ringed spaces: the induced maps on local rings have to be {\it local} i.e. induce homomorphisms of the residue fields. In our case we propose an analogous condition to account for the locality.

\begin{defn} \label{t24n}
Let $X$ and $Y$ be schemes. A morphism
\[
f : W_{\rat} (Y) \longrightarrow W_{\rat} (X)
\]
is a morphism of ringed spaces
\[
(f , f^{\sharp}) : (Y_{\top} , W_{\rat} (\Oh_Y)) \longrightarrow (X_{\top} , W_{\rat} (\Oh_X))
\]
which is local in the following sense: For each $y \in Y$ there is a commutative diagram
\[
\xymatrix{
W_{\rat} (\Oh_{X , f (y)}) \ar@{->>}[d] \ar[r]^-{f^{\sharp}_y} & W_{\rat} (\Oh_{Y, y}) \ar@{->>}[d] \\
W_{\rat} (\kappa (f (y))) \ar[r]^-{\tf^{\sharp}_y} & W_{\rat} (\kappa (y)) \; .
}
\]
Note that the map $\tf^{\sharp}_y$ is uniquely determined by $f^{\sharp}_y$. 
\end{defn}

For every integer $\nu \ge 1$ we have a Frobenius endomorphism
\[
F_{\nu} : W_{\rat} (X) \longrightarrow W_{\rat} (X) \; .
\]
It is the identity on the underlying space $X_{\top}$ and the $\nu$-th Frobenius on $W_{\rat} (\Oh_X)$. The multiplicative monoid $\Nh$ of positive integers acts on $W_{\rat} (X)$ via $\nu \mapsto F_{\nu}$. 

Any morphism of schemes $\alpha : Y \to X$ induces a morphism 
\[
W_{\rat} (\alpha) : W_{\rat} (Y) \to W_{\rat} (X) \; .
\]
On the underlying topological spaces it is the map $\alpha : Y_{\top} \to X_{\top}$. For $U \subset X$ open, using the map induced by $\alpha$
\[
\Oh_X (U) \longrightarrow \Oh_Y (\alpha^{-1} (U))
\]
we get a composition
\[
W_{\rat} (\Oh_X (U)) \longrightarrow W_{\rat} (\Oh_Y (\alpha^{-1} (U)) \hookrightarrow W_{\rat} (\Oh_Y) (\alpha^{-1} (U)) \; .
\]
This gives a morphism from the presheaf $U \mapsto W_{\rat} (\Oh_X (U))$ to the sheaf $\alpha_* W_{\rat} (\Oh_Y)$ and hence a morphism of sheaves
\[
W_{\rat} (\Oh_X) \longrightarrow \alpha_* W_{\rat} (\Oh_Y) \; .
\]
Consider its adjunction:
\[
W_{\rat} (a^{\sharp}) : \alpha^{-1} W_{\rat} (\Oh_X) \longrightarrow W_{\rat} (\Oh_Y) \; .
\]
Then
\[
W_{\rat} (\alpha) = (\alpha , W_{\rat} (\alpha^{\sharp}))
\]
is the desired morphism. The stalk in $y \in Y$ is
\begin{equation} \label{eq:x}
W_{\rat} (\alpha^{\sharp})_y = W_{\rat} (\alpha^{\sharp}_y) : W_{\rat} (\Oh_{X, \alpha (y)}) \longrightarrow W_{\rat} (\Oh_{Y, y}) \; .
\end{equation}
The morphism $\alpha^{\sharp}_y : \Oh_{X , \alpha (y)} \to \Oh_{Y, y}$ is local and therefore induces a map of residue fields $\talpha^{\sharp}_y : \kappa (\alpha (y)) \to \kappa (y)$ and of rings
\[
W_{\rat} (\talpha^{\sharp}_y) : W_{\rat} (\kappa (\alpha (y)) \longrightarrow W_{\rat} (\kappa (y)) \; .
\]
The locality condition on $W_{\rat} (\alpha)$ is satisfied with $W_{\rat} (\alpha^{\sharp})^{\sim}_y := W_{\rat} (\talpha^{\sharp}_y)$. 

We obtain a functor from the category of schemes to the category of rational Witt spaces by sending $X$ to $W_{\rat} (X)$ and $\alpha : Y \to X$ to $W_{\rat} (\alpha)$. Note that $W_{\rat} (\alpha)$ is equivariant with respect to the $\Nh$-actions via Frobenius on $W_{\rat} (Y)$ and $W_{\rat} (X)$. On stalks it is equivariant with respect to Verschiebung. 

\begin{prop} \label{t27xn}
For all schemes $X$ and $Y$, the map
\[
W_{\rat} : \Mor (Y,X) \longrightarrow \Mor (W_{\rat} (Y) , W_{\rat} (X))
\]
is injective.
\end{prop}

\begin{proof}
Assume that for $\alpha , \beta : Y \to X$ we have $W_{\rat} (\alpha) = W_{\rat} (\beta)$. Then by definition, the underlying maps of topological spaces $Y_{\top} \to X_{\top}$ are equal. Since $W_{\rat} (\alpha)^{\sharp} = W_{\rat} (\beta)^{\sharp}$ by assumption, equation \eqref{eq:x} implies equality for all $y \in Y$ where $z = \alpha (y) = \beta (y)$:
\begin{equation}
\label{eq:2x}
W_{\rat} (\alpha^{\sharp}_y) = W_{\rat} (\beta^{\sharp}_y) : W_{\rat} (\Oh_{X, z}) \longrightarrow W_{\rat} (\Oh_{Y,y}) \; .
\end{equation}
Consider the commutative diagram:
\[
\xymatrix{
W_{\rat} (\Oh_{X,z}) \ar[r]^{W_{\rat} (\alpha^{\sharp}_y)} & W_{\rat} (\Oh_{Y,y}) \\
\Oh_{X,z} \ar[u]^{[\;]} \ar[r]^{\alpha^{\sharp}_y} & \Oh_{Y,y} \; .\ar[u]_{[\;]}
}
\]
Together with the corresponding diagram for $\beta$, equation \eqref{eq:2x} and the injectivity of the Teichm\"uller maps $[\;]$ we find that $\alpha^{\sharp}_y = \beta^{\sharp}_y$ for all $y$, and hence $\alpha^{\sharp} = \beta^{\sharp}$. 
\end{proof}

Thus the functor $W_{\rat}$ is faithful. I have the hope that on a suitable category of anabelian schemes it may be fully faithful.


The Frobenius morphisms $F_{\nu}$ on $W_{\rat} (X)$ are not automorphisms in general. In order to invert them consider $\Nh$ as a directed poset under divisibility and define the $\ind$-space
\[
\cW_{\rat} (X) = \text{``}\colim_{\Nh}\text{''} W_{\rat} (X) \; .
\]
More generally, the same can be done for any submonoid $\Nh_0$ of $\Nh$.

We now define $S$-valued points of $W_{\rat} (X)$ and $\cW_{\rat} (X)$ for any scheme $S$, i.e. morphisms from $S$ into these spaces.

\begin{defn} \label{t25n}
Let $X$ and $S$ be schemes. A morphism
\[
f : S \longrightarrow W_{\rat} (X)
\]
is a morphism of ringed spaces
\[
(f , f^{\sharp}) : (S_{\top} , \Oh_S) \longrightarrow (X_{\top} , W_{\rat} (\Oh_X))
\]
which is local in the following sense. For each $s \in S$, there is a commutative diagram
\[
\xymatrix{
W_{\rat} (\Oh_{X , f (s)}) \ar@{->>}[d] \ar[r]^-{f^{\sharp}_s} & \Oh_{S,s} \ar@{->>}[d] \\
W_{\rat} (\kappa (f (s))) \ar[r]^-{\tf^{\sharp}_s} & \kappa (s) \; .
}
\]
Note that the map $\tf^{\sharp}_s$ is uniquely determined by $f^{\sharp}_s$. 
\end{defn}

For schemes $T$ and $Y$ there are natural (associative) composition maps:
\[
\Mor (T,S) \times \Mor (S, W_{\rat} (X)) \xrightarrow{\verk} \Mor (T , W_{\rat} (X))
\]
and
\[
\Mor (S, W_{\rat} (X)) \times \Mor (W_{\rat} (X) , W_{\rat} (Y)) \xrightarrow{\verk} \Mor (S, W_{\rat} (Y)) \; .
\]
We write $W_{\rat} (X) (S)$ for the set of morphisms from $S$ to $W_{\rat} (X)$. The $\Nh$-action on $W_{\rat} (X)$ induces an $\Nh$-action on $W_{\rat} (X) (S)$ by composition. We define:
\[
\cW_{\rat} (X) (S) = \colim_{\Nh} W_{\rat} (X) (S) \; ,
\]
taking the colimit in the category of sets. As usual, if $S = \spec A$ is affine we will write $W_{\rat} (X) (A)$ and $\cW_{\rat} (X) (A)$ for the $S$-valued points. 

\begin{exmp} \label{t28}
For a scheme $X$ the canonical surjective ring homomorphisms
\[
W_{\rat} (\Oh_X (U)) \longrightarrow \Oh_X (U) \quad \text{for} \; U \subset X \; \text{open} \; ,
\]
induce a surjective morphism of ring sheaves
\[
f^{\sharp} : W_{\rat} (\Oh_X) \longrightarrow \Oh_X \; .
\]
This gives a morphism
\[
f : X \longrightarrow W_{\rat} (X) \; , \; f = (\id , f^{\sharp} )
\]
because the locality condition is satisfied with the canonical projections
\[
\tf^{\sharp}_x : W_{\rat} (\kappa (x)) \longrightarrow \kappa (x) \quad \text{for} \; x \in X \; .
\]
\end{exmp}
For every scheme $S$ we therefore have a canonical injective map, which we will view as an inclusion:
\begin{equation}
\label{eq:8a}
X (S) \hookrightarrow W_{\rat} (X) (S) \; , \; g \longmapsto f \verk g \; .
\end{equation}
For $X = \spec \Z$ the set $X (S)$ consists of one element whereas $W_{\rat} (X) (S)$ can be very big. The following remark follows from standard facts about Witt vectors.

\begin{rem}
For an $\F_p$-scheme $X$ with absolute Frobenius endomorphisms $F_X$ we have a commutative diagram of ringed spaces
\[
\xymatrix{
X \ar[r]^f \ar[d]_{F_X} & W_{\rat} (X) \ar[d]^{F_p} \\
X \ar[r]^f & W_{\rat} (X) \; .
}
\]
Moreover $F_p$ on $W_{\rat} (X)$ is induced by $F_X$ i.e.$F_p = W_{\rat} (F_X)$. 
\end{rem}

The Teichm\"uller maps induce a map of sheaves of monoids on $X$
\[
\Oh^m_X \longrightarrow W_{\rat} (\Oh_X)^m \; .
\]
Here the superscript $m$ stands for ``multiplicative monoid''. This leads to a morphism of ``monoid spaces''
\[
W_{\rat} (X)^m := (X_{\top} , W_{\rat} (\Oh_X)^m) \longrightarrow (X_{\top} , \Oh^m_X) =: X^m \; .
\]
In a way, $W_{\rat} (X)$ is intermediate between the scheme $X$ and the space $X^m$ which may be considered as an object of $\F_1$-geometry. 

A morphism $f : S \to W_{\rat} (X)$ gives a sheaf map $f^{\sharp} : W_{\rat} (\Oh_X) \to f_* \Oh_S$ and hence a map of global sections
\[
\Gamma (f^{\sharp}) : \Gamma (X , W_{\rat} (\Oh_X)) \longrightarrow \Gamma (S , \Oh_S) \; .
\]
We get a pairing:
\begin{equation}
\label{eq:3*}
W_{\rat} (X) (S) \times \Gamma (X, W_{\rat} (\Oh_X)) \longrightarrow \Gamma (S , \Oh_S) \; , \; (f,g) \longmapsto \Gamma (f^{\sharp}) (g) \; .
\end{equation}
It induces maps of sets resp. of rings
\begin{equation}
\label{eq:4*}
W_{\rat} (X) (S) \longrightarrow \Hom (\Gamma (X , W_{\rat} (\Oh_X)) \; , \; \Gamma (S, \Oh_S)) 
\end{equation}
and
\begin{equation}
\label{eq:5*}
\Gamma (X, W_{\rat} (\Oh_X)) \longrightarrow \Map (W_{\rat} (X) (S) \; , \; \Gamma (S , \Oh_S)) \; .
\end{equation}
For $S = \spec R$ the map \eqref{eq:4*} becomes a map
\begin{equation}
\label{eq:6*}
W_{\rat} (X) (R) \longrightarrow (\spec \Gamma (X , W_{\rat} (\Oh_X)) (R) \; .
\end{equation}
For $X = \spec K$, where $K$ is a field, a short argument shows that the map \eqref{eq:6*} is a bijection
\begin{equation}
\label{eq:7*}
W_{\rat} (\spec K) (R) \silo (\spec W_{\rat} (K)) (R) \; .
\end{equation}
In the paper \cite{KS} the space $(\spec W_{\rat} (K)) (\C)$ is studied for the complex number field $\C$. It agrees with $W_{\rat} (\spec K) (\C)$ but the ringed spaces $W_{\rat} (\spec K)$ and $\spec W_{\rat} (K)$ are quite different. The underlying space of the former has only one point whereas the latter has many points.

For $S = \spec R$ the map \eqref{eq:5*} becomes a ring map
\begin{equation}
 \label{eq:8*}
 \Gamma (X , W_{\rat} (\Oh_X)) \longrightarrow \Map (W_{\rat} (X) (R) , R) \; .
\end{equation}
Using the canonical maps
\[
\Gamma (X, \Oh_X) \xrightarrow{[\;]} W_{\rat} (\Gamma (X, \Oh_X)) \hookrightarrow \Gamma (X, W_{\rat} (\Oh_X))
\]
we obtain a ring homomorphism
\begin{equation}
\label{eq:9*} W_{\rat} (\Gamma (X , \Oh_X)) \longrightarrow \Map (W_{\rat} (X) (R) , R) 
\end{equation}
and a {\it multiplicative}, non-additive map
\begin{equation}
\label{eq:10*} 
[\;] : \Gamma (X, \Oh_X) \longrightarrow \Map (W_{\rat} (X) (R), R) \; .
\end{equation}
For a field $R = \C$, the map
\begin{equation}
\label{eq:11*} 
[\;] : \Gamma (X , \Oh_X) \longrightarrow \Map (W_{\rat} (X) (\C) ,\C)
\end{equation}
can be described explicitely as follows. A point $f \in W_{\rat} (X) (\C)$ is given by a pair $(x , \tf^{\sharp}_{(0)})$, where $x \in X$ is the image of $(0) \in \spec \C$ and where \linebreak
$\tf^{\sharp}_{(0)} : W_{\rat} (\kappa (x)) \to \C$ is a homomorphism. The composition
\[
\kappa (x) \xrightarrow{[\;]} W_{\rat} (\kappa (x)) \xrightarrow{\tf^{\sharp}_{(0)}} \C
\]
is multiplicative and maps $1$ to $1$ and $0$ to $0$. It follows that $\kappa (x)^{\times}$ is mapped to $\C^{\times}$. Thus, for $a \in \kappa (x)$ we have $\tf^{\sharp}_{(0)} ([a]) = 0$ if and only if $a = 0$. Going through the construction of the map \eqref{eq:11*} we see that a regular function $\alpha \in \Gamma (X , \Oh_X)$ is mapped to the function
\[
[\alpha] : W_{\rat} (X) (\C) \longrightarrow \C \quad \text{defined by} \; [\alpha] (x , \tf^{\sharp}_{(0)}) = \tf^{\sharp}_{(0)} ([\alpha (x)]) \; .
\]
Here $\alpha (x) \in \kappa (x)$ is the image of $\alpha$ under the evaluation map $\Gamma (X, \Oh_X) \to \kappa (x)$. By what we just saw we have
\begin{equation}
\label{eq:12*} 
[\alpha] (x , \tf^{\sharp}_{(0)}) = 0 \iff \alpha (x) = 0 \; .
\end{equation}
Consider the natural map
\[
\pi : W_{\rat} (X) (\C) \longrightarrow X \; , \; (x , \tf^{\sharp}_{(0)}) \longmapsto x \; .
\]
We have just seen that as sets
\begin{equation}
 \label{eq:13*}
\ddiv [\alpha] = \pi^{-1} (\ddiv \alpha) \; .
\end{equation}
For example for $X = \spec \Z$ consider the multiplicative map \eqref{eq:11*}
\begin{equation}
\label{eq:14*}
[\;] : \Z \longrightarrow \Map (W_{\rat} (\spec \Z) (\C) , \C) \; .
\end{equation}
It exhibits numbers as $\C$-valued functions on $W_{\rat} (\spec \Z) (\C)$. For $\nu \in \Z$ the zero set of the function $[\nu]$ is the union of the inverse images $\pi^{-1} ((p))$'s for the primes $p$ dividing $\nu$. The map $\pi$ is $\Nh$-equivariant if $\Nh$ acts trivially on $X$. Using the existence of Frobenius elements in the absolute Galois group of $\Q$ we will see that the fibres of $\pi$ over the closed points of $\spec \Z$ are the ones with non-trivial stabilizers in $\Nh$. After inverting the $\Nh$-action and suspending to an $\R^{> 0}$-action, they give rise to the periodic orbits in the resulting dynamical system, c.f. Theorem \ref{t6}. 

\begin{remark}
\label{t37}
For schemes $X$ and $S$, the group $\Aut (W_{\rat} (X)) \times \Aut (S)$ operates on $W_{\rat} (X) (S)$ via the formula
\[
(\Sigma , \tau) \hullet f = \Sigma \verk f \verk \tau^{-1} = \big( S \xrightarrow{\tau^{-1}} S \xrightarrow{f} W_{\rat} (X) \xrightarrow{\Sigma} W_{\rat} (X) \big) \; .
\]
Let $\Aut_{\Nh} (W_{\rat} (X))$ denote the subgroup of $\Aut (W_{\rat} (X))$ consisting of automorphisms which commute with the $\Nh$-action. Then $\Aut_{\Nh} (W_{\rat} (X)) \times \Aut (S)$ acts on $W_{\rat} (X) (S)$ by automorphisms which commute with the $\Nh$-action on $W_{\rat} (X) (S)$. In particular, via the natural homomorphism $\Aut (X) \to \Aut_{\Nh} (W_{\rat} (X))$, the group $\Aut (X) \times \Aut (S)$ operates on $W_{\rat} (X) (S)$ by $\Nh$-equivariant automorphisms. Explicitely:
\[
(\sigma , \tau) \hullet f = W_{\rat} (\sigma) \verk f \verk \tau^{-1} = (S \xrightarrow{\tau^{-1}} S \xrightarrow{f} W_{\rat} (X) \xrightarrow{W_{\rat} (\sigma)} W_{\rat} (X)) \; .
\]
\end{remark}

\begin{example}
Let $G$ be the absolute Galois group of $\Q_p$. Then $G \times G$ acts $\Nh$-equivariantly on $W_{\rat} (\spec \oZ_p) (\eo_{\C_p})$. In section \ref{sec:13} we will see that this action is related to the $G \times G$-action on the universal covering of the ``curve'' in Fontaine and Fargues' theory \cite{FF}.
\end{example}
\section{$\C$-valued points of $W_{\rat} (X)$} \label{sec:3n}
Let $\C$ be a field. A $\C$-valued point $f : \spec \C \to W_{\rat} (X)$ of $W_{\rat} (X)$ is given by a point $x \in X$ and a homomorphism
\[
\oP_0 : W_{\rat} (\kappa (x)) \longrightarrow \C \; .
\]
Namely we have $x = f ((0))$ and $\oP_0 = \tf^{\sharp}_{(0)}$ in the notation of Definition \ref{t25n}. The map $f^{\sharp}$ i.e. $f^{\sharp}_{(0)}$ is the composition
\[
P_0 : W_{\rat} (\Oh_{X,x}) \longrightarrow W_{\rat} (\kappa (x)) \xrightarrow{\oP_0} \C \; .
\]
We will need a different description of these points for certain schemes $X$. 

Let $X = \eX_0$ be an integral normal scheme with function field $K_0$ and let $K$ be an algebraic closure of $K_0$. Let $\eX$ be the normalization of $\eX_0$ in $K$. Then the automorphism group $G = \Aut_{K_0} (K)$ of $K$ over $K_0$ acts on $\eX$ from the right. The canonical integral and hence affine morphism $\pi : \eX \to \eX_0$ is surjective. For $\ex_0 \in \eX_0$ let $\ex \in \eX$ be a point over $\ex_0$. Then $\kappa (\ex)$ is an algebraic closure of $\kappa (\ex_0)$. Let $\kappa (\ex_0)^{\sep}$ be the separable closure of $\kappa (\ex_0)$ in $\kappa (\ex)$. Let
\[
G_{\ex} = \{ \sigma \in G \mid \ex^{\sigma} = \ex \}
\]
be the stabilizer group of $\ex$ in $G$. It follows from \cite[Chap. V, \S\;2, n$^o$ 3, Proposition 6]{B} that the group $G$ permutes the points $\ex$ over $\ex_0$ transitively and that the induced homomorphism 
\begin{equation}
\label{eq:22n}
G_{\ex} \longrightarrow \Aut_{\kappa (\ex_0)} (\kappa (\ex)) = \Gal (\kappa (\ex_0)^{\sep} / \kappa (\ex_0))
\end{equation}
is surjective. The following is an immediate consequence of Corollary \ref{t28n}

\begin{cor} \label{t29n}
Let $\C$ be an algebraically closed field. Then in the above situation we have natural bijections
\begin{equation}
\label{eq:6n}
\Hom (W_{\rat} (\kappa (\ex_0)) , \C) \silo \Hom (\kappa (\ex)^{\times} , \C^{\times}) / G_{\ex} \; .
\end{equation}
Moreover, if $\car \kappa (\ex_0) = \car \C$ is positive, we also have a natural bijection
\begin{equation}
\label{eq:7n} 
\Hom (W_{\rat} (\kappa (\ex_0)) , \C) \silo \Hom ((\kappa (\ex_0)^{\sep})^{\times} , \C^{\times}) / G_{\ex} \; .
\end{equation}
\end{cor}

Let $\C$ be an algebraically closed field. We define $\deX (\C)$ to be the set of pairs $(\ex , \oP^{\times})$ where $\ex \in \eX$ and $\oP^{\times} : \kappa (\ex)^{\times} \to \C^{\times}$ is a homomorphism. The group $G$ acts on $\deX (\C)$ from the right by
\[
(\ex , \oP^{\times})^{\sigma} = (\ex^{\sigma} , \oP^{\times} \verk \sigma) \quad \text{for} \; \sigma \in G \; .
\]
The $G$-action commutes with the $\Nh$-action by
\[
F_{\nu} (\ex , \oP^{\times}) = (\ex , \oP^{\times} \verk (\,)^{\nu}) \quad \text{for} \; \nu \in \Nh \; .
\]
Hence the quotient
\[
\deX_0 (\C) = \deX (\C) / G
\]
inherits an $\Nh$-action. The Teichm\"uller map induces isomorphisms
\[
\Z \kappa (\ex)^{\times} \equiv \uZ \kappa (\ex) \silo W_{\rat} (\kappa (\ex)) \quad \text{for} \; \ex \in \eX \; .
\]
Hence we have a canonical $G$- and $\Nh$-equivariant identification
\begin{equation} 
\label{eq:23a}
W_{\rat} (\eX) (\C) = \deX (\C) \; .
\end{equation}

\begin{cor}
\label{t210n}
For any algebraically closed field $\C$ we have a natural $\Nh$-equivariant bijection
\[
\deX_0 (\C) = W_{\rat} (\eX) (\C) / G \silo W_{\rat} (\eX_0) (\C) \; .
\]
It sends $(\ex , \oP^{\times}) G$ to $(\ex_0 , \oP_0)$ where $\ex_0 = \pi (\ex)$ and where $\oP_0 : W_{\rat} (\kappa (\ex_0)) \to \C$ is the homomorphism corresponding to $\oP^{\times} \mod G_{\ex}$ under the bijection \eqref{eq:6n}. For each $\nu \in \Nh$ the Frobenius $F_{\nu}$ is injective on $W_{\rat} (\eX_0) (\C)$.
\end{cor}

\begin{proof}
Using Corollary \ref{t29n} one checks that the map $\deX_0 (\C) \to W_{\rat} (\eX_0) (\C)$ is well defined and surjective. Injectivity follows since $G$ permutes the points $\ex \in \eX$ over any $\ex_0 \in \eX_0$ transitively. The map $F_{\nu}$ is injective on $\deX (\C)$ since $(\;)^{\nu} : \kappa (\ex)^{\times} \to \kappa (\ex)^{\times}$ is surjective for all $\ex \in \eX$. Hence the induced self map $F_{\nu}$ of $\deX_0 (\C) = \deX (\C) / G$ is injective as well. 
\end{proof}

It follows from Corollary \ref{t210n} that descent holds more generally. Let $K_1 \subset K$ be a normal extension of $K_0$ and set $N = \Aut_{K_1} (K)$. Then $N$ is a normal subgroup of $G$ whose quotient $G / N$ is canonically isomorphic to $G_1 = \Aut_{K_0} (K_1)$. Let $\eX_1$ be the normalization of $\eX_0$ in $K_1$. 

\begin{cor} \label{t36}
For any algebraically closed field $\C$ the map
\[
W_{\rat} (\eX_1) (\C) / G_1 \longrightarrow W_{\rat} (\eX_0) (\C)
\]
induced by the morphism $\eX_1 \to \eX_0$ is an $\Nh$-equivariant bijection.
\end{cor}

\begin{proof}
By Corollary \ref{t210n} we have 
\[
W_{\rat} (\eX) (\C) / N = W_{\rat} (\eX_1) (\C) \quad \text{and} \quad W_{\rat} (\eX) (\C) / G = W_{\rat} (\eX_0) (\C) \; .
\]
Hence
\[
W_{\rat} (\eX_1) (\C) / G_1 = (W_{\rat} (\eX) (\C) / N) / G_1 = W_{\rat} (\eX) (\C) / G = W_{\rat} (\eX_0) (\C) \; .
\]
\end{proof}

\begin{remark} \label{t34} 
If $\eX_0 = \spec R_0$ is affine, $\eX = \spec R$, we will identify the points $(\ex , \oP^{\times})$ of $\deX (\C)$ with the multiplicative maps $P : R \to \C$ satisfying the following properties:\\
1) $P (0) = 0 , P (1) = 1$.\\
2) $\ep := P^{-1} (0)$ is additively closed and hence a prime ideal.\\
3) We have a factorization $P : R \to R / \ep \xrightarrow{\oP} \C$.
\end{remark}

Thus $\ep$ is the prime ideal corresponding to $\ex$ and $\oP^{\times}$ is the unique extension of the multiplicative map $\oP : (R / \ep) \setminus \{ 0 \} \to \C^{\times}$ to a homomorphism \\
$\oP^{\times} : \kappa (\ep)^{\times} \to \C^{\times}$. Occasionally we will write $P = (\ep , \oP^{\times})$ and $\ep_P = \ep$. Then we have $P^{\sigma} = (\ep , \oP^{\times})^{\sigma}$ where $P^{\sigma} = P \verk \sigma$.

\begin{prop} \label{t1}
The natural morphism $\pi : \eX \to \eX_0$ induces a bijection $\eX (\C) / G = \eX_0 (\C)$.
\end{prop}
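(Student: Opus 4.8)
\textbf{Proof strategy.} The plan is to describe $\C$-valued points concretely and reduce the injectivity part to the classical theory of Galois actions on normalizations, applied over all finite Galois subextensions of $K/K_0$. A morphism $\Spec \C \to \eX_0$ is the same datum as a point $x_0 \in \eX_0$ together with a field embedding $j_0 \colon \kappa(x_0) \hookrightarrow \C$; since $\C$ has characteristic $0$, the point $x_0$ lies in the generic fibre $\eX_0 \otimes_{\Z} \Q$, so all residue fields occurring below have characteristic $0$. The map induced by $\pi$ on $\C$-points is $f \mapsto \pi \circ f$, which in these terms sends $(x,j)$ to $(\pi(x), j|_{\kappa(\pi(x))})$, using the inclusion $\kappa(\pi(x)) \subseteq \kappa(x)$ available because $\pi$ is integral; since $G$ acts on $\eX(\C)$ through its action on $\eX$ over $\eX_0$, this map is $G$-invariant and factors through $\eX(\C)/G$. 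It remains to prove that the resulting map $\eX(\C)/G \to \eX_0(\C)$ is bijective.

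Surjectivity is easy: $\pi$ is integral and $\eX$ dominates $\eX_0$ (being its normalization in a field extension), so $\pi$ satisfies lying over and is surjective on underlying spaces; given $(x_0,j_0)$, choose $x \in \eX$ with $\pi(x)=x_0$ and extend $j_0$ along the algebraic extension $\kappa(x_0) \subseteq \kappa(x)$ to an embedding $j \colon \kappa(x) \hookrightarrow \C$, possible as $\C$ is algebraically closed. For injectivity, write $K = \bigcup_i K_i$ as the directed union of the finite subextensions of $K/K_0$ that are Galois over $K_0$, let $\eX_i$ be the normalization of $\eX_0$ in $K_i$ (a finite $\eX_0$-scheme, since $\eX_0$ is of finite type over $\Z$, hence Nagata), so $\eX = \varprojlim_i \eX_i$ and $G = \varprojlim_i \Gal(K_i/K_0)$. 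Fix $x_0 \in \eX_0$. Classical commutative algebra (Bourbaki, \emph{Commutative Algebra}, Ch.~V, §2, or any text on algebraic number theory) gives, for each $i$: the group $\Gal(K_i/K_0)$ acts transitively on the finite fibre of $\eX_i \to \eX_0$ over $x_0$; for a point $y$ of that fibre the residue extension $\kappa(y)/\kappa(x_0)$ is normal; and the decomposition group $D_y \subseteq \Gal(K_i/K_0)$ surjects onto $\Aut(\kappa(y)/\kappa(x_0))$. As everything at finite level is finite, these properties pass to the inverse limit: $G$ acts transitively on $\pi^{-1}(x_0)$; for $x \in \pi^{-1}(x_0)$ the extension $\kappa(x)/\kappa(x_0)$, a directed union of normal extensions, is normal; and $D_x = \{\sigma \in G : \sigma x = x\}$ surjects onto $\Aut(\kappa(x)/\kappa(x_0))$.

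Now suppose $(x,j)$ and $(x',j')$ have the same image $(x_0,j_0)$. By transitivity there is $\tau \in G$ with $\tau x = x'$; replacing $(x,j)$ by $\tau \cdot (x,j)$ we may assume $x = x'$, so $j,j'$ are two embeddings $\kappa(x) \hookrightarrow \C$ agreeing on $\kappa(x_0)$. Since $\kappa(x)/\kappa(x_0)$ is normal, $j$ and $j'$ have the same image in $\C$, hence $\theta := j^{-1} \circ j'$ is an automorphism of $\kappa(x)$ over $\kappa(x_0)$. Choosing $\sigma \in D_x$ mapping to $\theta$ under the surjection above, $\sigma$ fixes $x$ and acts on $\kappa(x)$ as $\theta$ (up to the convention for the side of the action), so $\sigma \cdot (x,j) = (x, j\circ\theta) = (x',j')$; thus the two $\C$-points lie in a single $G$-orbit.

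The work is concentrated in the reduction to finite subextensions: one must check that normalization, the fibres over $x_0$, the decomposition groups, and the automorphism groups of the residue extensions all commute with the inverse limit over $i$, and that the classical transitivity and surjectivity statements survive passage to the limit — which they do precisely because at finite level everything in sight is finite, so the relevant inverse systems of nonempty finite sets have nonempty limit. Everything else is the extension theorem for embeddings into algebraically closed fields and the lying-over theorem.
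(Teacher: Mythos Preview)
Your argument is correct. It is, however, a genuinely different route from the paper's.

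The paper's proof is very short: it reduces to the affine case $\eX_0 = \Spec R_0$, observes that normality of $\eX_0$ forces $R_0 = R^G$, and then invokes \cite[Lemma~4.9]{KS}, which gives $(\Spec R)(\C)/G = (\Spec R^G)(\C)$ for any algebraically closed field $\C$. The globalization is immediate because $\pi$ is affine.

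You instead unpack what a $\C$-point is and argue directly with the decomposition-group formalism from Bourbaki, Ch.~V, \S2: transitivity of $\Gal(K_i/K_0)$ on the (finite) fibre of $\eX_i \to \eX_0$, normality of the residue extension, and surjectivity of the decomposition group onto $\Aut(\kappa(y)/\kappa(x_0))$, all valid here because $\eX_0$ is normal. You then pass to the limit over finite Galois subextensions using that inverse limits of nonempty finite sets are nonempty. This is exactly the content that the cited lemma encapsulates, so your proof is essentially a self-contained reproof of the affine case of \cite[Lemma~4.9]{KS} specialized to this situation. The advantage of your approach is that it is self-contained and makes explicit where normality of $\eX_0$ and the Nagata property enter; the advantage of the paper's approach is brevity and that the lemma applies uniformly to any $G$-ring with $R_0 = R^G$, not just normalizations. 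Two small remarks: your appeal to Bourbaki is implicitly local on $\eX_0$ (the statements are for integral closures of integrally closed \emph{rings}), which is fine since the fibre over $x_0$ only depends on an affine neighbourhood; and the ``up to convention'' aside at the end is harmless but could be made precise by fixing once and for all whether $G$ acts on $\eX(\C)$ on the left or the right (the paper uses a right action).
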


\begin{proof}
If $\eX_0 = \spec R_0$ is affine we have $\eX = \spec R$ where $R$ is the normalization of $R_0$ in $K$. Hence $R_0 = R^G$ and the claim is a special case of \cite[Lemma 4.9]{KS}. The general case follows because by construction $\pi$ is integral and in particular affine: We have
\begin{equation} \label{eq:1}
\eX (\C) = \bigcup_i \pi^{-1} (\eX^i_0 (\C))
\end{equation}
where $\{ \eX^i_0 \}$ is an open affine covering of $\eX_0$. Moreover $\pi^{-1} (\eX^i_0 (\C)) = \eX^i (\C)$ where $\eX^i \to \eX^i_0$ is the normalization of $\eX^i_0$ in $K$. Hence $\pi^{-1} (\eX^i_0 (\C)) / G = \eX^i_0 (\C)$ and therefore the map $\eX (\C) / G \to \eX_0 (\C)$ is bijective.
\end{proof}

A point of $\eX_0 (\C)$ consists of a scheme theoretic point $\ex_0$ of $\eX_0$ together with an embedding of its residue field $\kappa (\ex_0)$ into $\C$. Such field embeddings exist only if $\car \kappa (\ex_0) = \car \C$ and hence $\eX_0 (\C)$ does not ``see'' the other points $\ex_0$ of $\eX_0$. 

We have
\[
\eX (\C) \subset \deX (\C) \quad \text{and} \quad \eX_0 (\C) \subset \deX_0 (\C) = W_{\rat} (\eX_0) (\C) \; .
\]
More generally, for any field $k$ with a multiplicative embedding $i : k^{\times} \hookrightarrow \C^{\times}$ we get a $G$-equivariant inclusion $i_* : \eX (k) \hookrightarrow \deX (\C)$. If $a \in \eX (k)$ corresponds to a point $\ex \in \eX$ and an inclusion $\kappa (\ex) \overset{\alpha}{\hookrightarrow} k$, we set $i_* (a) = (\ex , \oP^{\times})$ where $\oP^{\times}$ is the composition 
\[
\oP^{\times} : \kappa (\ex)^{\times} \overset{\alpha}{\hookrightarrow} k^{\times} \overset{i}{\hookrightarrow} \C^{\times} \; .
\]
If $k$ is algebraically closed, we have $\eX (k) / G =\eX_0 (k)$ by Proposition \ref{t1}, and we obtain an inclusion $i_* : \eX_0 (k) \hookrightarrow \deX_0 (\C)$.

We will now discuss $\cW_{\rat} (\eX_0) (\C)$ and variants. Let $\Nh_0$ be the submonoid of $\Nh$ generated by a set of prime numbers $\car \Nh_0$. Let $\Q^{> 0}_0$ be the subgroup of $\Q^{> 0}$ generated by $\Nh_0$. We always assume that $\car \Nh_0 \supset \car \eX_0$, the set of positive residue characteristics of the points of $\eX_0$. The main cases of interest are $\car \Nh_0 = \car \eX_0$ and $\Nh_0 = \Nh$. We order $\Nh_0$ by divisibility and view it as a directed poset. For $\nu \in \Nh_0$, the injective self maps $F_{\nu}$ of $\deX (\C)$ and $\deX_0 (\C)$ induce bijections $F_{\nu}$ of
\[
\ceX (\C) = \colim_{\Nh_0} \deX (\C) \quad \text{and} \quad \ceX_0 (\C) = \colim_{\Nh_0} \deX_0 (\C) \; .
\]
In this way the $\Nh_0$-actions on $\deX (\C)$ and $\deX_0 (\C)$ extend to $\Q^{> 0}_0$-actions on $\ceX (\C)$ and $\ceX_0 (\C)$. The $G$-action on $\deX (\C)$ extends canonically to a $G$-action on $\ceX (\C)$ commuting with the $\Q^{> 0}_0$-action and we have $\ceX_0 (\C) = \ceX (\C) / G$. The $\Nh_0$-equivariant projection 
\[
\dpi : \deX (\C) \to \deX_0 (\C) = \deX (\C) / G
\]
extends to a $\Q^{> 0}_0$-equivariant projection
\[
\cpi : \ceX (\C) \longrightarrow \ceX_0 (\C) = \ceX (\C) / G \; .
\]
Note that $\deX (\C) \subset \ceX (\C)$ and $\deX_0 (\C) \subset \ceX_0 (\C)$ canonically since the $F_{\nu}$'s are injective. We can write the points of $\ceX (\C)$ in the form $F^{-1}_{\nu} P$ for some $\nu \in \Nh_0$ and $P$ in $\deX (\C)$. Then $F^{-1}_{\nu} P = F^{-1}_{\nu'} P'$ is equivalent to $F_{\nu'} P = F_{\nu} P'$, an equality in $\deX (\C)$. A similar remark applies to the points $F^{-1}_{\nu} P_0$ of $\ceX_0 (\C)$ where $P_0 \in \deX_0 (\C)$. The canonical map 
\[
\pr_{\eX} : \deX (\C) \to \eX , (\ex , \oP^{\times}) \mapsto \ex
\]
extends naturally to a map 
\[
\pr_{\eX} : \ceX (\C) \to \eX \quad \text{by setting} \; \pr_{\eX} (F^{-1}_{\nu} P) = \pr_{\eX} (P)\; .
\]
We may view a point $\cP$ of $\ceX (\C)$ as a pair $(\ex , \cP^{\times})$ where $\ex = \pr_{\eX} (\cP)$ and where
\[
\cP^{\times} : \varprojlim_{\Nh_0} \kappa (\ex)^{\times} \longrightarrow \C^{\times}
\]
is a homomorphism. Explicitely, if $\cP = F^{-1}_{\nu} P$ as above we have
\[
\cP^{\times} ((x_{\mu})_{\mu \in \Nh_0}) = \oP^{\times} (x_{\nu}) \; .
\]
For $\nu \in \Nh_0$, let
\begin{equation}
\label{eq:23rn}
\pr_{\nu} : \varprojlim_{\Nh_0} \kappa (\ex)^{\times} \longrightarrow \kappa (\ex)^{\times}
\end{equation}
denote the projection onto the $\nu$-th component. With this notation we get the following alternative description of $\ceX (\C)$
\begin{multline}
\label{eq:24rn}
\ceX (\C) \; \text{ is the set of pairs $(\ex , \cP^{\times})$ where $\ex \in \eX$ and} \; 
\cP^{\times} : \varprojlim_{\Nh_0} \kappa (\ex)^{\times} \to \C^{\times} \; \text{is a} \\
\shoveleft{\text{homomorphism which factors over $\pr_{\nu}$ for some $\nu \in \Nh_0$.}} \hspace*{\fill}
\end{multline}
It is clear how $G$ and $\Q^{> 0}_0$ act on $\ceX (\C)$ in this description. We will later deal with topological fields $\C$ and also with topologies on $\kappa (\ex)^{\times}$. In section \ref{sec:13} we will see that in a certain situation with $\C = \C_p$ it is important to consider all characters on $\varprojlim_{\Nh_0} \kappa (\ex)^{\times}$ which are continuous for the projective limit topology on the projective limit of the topological groups $\kappa (\ex)^{\times}$. 

\begin{defn}
\label{t36n}
Given topologies on the groups $\kappa (\ex)^{\times}$ for $\ex \in \eX$ and an algebraically closed topological field $\C$ we set
\[
\hceX (\C) = \{ (\ex, \hcP) \mid \ex \in \eX \; \text{and} \; \hcP : \varprojlim_{\Nh_0} \kappa (\ex)^{\times} \to \C^{\times} \; \text{is a continuous character}  \; .\}
\]
If the isomorphisms $\sigma : \kappa (\ex^{\sigma}) \to \kappa (\ex)$ are continuous for all $\sigma \in G$ then $G$ acts naturally on $\hceX (\C)$ and we set
\[
\hceX_0 (\C) := \hceX (\C) / G \; .
\]
\end{defn}

The following proposition shows that $\hceX (\C)$ is actually contained in $\ceX (\C)$ if the group $\C^{\times}$ has no ``small subgroups'', i.e. if $1 \in \C^{\times}$ has an open neighborhood $U$ such that $\{ 1 \}$ is the only subgroup of $\C^{\times}$ contained in $U$.

\begin{prop}
\label{t37n}
In the above situation equip $\kappa (\ex)^{\times}$ with any topology for which it is a Hausdorff topological group and endow $\varprojlim_{\Nh_0} \kappa (\ex)^{\times}$ with the projective limit topology. Let $\C$ be an algebraically closed field such that $\C^{\times}$ has no small subgroups, e.g. the field of complex numbers. Then every continuous character
\[
\chi : \varprojlim_{\Nh_0} \kappa (\ex)^{\times} \longrightarrow \C^{\times}
\]
has the form $\chi = \psi \verk \pr_{\nu}$ for some $\nu \in \Nh_0$ and for some continuous character $\psi : \kappa (\ex)^{\times} \to \C^{\times}$ and $\pr_{\nu}$ as in \eqref{eq:23rn}.
\end{prop}

\begin{proof}
Let $S$ be the set of prime numbers $p \neq \car \kappa (\ex)$ dividing an element of $\Nh_0$. Since $\kappa (\ex)^{\times}$ is divisible, we have an exact sequence
\[
1 \longrightarrow T \longrightarrow \varprojlim_{\Nh_0} \kappa (\ex)^{\times} \xrightarrow{\pr_1} \kappa (\ex)^{\times} \longrightarrow 1 \; .
\]
Since $\kappa (\ex)^{\times}$ is Hausdorff, the points of $\kappa (\ex)^{\times}$ are closed and hence $\mu_{\nu} (\kappa (\ex)) \subset \kappa (\ex)^{\times}$ carries the discrete topology as a subspace. Therefore as topological groups, we have
\[
T = \prod_{p \in S} T_p \mu (\kappa (\ex)) \cong \prod_{p \in S} \Z_p \; .
\]
In particular, $T$ has a neighborhood basis of $1$ consisting of open subgroups. Let $\alpha : T \to \C^{\times}$ be a continuous character and let $U \subset \C^{\times}$ be an open neighborhood of $1$ in $\C^{\times}$ that does not contain a non-trivial subgroup. By continuity, $\alpha$ maps an open subgroup of $T$ into $U$ and hence to $1$. Hence $\alpha$ factors over a finite quotient of $T$. Thus there is some $\nu \in \Nh_0$ such that $\chi \verk (\;)^{\nu} |_T$ is trivial. Hence $F_{\nu} (\chi) = \chi \verk (\;)^{\nu}$ factors over $\pr_1$ and therefore $F_{\nu} (\chi) = \psi \verk \pr_1$ for some (automatically continuous) character $\psi : \kappa (\ex)^{\times} \to \C^{\times}$. This implies that
\[
\chi = F^{-1}_{\nu} (F_{\nu} (\chi)) = F^{-1}_{\nu} (\psi \verk \pr_1) = \psi \verk \pr_{\nu} \; .
\]
\end{proof}

\begin{cor}
\label{t38n}
In the situation of \eqref{eq:24rn} assume that $\C^{\times}$ has no small subgroups. If the $\kappa (\ex)^{\times}$ carry the discrete topology then
\[
\ceX (\C) = \{ (\ex , \cP^{\times}) \mid \ex \in \eX \; \text{and} \; \cP^{\times} : \varprojlim_{\Nh_0} \kappa (\ex)^{\times} \to \C^{\times} \; \text{is a continuous character} \} \; .
\]
\end{cor}

\section{Classes of characters and subsystems} \label{sec:4ggn}
Let the notations be as in the last section with $\C$ an algebraically closed field.  Consider the situation of Corollary \ref{t210n} where
\[
W_{\rat} (\eX_0) (\C) = \deX (\C) / G \; .
\]
The study of periodic orbits and connectedness in the next sections show that $W_{\rat} (\eX_0) (\C)$ and $\cW_{\rat} (\eX_0) (\C) = \colim_{\Nh_0} W_{\rat} (\eX_0) (\C)$ are too large for our purposes. We have to pass to suitable $\Nh_0$- resp. $\Q^{> 0}_0$-subsystems. Recall that $\deX (\C)$ consists of pairs $(\ex , \oP^{\times})$ where $\ex \in \eX$ and $\oP^{\times} : \kappa (\ex)^{\times} \to \C^{\times}$ is a character. We will now define some classes $\Eh$ of characters obtained by imposing suitable conditions. The condition that the extension by zero $\oP : \kappa (x) \to \C$ is additive leads to the usual points $\eX (\C) \subset \deX (\C)$. In this case $\oP$ and hence $\oP^{\times}$ are injective. Asking all characters $\oP^{\times}$ for $(\ex , \oP^{\times})$ in $\deX (\C)$ to be injective loses the Frobenius actions $F_{\nu}$ for all $\nu \in \Nh_0$ which are not a power of $p = \car \kappa (\ex)$ since $\oP^{\times} \verk (\;)^{\nu}$ is not injective. The best condition on the characters $\oP^{\times}$ is not clear to me. However the following two minimal conditions play an important role as we will see later. Let $\chi : \kappa^{\times} \to \C^{\times}$ be a character of an algebraically closed field $\kappa$ into another algebraically closed field $\C$. \\[0.2cm]
({\it Tors}) the group $\ker (\chi)_{\tors} = \ker (\chi \, |_{\mu (\kappa)})$ is finite and $|(\ker \chi)_{\tors}| \in \Nh_0$.\\[0.2cm]
({\it Image}) Only if $\car \kappa > 0$. If $\chi (\kappa^{\times})$ is torsion, then $\kappa^{\times}$ is torsion as well, i.e. $\kappa^{\times} \otimes \Q \neq 0$ implies $\chi (\kappa^{\times}) \otimes \Q \neq 0$. 

Conditions ({\it Tors}) and ({\it Image)} are weakened versions of injectivity of $\chi$. 

\begin{defn}
\label{t41n}
A class $\Eh$ of characters $\chi : \kappa^{\times} \to \C^{\times}$ on algebraically closed fields $\kappa$ is $(\Nh_0-)$admissible if for any $\sigma \in \Aut \kappa$ resp. $\nu \in \Nh_0$ the character $\chi$ is in $\Eh$ if and only if $\chi \verk \sigma$ resp. $\chi^{\nu} = \chi \verk (\;)^{\nu}$ is in $\Eh$. Moreover the characters in $\Eh$ should satisfy ({\it Tors}). 
\end{defn}

The following facts are immediate:

\begin{prop}
\label{t42n}
Given an admissible class of characters $\Eh$ on the $\kappa (\ex)^{\times}$ for $\ex \in \eX$, the set
\[
\deX (\C)_{\Eh} = \{ (\ex , \oP^{\times}) \in \deX (\C) \mid \oP^{\times} \; \text{is in} \; \Eh \} \subset \deX (\C)
\]
is $G$-invariant. It is foreward- and backward invariant under the $\Nh_0$-action, i.e. for $P \in \deX (\C)$ we have $P \in \deX (\C)_{\Eh}$ if and only if $F_{\nu} (P) \in \deX (\C)_{\Eh}$. The set
\[
\ceX (\C)_{\Eh} = \colim_{\Nh_0} \deX (\C)_{\Eh} \subset \ceX (\C)
\]
is $G$- and $\Q^{> 0}_0$-invariant. The quotients
\[
\deX_0 (\C)_{\Eh} = \deX (\C)_{\Eh} / G \; \text{resp.} \; \ceX_0 (\C)_{\Eh} = \ceX (\C)_{\Eh} / G = \colim_{\Nh_0} \deX_0 (\C)_{\Eh} 
\]
are foreward- and backward $\Nh_0$- resp. $\Q^{> 0}_0$-invariant. The monoid $\Nh_0$ acts by injections on $\deX (\C)_{\Eh}$ and $\deX_0 (\C)_{\Eh}$.
\end{prop}

We will need the map
\begin{equation}
\label{eq:4_29}
\pr_{\eX} : \deX (\C)_{\Eh} \longrightarrow \eX \; , \; (\ex , \oP^{\times}) \longmapsto \ex 
\end{equation}
and the induced map
\begin{equation}
\label{eq:4_30}
\pr_{\eX_0} : \deX_0 (\C)_{\Eh} \longrightarrow \eX_0 \; .
\end{equation}
For general admissible classes $\Eh$, these maps may not be surjective, but see Corollary \ref{t4ka}. 
Both $\pr_{\eX}$ and $\pr_{\eX_0}$ are $\Nh_0$-equivariant if we let $\Nh_0$ act trivially on $\eX_0$ and $\eX$. Note that the maps $\pr_{\eX}$ and $\pr_{\eX_0}$ above extend $\Q^{>0}_0$-equivariantly to maps
\[
\pr_{\eX} : \ceX (\C)_{\Eh} \longrightarrow \eX \quad \text{and} \quad \pr_{\eX_0} : \ceX_0 (\C)_{\Eh} \longrightarrow \eX_0 \; .
\]
Here we let $\Q^{> 0}_0$ act trivially on $\eX$ and $\eX_0$. 

\begin{lemma} \label{t3}
Let $k$ be a field with $\card k \le \card \C$ and $\C^{\times} \neq \mu (\C)$. Let $\chi_{\mu} : \mu (k) \to \C^{\times}$ be a homomorphism. Then there is a homomorphism $\chi : k^{\times} \to \C^{\times}$ with $\chi |_{\mu (k)} = \chi_{\mu}$ and $\ker \chi = \ker \chi_{\mu}$. In particular there is an injective homomorphism $k^{\times} \hookrightarrow \C^{\times}$ if $\car \C = 0$ or $\car k = \car \C$ is positive.
\end{lemma}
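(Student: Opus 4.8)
The plan is to reduce the construction of $\chi$ to a divisible-hull argument. First I would recall that $\C^{\times}$ is a divisible abelian group, hence injective as a $\Z$-module. So it suffices to produce, inside some ambient group, an extension of $\chi_\mu$ whose kernel is controlled; injectivity of $\C^{\times}$ then lets us extend further without enlarging the kernel, provided we first pin down the kernel on a large enough subgroup.

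The key structural fact about $k^{\times}$ is that $\mu(k)$ is the torsion subgroup of $k^{\times}$, so the quotient $A := k^{\times}/\mu(k)$ is a torsion-free abelian group; since $k$ is countable, $A$ is countable and torsion-free, hence embeds into a $\Q$-vector space, namely $A \otimes_\Z \Q$, which (being a countable-dimensional $\Q$-vector space) admits an injection into $\C^{\times}$ — for instance via $\Q \hookrightarrow \R \hookrightarrow \C^{\times}$ by $q \mapsto e^{2\pi i q}$ is torsion, so better: use that $(\R,+)$ is a $\Q$-vector space of dimension $2^{\aleph_0}$ and $\exp\colon (\R,+)\to \C^{\times}$ followed by a suitable choice, but cleanest is to pick a $\Q$-basis of $A\otimes\Q$ and map it to $\Q$-linearly independent elements of the $\Q$-vector space $\C^{\times}$ — note $\C^{\times} \cong (\Q/\Z) \oplus \bigoplus_{2^{\aleph_0}} \Q$ as abstract groups, whose free part has rank $2^{\aleph_0}$, so a countable torsion-free group injects into it. This gives an injective homomorphism $\psi\colon k^{\times}/\mu(k) \hookrightarrow \C^{\times}$.

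Now I would assemble $\chi$. Consider the short exact sequence $1 \to \mu(k) \to k^{\times} \to A \to 1$. We want a homomorphism $\chi\colon k^{\times}\to \C^{\times}$ restricting to $\chi_\mu$ on $\mu(k)$ and to something injective modulo $\mu(k)$. First extend $\chi_\mu\colon \mu(k)\to\C^{\times}$ to a homomorphism $\chi_1\colon k^{\times}\to\C^{\times}$ using injectivity of $\C^{\times}$ — this exists but a priori has a large kernel. Then set $\chi := \chi_1 \cdot (\psi\circ q)$, where $q\colon k^{\times}\to A$ is the quotient map and $\psi$ is the injection above. On $\mu(k)$ the second factor is trivial, so $\chi|_{\mu(k)} = \chi_\mu$. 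For the kernel: if $\chi(x)=1$ then $\psi(q(x)) = \chi_1(x)^{-1} \in \imm\chi_1$; to force $q(x)=0$ I should choose $\psi$ so that its image meets $\imm\chi_1$ trivially except at $1$. This is arranged by building $\psi$ and $\chi_1$ with ranges in complementary summands of the free part of $\C^{\times}$: choose a decomposition of the torsion-free rank-$2^{\aleph_0}$ part of $\C^{\times}$ into two pieces each of rank $2^{\aleph_0}$, route $\chi_1/\chi_\mu$ (which factors through the torsion-free $A$) into the first, and $\psi$ into the second. Then $\ker\chi \subseteq \ker q \cap \ker\chi_1 = \mu(k)\cap \ker\chi_1 = \ker\chi_\mu$, and the reverse inclusion is clear. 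The final sentence follows by taking $\chi_\mu$ injective, e.g. if $\mu(k)=\mu_\infty$ take $\chi_\mu\colon \Q/\Z \hookrightarrow \C^{\times}$ the standard embedding, or more generally use that any countable torsion abelian group injects into $\Q/\Z \subset \C^{\times}$.

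The main obstacle is the bookkeeping in the last step: one must be careful that the constructed $\chi_1$ really can be taken with image in a prescribed $2^{\aleph_0}$-rank free summand (this uses that $k^{\times}/\mu(k)$ has countable rank, so the divisible hull of $\chi_\mu(\mu(k))$ together with the pushout has bounded-enough size), and that the two summands can be chosen genuinely complementary so that no nontrivial relation between $\imm\chi_1$ and $\imm\psi$ appears. Everything else — divisibility/injectivity of $\C^{\times}$, torsion-freeness of $k^{\times}/\mu(k)$, the abstract structure of $\C^{\times}$ — is standard.
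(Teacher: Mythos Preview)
Your argument is correct but organized differently from the paper's. The paper first reduces to $k$ algebraically closed, so that $\mu(k)$ is divisible and $k^\times/\mu(k)$ is already a $\Q$-vector space; it then splits \emph{both} sequences $1\to\mu\to(\cdot)^\times\to(\cdot)^\times/\mu\to 1$ for $k$ and for $\C$, and simply sets $\chi=\chi_\mu\times\iota$ where $\iota:k^\times/\mu(k)\hookrightarrow\C^\times/\mu(\C)$ is a $\Q$-linear injection coming from the dimension count $\dim_\Q k^\times/\mu(k)\le\aleph_0\le\dim_\Q\C^\times/\mu(\C)$. You instead keep $k$ arbitrary, split only on the $\C^\times$ side, extend $\chi_\mu$ to some $\chi_1$ via injectivity of $\C^\times$, and then correct by $\psi\circ q$ with $\imm\psi$ disjoint from $\imm\chi_1$. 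Your route avoids the reduction step but requires the disjointness bookkeeping you flag; the paper's is a one-line product once the two splittings are in hand. Your bookkeeping becomes painless if you observe that $\chi_\mu(\mu(k))\subset\mu(\C)$ and $\mu(\C)$ is itself divisible, hence injective: take $\chi_1$ with values in $\mu(\C)$, take $\psi:A\hookrightarrow\R^{>0}\subset\C^\times$, and then $\mu(\C)\cap\R^{>0}=\{1\}$ gives the kernel computation immediately, with no need for the informal ``$\chi_1/\chi_\mu$'' or for decomposing the free part of $\C^\times$ into two halves.

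One slip in your last sentence: ``any countable torsion abelian group injects into $\Q/\Z$'' is false (e.g.\ $(\Z/2\Z)^2$). What you actually need is that $\mu(k)$ is locally cyclic---every finite subgroup of $k^\times$ is cyclic---and hence embeds in $\Q/\Z\subset\C^\times$.
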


\begin{proof}
We may assume that $k$ is algebraically closed. The sequences
\[
1 \to \mu (k) \to k^{\times} \to k^{\times} / \mu (k) \to 1 \quad \text{and} \quad 1 \to \mu (\C) \to \C^{\times} \to \C^{\times} / \mu (\C) \to 1
\]
are both split since $\mu (k)$ and $\mu (\C)$ are divisible. Choose splittings and consider the induced isomorphisms
\[
k^{\times} \cong \mu (k) \times (k^{\times} / \mu (k)) \quad \text{and} \quad \C^{\times} \cong \mu (\C) \times (\C^{\times} / \mu (\C)) \; .
\]
Here $k^{\times} / \mu (k)$ and $\C^{\times} / \mu (\C)$ are uniquely divisible abelian groups and therefore $\Q$-vector spaces. Since $\C^{\times} \neq \mu (\C)$ we have
\[
\dim_{\Q} k^{\times} / \mu (k) \le \card k \le \card \C = \dim_{\Q} \C^{\times} / \mu (\C) \; .
\]
Hence there is a $\Q$-linear injection from $k^{\times} / \mu (k)$ into $\C^{\times} / \mu (\C)$. Together with $\chi_{\mu}$ and the above decompositions we obtain a homomorphism $\chi : k^{\times} \to \C^{\times}$ prolonging $\chi_{\mu}$ with $\ker \chi = \ker \chi_{\mu}$.
\end{proof}

In the next corollary we assume the following conditions on $K_0$ and the algebraically closed field $\C$:\\
1) $\C^{\times} \neq \mu (\C)$ i.e. $\C$ is not the algebraic closure of a finite field\\
2) $\card K_0 \le \card \C$\\
3) $\car \C = 0$ or $\car K_0 = \car \C$ is positive.\\
We are thinking of the complex number field or of $\C_p = \hoQ_p$ or of $\C_{\infty} = \widehat{\overline{\F_p ((t))}}$. 

\begin{cor} \label{t4ka}
Let $\Eh$ be an admissible class as in Definition \ref{t41n} containing all injective characters. Then the maps $\pr_{\eX} : \deX (\C)_{\Eh} \to \eX$ and $\pr_{\eX_0} : \deX_0 (\C)_{\Eh} \to \eX_0$ are surjective.
\end{cor}

\begin{proof}
For any point $\ex \in \eX$ we have $\card \kappa (\ex) \le \card K_0 \le \card \C$. Using Lemma \ref{t3} for $k = \kappa (\ex)$ it follows that the map
\[
\pr_{\eX} : \deX (\C)_{\Eh} \longrightarrow \eX \; , \; (\ex , \oP^{\times}) \longmapsto \ex
\]
is surjective. Hence the induced map of orbit spaces $\deX_0 (\C)_{\Eh} \to \eX / G$ is surjective as well. The going-up theorem implies that the normalization map $\eX \to \eX_0$ and hence the induced map $\eX / G \to \eX_0$ are surjective as well. By composition we obtain the surjection $\pr_{\eX_0} : \deX_0 (\C)_{\Eh} \longrightarrow \eX_0$.
\end{proof}

Here are examples of admissible classes $\Eh$ of characters.

\begin{example}
\label{t43n}
1) $\Eh_{\tors}$: ({\it Tors}) holds\\
2) $\Eh_{\max}$: ({\it Tors}) and ({\it Image}) hold\\
3) $\Eh_f$: ({\it Tors}) and $\ker \chi$ is finite. Equivalently: $|\ker \chi| \in \Nh_0$\\
4) $\Eh_{fg}$: ({\it Tors}) and $\ker \chi$ is finitely generated\\
5) $\Eh_{fd}$: ({\it Tors}) and $\ker \chi \otimes \Q$ is finite dimensional\\
6) $\Eh_{fd_0}$: ({\it Tors}) and $(\ker \chi \, |_{\kappa (\ex_0)^{\times}}) \otimes \Q$ is finite dimensional where $\ex_0 = \pi (\ex)$ under the projection $\pi : \eX \to \eX_0$. 
\end{example}

We have inclusions in the appropriate sense
\[
\Eh_f \subset \Eh_{fg} \subset \Eh_{fd} \subset \Eh_{fd_0} \subset \Eh_{\max} \subset \Eh_{\tors} \; .
\]
For $\Eh_{fd_0} \subset \Eh_{\max}$, note that if $\chi (\kappa (\ex)^{\times})$ and hence $\chi (\kappa (\ex_0)^{\times})$ are torsion, then $(\ker \chi \, |_{\kappa (\ex_0)^{\times}}) \otimes \Q = \kappa (\ex_0)^{\times} \otimes \Q$. For $\chi$ in $\Eh_{fd_0}$ it follows that $\kappa (\ex_0)^{\times} \otimes \Q$ is finite dimensional. The structure theory of fields implies that $\kappa (\ex_0) \subset \oF_p$ for some $p$, hence $\kappa (\ex) \cong \oF_p$ and therefore $\kappa (\ex)^{\times}$ is torsion.

\begin{rem}
The topology of the $\R$-dynamical systems that we will build from $\ceX (\C)_{\Eh}$ depends very much on the choice of $\Eh$. For $\Eh = \Eh_f$ the path-connected components are not simply-connected whereas for $\Eh = \Eh_{fd}$ they are contractible. Incidentally, in the $p$-adic case where we will deal with multiplicative maps $\oP$ into a $p$-adic valuation ring and $\Nh_0 = p^{\Z}$, the right condition $\Eh$ is the following: $\oP$ is additive $\mod p$. This can be rephrased in terms of absolute values and translated to the case where $\C$ is the complex number field. However the resulting class $\Eh$ is not $\Nh$-invariant. 
\end{rem}

We call a class $\Eh$ of characters $\chi : \kappa^{\times} \to \C^{\times}$ on algebraically closed fields $\kappa$ {\it stable} if $\chi \in \Eh$ implies that $\chi \, |_{\tkappa^{\times}} \in \Eh$ for all algebraically closed subfields $\tkappa \subset \kappa$. All classes in the example are {\it stable} except for ({\it Image}) and hence $\Eh_{\max}$. Any class $\Eh$ can be extended to a stable class. For example ({\it Image}) becomes:

({\it Stable Image}) Only if $\car \kappa > 0$. If $\chi (\tkappa^{\times})$ is torsion for some algebraically closed subfield $\tkappa \subset \kappa$ then $\tkappa^{\times} \otimes \Q = 0$. 

It is also clear that the stable Galois invariant classes $\Eh$ are the {\it functorial} ones i.e. where $(\chi : \kappa^{\times} \to \C^{\times}) \in \Eh$ implies $(\chi \verk \tau : \tkappa^{\times} \to \C^{\times}) \in \Eh$ for any homomorphism $\tau : \tkappa \to \kappa$ of algebraically closed fields. 

For a morphism of integral normal schemes $f_0 : \eX_0 \to \eX'_0$ let
\[
\df_0 := W_{\rat} (f_0) (\C) : \deX_0 (\C) = W_{\rat} (\eX_0) (\C) \longrightarrow \deX'_0 (\C) = W_{\rat} (\eX'_0) (\C)
\]
be the induced $\Nh_0$-equivariant map.

\begin{prop}
\label{t44n}
Let $\C$ be algebraically closed and let $\Eh$ be a functorial admissible class of characters. If $f_0 : \eX_0 \to \eX'_0$ is a dominant morphism of integral normal schemes, then $\df_0$ maps $\deX_0 (\C)_{\Eh}$ into $\deX'_0 (\C)_{\Eh}$. 
\end{prop}

Thus the map $\eX_0 \mapsto \deX_0 (\C)_{\Eh}$ is functorial on the category of integral normal schemes and dominant morphisms. This restricted functoriality is in line with previous considerations, \cite[4.7]{D1}.  

\begin{proof}
For all points $\ex_0$ of $\eX_0$ we have an inclusion of residue fields
\[
f^{\sharp}_{\ex_0} : \kappa (f (\ex_0)) \hookrightarrow \kappa (\ex_0) \; .
\]
For the generic point $\eta_0$ of $\eX_0$, since the image $f (\eta_0) = \eta'_0$ is the generic point of $\eX'_0$ we get an inclusion of function fields
\[
f^{\sharp}_{\eta_0} : K'_0 \hookrightarrow K_0 \; .
\]
Let $K' = \oK'_0$ and $K = \oK_0$ be algebraic closures and fix an embedding $\tau : K' \hookrightarrow K$ prolonging $f^{\sharp}_{\eta_0}$. We obtain a commutative diagram
\[
\xymatrix{
\eX \ar[r]^{f_{\tau}} \ar[d] & \eX' \ar[d] \\
\eX_0 \ar[r]^{f_0} & \eX'_0
}
\]
where $f_{\tau}$ is the induced morphism between the normalizations $\eX$ and $\eX'$ of $\eX_0$ in $K$ resp. of $\eX'_0$ in $K'$. For any point $\ex \in \eX$, the morphism $f_{\tau}$ gives an inclusion
\[
f^{\sharp}_{\tau, \ex} : \kappa (f_{\tau} (\ex)) \hookrightarrow \kappa (\ex) \; .
\]
Given a multiplicative map $\oP^{\times} : \kappa (\ex)^{\times} \to \C^{\times}$ satisfying $\Eh$, pullback by $f^{\sharp}_{\tau , \ex}$ gives a multiplicative map
\[
f_{\tau} (\oP^{\times}) := \oP^{\times} \verk f^{\sharp}_{\tau , \ex} : \kappa (f_{\tau} (\ex)) \longrightarrow \C^{\times} 
\]
which is of class $\Eh$ as well. Thus we get a $G$- and $\Nh_0$-equivariant map
\[
\df_{\tau} : \deX (\C)_{\Eh} \longrightarrow \deX\strich (\C)_{\Eh} \; , \; (\ex , \oP^{\times}) \longmapsto (f_{\tau} (\ex) , f_{\tau} (\oP^{\times})) \; .
\]
On orbit spaces it induces (the restriction of) our map $\df_0$ above:
\[
\df_0 : \deX_0 (\C)_{\Eh} \longrightarrow \deX\strich_0 (\C)_{\Eh} \; .
\]
\end{proof}

The following lemma will be needed later.

\begin{lemma}
\label{t46n}
Let $R$ be an integral domain with quotient field $K$. Let $N \ge 1$ be an integer for which every element of $R$ is an $N$-th power and such that $\mu_N (R) = \mu_N (K)$. Let $\eo$ be another ring and consider a multiplicative map
\[
P : R \longrightarrow \eo \quad \text{with} \; P (1) = 1 \; \text{and} \; P (0) = 0 \; .
\]
1) There is a multiplicative map $Q : R \to \eo$ with $Q (1) = 1 , Q (0) = 0$ such that $P = Q^N$ if and only if $P \, |_{\mu_N (R)} = 1$. The map $Q$ is uniquely determined if it exists.\\
2) If $R = K$ is an algebraically closed field and $P^{-1} (1)$ is finite then $Q$ as above with $Q^N = P$ exists if and only if $N'$ divides $|P^{-1} (1)|$. Here $N'$ is the prime to $\car K$-part of $N$.
\end{lemma}

\begin{proof}
1) Uniqueness: For $r \in R$ we have $Q (r^N) = Q (r)^N = P (r)$. Since every element of $R$ is an $N$-th power, the map $Q$ is uniquely determined by $P$. The condition $P \, |_{\mu_N (R)} = 1$ is necessary because for $r \in \mu_N (R)$ we have
\[
P (r) = Q (r)^N = Q (r^N) = Q (1) = 1 \; .
\]
Assume that $P \, |_{\mu_N (R)} = 1$. Set $Q (0) = 0$. For $r \in R \setminus \{ 0 \}$ choose some $s \in R \setminus \{ 0 \}$ with $s^N = r$ and set $Q (r) = P (s)$. If $s'^N = r$ as well, we have $s'^N = s^N$ and hence $s' s^{-1} \in \mu_N (K) = \mu_N (R)$. Thus $s' = \zeta s$ for some $\zeta \in \mu_N (R)$. Multiplicativity of $P$ implies that $P (s') = P (\zeta) P (s) = P (s)$. Hence $Q$ is well defined and we have $Q^N = P$. The map $Q$ is multiplicative and sends $1$ to $1$ and $0$ to $0$ by definition.\\
2) The subgroup $P^{-1} (1)$ of $K^{\times}$ is finite by assumption and hence cyclic. The group $\mu_N (K) = \mu_{N'} (K)$ has $N'$ elements and hence $\mu_N (K) \subset P^{-1} (1)$ is equivalent to $N'$ dividing $|P^{-1} (1)|$. 
\end{proof}

\section{The structure of the $\Q^{> 0}_0$-orbits} \label{sec1}

Let $\eX_0$ be an integral normal scheme with function field $K_0$. Note that every open subscheme $\eX'_0$ of $\eX_0$ is again an integral normal scheme with the same function field $K_0$. Let $K$ be an algebraic closure of $K_0$ and set $G = \Aut_{K_0} (K)$. Let $\eX$ be the normalization of $\eX_0$ in $K$. The group $G$ acts on $\eX$ over $\eX_0$. Let $\C$ be an algebraically closed field which satisfies the conditions before Corollary \ref{t4ka}.

Fix an injective homomorphism $\iota : \mu (K) \hookrightarrow \mu (\C)$. It exists by our assumptions on $\car K_0$ and $\car \C$. 

The fibres of $\pr_{\eX_0} : \ceX_0 (\C)_{\Eh_{\tors}} \to \eX_0$ are $\Q^{> 0}_0$-invariant. We will now analyze the structures of the $\Q^{> 0}_0$-sets $C_{\ex_0} = \pr^{-1}_{\eX_0} (\ex_0)$ in $\ceX_0 (\C)_{\Eh_{\tors}}$ for points $\ex_0$ of $\eX_0$ whose residue field $\kappa (\ex_0)$ is finite. For every point $\ex \in \eX$ over $\ex_0$ the residue field $\kappa (\ex)$ is an algebraic closure of $\kappa (\ex_0)$. The reduction map $\Oh_{\eX, \ex} \to \kappa (\ex)$ induces an isomorphism
\begin{equation}
\label{eq:3}
i_{\ex} : \mu_{(p)} (K) = \mu_{(p)} (\Oh_{\eX, \ex}) \silo \kappa (\ex)^{\times} \; .
\end{equation}
Here $p = \car \kappa (\ex_0)$ and $\mu_{(p)} (S)$ denotes the group of prime-to-$p$ order roots of unity in a ring $S$. Consider the commutative diagram where $\pr = \pr_{\eX}$ and $\pr_0 = \pr_{\eX_0}$
\begin{equation}
\label{eq:4}
\xymatrix{ 
\eX \ar[d]_{\pi} & \deX (\C)_{\Eh_{\tors}} \ar@{->>}[l]_{\pr} \ar@{^{(}->}[r] \ar[d]_{\dpi} & \ceX (\C)_{\Eh_{\tors}} \ar[d]^{\cpi} \\
\eX_0 & \deX_0 (\C)_{\Eh_{\tors}} \ar@{->>}[l]_{\pr_0} \ar@{^{(}->}[r] & \ceX_0 (\C)_{\Eh_{\tors}} \; .
}
\end{equation}
The fibre $\pr^{-1}_0 (\ex_0)$ in $\deX_0 (\C)_{\Eh_{\tors}}$ is $\Nh_0$-invariant. Its extension to a $\Q^{> 0}_0$-invariant subset of $\ceX_0 (\C)_{\Eh_{\tors}}$ is the set
\[
C_{\ex_0} = \pr^{-1}_0 (\ex_0) \Q^{> 0}_0 = \bigcup_{\nu \in \Nh_0} F^{-1}_{\nu} \pr^{-1}_0 (\ex_0) \subset \ceX_0 (\C)_{\Eh_{\tors}} \; .
\]
The fibre $\pr^{-1}_0 (\ex_0)$ consists of the $G$-orbits of all pairs $(\ex , \oP^{\times})$ where $\ex$ is a point of $\eX$ over $\ex_0$ and $\oP^{\times} : \kappa (\ex)^{\times} \to \C^{\times}$ satisfies ({\it Tors}). Since $\kappa (\ex)^{\times}$ is torsion this means that $\ker \oP^{\times} = (\ker \oP^{\times})_{\tors}$ is finite hence cyclic and $|\ker \oP^{\times}| \in \Nh_0$. Given $\ex$ and composing the fixed injection $\iota : \mu (K) \hookrightarrow \mu (\C)$ above with \eqref{eq:3} we obtain the injective character $\chi_{\ex} = \iota \verk i^{-1}_{\ex} : \kappa (\ex)^{\times} \hookrightarrow \C^{\times}$ and hence a point $(\ex , \chi_{\ex})$ of $\pr^{-1} (\ex)$. Set $P_{0\ex} := \dpi (\ex , \chi_{\ex})$, an element of $\pr^{-1}_0 (\ex_0)$. The structure of $\pr^{-1}_0 (\ex_0)$ and of $C_{\ex_0}$ as $\Nh_0$-resp. $\Q^{> 0}_0$-sets is determined as follows. Fix a point $\ex$ over $\ex_0$ and let $G_{\ex}$ be the stabilizer subgroup of $\ex$ in $G$. It surjects onto $\Gal (\kappa (\ex) / \kappa (\ex_0))$, c.f. \cite[Chap.V, \S\,2, n$^{\rm o}$ 3, Proposition 6]{B}. The group of automorphisms of the abelian group $\kappa (\ex)^{\times}$ is given by $\hZ^{\times}_{(p)}$ where $\hZ_{(p)} = \prod_{l \neq p} \Z_l$. We have a natural inclusion
\begin{equation}
\label{eq:5}
N \ex^{\hZ}_0 = \Gal (\kappa (\ex) / \kappa (\ex_0)) \hookrightarrow \Aut (\kappa (\ex)^{\times}) = \hZ^{\times}_{(p)} \; .
\end{equation}
Here on the left, $N \ex_0$ corresponds to the Frobenius automorphism $y \mapsto y^{N \ex_0}$ in the Galois group. The monoid $\hZ^{\times}_{(p)} \times \Nh_0$ acts by pre-composition on the set $\Sh$ of homomorphisms $\oP^{\times} : \kappa (\ex)^{\times} \to \C^{\times}$ with finite cyclic kernel of order in $\Nh_0$. We have a $\hZ^{\times}_{(p)} \times \Nh_0$-equivariant surjection:
\begin{equation}
\label{eq:6}
\hZ^{\times}_{(p)} \times \Nh_0 \twoheadrightarrow \Sh \; , \; (a,\nu ) \mapsto \chi_{\ex} \cdot (a, \nu) := \chi_{\ex} \verk (\;)^a \verk (\;)^{\nu} \; .
\end{equation}
Two elements $(a,\nu)$ and $(a' , \nu')$ are in the same fibre of this map if and only if $\nu' = \nu p^n$ and $a = p^n a'$ for some $n \in \Z$. Any point $\ey$ in $\eX$ over $\ex_0$ is conjugate to our chosen point $\ex$ by an element of $G$. This follows from \eqref{eq:1} and \cite[Theorem 5 vi)]{Ma}. Having fixed $\ex$ and $\iota$ we have canonical $\Nh_0$-equivariant bijections
\begin{align*}
\pr^{-1}_0 (\ex_0) & = (\dpi^{-1} (\pr^{-1}_0 (\ex_0))) / G \\
 & = (\pr^{-1} (\pi^{-1} (\ex_0))) / G \\
 & = \{ (\ey , \oP^{\times}) \in \deX (\C)_{\Eh_{\tors}} \mid \pi (\ey) = \ex_0 \} / G \\
 & \cong \{ (\ex , \oP^{\times}) \in \deX (\C)_{\Eh_{\tors}} \} / G_{\ex} \\
 & \cong \Sh / \Gal (\kappa (\ex) / \kappa (\ex_0)) \; .
\end{align*}
Explicitely this gives the $\Nh_0$-equivariant bijection:
\begin{equation}
\label{eq:7}
\Sh / N \ex^{\hZ}_0 \silo \pr^{-1}_0 (\ex_0) \; , \; \oP^{\times} \mod N \ex^{\hZ}_0 \mapsto \dpi ((\ex , \oP^{\times})) \; .
\end{equation}
Composing with \eqref{eq:6} we get the $\Nh_0$-equivariant surjection:
\begin{equation}
\label{eq:8}
(\hZ^{\times}_{(p)} / N \ex^{\hZ}_0) \times \Nh_0 \twoheadrightarrow \pr^{-1}_0 (\ex_0) , (\oa , \nu) \mapsto \dpi ((\ex , i \verk (\,)^a \verk (\,)^{\nu})) \; .
\end{equation}
Two elements $(\oa , \nu)$ and $(\oa' , \nu')$ are in the same fibre if and only if $\nu' = \nu p^n$ and $\oa = p^n \oa'$ for some $n \in \Z$. The latter condition depends only on the class of $n \mod \deg \ex_0$. Passing to the $\Q^{> 0}_0$-extensions i.e. to $\colim_{\Nh_0}$ we obtain a $\Q^{> 0}_0$-equivariant bijection:
\begin{equation}
\label{eq:9}
(\hZ^{\times}_{(p)} / N \ex^{\hZ}_0 ) \times_{p^{\Z}} \Q^{> 0}_0 \silo C_{\ex_0} \; .
\end{equation}
Here, for a right $\Gamma$-module $A$ and a left $\Gamma$-module $B$ we write $A \times_{\Gamma} B$ for the quotient of $A \times B$ by the left $\Gamma$-action given by $\gamma (a,b) = (a \gamma^{-1} , \gamma b)$. The $\Gamma$-orbit of $(a,b)$ is denoted by $[a,b]$. It follows that all points $P_0 \in C_{\ex_0}$ have isotropy subgroup $(\Q^{> 0}_0)_{P_0} = N \ex^{\Z}_0$. Namely, for $q \in \Q^{> 0}_0$ the equation $[\oa , r] q = [\oa , r]$, i.e. $[\oa , rq] = [\oa , r]$ means that for some $n \in \Z$ we have $\oa = p^n \oa$ in $\hZ^{\times}_{(p)} / N \ex^{\hZ}_0$ and $rq = p^n r$. This is equivalent to $p^n \in N \ex^{\hZ}_0$ and $q = p^n$ i.e. to $q \in N \ex^{\Z}_0$. We may also write \eqref{eq:9} as a $\Q^{> 0}_0$-equivariant bijection:
\begin{equation}
\label{eq:10}
(\hZ^{\times}_{(p)} / N \ex^{\hZ}_0) \times_{p^{\Z / \deg \ex_0}} (\Q^{> 0}_0 / N \ex^{\Z}_0) \silo C_{\ex_0} \; .
\end{equation}

The set $C_{\ex_0}$ fibres over the compact group 
\[
\hZ^{\times}_{(p)} / p^{\hZ} = \Aut (\oF^{\times}_p) / \Aut (\oF_p) \; ,
\]
and the fibres are the $\Q^{> 0}_0$-orbits in $C_{\ex_0}$. After this discussion of the $\Q^{> 0}_0$-sets $C_{\ex_0}$ we mention one further item before proving the basic Theorem \ref{t4} below. The maps \eqref{eq:8}, \eqref{eq:9} and the fibration map depend on our choices of $\ex$ and $\iota$. On the other hand, the projection
\begin{equation}
\label{eq:11}
C_{\ex_0} \longrightarrow \Q^{> 0}_0 / p^{\Z}
\end{equation}
resulting from \eqref{eq:9} is canonical, it maps the point $\cpi (F^{-1}_{\nu} (\ex , \oP^{\times}))$ to the class of $|\ker \oP^{\times}| / \nu \mod p^{\Z}$. More generally consider the map
\begin{equation}
\label{eq:12}
\rho : \deX (\C)_{\Eh_{\tors}} \longrightarrow \Nh_0 \; , \; (\ex , \oP^{\times}) \longmapsto |(\Ker \oP^{\times})_{\tors}| \; .
\end{equation}
For $\nu \in \Nh_0$ let $\nu_{\ex}$ be the prime-to-$\car \kappa (\ex)$ part of $\nu$. If $\car \kappa (\ex) = 0$ then $\nu_{\ex} = \nu$. We have
\begin{equation} \label{eq:nr}
|(\Ker \oP^{\times} \verk ( \; )^{\nu})_{\tors}| = \nu_{\ex} |(\Ker \oP^{\times})_{\tors}| \; ,
\end{equation}
or in other words
\begin{equation}
\label{eq:13}
\rho (F_{\nu} (P)) = \nu_{\ex} \rho (P) \quad \text{for} \; \nu \in \Nh_0 \; \text{and} \; P \in \deX (\C)_{\Eh} \; .
\end{equation}
Recall the projection $\pr_{\eX} : \ceX (\C)_{\Eh_{\tors}} \to \eX$ and for a prime $p$ set 
\[
\ceX (\C)_{p , \Eh_{\tors}} = \pr^{-1}_{\eX} (\eX \otimes \F_p) \; .
\]
This is the set of points $F^{-1}_{\nu} (\ex, \oP^{\times})$ in $\ceX (\C)_{\Eh_{\tors}}$ with $\kappa (\ex)$ of characteristic $p$. Similarly,
\[
\ceX (\C)_{\Q , \Eh_{\tors}} = \pr^{-1}_{\eX} (\eX \otimes \Q)
\]
is the set of points with $\kappa (\ex)$ of characteristic zero. We define $\ceX_{0} (\C)_{p , \Eh_{\tors}}$ and $\ceX_{0} (\C)_{\Q , \Eh_{\tors}}$ similarly. Formulas \eqref{eq:12} and \eqref{eq:13} show that $\rho$ induces $\Q^{> 0}_0$-equivariant maps
\begin{equation}
\label{eq:14}
\rho : \ceX (\C)_{p , \Eh_{\tors}} \longrightarrow \Q^{> 0}_0 / p^{\Z}
\end{equation}
and
\begin{equation}
\label{eq:15}
\rho : \ceX (\C)_{\Q , \Eh_{\tors}} \longrightarrow \Q^{> 0}_0 \; 
\end{equation}
We obtain a natural $\Q^{> 0}_0$-equivariant surjective map of sets
\begin{equation}
\label{eq:16}
\rho : \ceX (\C)_{\Eh_{\tors}} =  \ceX (\C)_{\Q , \Eh_{\tors}} \amalg \coprod_p \ceX (\C)_{p , \Eh_{\tors}} \longrightarrow \Q^{> 0}_0 \amalg \coprod_p \Q^{> 0}_0 / p^{\Z}  \; .
\end{equation}
Here $p$ runs over $\car \eX_0$. The map $\rho$ factors over a map $\rho_0$ on $\ceX_0 (\C)_{\Eh_{\tors}}$ and the projection \eqref{eq:11} is the restriction of $\rho_0 \, |_{\ceX_{0p} (\C)} : \ceX_{0} (\C)_{p , \Eh_{\tors}} \to \Q^{> 0}_0 / p^{\Z}$ to $C_{\ex_0} \subset \ceX_{0} (\C)_{p , \Eh_{\tors}}$. 

\begin{prop}
\label{2xn}
A point $\cQ \in \ceX (\C)_{\Eh_{\tors}}$ is mapped by $\rho$ to $1 \in \Q^{> 0}_0$ resp. $1 \in \Q^{> 0}_0 / p^{\Z}$ for some prime $p$ in $\car \eX_0$ if and only if $\cQ = (\ex , \ovQ^{\times}) \in \deX (\C)_{\Eh_{\tors}}$ with $\car \kappa (\ex) = 0$ resp. $\car \kappa (\ex) = p$ and $\ovQ^{\times}$ injective on $\mu (\kappa (\ex))$. A corresponding statement holds for $\rho_0$ and $\cQ_0 \in \ceX_0 (\C)_{\Eh_{\tors}}$. 
\end{prop}

\begin{proof}
The condition is clearly sufficient. For $\cQ = F^{-1}_{\nu} (\ex , \oP^{\times})$ with $(\ex , \oP^{\times}) \in \deX (\C)_{\Eh_{\tors}}$ and $\nu \in \Nh_0$ we have:
\[
\rho (\cQ) = \nu^{-1}_{\ex} |(\ker \oP^{\times})_{\tors}| \quad \text{in $\Q^{> 0}_0$ resp.} \; \Q^{> 0}_0 / p^{\Z} \; .
\]
Hence $\rho (\cQ) = 1$ means that $|(\ker \oP^{\times})_{\tors}| = \nu_{\ex}$ since both numbers are prime to $p = \car \kappa (\ex)$ if $p > 0$. There exists a (unique) point $(\ex , \ovQ^{\times}) \in \deX (\C)_{\Eh_{\tors}}$ with $F_{\nu_{\ex}} (\ex , \ovQ^{\times}) = (\ex , \oP^{\times})$. This follows from Lemma \ref{t46n} for $R = K = \kappa (\ex)$ and $\eo = \C$. Since
\[
|(\ker \ovQ^{\times})_{\tors}| = \nu^{-1}_{\ex} |(\ker \oP^{\times})_{\tors}| = 1
\]
the character $\ovQ^{\times}$ is injective on $\mu (\kappa (\ex))$. Writing $\nu = \nu_{\ex} p^n$ for some $n \ge 0$ (where $n = 0$ if $\car \kappa (\ex) = 0$), we have
\[
\cQ = F^{-n}_p F^{-1}_{\nu_{\ex}} (\ex , \oP^{\times}) = F^{-n}_p (\ex , \ovQ^{\times}) \; .
\]
Noting that $(\;)^p$ is an isomorphism on $\kappa (\ex)^{\times}$ if $\car \kappa (\ex) = p > 0$ we can replace $\ovQ^{\times}$ by $\ovQ^{\times} \verk (\;)^{p^{-n}}$ which is still injective on $\mu (\kappa (\ex))$ and hence the necessity of the above condition follows. 
\end{proof}

Concerning the $\Q^{> 0}_0$-action on $\ceX_0 (\C)_{\Eh}$ we have the following main result:

\begin{theorem}
\label{t4}
Let $\Eh$ be an admissible class with $\Eh \subset \Eh_{\max}$. The following decomposition holds, where $\ex_0$ runs over the points of $\eX_0$ with finite residue field $\kappa (\ex_0)$ and where $C^{\Eh}_{\ex_0} = C_{\ex_0} \cap \ceX_0 (\C)_{\Eh}$
\begin{equation}
\label{eq:17}
\{ P_0 \in \ceX_0 (\C)_{\Eh} \mid (\Q^{> 0}_0)_{P_0} \neq 1 \} = \coprod_{\ex_0} C^{\Eh}_{\ex_0} \; .
\end{equation}
For any point $P_0 \in C^{\Eh}_{\ex_0}$ the isotropy group of $P_0$ is $(\Q^{> 0}_0)_{P_0} = N \ex^{\Z}_0$ where $N \ex_0 = |\kappa (\ex_0)|$. If e.g. $\Eh \supset \Eh_f$ then $C^{\Eh}_{\ex_0} = C_{\ex_0}$. 
\end{theorem}

\begin{rem}
In case $\eX_0$ has no points with finite residue field, the theorem asserts that all isotropy groups of the $\Q^{> 0}_0$-action on $\ceX_0 (\C)_{\Eh}$ are trivial.
\end{rem}

{\it Proof.} 
Replacing $P_0 \in \ceX_0 (\C)_{\Eh}$ with $F_{\nu} (P_0)$ for some $\nu \in \Nh_0$ does not change the isotropy subgroup. Hence it is sufficient to determine the points $P_0 \in \deX_0 (\C)_{\Eh}$ whose isotropy subgroup is non-trivial. The points $P_0$ in $\deX_0 (\C)_{\Eh}$ have the form $P_0 = \dpi ((\ex , \oP^{\times})) = (\ex , \oP^{\times}) \mod G$, where $\oP^{\times} : \kappa (\ex)^{\times} \to \C^{\times}$ is a homomorphism satisfying $\Eh$ and hence $\Eh_{\max}$. Let us write $q = \nu / \nu' \in \Q^{> 0}_0$ with $\nu , \nu' \in \Nh_0$ coprime. The condition $q \in (\Q^{> 0}_0)_{P_0}$ means that $F_q (P_0) = P_0$ i.e. $F_{\nu} (P_0) = F_{\nu'} (P_0)$. This is equivalent to the existence of some $\sigma \in G$ with $\sigma \ex = \ex$ and
\begin{equation}
\label{eq:18}
\oP^{\times} (y^{\nu}) = \oP^{\times} (\osigma (y)^{\nu'}) \quad \text{for all} \; y \in \kappa (\ex)^{\times} \; .
\end{equation}
Here $\osigma$ is the automorphism of $\kappa (\ex)$ over $\kappa (\ex_0)$ induced by $\sigma$ where $\ex_0 = \pi (\ex)$. Condition \eqref{eq:18} means that the subgroup
\[
H = \{ y^{\nu} \osigma (y)^{-\nu'} \mid y \in \kappa (\ex)^{\times} \}
\]
is contained in the kernel of $\oP^{\times}$. For $n \ge 1$ let $\mu_{(n)} (K)$ be the group of roots of unity in $K$ of order prime to $n$. To proceed with the proof we need the following result:

\begin{lemma}
\label{t5}
For coprime $\nu \neq \nu'$ in $\Nh$ the following map is surjective where $\emm_{\ex}$ is the maximal ideal in $\Oh_{\eX,\ex}$
\begin{equation}
\label{eq:19}
\mu_{(\nu \nu')} (K) = \bigcup_{i \ge 1} \mu_{\nu^i - \nu^{'i}} (K) \longrightarrow \kappa (\ex)^{\times} / H \; , \; \zeta \longmapsto (\zeta \mod \emm_{\ex}) H \; .
\end{equation}
In particular $\kappa (\ex)^{\times} / H$ is a prime to $\nu \nu'$ torsion group.
\end{lemma}

\begin{rem}
The map \eqref{eq:19} is in fact an isomorphism as shown in the proof of Theorem \ref{t6n} b) below.
\end{rem}

The proof of the lemma will be given below. Recall that because of the assumption $\Eh \subset \Eh_{\max}$, every character in $\Eh$ satisfies conditions ({\it Tors}) and ({\it Image}) from section \ref{sec:4ggn}. Assume that $q \neq 1$. Using the lemma it follows that $\mu_{l^{\infty}} (K)$ maps to $1$ in $\kappa (\ex)^{\times} / H$ for all prime divisors $l$ of $\nu \nu'$. Since $\oP^{\times}$ factors over $\kappa (\ex)^{\times} / H$, we see that
\[
\Imm (\mu_{l^{\infty}} (K) \longrightarrow \kappa (\ex)^{\times}) \subset (\ker \oP^{\times})_{\tors} 
\]
is finite because $\oP^{\times}$ satisfies ({\it Tors}). Because of $\nu \nu' > 1$ it follows that $p = \car \kappa (\ex)$ is positive. More precisely $\nu \nu'$ must have exactly one prime divisor $l$, namely $l = p = \car \kappa (\ex)$. Note here that the image of $\mu_{p^{\infty}} (K)$ in $\kappa (\ex)^{\times}$ is trivial. Because of the lemma the quotient 
\[
\kappa (\ex)^{\times} / H \twoheadrightarrow \oP^{\times} (\kappa (\ex)^{\times})
\]
is a torsion group. Since $p = \car \kappa (\ex)$ is positive and since $\oP^{\times}$ satisfies ({\it Image}), it follows that $\kappa (\ex)^{\times}$ is torsion. By the structure theory of fields we conclude that $\kappa (\ex) \subset \oF_p$ for some $p$ and hence $\kappa (\ex) = \oF_p$. Since $\kappa (\ex)^{\times}$ is torsion, we have $H \subset \ker \oP^{\times} = (\ker \oP^{\times})_{\tors}$ and ({\it Tors}) implies that $H$ is finite. Since $\kappa (\ex) \cong \oF_p$, we have
\[
\osigma = p^a \in p^{\hZ} = \Aut (\kappa (\ex)) \subset \Aut (\kappa (\ex)^{\times}) = \hZ^{\times}_{(p)} \; .
\]
The endomorphism ring of the abelian group $\kappa (\ex)^{\times}$ is 
\[
\End \kappa (\ex)^{\times} = \End (\bigoplus_{l \neq p} \Q_l / \Z_l) = \hZ_{(p)} \; .
\]
The group $H$ is the image of the endomorphism of $\kappa (\ex)^{\times}$ corresponding to $\nu - \nu' p^a \in \hZ_{(p)}$. Since $\kappa (\ex)^{\times}$ is divisible it follows that $H$ is divisible as well, and being finite, $H$ is trivial. Thus $\nu = \nu' p^a$ in $\hZ_{(p)}$, and using $q \neq 1$ it follows that $0 \neq a \in \Z$. Since $\kappa (\ex_0)$ is contained in the fixed field of $\osigma = p^a$, writing $n = |a| \ge 1$ we have $\kappa (\ex_0) \subset \F_{p^n}$. Thus our assumption $(\Q^{> 0}_0)_{P_0} \neq 1$ eventually implies that $\kappa (\ex_0)$ is finite and $P_0 \in \pr^{-1}_0 (\ex_0) \subset C_{\ex_0}$. The remaining assertion in the theorem has already been shown in our discussion of $C_{\ex_0}$. Thus, modulo the lemma, Theorem \ref{t4} is proved. 

\begin{proofof}
{\it Lemma \ref{t5}} For $\nu \neq \nu'$, the union of the groups $\mu_{\nu^i - \nu^{'i}} (K)$ for $i \ge 1$ is a subgroup of $\mu (K)$ since $\nu^i - \nu^{'i}$ divides $\nu^{ki} - \nu^{'ki}$ for $k \ge 1$. We have
\begin{equation}
\label{eq:20}
\mu_{(\nu \nu')} (K) = \bigcup_{i \ge 1} \mu_{\nu^i - \nu^{'i}} (K) \; .
\end{equation}
Namely, if a prime number $l$ divides $\nu^i- \nu^{'i}$, then it must be prime to $\nu$ and $\nu'$ because otherwise it would divide them both. Hence the right hand side is contained in $\mu_{(\nu  \nu')} (K)$. Now assume that $\zeta$ has order $N$ where $N$ is prime to $\nu$ and $\nu'$. Then for $i = |(\Z / N)^{\times}|$, the classes $\onu$ and $\onu'$ in $(\Z / N)^{\times}$ satisfy $\onu^i = \onu^{'i}$ which means that $N$ divides $\nu^i - \nu^{'i}$. Thus $\zeta \in \mu_{\nu^i - \nu^{'i}} (K)$ and the reverse inclusion follows. 

To prove the surjectivity of the map to $\kappa (\ex)^{\times} / H$ fix $c \in \kappa (\ex)^{\times}$ and choose $i \ge 1$ such that $\osigma^i (c) = c$. Since $\nu \neq  \nu'$ there is some $z \in \kappa (\ex)^{\times}$, such that
\[
z^{\nu^i - \nu^{'i}} = c \; .
\]
We find 
\[
\osigma^i (z)^{\nu^i - \nu^{'i}} = \osigma^i (c) = c = z^{\nu^i - \nu^{'i}} \; .
\]
Hence we have
\[
\osigma^i (z) = \eta z
\]
for some $\eta \in \mu_{\nu^i - \nu^{'i}} (\kappa (\ex))$. This implies: 
\[
\frac{z^{\nu^i}}{\osigma^i (z)^{\nu^{'i}}} = \frac{z^{\nu^i}}{\eta^{\nu^{'i}} z^{\nu^{'i}}} = \eta^{- \nu^{'i}} z^{\nu^i - \nu^{'i}} = \eta^{- \nu^{'i}} c \; .
\]
In the commutative group ring $\Z \langle \osigma \rangle$ of the cyclic group $\langle \osigma \rangle$ generated by $\osigma$ we have
\[
\nu^i - \nu^{'i} \osigma^i = \omega (\nu - \nu' \osigma)
\]
for $\omega = \sum^{i-1}_{\alpha = 0} \nu^{i - 1 - \alpha} \nu^{'\alpha} \osigma^{\alpha} \in \Z \langle \osigma \rangle$. 

The group ring $\Z \langle \osigma \rangle$ acts on $\kappa (\ex)^{\times}$. Setting $y = z^{\omega}$ we obtain:
\[
\frac{y^{\nu}}{\osigma (y)^{\nu'}} = y^{\nu - \nu' \osigma} = z^{\omega (\nu - \nu' \osigma)} = z^{\nu^i - \nu^{'i} \osigma^i} = \frac{z^{\nu^i}}{\osigma^i (z)^{\nu^{'i}}} \; .
\]
This implies that
\[
\eta^{- \nu^{'i}} c = \frac{y^{\nu}}{\osigma (y)^{\nu'}} \in H \; .
\]
Thus we have
\[
cH = \zeta H \; ,
\]
for $\zeta = \eta^{\nu^{'i}} \in \mu_{\nu^i - \nu^{'i}} (\kappa (\ex))$. This implies the Lemma and hence Theorem \ref{t4}.
\end{proofof}

\begin{remark} \em
For classes $\Eh$ satisfying the stronger condition $\Eh \subset \Eh_{fd_0}$ instead of $\Eh \subset \Eh_{\max}$ only, the proof of Theorem \ref{t4} does not require Lemma \ref{t5}. As before \eqref{eq:18} implies $H \subset \ker \oP^{\times}$ and hence $H \cap \kappa (\ex_0)^{\times} \subset \ker \oP^{\times} \, |_{\kappa (\ex_0)^{\times}}$. Since $\osigma$ acts trivially on $\kappa (\ex_0)^{\times}$, we have
\[
(\kappa (\ex_0)^{\times})^{\nu - \nu'} := \{ y^{\nu - \nu'} \mid y \in \kappa (\ex_0)^{\times} \} \subset H \cap \kappa (\ex_0)^{\times} \subset \ker \oP^{\times} \, |_{\kappa (\ex_0)^{\times}} \; .
\]
Because $\oP^{\times}$ satisfies $\Eh_{fd_0}$ it follows that $\dim_{\Q} ((\kappa (\ex_0)^{\times})^ {\nu - \nu'} \otimes \Q) < \infty$. If $q \neq 1$ i.e. $\nu \neq \nu'$ this implies that $\dim_{\Q} \kappa (\ex_0)^{\times} \otimes \Q < \infty$. It follows that $\kappa (\ex_0)$ is contained in $\oF_p$ for some $p$, and hence $\kappa (\ex) = \oF_p$. We now finish the proof as before. 
\end{remark}

The condition ({\it Tors}) i.e. that $(\ker \oP^{\times})_{\tors}$ is finite seems to be fundamental, in particular for the later connectedness results. However by itself it is not enough to ensure that Theorem \ref{t4} holds. As we have seen in the proof of Theorem \ref{t4}, ({\it Tors}) does force the points $P_0 \in \ceX_0 (\C)_{\Eh}$ over characteristic zero points of $\eX_0$ to have trivial stabilizer $(\Q^{> 0}_0)_{P_0} = 1$. However over all positive characteristic points of $\eX_0$ that condition still allows points $P_0$ with $(\Q^{> 0}_0)_{P_0} \neq 1$ as we will now see, whereas we want this to happen only over the points of $\eX_0$ with finite residue field.

\begin{theorem}
\label{t6n}
For $P_0 \in \ceX_0 (\C)_{\Eh_{\tors}}$ let $\ex_0 = \pr_{\eX_0} (P_0) \in \eX_0$. Then the following assertions hold:\\
a) If $(\Q^{> 0}_0)_{P_0} \neq 1$ then the characteristic $p$ of $\kappa (\ex_0)$ is positive and $(\Q^{> 0}_0)_{P_0} \subset N^{\Z}_{\ex_0}$. Here $N_{\ex_0} = |\kappa (\ex_0) \cap \oF_p|$ where $\oF_p$ is the algebraic closure of $\F_p$ in $\kappa (\ex)$. \\
b) For any point $\ex_0 \in \eX_0$ with $p = \car \kappa (\ex_0) > 0$ there exist points $P_0 \in \pr^{-1}_0 (\ex_0) \subset \ceX_0 (\C)_{\Eh_{\tors}}$ with $(\Q^{> 0}_0)_{P_0} \neq 1$. 
\end{theorem}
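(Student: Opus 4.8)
The plan is to run the strategy of Theorem~\ref{t4}, but to replace the now--false Lemma~\ref{t5} by two facts that hold for \emph{every} point $\ex$ of $\eX$ over a positive--characteristic point $\ex_0$ (put $p = \car \kappa (\ex_0) > 0$): first, $\kappa (\ex)$ is perfect and $\kappa (\ex)^{\times}$ is divisible, because $\Oh_{\eX , \ex}$ is integrally closed in the radicially closed field $K$ and hence closed under $n$--th roots for all $n$; second, $\oF_p \subset \kappa (\ex)$, the reduction $\mu_{(p)} (K) \to \kappa (\ex)^{\times}$ being injective since $p \in \emm_{\ex}$ (as in \eqref{eq:3}). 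Thus $\oF_p^{\times}$ is the (divisible) torsion subgroup of $\kappa (\ex)^{\times}$, and $\kappa (\ex) / \kappa (\ex_0)$ is algebraic and normal.

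\emph{Part (a).} As in the proof of Theorem~\ref{t4} one may assume $P_0 = \dpi ((\ex , \oP^{\times})) \in \deX_0 (\C)$. Let $1 \neq q = \nu / \nu' \in (\Q^{>0})_{P_0}$, so $F_{\nu} (P_0) = F_{\nu'} (P_0)$. Applying the invariant $\rho$ of \eqref{eq:12}, \eqref{eq:13} gives $\nu_{\ex} \rho (P_0) = \nu'_{\ex} \rho (P_0)$, and since $1 \le \rho (P_0) = |(\ker \oP^{\times})_{\tors}| < \infty$ we get $\nu_{\ex} = \nu'_{\ex}$. If $\car \kappa (\ex) = 0$ then $\nu_{\ex} = \nu$, $\nu'_{\ex} = \nu'$, so $\nu = \nu'$, a contradiction; hence $p := \car \kappa (\ex_0) = \car \kappa (\ex) > 0$ and $\nu_{\ex} = \nu'_{\ex}$ means $q = p^{m}$ for some $m \in \Z$. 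Now write $F_{\nu} (P_0) = F_{\nu'} (P_0)$ out: some $\sigma \in G_{\ex}$ induces $\tau \in \Gal (\kappa (\ex) / \kappa (\ex_0))$ with $\oP^{\times} (y^{\nu}) = \oP^{\times} (\tau (y)^{\nu'})$ for all $y \in \kappa (\ex)^{\times}$. Restricting to $y \in \oF_p^{\times}$ and using that all $p$--power maps are bijective on the divisible group $\oF_p^{\times}$, a short manipulation shows that the image of the homomorphism $\oF_p^{\times} \to \oF_p^{\times}$, $y \mapsto y^{p^{m}} \tau (y)^{-1}$, has the same (finite) cardinality as a subgroup of $(\ker \oP^{\times})_{\tors}$; being divisible, this image is trivial. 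Hence $\tau (y) = y^{p^{m}}$ for all $y \in \oF_p^{\times}$, i.e.\ $\tau|_{\oF_p}$ is the $m$--th power of the Frobenius. But $\tau$ fixes $\kappa (\ex_0)$, hence fixes $\kappa (\ex_0) \cap \oF_p = \F_{N_{\ex_0}}$, so $y^{p^{m}} = y$ for all $y \in \F_{N_{\ex_0}}$, which forces $N_{\ex_0} \mid p^{m} - 1$, i.e.\ $q = p^{m} \in N_{\ex_0}^{\Z}$. This proves (a).

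\emph{Part (b).} Pick any $\ex \in \eX$ over $\ex_0$ (possible as $\pr_{\eX_0}$ is surjective) and set $f = \log_p N_{\ex_0}$, so $\kappa (\ex_0) \cap \oF_p = \F_{p^{f}}$. Since $\Gal (\kappa (\ex) / \kappa (\ex_0))$ maps onto $\Gal (\kappa (\ex_0) \oF_p / \kappa (\ex_0)) = \Gal (\oF_p / \F_{p^{f}})$, which is topologically generated by $y \mapsto y^{p^{f}}$, we may choose $\tau \in \Gal (\kappa (\ex) / \kappa (\ex_0))$ with $\tau (y) = y^{p^{f}}$ on $\oF_p$, and lift it to $\sigma \in G_{\ex}$ (the map $G_{\ex} \to \Gal (\kappa (\ex) / \kappa (\ex_0))$ being onto). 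Let $\theta = \tau^{-1} \circ (\,)^{p^{f}}$, an automorphism of the perfect field $\kappa (\ex)$; it fixes $\oF_p$ pointwise and restricts to the $p^{f}$--th power automorphism on the perfect closure $\kappa (\ex_0)^{1/p^{\infty}}$ of $\kappa (\ex_0)$. The key point is that $\kappa (\ex)^{\theta} = \oF_p$: a $\theta$--fixed element is separable algebraic over $k := \kappa (\ex_0)^{1/p^{\infty}} \cdot \oF_p$, its minimal polynomial over $k$ is $\theta$--invariant, hence has coefficients in $k^{\theta} = \oF_p$ (the $p^{f}$--th power map fixes only $\F_{p^{f}}$ inside $\kappa (\ex_0)^{1/p^{\infty}}$), so the element is algebraic over the algebraically closed field $\oF_p$ and thus lies in it. Consequently the subgroup $H := \{\, y^{p^{f}} \tau (y)^{-1} : y \in \kappa (\ex)^{\times} \,\} = \tau \big( (\theta - 1) \kappa (\ex)^{\times} \big)$ has trivial torsion: if $y^{p^{f}} \tau (y)^{-1}$, equivalently $\theta (y) y^{-1}$, is a root of unity of order $e$, then $y^{e} \in \kappa (\ex)^{\theta} = \oF_p$, so $y \in \oF_p$ and the element is $1$. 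Now $\oF_p^{\times}$ embeds into $\kappa (\ex)^{\times} / H$; extending a fixed embedding $\oF_p^{\times} \hookrightarrow \C^{\times}$ across this inclusion (divisibility of $\C^{\times}$) and pulling back to $\kappa (\ex)^{\times}$ yields $\oP^{\times} : \kappa (\ex)^{\times} \to \C^{\times}$ with $H \subset \ker \oP^{\times}$ and $(\ker \oP^{\times})_{\tors} = 1$, hence an admissible character. For $P_0 = \dpi ((\ex , \oP^{\times}))$ we get $\pr_{\eX_0} (P_0) = \ex_0$, and since $\oP^{\times} (y^{p^{f}}) = \oP^{\times} (\tau (y))$ for all $y$, the points $(\ex , \oP^{\times p^{f}})$ and $(\ex , \oP^{\times})$ are $G$--conjugate via $\sigma$, so $F_{p^{f}} (P_0) = P_0$; thus $N_{\ex_0} = p^{f} \in (\Q^{>0})_{P_0} \setminus \{1\}$, proving (b).

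The decisive step, and the one I expect to be the main obstacle, is the identity $\kappa (\ex)^{\theta} = \oF_p$ in part (b) --- equivalently, that $H$ has only trivial torsion. This is exactly what prevents the sole hypothesis ``$(\ker \oP^{\times})_{\tors}$ finite'' from being wasted and instead produces genuine isotropy over \emph{non--closed} positive--characteristic points; it relies on both the perfectness of $\kappa (\ex)$ and the Frobenius twist $\theta$. In part (a) the analogous delicate point is squeezing the mere finiteness of $(\ker \oP^{\times})_{\tors}$ down to the constant--field constraint $f \mid m$, where the divisibility of $\oF_p^{\times}$ is essential.
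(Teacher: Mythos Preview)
Your proof is correct, and in both parts you take a genuinely different route from the paper.

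For part (a), the paper does \emph{not} invoke the invariant $\rho$; instead it reuses Lemma~\ref{t5} together with the identity $\bigcup_{i\ge 1}\mu_{\nu^i-\nu'^i}(K)=\mu_{(\nu\nu')}(K)$ to see that every prime $l\mid\nu\nu'$ would force $\mu_{l^\infty}(K)$ into the finite group $(\ker\oP^{\times})_{\tors}$, whence $q=p^n$; it then shows $\osigma|_{\mu_{(p)}}=(\;)^{p^n}$ by Pontrjagin duality (the dual map $p^n-a$ on $\hZ_{(p)}$ has finite image, hence $a=p^n$). Your use of $\rho$ to obtain $q=p^m$ in one line is a nice shortcut that the paper set up in \eqref{eq:12}--\eqref{eq:13} but did not exploit here, and your replacement of the duality step by the observation that the image of $y\mapsto y^{p^m}\tau(y)^{-1}$ on $\oF_p^{\times}$ is simultaneously finite and divisible, hence trivial, is more elementary. (One should perhaps note explicitly that since the stabilizer is a group one may assume $m>0$, so that $\nu=p^m,\nu'=1$ yields the relation directly.)

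For part (b), both arguments construct the same subgroup $H=\{y^{N_{\ex_0}}\osigma(y)^{-1}\}$ and both need that $H_{\tors}=1$. The paper first invokes Lemma~\ref{t5} to get a surjection $\mu(\kappa(\ep))\twoheadrightarrow\kappa(\ep)^{\times}/H$ and then proves injectivity by an iteration trick: if $\zeta=y^{N_{\ex_0}}\osigma(y)^{-1}$ has order $N$, then $z=y^N$ satisfies $\osigma(z)=z^{N_{\ex_0}}$, so $z\in\oF_p^{\times}$, hence $y\in\oF_p^{\times}$ and $\zeta=1$. Your argument via the twisted Frobenius $\theta=\tau^{-1}\circ(\;)^{p^f}$ and the computation $\kappa(\ex)^{\theta}=\oF_p$ (reducing to $k^{\theta}=\oF_p$ via minimal polynomials, which follows from the linear disjointness of $\kappa(\ex_0)^{1/p^{\infty}}$ and $\oF_p$ over $\F_{p^f}$) is more structural and avoids Lemma~\ref{t5} entirely. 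Having $H_{\tors}=1$, you then extend an embedding $\oF_p^{\times}\hookrightarrow\C^{\times}$ across $\oF_p^{\times}\hookrightarrow\kappa(\ex)^{\times}/H$ by injectivity of $\C^{\times}$, whereas the paper transports a character along the isomorphism $\mu_{(p)}(K)\cong\kappa(\ep)^{\times}/H$. Both yield an admissible $\oP^{\times}$ with $F_{N_{\ex_0}}(P_0)=P_0$. Your approach buys independence from Lemma~\ref{t5}; the paper's approach buys a stronger structural statement (an explicit isomorphism $\mu_{(p)}(K)\silo\kappa(\ep)^{\times}/H$) that it does not actually need here.
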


\begin{proof} 
a) For $q \in (\Q^{> 0}_0)_{P_0}$ with $q \neq 1$ write $q = \nu / \nu'$ with $\nu , \nu' \in \Nh_0$ coprime, $\nu \neq \nu'$. In the proof of Theorem \ref{t4}, after Lemma \ref{t5} we have seen that $p = \car \kappa (\ex) > 0$ and that $p$ is the only prime divisor of $\nu \nu' > 1$. Since $\nu \neq \nu'$ are coprime it follows that $\nu = p^n$ and $\nu' = 1$ or $\nu = 1$ and $\nu' = p^n$ for some $n \ge 1$. Thus we have $q = \nu / \nu' \in p^{\Z}$. We now assume that $\nu = p^n , \nu' = 1$, the following argument being similar in the case $\nu = 1 , \nu' = p^n$. We know that the divisible group
\[
\{ \zeta^{p^n} \sigma (\zeta)^{-1} \mid \zeta \in \mu_{(p)} (K) \} \hookrightarrow H_{\tors} \; , \; \zeta \longmapsto \zeta \mod \emm_{\ex}
\]
is finite since $H_{\tors} \subset (\ker \oP^{\times})_{\tors}$ and therefore trivial. It follows that $\sigma (\zeta) = \zeta^{p^n}$ for all $\zeta \in \mu_{(p)} (K)$. Since $\osigma$ is trivial on $\kappa (\ex_0)$ and hence on $\kappa (\ex_0) \cap \oF_p$ it follows that the index $(\kappa (\ex_0) \cap \oF_p : \F_p)$ divides $n$ i.e. that $q = \nu / \nu' = p^n \in N^{\Z}_{\ex_0}$. 

b) For a point $\ex_0 \in \eX_0$ of positive characteristic $p = \car \kappa (\ex_0)$ let $\spec R_0$ be an open affine subscheme of $\eX_0$ containing $\ex_0$. Let $R$ be the normalization of $R_0$ in $K$. Then the open affine subscheme $\spec R$ of $\eX$ contains the points $\ex \in \eX$ over $\ex_0$. Choose such an $\ex$ and let $\ep$ be the corresponding prime ideal in $R$ and let $\ep_0 = \ep \cap R_0$ be the prime ideal of $R_0$ corresponding to $\ex_0$. Let $\overline{\osigma} \in \Gal (\oF_p / \kappa (\ep_0) \cap \oF_p)$  be the Frobenius automorphism $y \mapsto y^{N_{\ex_0}}$. Lift $\overline{\osigma}$ to an automorphism $\osigma$ of $\kappa (\ep)$ over $\kappa (\ep_0)$. According to \cite[Chap. V, \S\,2, n$^{0}$ 3, Proposition 6]{B} there is an extension of $\osigma$ to an automorphism $\sigma \in G = \Aut (K / K_0)$ with $\sigma (\ex) = \ex$. Consider the subgroup
\[
H = \{ y^{N_{\ex_0}} \osigma (y)^{-1} \mid y \in \kappa (\ex)^{\times} \} \subset \kappa (\ex)^{\times} \; .
\]
By Lemma \ref{t5} the natural map
\[
\mu_{(p)} (K) \cong \mu (\kappa (\ep)) \twoheadrightarrow \kappa (\ep)^{\times} / H
\]
is surjective. We now show that it is an isomorphism. Its kernel consists of elements $\zeta = y^{N_{\ex_0}} \osigma (y)^{-1} \in \mu (\kappa (\ep))$ for some $y \in \kappa (\ep)^{\times}$. Choose some $N \ge 1$ prime to $p$ with $\zeta^N = 1$. For $z = y^N$ we then have $\osigma (z) = z^{N_{\ex_0}}$. Letting $i$ be such that $\osigma^i (z) = z$ we see that $z$ is an $(N^i_{\ex_0} - 1)$-th root of unity in $\kappa (\ep)^{\times}$. Hence $z$ and therefore also $y$ lie in $\mu (\kappa (\ep)) = \oF_p^{\times}$. But then, by the choice of $\sigma$ we have $\osigma (y) = y^{N_{\ex_0}}$ and therefore $\zeta = 1$. Using the isomorphism $\mu_{(p)} (K) \silo \kappa (\ep)^{\times} / H$, any character $\chi : \mu_{(p)} (K) \to \mu (\C)$ with finite kernel gives a character $\oP^{\times} : \kappa (\ex)^{\times} \to \mu (\C) \subset \C^{\times}$ of class $\Eh_{\tors}$ with $\oP^{\times} (H) = 1$ i.e. with $\oP^{\times} (y^{N_{\ex_0}}) = \oP^{\times} (\osigma (y))$ for all $y \in \kappa (\ex)^{\times}$. This gives a point $P_0 = (\ex , \oP^{\times}) \mod G$ in $\ceX_0 (\C)_{\Eh_{\tors}}$ over $\ex_0$ with $F_{N_{\ex_0}} (P_0) = P_0$. 
\end{proof}

\section{The $\R^{> 0}$-dynamical system $X_0$} \label{sec:3}

Let $\eX_0$ be an integral normal scheme and let $\Eh$ be an admissible class of characters in the sense of Definition \ref{t41n}. Let $\C$ be an algebraically closed field which satisfies the conditions before Corollary \ref{t4ka}. 

Let $\R^{> 0}$ be the group of positive real numbers under multiplication and consider the suspension
\[
X_0 = \ceX_0 (\C)_{\Eh} \times_{\Q^{> 0}_0} \R^{> 0} \; .
\]
It is the quotient of $\ceX_0 (\C)_{\Eh} \times \R^{> 0}$ by the right $\Q^{> 0}_0$-action given by 
\[
(P_0 , u) q = (P_0 q , q^{-1} u) = (F_q (P_0) , q^{-1} u) \quad \text{for} \; q \in \Q^{> 0}_0 \; .
\]
The $\Q^{> 0}_0$-orbit of $(P_0 , u)$ is denoted by $[P_0 , u]$. The group $\R^{> 0}$ acts on $X_0$ via the second factor:
\[
[P_0 , u] \cdot v = [P_0 , uv] \quad \text{for} \; v \in \R^{> 0} \; .
\]
We may also view $X_0$ as a continuous time dynamical system by letting $t \in \R$ act by $e^t$. We write $\phi^t$ for this action i.e. $\phi^t ([P_0 ,u]) = [P_0 , ue^t]$. For a point $\ex_0$ of $\eX_0$ with finite residue field of characteristic $p$ set
\[
\Gamma_{\ex_0} = C_{\ex_0} \times_{\Q^{> 0}_0} \R^{> 0} \subset X_0 \; .
\]
The $\Q^{> 0}_0$-bijection \eqref{eq:10} induces an $\R^{> 0}$-bijection
\[
(\hZ^{\times}_{(p)} / N \ex^{\hZ}_0) \times_{p^{\Z / \deg \ex_0}} \R^{> 0} / N \ex^{\Z}_0 \silo \Gamma_{\ex_0} \; . 
\]
Thus all $\R^{> 0}$-orbits in $\Gamma_{\ex_0}$ are circles $\R^{> 0} / N \ex^{\Z}_0$ and $\Gamma_{\ex_0}$ fibres over $\hZ^{\times}_{(p)} / p^{\hZ}= \Aut (\oF^{\times}_p) / \Aut (\oF_p)$ with fibres the $\R^{> 0}$-orbits in $\Gamma_{\ex_0}$. We set $\Gamma^{\Eh}_{\ex_0} = C^{\Eh}_{\ex_0} \times_{\Q^{> 0}_0} \R^{> 0}$ where $C^{\Eh}_{\ex_0} = C_{\ex_0} \cap \ceX_0 (\C)_{\Eh}$. If e.g. $\Eh_f \subset \Eh$ then $\Gamma^{\Eh}_{\ex_0} = \Gamma_{\ex_0}$. 

The next result is an immediate consequence of Theorem \ref{t4}.

\begin{theorem}
\label{t6}
Let $\Eh$ be an admissible class with $\Eh \subset \Eh_{\max}$. The following decomposition holds, where $\ex_0$ runs over the points of $\eX_0$ with finite residue fields $\kappa (\ex_0)$
\[
\{ x_0 \in X_0 \mid (\R^{> 0})_{x_0} \neq 1 \} = \coprod_{\ex_0} \Gamma^{\Eh}_{\ex_0} \; .
\]
For any point $x_0 \in \Gamma^{\Eh}_{\ex_0}$ the isotropy group of $x_0$ is $(\R^{> 0})_{x_0} = N \ex^{\Z}_0$.
\end{theorem}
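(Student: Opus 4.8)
The plan is to deduce Theorem~\ref{t6} directly from Theorem~\ref{t4} by transporting the isotropy computation along the suspension construction. Recall $X_0 = \ceX_0 (\C) \times_{\Q^{>0}} \R^{>0}$, so a point is $x_0 = [P_0 , u]$ with $P_0 \in \ceX_0 (\C)$ and $u \in \R^{>0}$. First I would observe that the $\R^{>0}$-orbit of $[P_0 , u]$, under $v \mapsto [P_0 , uv]$, is the image of $\{P_0\} \times \R^{>0}$ in $X_0$, and that its stabilizer is governed by how large the $\Q^{>0}$-orbit of $(P_0,u)$ meets $\{P_0\} \times \R^{>0}$. Concretely, $[P_0 , uv] = [P_0 , u]$ holds if and only if there is $q \in \Q^{>0}$ with $(F_q^{-1}(P_0) , qu) = (P_0 , uv)$, i.e. $q \in (\Q^{>0})_{P_0}$ and $v = q$. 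Hence $(\R^{>0})_{x_0} = (\Q^{>0})_{P_0}$ as subgroups of $\R^{>0}$, and this is independent of $u$. This is the one identity that carries all the content.

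Given that identity, the rest is bookkeeping. By Theorem~\ref{t4}, $(\Q^{>0})_{P_0} \neq 1$ if and only if $P_0 \in C_{\ex_0}$ for a (unique) closed point $\ex_0$ of $\eX_0$, and in that case $(\Q^{>0})_{P_0} = N\ex_0^{\Z}$. Translating through the identification of the previous paragraph: $(\R^{>0})_{x_0} \neq 1$ exactly when $P_0 \in C_{\ex_0}$ for some closed $\ex_0$, i.e. exactly when $x_0 \in C_{\ex_0} \times_{\Q^{>0}} \R^{>0} = \Gamma_{\ex_0}$; and then $(\R^{>0})_{x_0} = N\ex_0^{\Z}$. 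I would also note that the sets $\Gamma_{\ex_0}$ are pairwise disjoint because the $C_{\ex_0}$ are (they are the fibres $\pr_0^{-1}(\ex_0)\Q^{>0}$ over distinct points, and $\pr_{\eX_0}$ descends to $X_0$), so the right-hand union is genuinely a disjoint union $\coprod_{\ex_0}$. This yields the displayed decomposition and the isotropy statement simultaneously.

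One small point worth spelling out is well-definedness: the quantities $(\R^{>0})_{x_0}$, and membership in $\Gamma_{\ex_0}$, must not depend on the representative $(P_0 , u)$ of $x_0$. Replacing $(P_0,u)$ by $(F_{q'}^{-1}(P_0) , q'u)$ for $q' \in \Q^{>0}$ conjugates the isotropy subgroup of $\ceX_0(\C)$ by $q'$, but $\Q^{>0}$ is abelian so the subgroup is unchanged; and $F_{q'}^{-1}(P_0) \in C_{\ex_0} \iff P_0 \in C_{\ex_0}$ since $C_{\ex_0}$ is $\Q^{>0}$-invariant by construction. So everything descends.

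There is essentially no obstacle here: the theorem is, as the text says, an immediate consequence of Theorem~\ref{t4}, and the only thing to get right is the elementary computation $(\R^{>0})_{[P_0,u]} = (\Q^{>0})_{P_0}$ for a suspension, together with the observation that suspension is compatible with the fibration $\pr_{\eX_0}$ so that $C_{\ex_0}$ suspends to $\Gamma_{\ex_0}$. If one wanted to be careful one would also remark that the induced $\R^{>0}$-bijection $(\hZ^{\times}_{(p)} / N\ex_0^{\hZ}) \times_{p^{\Z/\deg \ex_0}} \R^{>0}/N\ex_0^{\Z} \silo \Gamma_{\ex_0}$ displayed just above re-confirms that every $\R^{>0}$-orbit in $\Gamma_{\ex_0}$ is a circle $\R^{>0}/N\ex_0^{\Z}$, hence has isotropy exactly $N\ex_0^{\Z}$, giving an independent check of the last sentence.
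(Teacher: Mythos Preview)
Your proposal is correct and matches the paper's approach exactly: the paper gives no argument beyond the sentence ``an immediate consequence of Theorem~\ref{t4}'' (the printed reference to Theorem~\ref{t1} is evidently a typo), and you have supplied precisely the elementary suspension computation $(\R^{>0})_{[P_0,u]} = (\Q^{>0})_{P_0}$ that makes this immediate. The additional remarks on well-definedness and disjointness are correct and harmless.
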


The theorem asserts that the points $\ex_0$ of $\eX_0$ with finite residue field correspond bijectively to the ``packets'' $\Gamma^{\Eh}_{\ex_0}$ of periodic orbits of length $\log N \ex_0$ in the $\R$-dynamical system $X_0$. Any periodic orbit $\gamma$ in $X_0$ is contained in $\Gamma^{\Eh}_{\ex_0}$ for a uniquely determined point $\ex_0$ of $\eX_0$ with finite residue field. 

\begin{rem}
We may also describe $X_0$ using $\deX_0 (\C)_{\Eh}$ instead of $\ceX_0 (\C)_{\Eh}$. For a (right) action of the monoid $\Nh_0$ on a set $Y$ we get an equivalence relation on $Y$ as follows: $y \sim y'$ if and only if there are $\nu , \nu' \in \Nh_0$ with $y \cdot \nu = y' \cdot \nu'$. Let $Y / \Nh_0$ denote the set of equivalence classes. Let $\nu \in \Nh_0$ act on $\deX_0 (\C)_{\Eh} \times \R^{> 0}$ by setting $(P_0 , u) \cdot \nu := (F_{\nu} (P_0) , \nu^{-1} u)$. We set
\[
\deX_0 (\C)_{\Eh} \times_{\Nh_0} \R^{> 0} := (\deX_0 (\C)_{\Eh} \times \R^{>0}) / \Nh_0 \; .
\]
The set $\deX_0 (\C)_{\Eh} \times_{\Nh_0} \R^{> 0}$ has a natural $\R^{> 0}$-action by multiplication on the second factor. It is straightforward to check that the natural map
\[
\deX_0 (\C)_{\Eh} \times \R^{> 0} \longrightarrow \ceX_0 (\C)_{\Eh} \times_{\Q^{> 0}_0} \R^{> 0} \; , \; (m,u) \mapsto [m,u]
\]
induces an $\R^{> 0}$-equivariant bijection
\[
\deX_0 (\C)_{\Eh} \times_{\Nh_0} \R^{> 0} \silo \ceX_0 (\C)_{\Eh} \times_{\Q^{> 0}_0} \R^{> 0} \; .
\]
\end{rem}
Now assume that $\eX_0$ is of finite type over $\spec \Z$ and $\C$ is the complex number field. In the following observation we omit $\Eh$ from the notation. One prediction in \cite{D3} was that $X_0$ should have a ``compactification'' $\oX_0$ corresponding to an Arakelov compactification $\oeX_0$ of $\eX_0$. The fixed points of the $\R$-action on $\oX_0$ should be the set $\eX_0 (\C) / F_{\infty}$ where $F_{\infty} : \eX_0 (\C) \to \eX_0 (\C)$ is induced by complex conjugation on $\C$. The isotropy groups in $\Q^{> 0}$ of the classical points $\eX_0 (\C) \subset \ceX_0 (\C)$ are trivial e.g. by Theorem \ref{t4}. More precisely, for $P_0 \in \eX_0 (\C)$ and $1 \neq q \in \Q^{> 0}$ the point $F_q (P_0)$ is no-longer a classical point. Namely, for an inclusion of fields $i : \kappa (\ex) \subset \C$ the map $i \verk (\;)^{\nu}$ cannot be conjugate to $i' \verk (\;)^{\nu'}$ for some other field embedding $i' : \kappa (\ex) \subset \C$ and $\nu , \nu' \in \Nh$ unless $\nu = \nu'$. Note here that $\nu = |\ker i \verk (\;)^{\nu} \verk \osigma|$ for all $\osigma \in \Aut \kappa (\ex)$. Hence the map
\[
\eX_0 (\C) \times \R^{> 0} \longrightarrow X_0 \; , \; (P_0 , u) \longmapsto [P_0 , u] = [P_0 , 1] \cdot u
\]
is an $\R^{> 0}$-equivariant inclusion. Letting $F_{\infty}$ act on $\R$ by $F_{\infty} (t) = -t$ we have an $\R^{> 0}$-equivariant isomorphism where $\R^{> 0}$ acts on $\R^{\times}$ by multiplication:
\[
\eX_0 (\C) \times \R^{> 0} \silo \eX_0 (\C) \times_{F_{\infty}} \R^{\times} \; .
\]
This gives an $\R^{> 0}$-immersion:
\[
\eX_0 (\C) \times_{F_{\infty}} \R^{\times} \hookrightarrow \ceX_0 (\C) \times_{\Q^{> 0}} \R^{> 0} = X_0 \; .
\]
It should extend to an $\R^{> 0}$-immersion where $\R = \R^{\times} \cup \{ 0 \}$
\[
\eX_0 (\C) \times_{F_{\infty}} \R \hookrightarrow \oX_0 \; .
\]
The fixed point set of the $\R^{>0}$-action on the left is $\eX_0 (\C) \times_{F_{\infty}} \{ 0 \} = \eX_0 (\C) / F_{\infty}$. Moreover for $z \in \eX_0 (\C) \setminus \eX_0 (\R)$ there are two orbits tending to the fixed point $[z,0]$, namely $[z , \pm 1] u$ for $u \to 0+$. For $z \in \eX (\R)$ there is one orbit with this property, namely $[z , 1] u$ for $u \to 0+$. In the case $\eX_0 = \spec \eo_k$, where $k / \Q$ is a number field, we had deduced this type of orbit structure from a consideration of $\Gamma$-factors, c.f. \cite[\S\,3]{D2}. Of course, it is unclear, why $\oX_0$ would not contain many more fixed points obtained in the same way from the points in $\ceX_0 (\C) \setminus \eX_0 (\C)$. The system $X_0$ may have to be replaced by a much smaller system: Is there a sub-dynamical system $Y_0 \subset X_0 = \ceX_0 (\C) \times_{\Q^{> 0}} \R^{> 0}$ or at least one which maps to $X_0$ such that $\dim Y_0 = 2d + 1$ where $d = \dim \eX_0$ and such that $Y_0$ contains at least one periodic orbit in $\Gamma_{\ex_0}$ for every closed point $\ex_0$ of $\eX_0$? If $d = 1$, is there such a $Y_0$ which is a Riemann surface lamination in the sense of \cite{G}? 
\section{Topology} \label{sec:4n}

We now introduce topologies on our spaces. We only consider integral normal schemes $\eX_0$ whose function field $K_0$ is countable. For brevity we call them {\it arithmetic schemes}. If as usual $K$ denotes an algebraic closure of $K_0$ and $\eX$ is the normalization of $\eX_0$ in $K$, then $\eX$ is an arithmetic scheme as well. In this section, $\C$ is an algebraically closed field with a valuation $|\;|$ and the corresponding topology. We begin with the affine case $\eX_0 = \spec R_0$ and write $\eX = \spec R$. Viewing $\deX (\C)$ as a set of multiplicative maps $P : R \to \C$ as in Remark \ref{t34} we give $\deX (\C)$ the topology of pointwise convergence. It is the subspace topology induced by the Tychonov topology of $\C^R = \prod_R \C$ on $\deX (\C)$ via the inclusion $\deX (\C) \subset \C^R , P \mapsto (P (r))_{r \in R}$. Since $R$ is countable, $\deX (\C)$ is a metrizable topological space.

\begin{lemma} \label{t7}
For affine arithmetic schemes $\eX_0$, the natural map
\[
\pr_{\eX} : \deX (\C) \longrightarrow \eX \; , \; (\ex , \oP^{\times}) \longmapsto \ex \quad \text{or} \quad P \longmapsto \ep = P^{-1} (0)
\]
is continuous.
\end{lemma}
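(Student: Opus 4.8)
The plan is to unwind the definition of the Zariski topology on $\eX = \Spec R$ and show that preimages of a sub-basis of open sets are open in the topology of pointwise convergence on $\deX(\C)$. Recall that the topology on $\Spec R$ has as a sub-basis the principal open sets $D(r) = \{\ep \in \Spec R \mid r \notin \ep\}$ for $r \in R$. So it suffices to check that $\pr_{\eX}^{-1}(D(r))$ is open in $\deX(\C)$ for every $r \in R$.

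Next I would translate the condition $r \notin \ep_P$ into a statement about the multiplicative map $P : R \to \C$. By property 2) in the identification of $\deX(\C)$ with multiplicative maps, we have $\ep_P = P^{-1}(0)$, so $r \notin \ep_P$ is equivalent to $P(r) \neq 0$. Therefore
\[
\pr_{\eX}^{-1}(D(r)) = \{ P \in \deX(\C) \mid P(r) \neq 0 \} = \deX(\C) \cap \ev_r^{-1}(\C^{\times}) \; ,
\]
where $\ev_r : \C^R \to \C$ is the $r$-th coordinate projection $Q \mapsto Q(r)$. Since the topology on $\deX(\C)$ is by definition the subspace topology induced from the product (Tychonov) topology on $\C^R$, the map $\ev_r$ is continuous on $\C^R$ and hence its restriction to $\deX(\C)$ is continuous. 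As $\C^{\times} = \C \setminus \{0\}$ is open in $\C$, the set $\ev_r^{-1}(\C^{\times})$ is open in $\C^R$, and so $\pr_{\eX}^{-1}(D(r))$ is open in $\deX(\C)$.

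Finally, since the sets $D(r)$ form a sub-basis for the Zariski topology on $\Spec R$ and preimages commute with unions and finite intersections, it follows that $\pr_{\eX}^{-1}(U)$ is open for every open $U \subset \eX$, i.e. $\pr_{\eX}$ is continuous. There is no real obstacle here: the only point requiring a moment's care is invoking the identification (established in the paragraph containing properties 1)--4) above) that a point $(\ex,\oP^{\times})$ of $\deX(\C)$ corresponds to the multiplicative map $P$ with $\ep_P = P^{-1}(0)$, so that the two descriptions of $\pr_{\eX}$ in the statement genuinely agree and the fibre over $\ep$ is $\{P \mid P^{-1}(0) = \ep\}$.
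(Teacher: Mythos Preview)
Your proof is correct and follows essentially the same idea as the paper's: both arguments reduce continuity to the observation that $\pr_{\eX}^{-1}(D(r)) = \{P : P(r) \neq 0\}$ (equivalently, $\pr_{\eX}^{-1}(V(I)) = \{P : P|_I = 0\}$), which is open (resp.\ closed) because evaluation at each $r$ is continuous for the product topology. The only cosmetic difference is that the paper works with closed sets and checks closedness via convergent sequences (using that $\deX(\C)$ is metrizable since $R$ is countable), whereas you work directly with the open sub-basis $D(r)$ and avoid any appeal to metrizability; your version is thus marginally more elementary.
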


\begin{proof}
A closed subset $A$ of $\eX$ has the form $A = \{ \ep \supset I \}$ for some ideal $I$ in $R$. Consider a convergent sequence $P_n \to P$ in $\deX (\C)$ where $P_n \in \pr^{-1}_{\eX} (A)$ for all $n$, i.e. $\ep_n = P^{-1}_n (0) \supset I$. Since $P_n (r) \to P (r)$ for all $r \in R$, it follows that $P (r) = 0$ for $r \in I$ and hence $\ep = P^{-1} (0) \supset I$ i.e. $P  \in \pr^{-1}_{\eX} (A)$. Hence $\pr^{-1}_{\eX} (A)$ is closed and therefore $\pr_{\eX}$ is continuous.
\end{proof}

Let $\eX'_0 = \spec R'_0 \subset \eX_0 = \spec R_0$ be the inclusion of an open affine (hence arithmetic) subscheme of $\eX_0$. Then we have $R_0 \subset R'_0 \subset K_0$. Let $\eX' = \spec R'$ be the normalization of $\eX'_0$ in $K$. It is an open subscheme of $\eX = \spec R$ and we have $R \subset R' \subset K$. There are commutative diagrams
\[
\vcenter{\xymatrix{
\eX' \ar@{}[r]|{\subset}  \ar[d]_{\pi'} & \eX \ar[d]^{\pi} \\
\eX'_0  \ar@{}[r]|{\subset} & \eX_0 
} }
\quad \text{and} \quad 
\vcenter{\xymatrix{
\deX\strich (\C) \ar@{}[r]|{\subset}  \ar[d]_{\pr_{\eX'}} & \deX (\C) \ar[d]^{\pr_{\eX}} \\
\eX'  \ar@{}[r]|{\subset} & \eX \, .
}}
\]
The inclusion map $\deX\strich (\C) \subset \deX (\C)$ sends $(\ex , \oP^{\times})$ to $(\ex , \oP^{\times})$ or in terms of multiplicative maps, $P'$ is sent to $P' \, |_R$. It follows that the inclusion is continuous.

\begin{lemma} \label{t8}
The space $\deX\strich (\C)$ is an open subspace of $\deX (\C)$.
\end{lemma}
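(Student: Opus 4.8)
We need to show that $\deX'(\C)$, viewed inside $\deX(\C)$ via $P' \mapsto P'|_R$, is an \emph{open} subset. Since both $\eX' = \Spec R'$ and $\eX = \Spec R$ carry the topology of Zariski opens and $\eX'$ is a \emph{standard} open, I would first reduce to the case where $\eX'_0 = \Spec R_0[1/g]$ for a single element $g \in R_0 \setminus \{0\}$ (a general open affine subscheme $\eX'_0 \subset \eX_0$ is covered by such basic opens, and openness is local, so it suffices to treat a basic open). For such a $g$ we have $R' = R[1/g]$ (normalization commutes with localization), and the point $\ex' \in \eX$ lies in $\eX'$ if and only if $g \notin \ep_{\ex'}$, i.e. $g(\ex') \neq 0$ in $\kappa(\ex')$.

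**The key identification.** The plan is to show that, under the inclusion $\deX'(\C) \subset \deX(\C)$, a multiplicative map $P : R \to \C$ lies in $\deX'(\C)$ if and only if $P(g) \neq 0$. One direction is immediate: if $P$ extends to a multiplicative map on $R[1/g]$, then $P(g)$ is invertible in $\C$, hence nonzero. Conversely, suppose $P : R \to \C$ is a point of $\deX(\C)$ with $P(g) \neq 0$. Then $g \notin \ep := P^{-1}(0)$, so $\ep$ is a prime of $R$ not containing $g$, hence comes from a prime $\ep' = \ep R[1/g]$ of $R' = R[1/g]$ with $R'/\ep' = (R/\ep)[1/\bar g] \subset \kappa(\ep)$. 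The character $\oP^\times : \kappa(\ep)^\times \to \C^\times$ is unchanged, so the condition $\Eh$ (which is a condition on $\oP^\times$ alone, not on the ring) still holds; thus $P$ extends uniquely to a point $P' \in \deX'(\C)$ with $P'|_R = P$. This gives a set-theoretic identification $\deX'(\C) = \{P \in \deX(\C) : P(g) \neq 0\}$.

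**Concluding openness.** Now the set $\{P \in \deX(\C) : P(g) \neq 0\}$ is the preimage of $\C \setminus \{0\}$ under the evaluation map $\deX(\C) \to \C$, $P \mapsto P(g)$, which is continuous by definition of the topology of pointwise convergence (it is the restriction of the projection $\C^R \to \C$ onto the $g$-coordinate). Since $\C \setminus \{0\}$ is open in $\C$, this preimage is open in $\deX(\C)$, proving the claim for basic opens. For a general open affine $\eX'_0 \subset \eX_0$, write $\eX'_0 = \bigcup_j \Spec R_0[1/g_j]$ as a finite union of basic opens inside $\eX_0$; then $\eX' = \bigcup_j \Spec R[1/g_j]$ and correspondingly $\deX'(\C) = \bigcup_j \{P \in \deX(\C) : P(g_j) \neq 0\}$ is a union of open sets, hence open.

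**Expected obstacle.** The only subtle point is the verification that the character condition $\Eh$ is genuinely intrinsic to $\oP^\times : \kappa(\ep)^\times \to \C^\times$ and does not secretly depend on whether we regard $\ep$ as a prime of $R$ or of $R'$. This is where one must check that passing from $R$ to the localization $R[1/g]$ changes neither the residue field $\kappa(\ep)$ nor the induced homomorphism on its multiplicative group, so that conditions such as $\Eh_f$, $\Eh_{fd}$, $\Eh_{fd_0}$, $\Eh_{\max}$ — all phrased in terms of $\ker \oP^\times$ and $\imm \oP^\times$ — transfer verbatim. Granting this (which is clear since $R/\ep \hookrightarrow R'/\ep' \hookrightarrow \kappa(\ep)$ induce the same fraction field), the argument is routine point-set topology.
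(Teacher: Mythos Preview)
Your argument correctly shows that $\deX'(\C)$ is an open \emph{subset} of $\deX(\C)$, but this is not the full content of the lemma. Here ``open subspace'' means that the inclusion $\deX'(\C)\hookrightarrow\deX(\C)$ is an open embedding, i.e.\ a homeomorphism onto an open subset. Since $\deX'(\C)$ carries its own topology (pointwise convergence on $R'$), one must check that this intrinsic topology agrees with the subspace topology inherited from $\deX(\C)$ (pointwise convergence on $R$). In fact, openness of the image was already available from Lemma~\ref{t7}, since $\deX'(\C)=\pr_\eX^{-1}(\eX')$; the paper's proof of Lemma~\ref{t8} is devoted to showing that the inclusion is an open \emph{map}, and it is precisely this matching of topologies that is used immediately afterward when the topology on $\deX(\C)$ for non-affine $\eX_0$ is defined by gluing over affine covers.

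That said, your reduction to basic opens $R'=R[1/g]$ makes the missing step very short: every $r'\in R'$ has the form $r g^{-n}$ with $r\in R$, so $P'(r')=P(r)P(g)^{-n}$, and hence pointwise convergence on $R$ within the set $\{P(g)\neq 0\}$ forces pointwise convergence on $R'$; combined with the continuity already noted before the lemma, this gives the required homeomorphism for basic opens, and your covering argument then handles general open affines. The paper takes a different route, working directly with an arbitrary open affine $\eX'_0$: it uses the identification of local rings $R'_{\ep'}=R_\ep$ to write each $r'\in R'$ as $r/s$ with $r,s\in R$ and $P(s)\neq 0$, and runs a sequence argument. Your basic-open reduction is arguably cleaner, but as written it stops one step short.
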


\begin{proof}
The inclusion being continuous it remains to show that it is an open map. Let $O \subset \deX\strich (\C)$ be an open subset. The set $O$ is open in $\deX (\C)$ if
\[
A' = (\deX (\C) \setminus \deX\strich (\C)) \cup A \quad \text{with} \; A = \deX\strich (\C) \setminus O
\]
is closed in $\deX (\C)$. Since $\deX\strich (\C) = \pr^{-1}_{\eX} (\eX')$ is open in $\deX (\C)$ by Lemma \ref{t7} it suffices to show that $\oA \subset A'$, where $\oA$ is the closure of $A$ in $\deX (\C)$. So, let $P_n \in A \subset \deX (\C)$ be a sequence with $P_n \to P \in \deX (\C)$. The inclusion $\deX\strich (\C) \subset \deX (\C)$ is given by  the restriction $P' \mapsto P' \, |_R$. Hence $P_n = P'_n \, |_R$ for a sequence of points $P'_n \in A \subset \deX\strich (\C)$. If $P \notin \deX\strich (\C)$, then $P \in A'$ and we are done. If $P \in \deX\strich (\C)$ then $P = P' \, |_R$. For the multiplicative maps $P'_n , P' : R' \to \C$ we have $P'_n (r) \to P' (r)$ for each $r \in R$ as $n \to \infty$. Let $\ep' = P^{'-1} (0) \subset R'$ and $\ep = P^{-1} (0) \subset R$ be the prime ideals belonging to $P'$ resp. $P$. We have $\ep = \ep' \cap R$. Since $\eX' = \spec R' \subset \eX = \spec R$ is an open immersion, the local rings of $\eX'$ in $\ep'$ and $\eX$ in $\ep$ are canonically isomorphic. Thus the inclusion $R \subset R'$ induces a commutative diagram:
\[
\xymatrix{
R_{\ep} \ar[dr]_P \ar@{=}[rr] & & R'_{\ep'} \ar[dl]^{P'} \\
 & \C & 
}
\]
Here $P , P'$ also denote the unique multiplicative extensions of $P$ and $P'$ to $R_{\ep}$ resp. $R'_{\ep'}$. We have $R' \subset R'_{\ep'} = R_{\ep}$. Given $r' \in R'$ write $r' = r/ s$ with $r \in R , s \in R \setminus \ep$. Since $P_n (s) \to P (s) \neq 0$ for $n \to \infty$ we have $P_n (s) \neq 0$ for $n \ge n_0 (s)$. Pointwise convergence $P_n \to P$ on $R$ gives:
\[
P'_n (r') = P_n (r) / P_n (s) \longrightarrow P (r) / P (s) = P' (r') 
\]
for $n \to \infty$. Thus we have $P'_n \to P'$ pointwise on $R'$ and therefore $P' \in A \subset A'$ since $P_n \in A$ and $A$ is closed in $\deX\strich (\C)$. Hence we have shown that $\oA \subset A'$.
\end{proof}

Now let $\eX_0$ be any arithmetic scheme. For any affine open (and hence arithmetic) subscheme $\eX'_0$ of $\eX_0$ consider the inclusion $\deX\strich (\C) \subset \deX (\C)$ mapping $(\ex' , \oP^{\times})$ to $(\ex', \oP^{\times})$ via the identification $\Oh_{\eX', \ex'} = \Oh_{\eX , \ex'}$. We give $\deX (\C)$ the topology for which $O \subset \deX (\C)$ is open if and only if $O \cap \deX\strich (\C)$ is open in $\deX\strich (\C)$ for any $\eX'_0$. Given any covering $\{ \eX^i_0\}_{i \in I}$ of $\eX_0$ by affine open subschemes $\eX^i_0$, the set $O$ is open if and only if $O \cap \deX\!\,^i (\C)$ is open for all $i$. To see the non-trivial direction, one covers $\eX^i_0 \cap \eX'_0$ by open affine subschemes $\eX^{ij}_0$ and notes that $\deX\!\,^{ij} (\C)$ is an open subspace of $\deX\!\,^i (\C)$ and $\deX\strich (\C)$ by Lemma \ref{t8}. Thus, if $O \cap \deX\!\,^i (\C)$ is open in $\deX\!\,^i (\C)$ for all $i$, it follows that $O \cap \deX\!\,^{ij} (\C)$ is open in $\deX\!\,^{ij} (\C)$ and hence in $\deX\strich (\C)$. Hence
\[
O \cap \deX\strich (\C) = \bigcup_{i,j} O \cap \deX\!\,^{ij} (\C)
\]
is open in $\deX\strich (\C)$. Let $G = \Aut_{K_0} (K)$. We equip $\deX_0 (\C) = \deX (\C) / G$ with the quotient topology. Using Lemma \ref{t7} one sees that
\[
\pr_{\eX} : \deX (\C) \longrightarrow \eX \; \text{and hence also} \; \pr_{\eX_0} : \deX_0 (\C) \longrightarrow \eX_0
\]
are continuous.

\begin{lemma} \label{t9}
For any arithmetic scheme $\eX_0$, the group $G$ acts by homeomorphisms on $\deX (\C)$ and the injective maps $F_{\nu} : \deX (\C) \hookrightarrow \deX (\C)$ for $\nu \in \Nh$ are continuous, closed and open. In particular $F_{\nu} (\deX (\C))$ is closed and open in $\deX (\C)$. 
\end{lemma}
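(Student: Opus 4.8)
The plan is to reduce at once to the affine case and then read everything off the coordinatewise description of the topology of pointwise convergence. Since $\pr_{\eX}\colon\deX(\C)\to\eX$ is continuous, for every affine open $\eX'_0\subset\eX_0$ the set $\deX\strich(\C)=\pr^{-1}_{\eX}(\eX')$ is open in $\deX(\C)$, and these sets cover $\deX(\C)$. Both the $G$-action and the maps $F_{\nu}$ preserve each $\deX\strich(\C)$: $F_{\nu}$ fixes the point $\ex$, and $G$ preserves $\eX'=\pi^{-1}(\eX'_0)$ because $\pi\verk\sigma=\pi$. Hence continuity, openness, closedness, and the homeomorphism property may be checked on the charts $\deX\strich(\C)$ separately, and we may assume $\eX_0=\spec R_0$, $\eX=\spec R$, with $\deX(\C)\subset\C^R$ carrying the Tychonov topology. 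In this setting the $G$-action is immediate: for $\sigma\in G$ the ring automorphism $\sigma\colon R\to R$ is a bijection of the index set, so $P\mapsto P\verk\sigma$ is the restriction to $\deX(\C)$ of the corresponding coordinate permutation of $\C^R$, a homeomorphism with inverse coming from $\sigma^{-1}$.

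For $F_{\nu}$, since $F_{\nu}(P)(r)=P(r^{\nu})$ the $r$-th coordinate of $F_{\nu}(P)$ is the $r^{\nu}$-th coordinate of $P$, so $F_{\nu}$ is continuous. Because $\Oh_{\eX,\ex}$, and hence $R$ itself, is integrally closed in the divisible field $K$, the map $(\;)^{\nu}\colon R\to R$ is surjective (a $\nu$-th root in $K$ of an element of $R$ is integral over $R$, hence lies in $R$). Choosing a set-theoretic section $s\colon R\to R$ of $(\;)^{\nu}$ one gets $P(r)=P(s(r)^{\nu})=F_{\nu}(P)(s(r))$, so $Q\mapsto Q\verk s$ is a continuous inverse of $F_{\nu}$ on its image; with the injectivity of $F_{\nu}$ already recorded, this shows $F_{\nu}\colon\deX(\C)\to F_{\nu}(\deX(\C))$ is a homeomorphism. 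It therefore suffices to prove $F_{\nu}(\deX(\C))$ is open and closed in $\deX(\C)$, since a homeomorphism onto a clopen subset is automatically an open and closed self-map.

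The heart of the matter is the identification
\[
F_{\nu}(\deX(\C))=\{\,Q\in\deX(\C)\mid Q(\zeta)=1\ \text{for all}\ \zeta\in\mu_{\nu}(K)\,\}.
\]
The inclusion ``$\subseteq$'' is clear: $\zeta^{\nu}=1$ forces $F_{\nu}(P)(\zeta)=P(\zeta)^{\nu}=1$. For ``$\supseteq$'', if $Q|_{\mu_{\nu}(K)}\equiv1$ then $Q$ is constant on the fibres of $(\;)^{\nu}\colon R\to R$ (for $r\neq0$ one has $r^{\nu}=(r')^{\nu}$ iff $r'=\zeta r$ with $\zeta\in\mu_{\nu}(K)\subset R$), so $Q=F_{\nu}(P)$ for a unique multiplicative $P\colon R\to\C$; and $P$ satisfies condition $\Eh$ because $\ker\oP^{\times}\subseteq\ker\overline{Q}^{\times}$ while $\imm\oP^{\times}=\imm\overline{Q}^{\times}$ (using surjectivity of $(\;)^{\nu}$ on $\kappa(\ex)^{\times}$), and these two relations transport each of $\Eh_f,\Eh_{fd},\Eh_{fd_0},\Eh_{\max}$ from $Q$ to $P$. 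Now $\mu_{\nu}(K)$ is finite (cyclic of order $\nu$, as $K$ contains all roots of unity), and for each $\zeta\in\mu_{\nu}(K)$ the evaluation $Q\mapsto Q(\zeta)$ is continuous with values in the finite set $\mu_{\nu}(\C)$; hence $\{Q\mid Q(\zeta)=1\}$ is at once the preimage of a small open disc about $1$ and the preimage of the closed point $\{1\}$, so it is clopen. A finite intersection of clopen sets being clopen, $F_{\nu}(\deX(\C))$ is clopen. Globalizing, one checks $F_{\nu}(\deX(\C))\cap\deX\strich(\C)=F_{\nu}(\deX\strich(\C))$, clopen in $\deX\strich(\C)$, so $F_{\nu}(\deX(\C))$ is clopen in $\deX(\C)$, and the homeomorphism and openness/closedness statements globalize the same way.

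I expect the main obstacle to be the image identification, i.e. checking that condition $\Eh$ descends along $F_{\nu}$; this rests on the surjectivity of the $\nu$-th power map on $R$ and on $\kappa(\ex)^{\times}$ together with a short case check for the concrete conditions $\Eh_f,\Eh_{fd},\Eh_{fd_0},\Eh_{\max}$. The one genuinely useful observation is that ``$Q$ is constant on the fibres of $(\;)^{\nu}$'' involves only the \emph{finite} group $\mu_{\nu}(K)$, which is exactly why the image is cut out by finitely many clopen conditions rather than an infinite intersection of merely closed ones; the affine reduction and the chart-compatibility bookkeeping are routine given the continuity of $\pr_{\eX}$ already established.
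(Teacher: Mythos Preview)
Your proof is correct and follows essentially the same route as the paper: reduce to the affine case, identify $F_{\nu}(\deX(\C))=\{Q\in\deX(\C)\mid Q|_{\mu_{\nu}(K)}=1\}$ to see the image is clopen, deduce that $F_{\nu}$ is a homeomorphism onto its clopen image (hence open and closed as a self-map), and globalize via $F_{\nu}(Z)\cap\deX\strich(\C)=F_{\nu}(Z\cap\deX\strich(\C))$. The only cosmetic differences are that you construct the inverse on the image explicitly via a set-theoretic section of $(\;)^{\nu}$ (the paper instead checks closedness by a sequence argument using surjectivity of $(\;)^{\nu}$ on $R$), and you spell out the descent of condition~$\Eh$ in the ``$\supseteq$'' direction, which the paper leaves implicit.
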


\begin{proof}
We only need to prove the properties of $F_{\nu}$. First assume that $\eX_0 = \spec R_0$ is affine. Lemma \ref{t46n} implies the equations
\begin{equation}
\label{eq:21}
F_{\nu} (\deX (\C)) = \{ P \in \deX (\C) \mid P (\mu_{\nu} (K)) = 1 \} \subset \deX (\C) \; ,
\end{equation}
and
\[
\deX (\C) \setminus F_{\nu} (\deX (\C)) = \bigcup_{\chi \neq 1} \{ P \in \deX (\C) \mid P \, |_{\mu_{\nu} (K)} = \chi \} \; .
\]
Here $\chi$ is in the finite group $\Hom (\mu_{\nu} (K) , \C^{\times})$. It follows that $F_{\nu} (\deX (\C))$ is both closed and open in $\deX (\C)$. Now let $A$ be a closed subset of $\deX (\C)$. If $P_n \in F_{\nu} (A)$ and $P_n \to P$ in $\deX (\C)$, it follows that $P \in F_{\nu} (\deX (\C))$. Writing $P_n = F_{\nu} (Q_n)$ and $P = F_{\nu} (Q)$ it follows that $Q_n (r) \to Q (r)$ for any $r \in R$ since any $r$ has the form $r = s^{\nu}$ for some $s \in R$. Since $A$ is closed we see that $Q \in A$ and hence $P \in F_{\nu} (A)$. Thus $F_{\nu} (A)$ is closed. The continuity of $F_{\nu}$ being clear, it follows that $F_{\nu}$ induces a homeomorphism onto its image $F_{\nu} (\deX (\C))$. Since $F_{\nu} (\deX (\C))$ is open in $\deX (\C)$, it follows that the map $F_{\nu} : \deX (\C) \to \deX (\C)$ is also open. If $\eX_0$ is an arbitrary arithmetic scheme and $\eX'_0 \subset \eX_0$ an affine open (hence arithmetic) subscheme, we have
\begin{equation}
\label{eq:22}
F_{\nu} (Z) \cap \deX\strich (\C) = F_{\nu} (Z \cap \deX\strich (\C)) \; ,
\end{equation}
for any subset $Z \subset \deX (\C)$ since $F_{\nu} (\ex , \oP^{\times}) = (\ex , F_{\nu} (\oP^{\times}))$. If $Z$ is closed (open) in $\deX (\C)$, then by definition of the topology of $\deX (\C)$, the set $Z \cap \deX\strich (\C)$ is closed (open) in $\deX\strich (\C)$. From the affine case we know that $F_{\nu} (Z \cap \deX\strich (Z))$ is closed (open) in $\deX\strich (\C)$. Since this holds for all $\eX'_0$, using \eqref{eq:22} it follows that $F_{\nu} (Z)$ is closed (open) in $\deX (\C)$. 
\end{proof}

As usual let $\Nh_0$ be the submonoid of $\Nh$ generated by a set of prime numbers $\car \Nh_0 \supset \car \eX_0$ and let $\Q^{> 0}_0$ be the subgroup of $\Q^{> 0}$ generated by $\Nh_0$. Recall that the maps $F_{\nu}$ extend to bijections $F_{\nu} : \ceX (\C) \silo \ceX (\C)$. We give $\ceX (\C) = \colim_{\Nh_0} \deX (\C)$ the inductive limit topology. It is the finest topology such that for all $\nu \in \Nh_0$ the inclusions
\begin{equation}
\label{eq:23}
F^{-1}_{\nu} \, |_{\deX (\C)} : \deX (\C) \hookrightarrow \ceX (\C)
\end{equation}
are continuous. Thus $Z \subset \ceX (\C)$ is closed, resp. open if and only if $F_{\nu} (Z) \cap \deX (\C)$ is closed, resp. open in $\deX (\C)$ for all $\nu \in \Nh_0$.

\begin{prop} \label{t10}
a) $\deX (\C)$ is a closed and open subspace of $\ceX (\C)$.\\
b) $F_q : \ceX (\C) \to \ceX (\C)$ is a homeomorphism for every $q \in \Q^{> 0}_0$.\\
c) The group $G$ acts by homeomorphisms on $\ceX (\C)$. 
\end{prop}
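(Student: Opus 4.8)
The plan is to verify the three assertions by reducing everything to the definition of the inductive limit topology on $\ceX (\C) = \colim_{\Nh} \deX (\C)$, and then invoking Lemma \ref{t9}. Recall that $Z \subset \ceX (\C)$ is closed (resp.\ open) precisely when $F_{\nu}(Z) \cap \deX (\C)$ is closed (resp.\ open) in $\deX (\C)$ for every $\nu \in \Nh$.

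For part a), I would show that $\deX (\C)$, viewed inside $\ceX (\C)$ via \eqref{eq:23} with $\nu = 1$, is both closed and open. Take $Z = \deX (\C)$. For $\nu \in \Nh$ the set $F_{\nu}(\deX (\C)) \cap \deX (\C)$ is just $F_{\nu}(\deX (\C))$, which by Lemma \ref{t9} is closed and open in $\deX (\C)$. Hence $\deX (\C)$ is closed and open in $\ceX (\C)$. (More generally each $F^{-1}_{\nu}(\deX (\C))$ is closed and open, since $F_{\mu}(F^{-1}_{\nu}(\deX(\C))) \cap \deX(\C) = F_{\mu}(\deX(\C)) \cap F_{\nu}(\deX(\C))$ for the appropriate reindexing, again closed and open by Lemma \ref{t9}.)

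For parts b) and c): since $F_q = F_{\nu} \verk F^{-1}_{\nu'}$ and the $G$-action commutes with all the $F_{\nu}$, it suffices to check that $F_{\nu}$ and each $\sigma \in G$ are homeomorphisms of $\ceX (\C)$, because the bijections $F_q$ and the $G$-action are then composites and inverses of such maps. For a bijection $\Phi$ of $\ceX (\C)$ that commutes with all $F_{\mu}$ (up to the colimit reindexing) and restricts to a homeomorphism $\deX (\C) \to \deX (\C)$, one checks directly from the colimit description that $\Phi$ and $\Phi^{-1}$ are continuous: for $Z$ closed in $\ceX (\C)$, $F_{\mu}(\Phi^{-1}(Z)) \cap \deX (\C) = \Phi^{-1}(F_{\mu}(Z) \cap \deX (\C))$ (using that $\Phi$ preserves $\deX (\C)$ and commutes with $F_{\mu}$), and the right side is closed in $\deX (\C)$ since $F_{\mu}(Z) \cap \deX (\C)$ is closed and $\Phi|_{\deX (\C)}$ is a homeomorphism. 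This gives continuity of $\Phi$; the same argument applied to $\Phi^{-1}$ finishes it. For $\Phi = \sigma \in G$ the restriction to $\deX (\C)$ is a homeomorphism by Lemma \ref{t9}; for $\Phi = F_{\nu}$ the restriction is the open, closed, continuous injection of Lemma \ref{t9}, whose image $F_{\nu}(\deX (\C))$ we must handle with a little care.

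The main subtlety I anticipate is bookkeeping with the colimit reindexing for $F_{\nu}$: unlike elements of $G$, the map $F_{\nu}$ does not preserve the subset $\deX (\C) \subset \ceX (\C)$ on the nose, it shifts the diagram index. Concretely one should use that $\ceX (\C)$ is the union of the open-and-closed pieces $F^{-1}_{\mu}(\deX (\C))$ from part a), that $F_{\nu}$ carries $F^{-1}_{\mu}(\deX (\C))$ homeomorphically onto $F^{-1}_{\mu/\gcd(\mu,\nu)}(\cdot)$-type pieces compatibly, and that a set is open in $\ceX (\C)$ iff its intersection with each such piece is open; then continuity and openness of $F_{\nu}$ on each piece (from Lemma \ref{t9}) glue to the statement on $\ceX (\C)$. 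Once the reindexing is written out, everything is formal, so I expect no real obstacle beyond keeping the indices straight.
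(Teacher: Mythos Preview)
Your overall strategy matches the paper's: reduce everything to Lemma \ref{t9} via the colimit criterion for closed/open sets in $\ceX(\C)$. Two points need tightening.

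First, in a) you only check that $\deX(\C)$ is closed and open as a \emph{subset}; the word ``subspace'' in the statement also demands that the inclusion be a topological embedding. The paper obtains this by running your argument for an arbitrary closed (resp.\ open) $Z \subset \deX(\C)$ rather than only $Z = \deX(\C)$: since $F_\nu(Z) \cap \deX(\C) = F_\nu(Z)$ is closed (resp.\ open) in $\deX(\C)$ by Lemma \ref{t9}, the inclusion is a closed and open map, hence a homeomorphism onto its image. (Aside: your parenthetical identity $F_\mu(F^{-1}_\nu(\deX(\C))) \cap \deX(\C) = F_\mu(\deX(\C)) \cap F_\nu(\deX(\C))$ is not correct; the left side is $(F_{\nu'}\,|_{\deX(\C)})^{-1}(F_{\mu'}(\deX(\C)))$ with $\mu' = \mu/\gcd(\mu,\nu)$ and $\nu' = \nu/\gcd(\mu,\nu)$. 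Your conclusion still holds, but via continuity of $F_{\nu'}$ rather than an intersection formula.)

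Second, for b) the paper sidesteps the gluing-and-reindexing you anticipate with a two-line direct computation. For $O \subset \ceX(\C)$ open and any $\mu \in \Nh$ one has $F_\mu(F_\nu(O)) \cap \deX(\C) = F_{\mu\nu}(O) \cap \deX(\C)$, open by definition, so $F_\nu(O)$ is open. For the inverse, $F_\mu(F^{-1}_\nu(O)) \cap \deX(\C) = (F_\nu\,|_{\deX(\C)})^{-1}(F_\mu(O))$; since $F_\mu(O)$ is open in $\ceX(\C)$ by the previous sentence, $\deX(\C)$ is an open subspace by a), and $F_\nu\,|_{\deX(\C)}$ is continuous by Lemma \ref{t9}, this is open in $\deX(\C)$, so $F^{-1}_\nu(O)$ is open. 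Your gluing approach can be made to work, but this direct route avoids the indexing bookkeeping and any risk of circularly using the homeomorphism $F^{-1}_\mu(\deX(\C)) \cong \deX(\C)$ before it is established.
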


\begin{proof}
a) By \eqref{eq:23} for $\nu = 1$, the inclusion $\deX (\C) \subset \ceX (\C)$ is continuous. We show that it is a closed (open) map. Thus let $Z \subset \deX (\C)$ be closed (open). Then $F_{\nu} (Z) \cap \deX (\C) = F_{\nu} (Z)$ is closed (open) in $\deX (\C)$ for every $\nu$ by Lemma \ref{t9}. Hence $Z$ is closed (open) in $\ceX (\C)$. \\
b) Fix $\nu \in \Nh_0$. If $O \subset \ceX (\C)$ is open, then for all $\mu \in \Nh_0$,
\[
F_{\mu} (F_{\nu} (O)) \cap \deX (\C) = F_{\mu\nu} (O) \cap \deX (\C)
\]
is open in $\deX (\C)$ by definition. Hence $F_{\nu} (O)$ is open in $\ceX (\C)$ as well. On the other hand
\begin{equation}
\label{eq:24}
F_{\mu} (F^{-1}_{\nu} (O)) \cap \deX (\C) = F^{-1}_{\nu} (F_{\mu} (O)) \cap \deX (\C) = (F_{\nu} \, |_{\deX (\C)})^{-1} (F_{\mu} (O)) \; .
\end{equation}
The map
\[
F_{\nu} \, |_{\deX (\C)} : \deX (\C) \longrightarrow \deX (\C) \subset \ceX (\C)
\]
is continuous and we have seen that $F_{\mu} (O)$ is open. Hence the subsets in \eqref{eq:24} are open for every $\mu$ and therefore $F^{-1}_{\nu} (O)$ is open. In conclusion, $F_{\nu}$ is a homeomorphism for every $\nu \in \Nh_0$ and therefore also for every $\nu \in \Q^{> 0}_0$. Assertion c) is clear.
\end{proof}

We give $\ceX_0 (\C) = \ceX (\C) / G$ the quotient topology. Then $\ceX_0 (\C)$ is homeomorphic to $\colim_{\Nh_0} \deX_0 (\C)$ with the inductive limit topology. The projections
\[
\dpi : \deX (\C) \longrightarrow \deX_0 (\C) \quad \text{and} \quad \cpi : \ceX (\C) \longrightarrow \ceX_0 (\C)
\]
are continuous and since $G$ acts by homeomorphisms, also open. Moreover the projections
\[
\pr_{\eX} : \ceX (\C) \longrightarrow \eX \quad \text{and} \quad \pr_{\eX_0} : \ceX_0 (\C) \longrightarrow \eX_0
\]
are continuous. It suffices to show that $\pr_{\eX}$ is continuous, i.e. that
\[
\pr_{\eX} \verk (F^{-1}_{\nu} \, |_{\deX (\C)}) : \deX (\C) \longrightarrow \eX
\]
is continuous for each $\nu \in \Nh_0$. Since $\pr_{\eX} = \pr_{\eX} \verk F_{\nu}$, this follows from the continuity of $\pr_{\eX} : \deX (\C) \to \eX$ which was noted before Lemma \ref{t9}.

Next we discuss the action of $G$ on $\deX (\C)$ and show that $\deX_0 (\C)$ is metrizable and in particular Hausdorff if $\eX_0$ is affine or more generally, if $\eX_0$ carries an invertible ample sheaf.

\begin{prop}
\label{t85}
Let $\eX_0$ be an arithmetic scheme. Then the right-action map
\[
\deX (\C) \times G \longrightarrow \deX (\C)
\]
is continuous.
\end{prop}

\begin{proof}
$\deX (\C)$ is covered by the open $G$-invariant subspaces $\deX\strich (\C)$ with $\eX'_0 \subset \eX_0$ an open affine subscheme. Since the assertion is local on $\deX (\C) \times G$ we may assume that $\eX_0 = \spec R_0$ is affine. For a sequence $(P_n , \sigma_n)$ in $\deX (\C) \times G$ with $(P_n , \sigma_n) \to (P , \sigma)$ we have to show that for all $r \in R$ we have
\[
P_n (\sigma_n (r)) \longrightarrow P (\sigma (r)) \quad \text{for} \; n \to \infty \; .
\]
Since $\sigma_n \sigma^{-1} \to e$ in $G$ and since the stabilizer of $r$ in $G$ is open, we have $P_n (\sigma_n (r)) = P_n (\sigma (r))$ for $n$ large enough. For $n \to \infty$ this tends to $P (\sigma (r))$.
\end{proof}

\begin{prop}
\label{t86}
For affine arithmetic schemes $\eX_0 = \spec R_0$, the space $\deX (\C)$ carries a $G$-invariant metric inducing the topology. If $\C$ is separable, the space $\deX (\C)$ is second countable and separable.
\end{prop}

\begin{proof}
It suffices to show that $\C^R = \map (R , \C)$ has a $G$-invariant metric inducing the topology. Since $R$ is countable by assumption, it is known that the metrics
\begin{equation}
\label{eq:861}
d (P , P') = \sum_{r \in R} a_r \frac{|P (r) - P' (r)|}{1 + |P (r) - P' (r)|} \quad \text{with} \; a_r > 0 \; , \; \sum_{r \in R} a_r < \infty
\end{equation}
induce the product topology on $\C^R$. The $G$-orbits $\eo$ of the $G$-action on $R$ are finite. Choose $b_{\eo} > 0$ with $\sum_{\eo} |\eo| \eb_{\eo} < \infty$. Then the metric 
\[
d (P , P') = \sum_{\eo} b_{\eo} \sum_{r \in \eo} \frac{|P (r) - P' (r)|}{1 + |P (r) - P' (r)|} 
\]
satisfies $d (P^{\sigma} , P^{'\sigma}) = d (P , P')$ for all $\sigma \in G$ and it is of the form \eqref{eq:861} with $a_r = b_{\eo}$ for $r \in \eo$. Note that
\[
\sum_r a_r = \sum_{\eo} \big( \sum_{r \in \eo} 1) b_{\eo} = \sum_{\eo} |\eo| b_{\eo} < \infty \; .
\]
If $\C$ is separable, the metric space $\C^R$ is separable as well and hence second countable. Hence the subspace $\deX (\C)$ is second countable and hence separable as well.
\end{proof}

\begin{prop}
\label{t87}
Let $(X , d)$ be a metric space with an action of a compact group $G$ of isometries and such that the maps $G \to X , \sigma \to x^{\sigma}$ are continuous for all $x \in X$. Then we obtain a metric $\delta$ on $X / G$ by setting
\[
\delta (xG , yG) = \min_{\sigma , \tau} d (x^{\sigma} , y^{\tau}) = \min_{\sigma} d (x^{\sigma} , y) = \min_{\tau} d (x , y^{\tau}) \; .
\]
The metric $\delta$ induces the quotient topology. In particular $X / G$ is Hausdorff. 
\end{prop}

\begin{proof}
The map $G \times G \to \R , (\sigma , \tau) \mapsto d (x^{\sigma} , y^{\tau})$ is continuous and $G \times G$ is compact. Hence the first minimum is obtained and similarly the other two. The equalities follow since $G$ acts by isometries. In particular $\delta (x G , yG) = 0$ implies that $d (x^{\sigma} , y^{\tau}) = 0$ for some $\sigma , \tau$, i.e. $x^{\sigma} = y^{\tau}$ and hence $xG = yG$. Thus $\delta$ is positive definite. We have
\[
d (x^{\sigma} , z^{\tau}) \le d (x^{\sigma} , y) + d (y , z^{\tau}) \; .
\]
Hence
\[
\inf_{\sigma} d (x^{\sigma} , z^{\tau}) \le \inf_{\sigma} d (x^{\sigma} , y) + d (y , z^{\tau}) = \delta (xG , yG) + d (y , z^{\tau}) 
\]
and therefore: 
\begin{align*}
\delta (xG , zG) & = \inf_{\sigma , \tau} d (x^{\sigma} , z^{\tau}) \le \delta (xG , yG) + \inf_{\tau \in G} d (y , z^{\tau}) \\
& = \delta (xG , yG) + \delta (yG , zG) \; .
\end{align*}
The estimate $\delta (xG , yG) \le d (x,y)$ shows that the projection map $X \to (X / G)_{\delta}$ is continuous, where $(X / G)_{\delta}$ is $X / G$ with the topology induced by $\delta$. Hence the map $X / G \to (X / G)_{\delta}$ is continuous. To show that $(X / G)_{\delta} \to X / G$ is continuous, since $(X / G)_{\delta}$ is first countable, we can argue with sequences. Assume $\delta (x_n G , xG) \to 0$ for $n \to \infty$. This means that there are $\sigma_n \in G$ with $d (x^{\sigma_n}_n , x) \to 0$ i.e. $x^{\sigma_n}_n \to x$ in $X$. It follows that $x_n G = x^{\sigma_n}_n G \to xG$ in $X / G$.
\end{proof}

We now obtain the desired result about the topology of $\deX_0 (\C)$:

\begin{cor}
\label{t88}
For any affine arithmetic scheme $\eX_0 = \spec R_0$ there are a $G$-invariant metric $d$ on $\deX (\C)$ and a metric $\delta$ on $\deX_0 (\C) = \deX (\C) / G$ inducing the topology on $\deX (\C)$ and the (quotient-)topology on $\deX_0 (\C)$. If $\pi : \deX (\C) \to \deX_0 (\C)$ denotes the quotient map, we have
\[
\delta (\pi (P) , \pi (P')) \le d (P, P') \quad \text{for all} \; P ,  P' \in \deX (\C) \; .
\]
The topological space $\deX_0 (\C)$ is metrizable and separable and in particular Hausdorff.
\end{cor}

This result has the following consequence:

\begin{cor} \label{t89n}
Let $\eX_0$ be an arithmetic scheme which carries an ample invertible sheaf. Then the spaces $\deX (\C) , \deX_0 (\C) , \ceX (\C)$ and $\ceX_0 (\C)$ are Hausdorff.
\end{cor}

\begin{proof}
By \cite[Lemma 28.29.5, (3)]{stacks} any two points of $\eX_0$ lie in a common open affine subscheme of $\eX_0$. Hence, given points $(\ex , \oP^{\times})$ and $(\ey , \onQ^{\times})$ of $\deX (\C)$, there is an open affine $\eX'_0 \subset \eX_0$ containing the images of $\ex$ and $\ey$ in $\eX_0$. It follows that $(\ex , \oP^{\times})$ and $(\ey , \onQ^{\times})$ lie in the Hausdorff open subspace $\deX' (\C)$ of $\deX (\C)$. Hence $\eX (\C)$ is Hausdorff. The points $(\ex , \oP^{\times}) G$ and $(\ey , \onQ^{\times}) G$ of $\deX_0 (\C)$ lie in $\deX'_0 (\C)$ which is Hausdorff by Corollary \ref{t88}. Hence $\deX_0 (\C)$ is Hausdorff as well. Since any two points of $\ceX (\C)$ resp. $\ceX_0 (\C)$ can be mapped to the open subspace $\deX (\C)$ resp. $\deX_0 (\C)$ by a suitable Frobenius homeomorphism $F_r$, it follows that $\ceX (\C)$ and $\ceX_0 (\C)$ are Hausdorff as well. 
\end{proof}

\begin{rem}
We do not know, whether $\deX (\C)$ or $\deX_0 (\C)$ are Hausdorff if we only assume that $\eX_0$ is separated: The functor $X \mapsto W_{\rat} (X) (\C)$ is far from commuting with cartesian products.
\end{rem}

The following theorem contributes a basic insight into the structure of the $\R^{> 0}$-dynamical systems studied in section \ref{sec:3}.

Consider the natural continuous maps from $\deX (\C) , \deX_0 (\C) , \ceX (\C)$ and $\ceX_0 (\C)$ to $\spec \Z$ and denote their fibres over $\spec \F_p$ resp. $\spec \Q$ by $\deX (\C)_p$ resp. $\deX_0 (\C)_{\Q}$ etc. Consider the $G$-invariant subspace
\[
\deX (\C)_{in} = \{ (\ex , \oP^{\times}) \in \deX (\C) \mid \oP^{\times} \, |_{\mu (\kappa (\ex))} \; \text{is injective} \, \} \; .
\]
We give
\[
\deX (\C)_{p , in} = \deX (\C)_{in} \cap \deX (\C)_p \quad \text{and} \quad \deX (\C)_{\Q , in} = \deX (\C)_{in} \cap \deX (\C)_{\Q}
\]
the subspace topologies inside $\deX (\C)$. The quotient topologies on 
\[
\deX_0 (\C)_{p , in} := \deX (\C)_{p , in} / G \quad \text{and} \quad \deX_0 (\C)_{\Q , in} := \deX (\C)_{\Q , in} / G
\]
agree with the subspace topologies inside of $\deX_0 (\C)$. For a prime number $p \in \car \eX_0$ the Frobenius endomorphisms $F_p$ of $\deX (\C)$ and $\deX_0 (\C)$ restrict to homeomorphisms $F_p$ of $\deX (\C)_p$ and $\deX_0 (\C)_p$. Hence the subgroup $p^{\Z}$ of $\Q^{> 0}_0$ acts on these spaces with $p$ acting by $F_p$, as usual. 

\begin{theorem}
\label{t710n}
The following canonical $\R^{> 0}$-equivariant maps are continuous bijections: 
\[
\deX (\C)_{\Q , in} \times \R^{> 0} \amalg \coprod_{p \in \car \eX_0} \deX (\C)_{p , in} \times_{p^{\Z}} \R^{> 0} \silo X = \ceX (\C)_{\Eh_{\tors}} \times_{\Q^{> 0}_0} \R^{> 0}
\]
and
\[
\deX_0 (\C)_{\Q , in} \times \R^{> 0} \amalg \coprod_{p \in \car \eX_0} \deX_0 (\C)_{p , in} \times_{p^{\Z}} \R^{> 0} \silo X_0 = \ceX_0 (\C)_{\Eh_{\tors}} \times_{\Q^{> 0}_0} \R^{> 0} \; .
\]
\end{theorem}

\begin{proof}
By Proposition \ref{2xn}, every $\Q^{> 0}_0$-orbit in $\ceX (\C)_{\Q , \Eh_{\tors}} \times \R^{> 0}$ contains a unique representative in $\deX (\C)_{\Q , in} \times \R^{> 0}$. Hence the natural map
\[
\deX (\C)_{\Q , in} \times \R^{> 0} \longrightarrow \ceX (\C)_{\Q , \Eh_{\tors}} \times_{\Q^{> 0}_0} \R^{> 0}
\]
is a continuous bijection. By the same proposition, the natural map
\[
\deX (\C)_{p , in} \times_{p^{\Z}} \R^{> 0} \longrightarrow \ceX (\C)_{p , \Eh_{\tors}} \times_{\Q^{> 0}_0} \R^{> 0}
\]
is a continuous bijection as well. Composing these maps with the inclusions into $X$ we obtain a continuous $G$-equivariant bijection from the coproduct into $X$. The second continuous bijection in the theorem follows by passing to the $G$-orbit spaces.
\end{proof}

\begin{rems}
1) The $\R^{> 0}$-equivariant bijections in Theorem \ref{t710n} make it obvious that any isotropy group of the $\R^{> 0}$-action on $X_0$ is either trivial or a subgroup of $p^{\Z}$ for some $p \in \car \eX_0$. A more precise statement follows from Theorem \ref{t6n} a): If $(\R^{> 0})_{x_0} \neq 1$ for some point $x_0 = [(\ex , \oP^{\times}) G , u] \in X_0$ then $p = \car \kappa (\ex_0) > 0$ where $\ex_0$ is the image of $\ex$ in $\eX_0$ and $(\R^{> 0})_{x_0} \subset N^{\Z}_{\ex_0}$ where $N_{\ex_0} = |\kappa (\ex_0) \cap \oF_p|$.\\
2) The continuous bijections in Theorem \ref{t710n} are not homeomorphisms in general. If $\eX_0$ is flat of finite type over $\spec \Z$ and $\C$ is the field of complex numbers it follows from the last remark in section \ref{sec:10n} or the results in section \ref{sec:5n} that the fibre $X_{\eta}$ over $\eta = \spec K$ of the space on the right is connected. The corresponding fibre on the left is the space
\[
\{ \oP^{\times} \in \Hom (K^{\times} , \C^{\times}) \mid \oP^{\times} \, |_{\mu (K)} \; \; \text{injective} \, \} \times \R^{> 0} \; ,
\]
whose uncountably many connected components are parametrized by $\Hom_{\inj} (\mu (K) , S^1)$, use e.g. \cite[Proposition 5.1]{KS}. It also follows that in the second continuous bijection the fibre $X_{0 \eta_0}$ of $X_0$ over $\eta_0 = \spec K_0$ is connected, whereas on the left the fibre over $\eta_0$ has $d_{\mu} = (\hZ^{\times} : \kappa (G))$ connected components. Here $\kappa$ is the cyclotomic character defined in \eqref{eq:57} below.
\end{rems}

Given an admissible class $\Eh$ as in Definition \ref{t41n} we equip $\deX (\C)_{\Eh}$ and $\deX_0 (\C)_{\Eh}$ with the subspace topologies of $\deX (\C)$ and $\deX_0 (\C)$. Equip $\deX (\C)_{\Eh} / G$ with the quotient topology. It is clear that the natural bijection $\deX (\C)_{\Eh} / G \silo \deX_0 (\C)_{\Eh}$ is continuous. It is also open since $G$ acts by homeomorphisms and hence we may identify $\deX (\C)_{\Eh} / G$ and $\deX_0 (\C)_{\Eh}$ as topological spaces. We give
\[
\ceX (\C)_{\Eh} = \colim_{\Nh_0} \deX (\C)_{\Eh} \quad \text{and} \quad \ceX_0 (\C)_{\Eh} = \colim_{\Nh_0} \deX_0 (\C)_{\Eh}
\]
the inductive limit topologies. They agree with the subspace topologies via $\ceX (\C)_{\Eh} \subset \ceX (\C)$ and $\ceX_0 (\C)_{\Eh} \subset \ceX_0 (\C)$ because the subspaces $F^{-1}_{\nu} \deX (\C)$ and $F^{-1}_{\nu} \deX_0 (\C)$ are open in $\ceX (\C)$ resp. $\ceX_0 (\C)$ for all $\nu \in \Nh_0$. As above, the natural continuous bijection $\ceX (\C)_{\Eh} / G \silo \ceX_0 (\C)_{\Eh}$ is a homeomorphism. All preceding results in this section remain true if we replace $\deX (\C)$ etc. by $\deX (\C)_{\Eh}$ etc. In the rest of the section, for simplicity, we assume that $\eX_0 \to \spec \Z$ is surjective and hence $\car K_0 = 0$ and $\Nh_0 = \Nh$. We discuss a relation of $\ceX (\C)$ with objects studied by Bost and Connes \cite{BC}. Fix an isomorphism $\iota : \mu (K) \silo \mu (\C)$ and consider $\dH = \Hom (\mu (K) , \mu (\C))$ with the topology of pointwise convergence. Using $\iota$, we have a topological isomorphism
\begin{equation}
\label{eq:55n}
\hZ \silo \dH \; , \; a \mapsto (\zeta \mapsto \iota (\zeta)^a) \; .
\end{equation}
The group $G$ acts continuously on $\dH$ via its action on $\mu (K)$. Let
\begin{equation}
\label{eq:57}
\kappa : G \twoheadrightarrow \Aut (K_0 (\mu (K)) / K_0) = \Aut (\Q (\mu (K)) / K_0 \cap \Q (\mu (K)) \subset \hZ^{\times}
\end{equation}
be the cyclotomic character. Then we have $\chi^{\sigma} = \chi^{\kappa (\sigma)}$ pointwise for $\chi \in \dH$ and $\sigma \in G$. The $\Nh$-action by exponentiation of characters is just the action by multiplication on the abelian group $\dH$. Consider
\[
\cH = \colim_{\Nh} \dH = \dH \otimes_{\Z} \Q
\]
with the inductive limit topology and the continuous $G \times \Q^{> 0}$-action. Using $\iota$ we get a topological isomorphism of $\cH$ with the ring of finite ad\`eles
\begin{equation}
\label{eq:58}
\A_f = \hZ \otimes \Q \silo \cH \; .
\end{equation}
For $\ex \in \eX$ consider the map $i_{\ex} : \mu (K) = \mu (\Gamma (\eX , \Oh) \to \mu (\kappa (\ex))$. The $G$- and $\Nh$-equivariant map
\begin{equation}
\label{eq:59}
r : \deX (\C) \to \dH \; , \; (\ex , \oP^{\times}) \mapsto (\oP^{\times} \, |_{\mu (\kappa (\ex)}) \verk i_{\ex}
\end{equation}
is continuous. It suffices to check this in the affine case $\eX = \spec R$, where continuity is clear since $r (P) = P \, |_{\mu (K)}$ for $P \in \deX (\C)$. Passing to the colimit over $\Nh$ we obtain a continuous $G$- and $\Q^{> 0}$-equivariant map
\begin{equation}
\label{eq:60}
r : \ceX (\C) \to \cH \; .
\end{equation}
Using \eqref{eq:55n} and \eqref{eq:58} and $\iota$, we obtain continuous $G$- and $\Nh$-resp. $\Q^{> 0}$-equivariant maps
\begin{equation}
\label{eq:61}
r : \deX (\C) \to \hZ \quad \text{and} \quad r : \ceX (\C) \to \A_f = \hZ \otimes \Q \; .
\end{equation}
Here $\sigma \in G$ acts on the right by multiplication with $\kappa (\sigma) \in \hZ^{\times}$. Thus $\tX = \ceX (\C) \times \R^{> 0}$ maps naturally and $\Q^{> 0}$-equivariantly to $\A^{> 0} = \A_f \times \R^{> 0} \subset \A$ with its $\Q^{> 0}$-action. The work of Bost-Connes \cite{BC} and Connes on the $\Q^{> 0}$-action on $\A$ is very well known, c.f. \cite{C} for an overview. One may wonder if it is possible to study the $\Q^{> 0}$-action on $\tX$ from their non-commutative point of view and whether that could lead to a higher dimensional analogue of their work. 

Consider the $G$-invariant subsets $\dH_{\fin}$ and $\dH_{\fin , p}$ of $\dH$ consisting of the characters $\chi$ with finite kernel, resp. with $\chi \, |_{\mu_{p^{\infty}} (K)} = 1$ and $\ker (\chi \, |_{\mu_{(p)} (K)})$ finite. The sets $\dH_{\fin}$ and $\dH_{\fin , p}$ are $\Nh$-foreward and backward invariant. We give
\begin{equation}
\label{eq:62}
\dH_{\Eh_{\tors}} := \dH_{\fin} \; \dot{\cup} \; \dot{\bigcup}_{p \in \car \eX_0} \dH_{\fin, p} \subset \dH
\end{equation}
the subspace topology. Using Lemma \ref{t46n}, via $\iota$ we have an identification
\begin{equation}
\label{eq:63}
\dH_{\Eh_{\tors}} \overset{\iota}{=} \Nh \hZ^{\times} \; \dot{\cup} \; \bigcup_p \Nh (\hZ^{\times}_{(p)} \times 0) \subset \Q^{>0} \hZ = \A_f \; .
\end{equation}
Set $\cH_{\Eh_{\tors}} = \colim_{\Nh} \dH_{\Eh_{\tors}} \subset \cH$ with the colimit topology which is the same as the subspace topology in $\cH$ since $\dH$ is open in $\cH$. Then
\begin{equation}
\label{eq:64}
\cH_{\Eh_{\tors}} \overset{\iota}{=} \Q^{> 0} \hZ^{\times} \; \dot{\cup} \; \dot{\bigcup_p} \Q^{> 0} (\hZ_{(p)} \times 0) \subset \A_f \; .
\end{equation}
The topology induced on $\Q^{> 0}\hZ^{\times} = \A^{\times}_f$ by $\A_f$ is not the id\`ele topology, as is well known. Note that the canonical map
\begin{equation}
\label{eq:65}
\tau : \cH_{\Eh_{\tors}} \longrightarrow \spec \Z \; ,
\end{equation}
sending $\Q^{> 0} \hZ^{\times}$ to $(0)$ and $\Q^{> 0} (\hZ^{\times}_{(p)} \times 0)$ to $(p)$ is continuous. By definition of an admissible class $\Eh$, the map \eqref{eq:60} restricts to a continuous map
\begin{equation}
\label{eq:66}
r : \ceX (\C)_{\Eh} \longrightarrow \cH_{\Eh_{\tors}} \overset{\iota}{=} \Q^{> 0} \hZ^{\times} \; \dot{\cup} \; \dot{\bigcup_p} \Q^{> 0} (\hZ^{\times}_{(p)} \otimes 0) \subset \A_f
\end{equation}
Passing to the quotient by the $\hZ^{\times}$-action on the right hand side and making the identification
\[
\Q^{> 0} (\hZ^{\times}_{(p)} \times 0) / \hZ^{\times} = \Q^{> 0}_{(p)} = \Q^{> 0} / p^{\Z} \; ,
\]
we obtain a continuous map which is independent of $\iota$,
\[
\rho : \ceX (\C)_{\Eh} \longrightarrow \Q^{> 0} \; \dot{\cup} \; \dot{\bigcup_p} \Q^{> 0} / p^{\Z} \; .
\]
Here the right hand side carries the quotient topology of $\cH_{\Eh_{\tors}}$ or equivalently the subspace topology of $\A_f / \hZ^{\times}$. For $\Eh = \Eh_{\tors}$ this is the same map as in \eqref{eq:16}. The map $r$ fits into the commutative diagram
\begin{equation}
\label{eq:67}
\vcenter{\xymatrix{
\ceX (\C)_{\Eh} \ar[r]^r \ar[d] & \cH_{\Eh_{\tors}} \ar[d]^{\tau} \\
\eX \ar[r] & \spec \Z \; .
}}
\end{equation}
The map \eqref{eq:66} induces a $G$- and $\Q^{> 0}$-equivariant continuous map
\begin{equation}
\label{eq:68}
\tX = \ceX (\C)_{\Eh} \times \R^{> 0} \longrightarrow \cH_{\Eh_{\tors}} \times \R^{> 0} \overset{\iota}{=} \A^{> 0} \;  \dot{\cup} \; \dot{\bigcup_p} \A^{> 0}_{(p)} \subset \A \; .
\end{equation}
Here $\A^{> 0} = \A^{\times}_f \times \R^{> 0}$ and $\A^{> 0}_{(p)} = \Q^{> 0} (\hZ^{\times}_{(p)} \times 0) \times \R^{> 0}$. The $\Q^{> 0}$-action on $\cH_{\Eh_{\tors}} \times \R^{> 0}$ is not properly discontinuous. In section \ref{sec:10n}, we will see that this works to our advantage.
\section{The closure of the periodic orbits in $X_0$} \label{sec9_nneu}
In this section $\C$ denotes the complex number field, and we take $\Nh_0 = \Nh$. 

For an integral normal scheme $\eX_0$ of finite type over $\spec \Z$ with $\dim \eX_0 \ge 1$ the dynamical system $X_0 = \ceX_0 (\C) \times_{\Q^{> 0}} \R^{> 0}$ is infinite dimensional, whereas we are searching for a system of dimension $2 \dim \eX_0 + 1$, e.g. $3$ in the case of $\eX_0 = \spec \Z$. Since we want to keep periodic orbits for every closed point of $\eX_0$ one idea would be to replace $X_0$ by the dynamical system $Y_0$ obtained as the topological closure of the union of {\it all} periodic orbits coming from closed points of $\eX_0$. In this section we will show that the system $Y_0$ is still infinite-dimensional: Namely, for one-dimensional $\eX_0$, flat over $\spec \Z$ and conditionally for all $\eX_0$ we have
\[
Y_0 = \ceX_0 (S^1) \times_{\Q^{> 0}} \R^{> 0} \; .
\]
Here $\ceX_0 (S^1) = \ceX (S^1) / G$ where $\ceX (S^1) = \colim_{\Nh} \deX (S^1)$ and $\deX (S^1)$ is the subspace of $\deX (\C)$ consisting of points $(\ex , \oP^{\times})$ with $\oP^{\times} : \kappa (\ex)^{\times} \to S^1$ a unitary character. This follows from Theorem \ref{t92nn} below which depends on the following assertion which is known for number rings $A$ and that we expect to hold in general. 

\begin{claim} \label{t91nn}
Let $A$ be a finitely generated integrally closed domain with quotient field $\kappa$ and let $x_1 , \ldots , x_n \in \kappa^{\times}$ be elements whose images $x_1 \otimes 1 , \ldots , x_n \otimes 1$ are $\Q$-linearly independent in $\kappa^{\times} \otimes \Q$.  Furthermore let $N \ge 1$ be an integer prime to $\car \kappa$, $l \nmid N$ a prime number with $l \neq \car \kappa$ and $\nu_1 , \ldots , \nu_n \ge 1$ integers. Let $\Mh$ be the set of maximal ideals $\en$ of $A$ with the following properties:\\
1) $N \notin \en$ and $l \notin \en$\\
2) $x_1 , \ldots , x_n \in A_{\en} \setminus \en A_{\en}$ and the orders $o (\ox_j)$ of the elements $\ox_j = x_j \mod \en A_{\en}$ in $(A_{\en} / \en A_{\en})^{\times}$ are prime to $N$ for $1 \le j \le n$.\\
3) $\ord_l (o (\ox_j)) = \nu_j$ \quad for $1 \le j \le n$.\\
Then $\Mh$ is non-empty.
\end{claim}
If $\dim A = 0$ i.e. if $A = \kappa$ is a finite field, the claim holds trivially.

If $\spec A$ is a non-empty open subscheme of $\spec \eo_{\kappa}$ for some number field $\kappa$, then the set $\Mh$ in Claim \ref{t91nn} is always infinite, and it even has a positive Dirichlet density. This follows from \cite[Theorem 1]{perucca}.

Consider the subset $\ceX (\C)_{\per} = \colim_{\Nh} \deX (\C)_{\per} \subset \ceX (\C)$ where
\[
\deX (\C)_{\per} = \{ (\ex , \oP^{\times}) \in \deX (\C) \mid \kappa (\ex) \cong \oF_p \; \text{for some $p$ and} \; \ker \oP^{\times} \; \text{is finite} \} \; .
\]

\begin{theorem} \label{t92nn}
Let $\eX_0$ be an integral normal scheme of finite type over $\spec \Z$. If $\dim \eX_0 \ge 2$ or $\dim \eX_0 = 1$ and $\car K_0 > 0$ assume that Claim \ref{t91nn} holds. Then we have
\[
\overline{\ceX (\C)_{\per}} = \ceX (S^1) \quad \text{and} \quad \overline{\ceX_0 (\C)_{\per}} = \ceX_0 (S^1) \; .
\]
\end{theorem}

\begin{proof}
It suffices to show the first assertion. Any character of the torsion group $\oF^{\times}_p$ takes values in the roots of unity. Hence $\deX (\C)_{\per} \subset \deX (S^1)$ and $\ceX (\C)_{\per} \subset \ceX (S^1)$. Since $\deX (S^1)$ is closed in $\deX (\C)$ the subspace $\ceX (S^1)$ is closed in $\ceX (\C)$ as well and $\overline{\ceX (\C)_{\per}} \subset \ceX (S^1)$. Equality will follow if we show that $\deX (\C)_{\per}$ is dense in $\deX (S^1)$. For this we may assume that $\eX_0 = \spec R_0$ and hence $\eX = \spec R$ are affine. Let $(\ex , \oP^{\times})$ be a point of $\deX (S^1)$. Let $\ep$ be the prime ideal of $R$ corresponding to $\ex$ and set $\oR = R / \ep$. We have $\kappa (\ex) = \Quot \oR$ and $\overline{\{ \ex \} } = \spec \oR$, where $\overline{\{ \ex \} }$ carries the reduced scheme structure. By Remark \ref{t34} we may view the point $(\ex , \oP^{\times}) \in \deX (S^1)$ as a multiplicative map $P : R \to S^1_0 = S^1 \cup \{ 0 \}$ with $P^{-1} (0) = \ep$ which factors as $P : R \to \oR \xrightarrow{\oP} S^1_0$. Then $\oP^{\times}$ is the unique multiplicative extension of $\oP \, |_{\oR \setminus 0}$ to a character $\kappa (\ex)^{\times} \to S^1$. 
We have to show that every neighborhood of $P = (\ex , \oP^{\times})$ in $\deX (\C)$ contains a point from $\deX (\C)_{\per}$. This means that for every $\varepsilon > 0$ and every finite set $S \subset R$ there is a point $Q \in \deX (\C)_{\per}$ with
\begin{equation}
 \label{eq:9nn1}
 |P (r) - Q (r)| < \varepsilon \quad \text{for all} \; r \in S \; .
\end{equation}
In the situation of Theorem \ref{t92nn} and with the preceeding notations we have the following result:

\begin{lemma} \label{t93nn}
Let $\psi : \kappa (\ex)^{\times} \to S^1$ be a character. Fix a finite subset $T \subset \kappa (\ex)^{\times}$ and some $\varepsilon > 0$. Then there are a maximal ideal $\oemm$ of $\oR$ and a character
\[
\chi : (\oR_{\oemm} / \oemm R_{\oemm})^{\times} = (\oR / \oemm)^{\times} \longrightarrow S^1
\]
with finite kernel, such that $T \subset \oR_{\oemm} \setminus \oemm \oR_{\oemm}$ and
\begin{equation}
\label{eq:9nn2}
|\psi (t) - \chi (t \mod \oemm)| < \varepsilon \quad \text{for all} \; t \in T \; .
\end{equation}
\end{lemma}

Using the lemma, we find some $Q \in \deX (\C)_{\per}$ with \eqref{eq:9nn1} as follows. Take $\psi = \oP^{\times}$ and $T = \oS \setminus 0$ where $\oS$ is the image of $S$ in $\oR$. Let $\emm$ be the inverse image of $\oemm$ in $R$. It is a maximal ideal with $R / \emm = \oR / \oemm$. Using the character $\chi$ of the lemma we obtain a point $Q = (\emm , \chi) \in \deX (\C)_{\per}$. For $r \in S \setminus \ep$ i.e. $\barr \in \oS \setminus 0 = T$ we have $P (r) = \oP^{\times} (\barr) = \psi (\barr)$ and $Q (r) = \chi (\barr \mod \oemm)$. For $r \in S \setminus \ep$, estimate \eqref{eq:9nn2} therefore implies \eqref{eq:9nn1}. For $r \in S \cap \ep$ we have $P (r) = 0 = Q (r)$ and \eqref{eq:9nn1} holds trivially.
\end{proof}

\begin{proofof} {\it Lemma \ref{t93nn}.} 
Recall that $\kappa (\ex) = \Quot \oR$. Since $\kappa (\ex)^{\times}$ is divisible we have a natural exact sequence
\[
1 \longrightarrow \mu (\kappa (\ex)) \longrightarrow \kappa (\ex)^{\times} \longrightarrow \kappa (\ex)^{\times} \otimes \Q \longrightarrow 0 \; .
\]
Let $x_1 , x_2 , \ldots$ be preimages in $\kappa (\ex)^{\times}$ of a basis of $\kappa (\ex)^{\times} \otimes \Q$. Then the following facts hold:\\
A) If $x^{\nu_1}_{i_1} \cdots x^{\nu_k}_{i_k} \in \mu (\kappa (\ex))$ for some $i_1 < \ldots < i_k$ and $\nu_1 , \ldots , \nu_k \in \Z$ then $\nu_1 = \ldots = \nu_k = 0$. In particular $x_1 , x_2 , \ldots$ are $\Z$-linearly independent in $\kappa (\ex)^{\times}$.\\
B) Every element $x \in \kappa (\ex)^{\times}$ has a unique representation
\begin{equation}
\label{eq:9nn3}
x = \zeta x^{\nu_1}_1 x^{\nu_2}_2 \cdots \quad \text{with} \; \zeta \in \mu (\kappa (\ex)) \; , \; \nu_j \in \Z \; , \; \nu_j = 0 \; \text{for} \; j \gg 0 \; .
\end{equation}
The elements of $T$ can be written in the form \eqref{eq:9nn3} where only finitely many $x_1 , \ldots , x_n$ occur with non-zero exponent, and only finitely many roots of unity $\zeta$. The subgroup of $\kappa (\ex)^{\times}$ generated by these $\zeta$'s is cyclic. Let $\eta$ be a generator. Since $\psi$ is multiplicative, it suffices to prove the lemma for sets $T$ of the form $T = \{ \eta , x_1 , \ldots , x_n \}$. Let $\ex_0$ be the image of $\ex$ in $\eX_0$, corresponding to the prime ideal $\ep_0 = \ep \cap R_0$ of $R_0$. In $\kappa (\ex)$ consider the finite extension $\kappa = \kappa (\ex_0) (\eta , x_1 , \ldots , x_n)$ generated by $T$. Let $A$ be the normalization of the (usually non-normal) ring $\oR_0 = R_0 / \ep_0$ in $\kappa$. We claim that $\overline{ \{ x \} } = \spec \oR$ is normal, i.e. that $\oR$ is integrally closed in its quotient field $\kappa (\ex)$. Namely, let $\onQ (T) \in \oR [T]$ be a monic polynomial. Lift $\onQ$ to a monic polynomial $Q \in R [T]$ of the same degree. Since $R$ is integrally closed in its algebraically closed quotient field the polynomial $Q$ decomposes in $R [T]$ into a product of monic linear factors. Therefore $\onQ$ decomposes into monic linear factors in $\oR [T]$ as well. Hence all solutions $x \in \kappa (\ex)$ of $\onQ (x) = 0$ lie in $\oR$, and we have shown that $\oR$ is integrally closed. It follows that $\oR$ is the normalization of $\oR_0$ in $\kappa (\ex)$. Hence we have inclusions
\[
\oR_0 \subset A \subset \oR \quad \text{and} \quad T \subset \kappa^{\times} \; .
\]
Since $\Z$ is Nagata, the ring $A$ is finite over $\oR_0$ and hence of finite type over $\Z$. Note also that
\[
\dim A = \dim \oR_0 \le \dim R_0 = \dim \eX_0 \; .
\]
By assumption we may apply Claim \ref{t91nn} to our ring $A$, the chosen elements $x_1 , \ldots , x_n \in T$ and the order $N$ of the root of unity $\eta$ above. Note that $N$ is prime to $\car \kappa = \car \kappa (\ex)$. Moreover we let $\nu_j = j$ for $1 \le j \le n$ and choose the prime number $l \nmid N$ such that $l \neq \car \kappa$ and $2 \pi < \varepsilon l$. Let $\en$ be a maximal ideal of $A$ for which 1)--3) in Claim \ref{t91nn} hold with these choices. Then $k = A_{\en} / \en A_{\en} = A / \en$ is a finite field and the order of $\ox_j = x_j \mod \en$ in $k^{\times}$ is prime to $N$ for each $1 \le j \le n$. Let $k^{\times} (N)$ resp. $k^{\times} (N')$ denote the subgroups of $k^{\times}$ consisting of elements of order dividing $N^{\infty}$ resp. of order prime to $N$. Then $k^{\times}$ is the direct product $k^{\times} = k^{\times} (N) k^{\times} (N')$, and $\eta \in k^{\times} (N)$ and $\ox_1 , \ldots , \ox_n \in k^{\times} (N')$. Since $N \notin \en$, reduction gives an isomorphism
\[
\mu_N (A) = \langle \eta \rangle \silo \mu_N (k) = \langle \oeta \rangle \; .
\]
On $\langle \oeta \rangle \subset k^{\times} (N)$ we therefore obtain a character $\chi : \langle \oeta \rangle \to S^1$ by setting $\chi (\oeta^i) = \psi (\eta^i)$. We extend $\chi$ to a character $\chi : k^{\times} (N) \to S^1$. In order to define $\chi$ on $k^{\times} (N')$, we decompose $k^{\times} (N')$ further:
\begin{equation}
\label{eq:9nn4}
k^{\times} (N') = k^{\times} ((Nl)') k^{\times} (l) \; .
\end{equation}
Let
\[
\ox_j = \ox'_j \ox''_j
\]
be the corresponding decomposition of $\ox_j$ for $1 \le j \le n$. Next we define $\chi$ on the subgroup $\mu_{l^n} (k) \subset k^{\times} (l)$. By the choice of $\en$, we have $\ord_l (o (\ox_j)) = j$ for $1 \le j \le n$. Hence $\ox''_j$ has order $l^j$ and it follows that
\begin{equation}
\label{eq:9nn5}
\ox''_j = \ox^{l^{n-j}}_n \quad \text{for} \; 1 \le j \le n \; .
\end{equation}
Choose $\theta_j \in \R / \Z$ s.t.
\begin{equation}
\label{eq:9nn6}
\psi (x_j) = \exp 2 \pi i \theta_j \quad \text{for} \; 1 \le j \le n \; .
\end{equation}
For $\alpha \in \R / \Z$ we set $((\alpha)) = \min_{\nu \in \Z} |\alpha - \nu|$. We will now define an integer
\[
B = b_0 + b_1 l + \ldots + b_{n-1} l^{n-1} \quad \text{with} \; b_i \in \{ 0 , 1 , \ldots , l-1 \} 
\]
such that
\begin{equation}
 \label{eq:9nn7}
\big( \big( \frac{B}{l^j} - \theta_j \big) \big) \le \frac{1}{j} \quad \text{for} \; 1 \le j \le n \; .
\end{equation}
For $j = 1$ we have
\[
\big( \big( \frac{B}{l} - \theta_1 \big) \big) = \big( \big( \frac{b_0}{l} - \theta_1 \big) \big) \le \frac{1}{l} 
\]
for a suitable $b_0 \in \{ 0 , \ldots , l-1 \}$. Next:
\[
\big( \big( \frac{B}{l^2} - \theta_2 \big) \big) = \big( \big( \frac{b_1}{l} - \big( \theta_2 - \frac{b_0}{l^2} \big) \big) \big) \le \frac{1}{l}
\]
for a suitable $b_1$, and so on. Thus in the $n$-th and last step we find $b_{n-1}$ such that
\[
\big( \big( \frac{B}{l^n} - \theta_n \big) \big) = \big( \big( \frac{b_{n-1}}{l} - \big( \theta_n - \frac{b_0}{l^n} - \ldots - \frac{b_{n-2}}{l^2} \big) \big) \big) \le \frac{1}{l} \; .
\]
For $\alpha , \beta \in \R$ we have the estimate
\[
|\exp 2 \pi i \alpha - \exp 2 \pi i \beta| \le 2 \pi ((\alpha - \beta)) \; .
\]
It follows from \eqref{eq:9nn6}, \eqref{eq:9nn7} and our choice of $l$ that
\begin{equation}
\label{eq:9nn8}
\big| \psi (x_j) - \exp \frac{2 \pi i B}{l^j} \big| \le \frac{2\pi}{l} < \varepsilon \quad \text{for} \; 1 \le j \le n \; .
\end{equation}
We define $\chi$ on $\mu_{l^n} (k) = \langle \ox''_n\rangle$ by setting
\[
\chi (\ox''_n) = \exp \frac{2 \pi i B}{l^n} \; .
\]
This is possible since $\ox''_n$ is a primitive $l^n$-th root of unity. Using equation \eqref{eq:9nn5} it follows that
\begin{equation}
\label{eq:9nn9}
\chi (\ox''_j) = \exp \frac{2\pi iB}{l^j} \quad \text{for} \; 1 \le j \le n \; .
\end{equation}
We now extend $\chi$ from $\mu_{l^n} (k)$ to a character on $k^{\times} (l)$, and using the decomposition \eqref{eq:9nn4} to a character $\chi : k^{\times} (N') \to S^1$ by setting $\chi = 1$ on $k^{\times} ((Nl)')$. By construction we then have $\chi (\ox_j) = \chi (\ox''_j)$ and using \eqref{eq:9nn8} and \eqref{eq:9nn9} we get
\begin{equation}
\label{eq:9nn10}
|\psi (x_j) - \chi (\ox_j)| < \varepsilon \quad \text{for} \; 1 \le j \le n \; .
\end{equation}
On $k^{\times} (N)$ we already defined $\chi$ in such a way that $\chi (\oeta) = \psi (\eta)$. Thus we get a character $\chi : k^{\times} \to S^1$ which satisfies
\begin{equation}
\label{eq:9nn11}
|\psi (t) - \chi (t \mod \en)| < \varepsilon \quad \text{for all} \; t \in T \; .
\end{equation}
Since $\oR$ is integral over $A$ there is a prime ideal $\oemm$ of $\oR$ over $\en$ which must be maximal. Using Lemma \ref{t3}, we can extend $\chi$ from the finite field $k^{\times} = (A / \en)^{\times}$ to a character $\chi : (\oR / \oemm)^{\times} \to S^1$ with finite kernel. Using \eqref{eq:9nn11}, it follows that $\chi$ satisfies the estimate \eqref{eq:9nn2} in Lemma \ref{t93nn} and hence the lemma is proved.
\end{proofof}

\section{Connectedness of the $\R^{> 0}$-dynamical system} \label{sec:5n}
In this section $\C$ is the complex number field and our base scheme $\eX_0$ will be integral normal and of finite type over $\spec \Z$. As usual we fix a sub-monoid $\Nh_0 \subset \Nh$ generated by a set of prime numbers $\car \Nh_0 \supset \car \eX_0$ and form $\ceX (\C) = \colim_{\Nh_0} \deX (\C)$. Let $\deX (\C)' \subset \deX (\C)$ denote the subspace of points $(\ex , \oP^{\times})$ where $(\ker \oP^{\times})_{\tors}$ is a possibly infinite $\Nh_0$-primary group. Set $\deX_0 (\C)' = \deX (\C)' / G$ and
\[
\ceX (\C)' = \colim_{\Nh_0} \deX (\C)' \quad \text{and} \quad \ceX_0 (\C)' = \ceX (\C)' / G \; .
\]
We have $\ceX (\C)_{\Eh_{\tors}} \subset \ceX (\C)'$ etc. and $\ceX (\C)' = \ceX (\C)$ etc. if $\Nh_0 = \Nh$.

\begin{prop}
\label{t91xn}
$\deX (\C)'$ is the topological closure of $\deX (\C)_{\Eh_{\tors}}$ in $\deX (\C)$. 
\end{prop}

\begin{proof}
For subsets $S \subset T$ of a topological space $X$, the assertion $\oS = T$ is local. Hence we may assume that $\eX_0 = \spec R_0$ is affine. A point $(\ex , \oP^{\times})$ of $\deX (\C)$ corresponds via Remark \ref{t34} to a multiplicative map $P : R \to \C$. The condition $(\ex , \oP^{\times}) \in \deX (\C)'$ means that $\ker P \, |_{\mu (R)}$ is $\Nh_0$-primary. Note here that $\car \kappa (\ex) \in \Nh_0$. If $P_n \in \deX (\C)'$ converge to $P \in \deX (\C)$ and if $P (\zeta) = 1$ for some $\zeta \in \mu_N (R)$ then $P_n (\zeta) = 1$ for all $n \gg 0$ since $\mu_N (\C)$ is discrete in $\C^{\times}$. It follows that $\zeta \in \mu_{\Nh_0} (R)$ and hence $\ker P \, |_{\mu(R)}$ is $\Nh_0$-primary. Thus $\deX (\C)'$ is closed in $\deX (\C)$. Given a point $P = (\ex , \oP^{\times})$ of $\deX (\C)'$, since $\mu (\kappa (\ex))$ is divisible, we can choose a splitting $\beta$ of the exace sequence
\[
\xymatrix{
1 \ar[r] & \mu (\kappa (\ex)) \ar[r] & \kappa (\ex)^{\times} \ar[r]_-{\pi} & \kappa (\ex)^{\times} \otimes \Q \ar[r] \ar@/_/[l]_-{\beta} & 1 \; .
}
\]
It induces a topological isomorphism of groups
\begin{align*}
\Hom (\kappa (\ex)^{\times} , \C^{\times}) & \xrightarrow{\;\sim\;} \Hom (\mu (\kappa (\ex)) , \C^{\times}) \times \Hom (\kappa (\ex)^{\times} \otimes \Q , \C^{\times}) \\
\chi & \longmapsto (\chi \, |_{\mu (\kappa (\ex))} , \chi \verk \beta) \\
\tchi_1 \cdot \tilde{\tchi}_2 & \longmapsfrom (\chi_1 , \chi_2)
\end{align*}
where $\tchi_1 (x) = \chi_1 (x \beta (x \otimes 1)^{-1})$ and $\tilde{\tchi}_2 (x) = \chi_2 (x \otimes 1)$. Here the $\Hom$ groups carry the topologies of pointwise convergence. Since $\ker \oP^{\times}$ is $\Nh_0$-primary, we can find a sequence of characters $\psi_n \in \Hom (\mu (\kappa (\ex)) , \C^{\times})$ with $|\ker \psi_n| \in \Nh_0$ and $\psi_n \to \oP^{\times} \, |_{\mu (\kappa (\ex))}$ pointwise: On $\mu_{(\Nh_0)} (\kappa (\ex))$, the roots of unity of order prime to $\Nh_0$ set $\psi_n = \oP^{\times}$, which is injective. Choose a sequence $N_1 \mid N_2 \mid \ldots$ of integers in $\Nh_0$ such that every $\nu \in \Nh_0$ divides some $N_n$. Set $\psi_n \, |_{\mu_{N_n} (\kappa (\ex))} := \oP^{\times} \, |_{\mu_{N_n} (\kappa (\ex))}$ and extend  $\psi_n$ to a character of $\mu_{\Nh_0} (\kappa (\ex))$ with finite kernel. This gives the desired sequence of characters $\psi_n$ on $\mu (\kappa (\ex))$ converging to $P \, |_{\mu (\kappa (\ex))}$. Define $\oP^{\times}_n : \kappa (\ex)^{\times} \to \C^{\times}$ by $\oP^{\times}_n = \tpsi_n (\oP^{\times}_n \verk \alpha)^{\approx}$. Then $(\ex , \oP^{\times}_n) \in \deX (\C)_{\Eh_{\tors}}$ and $\oP^{\times}_n \to \oP^{\times}$ on $\kappa (\ex)^{\times}$. For the corresponding multiplicative maps $P_n : R \to \C$ we have $P^{-1}_n (0) = P^{-1} (0)$ for all $n$ and therefore $P_n \to P$ on $R$. Hence $(\ex , \oP^{\times}_n) \to (\ex , \oP^{\times})$ in $\deX (\C)$ and we have shown that $\deX (\C)_{\Eh_{\tors}}$ is dense in $\deX (\C)'$. 
\end{proof}

\begin{theorem}
\label{t11}
Let $\eta = \spec K$ and $\eta_0 = \spec K_0$ be the generic points of $\eX$ resp. $\eX_0$. Then the fibres of $\ceX (\C)_{\Eh_f}$ and $\ceX_0 (\C)_{\Eh_f}$ over $\eta$ resp. $\eta_0$ are dense in $\ceX (\C)'$ resp. $\ceX_0 (\C)'$. 
\end{theorem}

\begin{proof}
It suffices to show the first assertion. Consider the projection $\pr : \deX (\C)_{\Eh_f} \to \eX$, c.f. Corollary \ref{t4ka}. Since $\ceX (\C)'$ and the fibre of $\ceX (\C)_{\Eh_f}$ over $\eta$ are $\Q^{> 0}_0$-invariant and since $\ceX (\C)'$ is covered by the $\Q^{> 0}_0$-translates of the subspace $\deX (\C)'$, it is enough to show that $\pr^{-1} (\eta)$ is dense in $\deX (\C)'$. By Proposition \ref{t91xn} it suffices to show that $\pr^{-1} (\eta)$ is dense in $\deX (\C)_{\Eh_{\tors}}$. For a covering of $\eX_0$ by open affine subschemes $\eX^i_0$, the space $\deX (\C)$ is covered by the open subspaces $\deX\,\!^i_0 (\C)$ and each of them contains $\pr^{-1} (\eta)$. Hence it suffices to show that $\pr^{-1} (\eta)$ is dense in $\deX (\C)_{\Eh_{\tors}}$ if $\eX_0 = \spec R_0$ is affine. Let $\Hom_f (K^{\times} , \C^{\times})$ be the set of homomorphisms $\chi : K^{\times} \to \C^{\times}$ with finite kernel such that $|\ker \chi| \in \Nh_0$. We have an identification
\[
\Hom_f (K^{\times} , \C^{\times}) \silo \pr^{-1} (\eta) \; , \; \chi \longmapsto (\eta , \chi) \; .
\]
We will also view $\chi$ as a multiplicative map $\chi : R \to \C$ by extending $\chi \, |_{R \setminus 0}$ by $\chi (0) = 0$ to $R$. Now let $P = (\ep , \oP^{\times})$ be a point of $\deX (\C)_{\Eh_{\tors}}$. We can extend the multiplicative map $P : R \to \C$ uniquely to a multiplicative map on $R_{\ep}$ as the composition $P : R_{\ep} \to \kappa (\ep) \xrightarrow{\oP} \C$ where $\oP$ is the extension of $\oP^{\times}$ by $0 \mapsto 0$. We have to show that every neighborhood of $P$ contains a point from $\pr^{-1} (\eta)$. Since $P (0) = 0 = \chi (0)$, this means that given any finite subset $T \subset R \setminus 0$ and any $\varepsilon > 0$ there is a homomorphism $\chi : K^{\times} \to \C^{\times}$ with $|\ker \chi| \in \Nh_0$ such that
\begin{equation}
\label{eq:25}
|\chi (r) - P (r)| < \varepsilon \quad \text{for all} \; r \in T \; .
\end{equation}
We first define a character $\chi_{\mu} : \mu (K) \to \C^{\times}$, which will be the restriction of $\chi$ to  $\mu (K)$. If the characteristic $p$ of $\kappa (\ep)$ is zero or if $\car K = \car \kappa (\ep) = p$ is positive, we have $\mu (K) \hookrightarrow \kappa (\ep)^{\times}$ and we set
\[
\chi_{\mu} (\zeta) = P (\zeta) = \oP^{\times} (\zeta \mod \ep) \quad \text{for} \; \zeta \in \mu (K) \; .
\]
If $p$ is non-zero and $\car K_0 = 0$, we have a direct product decomposition
\[
\mu (K) = \mu_{(p)} (K) \cdot \mu_{p^{\infty}} (K)
\]
and it suffices to define $\chi_{\mu}$ on each factor. We set
\[
\chi_{\mu} (\zeta) = P (\zeta) = \oP^{\times} (\zeta \mod \ep) \quad \text{for} \; \zeta \in \mu_{(p)} (K) \; ,
\]
noting the inclusion $\mu_{(p)} (K) \hookrightarrow \kappa (\ep)^{\times}$. On $\mu_{p^{\infty}} (K)$ the map $P$ is identically $1$ and we cannot set $\chi_{\mu} = P$ because then $\chi_{\mu}$ and hence $\chi$ would have a kernel with infinite torsion. Instead we choose some $M \ge 0$ which is bigger than the $p$-valuations of the orders of all the roots of unity in the finite set $T$. Fix an isomorphism $\iota : \mu_{p^{\infty}} (K) \silo \mu_{p^{\infty}} (\C)$ and set
\begin{equation}
\label{eq:26}
\chi_{\mu} (\zeta)  = \iota (\zeta^M) \quad \text{for} \; \zeta \in \mu_{p^{\infty}} (K) \; .
\end{equation}
In all cases the so defined character $\chi_{\mu} : \mu (K) \to \C^{\times}$ has a finite kernel with $|\ker \chi_{\mu}| \in \Nh_0$ and we have
\begin{equation}
\label{eq:27}
\chi_{\mu} (\zeta) = P (\zeta) \quad \text{for all} \; \zeta \in \mu (K) \cap T \; .
\end{equation}
At a later stage we may have to increase the chosen $M$. The characters $\chi$ that we construct are of the following type. Since $\mu (K)$ is divisible the sequence of abelian groups
\[
1 \longrightarrow \mu (K) \longrightarrow K^{\times}  \longrightarrow K^{\times} / \mu (K) \longrightarrow 1
\]
splits. Let $\alpha : K^{\times} \to \mu (K)$ and $\beta : K^{\times} / \mu (K) \to K^{\times}$ be corresponding splittings i.e. $\alpha (\zeta) = \zeta$ for $\zeta \in \mu (K)$ and $\beta (\ox) = x \alpha (x)^{-1}$ where we set $\ox = x \mod \mu (K)$. Note that $\alpha (\beta (\ox)) = 1$. Set $\Z (1) = 2 \pi i \Z$ and $\Q (1) = 2 \pi i \Q$ inside of $\C$ and choose a complement $\C'$ to $\Q (1)$ as $\Q$-vector spaces, i.e. $\C = \Q (1) \oplus \C'$. Then the homomorphism
\[
\mu (\C) \times \C' \longrightarrow \C^{\times} \; , \; (\zeta , z) \longmapsto \zeta \exp z
\]
is injective. Our characters $\chi$ are defined by composition
\begin{equation}
\label{eq:28}
\xymatrix{
K^{\times} \ar[r]^-{\sim} \ar[d]_{\chi} & \mu (K) \times K^{\times} / \mu (K) \ar[d]^{(\chi_{\mu} , \chi')} \\
\C^{\times} & \mu (\C) \times \C' \ar@{_{(}->}[l]
}
\end{equation}
Here $\chi' : K^{\times} / \mu (K) \to \C'$ is an injective linear map of $\Q$-vector spaces to be constructed and the top horizontal map sends $x$ to $(\alpha (x) , \ox)$. Explicitely, we have:
\[
\chi (x) = \chi_{\mu} (\alpha (x)) \exp \chi' (\ox) \quad \text{for} \; x \in K^{\times} \; .
\]
It is then immediate that
\[
\chi \, |_{\mu (K)} = \chi_{\mu} \quad \text{and} \quad \ker \chi = \ker \chi_{\mu} \; \text{is finite with} \; |\ker \chi| \in \Nh_0 \; .
\]
In defining $\chi'$, so that the resulting character $\chi$ is close to $P$ on $T$, the elements $t \in T \cap \ep$ are the problematic ones since $P (t) = 0$. The naive approach which works for $t \in T \setminus \ep$, using elementary linear algebra after tensoring with $\Q$ runs into the problem that $\frac{0}{0}$ is not defined. Instead we have to work with multiplicative maps on the rings themselves and use resolution of singularities to arrive at rings whose multiplicative structure is understood. Let $K_1$ be the finite extension of $K_0$ generated by $T$ and let $R_1 = R \cap K_1$ be the integral closure of $R_0$ in $K_1$. Since $\Z$ is universally Japanese, $R_1$ is a finite $R_0$-algebra and in particular of finite type over $\Z$. As an integral Noetherian scheme, $\eX_1 = \spec R_1$ has an alteration, cf. \cite{dJ}. This is a proper, dominant hence surjective morphism $f : \Zh \to \eX_1$ from an integral regular scheme $\Zh$ whose function field $L$ is a finite extension of $K_1$ in $K$. Let $z \in \Zh$ be a point with $f (z) = \ep_1 := \ep \cap R_1$. Then we have inclusions, where $\Oh_z = \Oh_{\Zh ,z}$,
\[
\begin{array}{ccccc}
 & & \Oh_z & \subset & L \\
& & \cup && \cup \\
R_1 & \subset & R_{1 , \ep_1} & \subset & K_1 \; .
\end{array}
\]
Let $\emm_z$ be the maximal ideal in the local ring $\Oh_z$. The map $R_{1 , \ep_1} \subset \Oh_z$ is local, i.e.
\[
\ep_1 R_{1 , \ep_1} = \emm_z \cap R_{1 , \ep_1}
\]
and hence $\ep_1 = \emm_z \cap R_1$. By the Auslander--Buchsbaum theorem, the regular, Noetherian local ring $\Oh_z$ is a unique factorization domain. We therefore have a decomposition of multiplicative monoids
\begin{equation}
\label{eq:29}
\Oh_z \setminus 0 = \Oh^{\times}_z \oplus \pi^{\N_0}_1 \oplus \pi^{\N_0}_2 \oplus \ldots
\end{equation}
Here $\pi_1 , \pi_2 , \ldots$ are a complete set of pairwise non-associated prime elements of $\Oh_z$. If $T \cap \ep = T \cap \ep_1 \subset \emm_z$ is non-empty we can write its elements $s_1 , \ldots , s_m$ in the form
\begin{equation}
\label{eq:30}
s_i = u_i \pi^{\nu_{i1}}_1 \cdots \pi^{\nu_{iN}}_N \quad \text{for} \; 1 \le i \le m \; .
\end{equation}
Here $N \ge 1 , \nu_{ij} \ge 0$ and for each $i$ there is some $1 \le j \le N$ with $\nu_{ij} \ge 1$. Moreover $u_i \in \Oh^{\times}_z$ for $1 \le i \le m$. We will come back to this expression later. If $T \setminus \ep = T \setminus \ep_1$ is non-empty its elements $t_1 , \ldots , t_n$ lie in $R^{\times}_{1, \ep_1} \subset \Oh^{\times}_z$. 

Let $\Gamma$ be the subgroup of $R^{\times}_{1 , \ep_1} / \mu (K_1) \subset K^{\times} / \mu (K)$ generated by $\ot_1 , \ldots , \ot_n$. Since $\Gamma$ is finitely generated and torsion-free, it is free of some rank $l \ge 1$. Let $\gamma_1 , \ldots , \gamma_l$ be a $\Z$-basis of $\Gamma$. Then we have
\[
\ot_i = \gamma^{k_{i1}}_1 \cdots \gamma^{k_{il}}_l \quad \text{for some} \; k_{ij} \in \Z \; \text{and} \; 1 \le i \le n \; .
\]
Setting $\xi_i = \beta (\gamma_i) \in K^{\times}$, we have $\xi_i \in \mu (K) R^{\times}_{1 , \ep_1} \subset R^{\times}_{\ep}$ and 
\[
t_i = \zeta_i \xi^{k_{i1}}_1 \cdots \xi^{k_{il}}_l \quad \text{in $R^{\times}_{\ep}$ for some} \; \zeta_i \in \mu (K) \; \text{and} \; 1 \le i \le n \; .
\]
Note that $P (\xi_j) \neq 0$ for all $1 \le j \le l$. We need $\chi_{\mu}$ and $P$ to agree on all $\zeta_i$ for $1 \le i \le n$. This will be the case if the integer $M \ge 0$ in \eqref{eq:26} is bigger than the $p$-valuations of the orders of all the roots of unity $\zeta_i$ for $1 \le i \le n$. Choosing some $M$ that satisfies this condition as well, $\chi_{\mu}$ is finally defined. We have
\[
P (t_i) = P (\zeta_i) P (\xi_1)^{k_{i1}} \cdots P (\xi_l)^{k_{il}} = \chi_{\mu} (\zeta_i) P (\xi_1)^{k_{i1}} \cdots P (\xi_l)^{k_{il}} \; .
\]
Thus, if $\chi'$ is defined in such a way, that for the corresponding $\chi$ the values $P (\xi_j)$ and $\chi (\xi_j)$ are very close to each other, it will follow that $P (t_i)$ and
\[
\chi_{\mu} (\zeta_i) \chi (\xi_i)^{k_{i1}} \cdots \chi (\xi_l)^{k_{il}} = \chi (\zeta_i \xi^{k_{i1}}_1 \cdots \xi^{k_{il}}_l) = \chi (t_i)
\]
are very close for $1 \le i \le n$. We have
\[
\chi (\xi_j) = \chi_{\mu} (\alpha (\xi_j)) \exp \chi' (\oxi_j) = \exp \chi' (\gamma_j) \; .
\]
Note here that $\alpha (\xi_j) = \alpha (\beta (\gamma_j)) = 1$. Choose $z_j \in \C$ with $\exp z_j = P (\xi_j)$ and choose $\Q$-linearly independent complex numbers $w_1 , \ldots , w_l \in \C'$ such that each $w_j$ is very close to $z_j$ for each $j$. Note that $\C'$ is dense in $\C$ since it has to contain two $\R$-linearly independent vectors. We obtain an injective $\Q$-linear map
\[
\chi' : \Gamma \otimes \Q \hookrightarrow \C'
\]
by setting $\chi' (\gamma_j) = w_j$ for $1 \le j \le l$. Then $P (\xi_j)$ and $\chi (\xi_j)$ will be close to each other as desired. We extend $\chi'$ from $\Gamma \otimes \Q \subset (R^{\times}_{1 , \ep_1} / \mu (K_1)) \otimes \Q \subset (\Oh^{\times}_z / \mu (L)) \otimes \Q$ to an injective $\Q$-linear map
\[
\chi' : (\Oh^{\times}_z / \mu (L)) \otimes \Q \hookrightarrow \C' \; .
\]
The isomorphism \eqref{eq:29} gives a decomposition of $\Q$-vector spaces
\[
(L^{\times} / \mu (L)) \otimes \Q = ((\Oh^{\times}_z / \mu (L)) \otimes \Q) \oplus \opi^{\Q}_1 \oplus \opi^{\Q}_2 \oplus \ldots
\]
The values of $\chi'$ on the first summand on the right are already determined. In the formula resulting from \eqref{eq:30},
\[
\os_i = \ou_i \opi^{\nu_{i1}}_1 \cdots \opi^{\nu_{iN}}_N \quad \text{in} \; L^{\times} / \mu (L)
\]
the values $\chi' (\ou_i)$ are therefore given. We extend $\chi'$ to a $\Q$-linear injection
\[
\chi' : (L^{\times} / \mu (L)) \otimes \Q \hookrightarrow \C'
\]
in such a way that the values $\RRe \chi' (\opi_1) , \ldots , \RRe \chi' (\opi_N)$ are close to $-\infty$. Then the real parts of
\[
\chi' (\os_i) = \chi' (\ou_i) + \sum^N_{j=1} \nu_{ij} \chi' (\opi_j)
\]
will be close to $-\infty$ as well. It is decisive here that all $\nu_{ij} \ge 0$ and for each $i$ there is an index $j$ with $\nu_{ij} \ge 1$. In this way we can make
\[
|\chi (s_i)| = |\chi_{\mu} (\alpha (s_i))| \exp \RRe \chi' (\os_i) = \exp \RRe \chi' (\os_i)
\]
as small as we wish and hence $\chi (s_i)$ as close to $P (s_i) = 0$ as desired. Having thus made the necessary choices to ensure that \eqref{eq:25} will hold, we now choose any extension of $\chi'$ from $(L^{\times} / \mu (L)) \otimes \Q$ to a $\Q$-linear injection $\chi' : K^{\times} / \mu (K) \hookrightarrow \C'$. Note that $K^{\times} / \mu (K)$ is a $\Q$-vector space. Hence we have found a character $\chi \in \Hom_{\Eh_f} (K^{\times} , \C^{\times})$ satisfying \eqref{eq:25}. 
\end{proof}

%

The same proof but with some simplifications (e.g. $\chi_{\mu} = P \, |_{\mu (K)}$ in all cases) shows the following result:

\begin{theorem}
\label{t93n}
The fibres of $\ceX (\C)$ and $\ceX_0 (\C)$ over $\eta$ resp. $\eta_0$ are dense in $\ceX (\C)$ resp. $\ceX_0 (\C)$. 
\end{theorem}

We now investigate connectedness properties of our spaces. Let $V$ be a countable $\Q$-vector space and let $H = \Hom (V , \C^{\times})$ be the set of group homomorphisms $\varphi : V \to \C^{\times}$. We give $H$ the topology of pointwise convergence i.e. the subspace topology of the inclusion
\[
H \subset \prod_{v \in V} \C^{\times} \; , \; \varphi \longmapsto (\varphi (v))_{v \in V} \; .
\]
Let $H_{\inj} \subset H$ be the subspace of injective $\varphi$'s. 

\begin{lemma} \label{t13}
For any $\varphi_0 , \varphi_1 \in H$ and neighborhoods $U_0$ of $\varphi_0$ and $U_1$ of $\varphi_1$ there are $\psi_0 , \psi_1 \in H_{\inj}$ with $\psi_0 \in U_0$ and $\psi_1 \in U_1$ that can be connected by a continuous path in $H_{\inj}$.  
\end{lemma}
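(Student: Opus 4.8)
The plan is to approximate $\varphi_0$ and $\varphi_1$ by injective characters whose values lie in a fixed torsion-free divisible subgroup of $\C^{\times}$, and then to join these two by an explicit affine homotopy. First I fix a $\Q$-linear complement $\C'$ of $2\pi i\Q$ in $\C$, so that $\C=2\pi i\Q\oplus\C'$. Because $\C'$ has $\Q$-codimension $1$, it cannot lie in a single real line (a real line together with $2\pi i\Q$ does not exhaust $\C$), so $\C'$ is dense in $\C$, and hence $\exp(\C')$ is dense in $\C^{\times}$. The exponential restricts to an injective homomorphism $\exp:(\C',+)\hookrightarrow\C^{\times}$, since its kernel on $\C$ is $2\pi i\Z\subset 2\pi i\Q$ while $\C'\cap 2\pi i\Q=0$. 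Finally, a short cardinality count (using that $n\C'=\C'$ for all $n\ge1$, that $\C'$ is uncountable, and that $\C'$ is dense) shows that $\C'$ meets every open ball of $\C$ in an uncountable set; this is what will let me make the generic choices below.

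Assume $V\neq 0$ and consider $\varphi_0$ together with a basic neighbourhood of it inside $U_0$, cut out by a finite set $S\subset V$ and some $\varepsilon>0$. Let $V'=\langle S\rangle_{\Q}$ and $r=\dim_{\Q}V'$; since $S$ spans $V'$, the finitely generated torsion-free group $\langle S\rangle_{\Z}$ has rank $r$, so a $\Z$-basis $g_1,\dots,g_r$ of it is a $\Q$-basis of $V'$, which I extend to a $\Q$-basis $(g_j)_{j\ge1}$ of $V$. Each $v\in S$ is an integral combination of $g_1,\dots,g_r$, so $\psi(v)$ is a Laurent monomial in $\psi(g_1),\dots,\psi(g_r)$; hence there is $\delta>0$ such that $|\psi(g_j)-\varphi_0(g_j)|<\delta$ for all $j\le r$ already forces $\psi\in U_0$. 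Now, by density of $\exp(\C')$, choose $c_1,\dots,c_r\in\C'$ with $|\exp c_j-\varphi_0(g_j)|<\delta$, and, perturbing each $c_j$ inside a small ball (on which $\C'$ is uncountable), arrange greedily that $c_1,\dots,c_r$ are $\Q$-linearly independent. Extend to a $\Q$-linear map $b_0:V\to\C'$ by sending $g_{r+1},g_{r+2},\dots$ successively to elements of $\C'$ that are $\Q$-linearly independent of the span of the images chosen so far — possible because that span is always countable-dimensional while $\C'$ has uncountable $\Q$-dimension. Then $b_0$ is injective, so $\psi_0:=\exp\circ b_0\in H_{\inj}$, and $\psi_0(g_j)=\exp c_j$ shows $\psi_0\in U_0$. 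In exactly the same way one produces $\psi_1=\exp\circ b_1\in H_{\inj}\cap U_1$ with $b_1:V\hookrightarrow\C'$ injective and $\Q$-linear.

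To connect $\psi_0$ to $\psi_1$ inside $H_{\inj}$, set $W=b_0(V)+b_1(V)\subseteq\C'$, a countable-dimensional subspace, and choose a subspace $M\subseteq\C'$ with $M\cap W=0$ and $\dim_{\Q}M\ge\dim_{\Q}V$ together with an injective $\Q$-linear $b_*:V\hookrightarrow M$. For every $t\in[0,1]$ the $\Q$-linear map $(1-t)b_0+tb_*:V\to\C'$ is injective: if it kills $v$, then $(1-t)b_0(v)\in W$ and $tb_*(v)\in M$ sum to $0$, so both vanish, whence $v=0$ using injectivity of $b_0$ for $t<1$ and of $b_*$ for $t=1$. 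Composing with $\exp$ yields a path in $H_{\inj}$ from $\psi_0$ to $\exp\circ b_*$, and the analogous maps $(1-t)b_*+tb_1$ give a path from $\exp\circ b_*$ to $\psi_1$; concatenating gives the path required. Its continuity for the topology of pointwise convergence reduces to the evident continuity of $t\mapsto\exp\bigl((1-t)b_0(v)+tb_*(v)\bigr)$ for each fixed $v\in V$.

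The step I expect to be the real obstacle is the reduction to finitely many values in the second paragraph: one cannot simply test closeness on a $\Q$-basis of $V'$, because a homomorphism out of a $\Q$-vector space is not determined by its values on a $\Q$-basis — the target $\C^{\times}$ contains the torsion group $\mu(\C)$, and $\Hom(\Q,\mu(\C))\neq 0$, so incompatible choices of roots of the prescribed values produce genuinely different characters. Passing to the integral lattice $\langle S\rangle_{\Z}$ inside $V'$ repairs exactly this, since a homomorphism out of a finitely generated free abelian group is determined by, and depends continuously on, its values on a basis. The remaining ingredients are routine: the genericity bookkeeping, and the affine-homotopy trick, which succeeds only because $\C'$ has $\Q$-dimension strictly larger than $\dim_{\Q}V$ — so that, in contrast with $\mathrm{GL}_n$, the space of injective $\Q$-linear maps $V\to\C'$ is connected.
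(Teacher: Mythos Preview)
Your approximation step (second paragraph) is correct and essentially the same as the paper's. The gap is in the path construction. You assert that for every $t\in[0,1]$ the map $(1-t)b_0+tb_*:V\to\C'$ is injective, arguing that $(1-t)b_0(v)\in W$ and $tb_*(v)\in M$ sum to zero, so both vanish by $W\cap M=0$. But $W$ and $M$ are only $\Q$-subspaces, and for irrational $t$ the element $(1-t)b_0(v)$ need not lie in $W$ (nor $tb_*(v)$ in $M$), and the value $(1-t)b_0(v)+tb_*(v)$ need not lie in $\C'$ at all. Concretely: take $V=\Q$, arrange $1,\sqrt{2}\in\C'$ (possible since $1,\sqrt{2},2\pi i$ are $\Q$-linearly independent), and suppose $b_0(1)=1\in W$ and $b_*(1)=-\sqrt{2}\in M$ with $M\cap W=0$ (e.g.\ $W=\Q$, $M=\Q\sqrt{2}$, which is consistent with all your stated constraints). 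At $t=\sqrt{2}-1\in(0,1)$ one computes $(1-t)\cdot 1+t\cdot(-\sqrt{2})=0$, so the map kills $1\neq 0$. Even when the linear map is injective, you still need its image to miss $2\pi i\Z\setminus\{0\}$ in order for the composite with $\exp$ to be injective; since for irrational $t$ the image is not contained in $\C'$, your disjointness $\C'\cap 2\pi i\Q=0$ no longer settles this.

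The underlying obstruction is that the bad parameter set $\{\,t:\exists\,v\neq 0,\ (1-t)b_0(v)+tb_*(v)\in 2\pi i\Z\,\}$ is only countable, and a path in the real interval $[0,1]$ cannot detour around a countable subset. The paper meets exactly this difficulty by letting the interpolation parameter run through $\C$: it sets $\psi'_t=(1-\alpha(t))\psi'_0+\alpha(t)\psi'_1$ for a continuous $\alpha:[0,1]\to\C$ with $\alpha(0)=0$, $\alpha(1)=1$, identifies the bad set $\Omega\subset\C$ as countable, and uses that the complement of a countable set in $\C$ is path-connected. Your pivot-through-$b_*$ idea can be repaired in the same way, replacing each real affine homotopy by a complex-parameter one that avoids the corresponding countable $\Omega$.
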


\begin{proof}
Choose a finite set $\emptyset \neq S \subset V$ and some $\varepsilon > 0$ such that
\[
U'_i := H \cap \big( \prod_{s \in S} U_{\varepsilon} (\varphi_i (s)) \times \prod_{s \in V \setminus S} \C^{\times} \big) \subset U_i \quad \text{for} \; i = 0,1 \; .
\]
The open sets $U'_i$ are non-empty since $\varphi_i \in U'_i$ for $i = 0,1$. Choose a $\Z$-basis $\eb = \{ v_1 , \ldots , v_n \}$, $n \ge 0$ of the finitely generated torsion free (hence free) subgroup of $V$ generated by $S$. For small enough $\delta > 0$ any $\varphi \in H$ with $\varphi (v) \in U_{\delta} (\varphi_0 (v))$ for $v \in \eb$ satisfies $\varphi (s) \in U_{\varepsilon} (\varphi_0 (s))$ for all $s \in S$, and similarly for $\varphi_1$ instead of $\varphi_0$. This implies that for small $\delta > 0$ and $i = 0,1$
\[
\varphi_i \in U''_i := H \cap \big( \prod_{v \in \eb} U_{\delta} (\varphi_i (v)) \times \prod_{v \in V \setminus \eb} \C^{\times}\big) \subset U'_i \subset U_i \; .
\]
Choose a $\Q$-vector space complement $\C_0 \subset \C$ to the countable $\Q$-vector space generated by $\exp^{-1} (\imm \varphi_0$). Then $\C_0$ is dense in $\C$ since it has to contain two $\R$-linear independent vectors. Hence there is an injective $\Q$-linear map $\psi'_0 : V \to \C_0 \subset \C$ such that $\exp \psi'_0 (v) \in U_{\delta} (\varphi_0 (v))$ for all $v \in \eb$. Since $2 \pi i \Z = \exp^{-1} (1) \subset \exp^{-1} (\imm \varphi_0)$ we have $2 \pi i \Q \cap \C_0 = 0$ and hence $\psi_0 = \exp \verk \psi'_0 \in U_0 \cap H_{\inj}$. Similarly we find an injective $\Q$-linear map $\psi'_1 : V \to \C_1 \subset \C$ with $2 \pi i \Q \cap \C_1 = 0$ such that $\psi_1 = \exp \verk \psi'_1 \in U_1 \cap H_{\inj}$. For a continuous path $\alpha : [0,1] \to \C$ with $\alpha (0) = 0 , \alpha (1) = 1$ set
\[
\psi'_t = (1 - \alpha (t)) \psi'_0 + \alpha (t) \psi'_1 \quad \text{and} \quad \psi_t = \exp \verk \psi'_t \in H \; .
\]
We want $\psi_t$ to be in $H_{\inj}$ for all $0 \le t \le 1$. This means that $\psi_t (v) \neq 1$ or equivalently $\psi'_t (v) \notin 2 \pi i \Z$ for all $0 \neq v \in V$ and $0 \le t \le 1$. In other words, $\alpha$ has to avoid the set
\[
\Omega = \{ z \in \C \mid (1-z) \psi'_0 (v) + z \psi'_1 (v) \in 2 \pi i \Z \; \text{for some} \; 0 \neq v \in V \} \; .
\]
For $0 \neq v \in V$ with $\psi'_0 (v) = \psi'_1 (v)$ we have
\[
(1-z) \psi'_0 (v) + z \psi'_1 (v) = \psi'_0 (v) \notin 2 \pi i \Z
\]
by the construction of $\psi'_0$. Hence $\Omega$ consists of the complex numbers
\[
z = (2 \pi i \nu - \psi'_0 (v)) / ( \psi'_1 (v) - \psi'_0 (v))
\]
with $\nu \in \Z$ and $0 \neq v \in V$ with $\psi'_0 (v) \neq \psi'_1 (v)$. Thus $\Omega \subset \C$ is countable. By construction, $\alpha (0) = 0$ and $\alpha (1) = 1$ are not in $\Omega$. The complement of any countable set in $\C$ is path connected. In fact, given two points $z_1 \neq z_2$ in the complement, for all but countably many slopes the lines through one of them lie in the complement. Taking two such lines through $z_1$ resp. $z_2$ which intersect we get a path from $z_1$ to $z_2$. Hence a path $\alpha$ avoiding $\Omega$ exists. The corresponding continuous path $t \mapsto \psi_t$ joins $\psi_0$ with $\psi_1$ in $H_{\inj}$. 
\end{proof}

\begin{defn} \label{t95n}
We call a topological space $Z$ almost path-connected if for any two points $z_0 , z_1 \in Z$ and any two neighborhoods $U_0$ of $z_0$ and $U_1$ of $z_1$ there are two points $\tz_0 \in U_0$ and $\tz_1 \in U_1$ and a continuous map $\gamma : [0,1] \to Z$ with $\gamma (0) = \tz_0$ and $\gamma (1) = \tz_1$. 
\end{defn}

The continuous image of an almost path connected space is again almost path connected. An almost path connected space $Z$ is connected. If not, there would be a decomposition $Z = U_0 \cup U_1$ into non-empty disjoint open subsets and a continuous map $\gamma : [0,1] \to Z$ with $\gamma ([0,1]) \cap U_i \neq \emptyset$ for $i = 0,1$. Then $\gamma^{-1} (U_0)$ and $\gamma^{-1} (U_1)$ would be non-empty disjoint open subsets covering $[0,1]$. This is a contradiction. 

Let $\Eh$ be an admissible class as in Definition \ref{t41n}. We give
\[
X = \ceX (\C)_{\Eh} \times_{\Q^{> 0}_0} \R^{> 0} \quad \text{and} \quad X_0 = \ceX_0 (\C)_{\Eh} \times_{\Q^{> 0}_0} \R^{> 0} 
\]
the quotient topologies. The canonical $G \times \R^{> 0}$-action on $X$ and the $\R^{> 0}$-action on $X_0$ are continuous. The canonical $\R^{> 0}$-equivariant projection $X \to X_0$ is continuous and open and identifies $X_0$ with $X / G$ as topological spaces. In the following we will assume that $\car K_0 = 0$. In the case $\car K_0 = p$ similar results hold for the fibres of the map $X_0 \to S_0$ induced by $\eX_0 \to \spec \F_p$ where $S_0 = (\spec \F_p)^{\vee} (\C)_{\Eh} \times_{ \Q^{ > 0}_0} \R^{> 0}$. By assumption $K_0$ is a finitely generated field and hence the subfield $K_0 \cap \Q (\mu (K))$ is finitely generated as well and hence a finite extension of $\Q$. Let  
\[
d_{\mu} = (K_0 \cap \Q (\mu (K)) : \Q)
\]
be its degree. Then $d_{\mu} = 1$ if and only if $K_0$ is disjoint from the maximal cyclotomic extension of $\Q$ in $K$. Let $\pr_{\eX} : \ceX (\C)_{\Eh} \to \eX$ and $\pr_{\eX_0} : \ceX_0 (\C)_{\Eh} \to \eX_0$ be the projections and consider:
\[
X_{\eta} = \pr^{-1}_{\eX} (\eta) \times_{\Q^{> 0}_0} \R^{> 0} \subset X \quad \text{and} \quad X_{0 \eta_0} = \pr^{-1}_{\eX_0} (\eta_0) \times_{\Q^{> 0}_0} \R^{> 0} = X_{\eta} / G \subset X_0 \;.
\]

\begin{theorem}
\label{t14}
Assume that $\eX_0$ is integral, normal and flat of finite type over $\spec \Z$. For every admissible class $\Eh \supset \Eh_f$, the spaces $X_{\eta}$ and $X_{0 \eta_0}$ are connected. The space $X_{0 \eta_0}$ is the disjoint union of $d_{\mu}$ almost path connected subspaces. 
\end{theorem}

\begin{proof}
Let $\Hom_f (\mu (K) , \C^{\times}) = \Hom_f (\mu (K) , S^1)$ be the space of homomorphisms $\mu (K) \to \C^{\times}$ with finite kernel of order in $\Nh_0$, equipped with the topology of pointwise convergence. Fix an isomorphism $i : \mu (K) \silo \mu (\C)$ and consider the induced topological isomorphism
\begin{equation}
\hZ \silo \Hom (\mu (K) , \C^{\times}) = \Hom (\mu (K) , S^1)  \; , \; a \longmapsto \chi_a := (\zeta \mapsto i (\zeta)^a) \; . \label{eq:33} 
\end{equation}
The isomorphism \eqref{eq:33} restricts to a homeomorphism:
\begin{equation}
\label{eq:34}
\Nh_0 \hZ^{\times} \silo \Hom_f (\mu (K) , \C^{\times}) \; .
\end{equation}
Here $\Nh_0 \hZ^{\times} \subset \hZ$ carries the subspace topology. Note that $\Nh_0 \cap \hZ^{\times} = \{ 1 \}$ in $\hZ$. Let $\Hom_{\Eh} (K^{\times} , \C^{\times})$ be the set of homomorphisms $\chi : K^{\times} \to \C^{\times}$ of class $\Eh$. Consider the restriction map
\begin{equation}
\label{eq:35}
r : \Hom_{\Eh} (K^{\times} , \C^{\times}) \longrightarrow \Hom_f (\mu (K) , \C^{\times}) \; , \;  \chi \longmapsto \chi \, |_{\mu (K)}\; .
\end{equation}
It is continuous for the topologies of pointwise convergence, and surjective by Lemma \ref{t3} since $\Eh_f \subset \Eh$. Choose a splitting $\alpha : K^{\times}\to \mu (K)$ of the exact sequence 
\begin{equation} \label{eq:75n}
1 \to \mu (K) \to K^{\times} \xrightarrow{\pi} K^{\times} / \mu (K) \to 1 \; .
\end{equation}
Given a character $\chi_{\mu} \in \Hom_f (\mu (K) , \C^{\times})$, the following map is a homeomorphism onto its image with the subspace topology
\begin{equation}
\label{eq:36}
f : r^{-1} (\chi_{\mu}) \hookrightarrow \Hom (K^{\times} \otimes \Q , \C^{\times}) \; , \; \chi \longmapsto \psi = \chi (\chi^{-1}_{\mu} \verk \alpha) \; .
\end{equation}
Here we view $\psi = \chi (\chi^{-1}_{\mu} \verk \alpha)$ which is trivial on $\mu (K)$ as a character on $K^{\times} / \mu (K)= K^{\times} \otimes \Q$. The inverse map $f^{-1}$ defined on $\imm f$ sends $\psi : K^{\times} \otimes \Q \to \C^{\times}$ to the character $\chi$ defined by $\chi (x) = \psi (x \otimes 1) \chi_{\mu} (\alpha (x))$ for $x \in K^{\times}$. The continuity of $f$ and $f^{-1}$ can be checked on pointwise convergent sequences of characters. Moreover, we claim that $f$ restricts to a homeomorphism 
\begin{equation} \label{eq:76n}
\Hom_{\Eh_f} (K^{\times} , \C^{\times}) \cap r^{-1} (\chi_{\mu}) \silo \Hom_{\inj} (K^{\times} \otimes \Q , \C^{\times}) \; .
\end{equation} 
Recall here that $\Eh_f \subset \Eh$. Namely, consider the commutative diagram
\[
\xymatrix{
K^{\times} \otimes \Q \ar[r]^{\psi} \ar[dr]_{\chi \otimes \id} & \C^{\times} \ar[d] \\
 & \C^{\times} \otimes \Q \; .
}
\] 
It shows that $\Ker \chi = (\Ker \chi)_{\tors}$ i.e. $\Ker (\chi \otimes \id) = 0$ if and only if $\psi$ is injective. 
Lemma \ref{t13} and \eqref{eq:36}, \eqref{eq:76n} show that the fibres of the restriction map $r$ are almost path connected. For the projection $\pr : \deX (\C)_{\Eh} \to \eX$ we have
\[
\pr^{-1} (\eta) = \Hom_{\Eh} (K^{\times} , \C^{\times}) \quad \text{via} \; (\eta , \oP^{\times}) \mapsto \oP^{\times} \; .
\]
Using \eqref{eq:34} which depends on the choice of the isomorphism $i : \mu (K) \silo \mu (\C)$ we may therefore view $r$ as an $\Nh_0$-equivariant continuous surjection with almost path-connected fibres
\begin{equation}
\label{eq:38}
r : \pr^{-1} (\eta) \twoheadrightarrow \Nh_0 \hZ^{\times} \subset \hZ \; .
\end{equation}
Passing to the colimit over $\Nh_0$ we obtain a $\Q^{> 0}_0$-equivariant continuous surjection with almost path-connected fibres
\begin{equation}
\label{eq:39}
r : \pr^{-1}_{\eX} (\eta) \twoheadrightarrow \Q^{> 0}_0 \hZ^{\times} \subset \hZ \otimes \Q_0 \; .
\end{equation}
Here $\Q_0 = \Z [p^{-1} \mid p \in \car \Nh_0] \subset \Q$ and $\Nh_0 \hZ^{\times}$ resp. $\Q^{> 0}_0 \hZ^{\times}$ carry the subspace topology of $\hZ$ resp. $\hZ \otimes \Q_0$ i.e. the adele and not the idele topology. The maps \eqref{eq:38} and \eqref{eq:39} are $G$-equivariant if the $G$-action on the right is given by multiplication with the cyclotomic character $\kappa$ of $G$, c.f. \eqref{eq:57}.

The quotient
\[
\hZ^{\times} / \kappa (G) = \Gal (K_0 \cap \Q (\mu (K)) / \Q)
\]
is finite of order $d_{\mu}$. Passing to the orbit spaces $\mod G$ in \eqref{eq:39} we obtain a continuous surjective map
\begin{equation}
\label{eq:40}
r_0 : \pr^{-1}_{\eX_0} (\eta_0) \twoheadrightarrow \Q^{> 0}_0 \hZ^{\times} / \kappa (G)
\end{equation}
whose fibres are continuous images of the fibres of $r$ in \eqref{eq:39} and are therefore almost path connected. Incidentally, note that the quotient topology on $\pr^{-1}_{\eX_0} (\eta_0) = \pr^{-1}_{\eX} (\eta) / G$ equals the subspace topology within $\ceX_0 (\C)_{\Eh}$, the latter being equipped with the quotient topology via $\ceX_0 (\C)_{\Eh} = \ceX (\C)_{\Eh} / G$. 

For $a \in \hZ^{\times}$ we have a continuous bijection
\begin{equation}
\label{eq:81n}
r^{-1} (a) \times \R^{> 0} \silo (X_{\eta})_a := r^{-1} (\Q^{> 0}_0 a) \times_{\Q^{> 0}_0} \R^{> 0} \; , \; (f , u) \mapsto [f, u] \; .
\end{equation}
It follows that $(X_{\eta})_a$ is almost path connected. Similarly, for $\oa \in \hZ^{\times} / \kappa (G)$ the continuous bijection
\begin{equation}
\label{eq:82n}
r^{-1}_0 (\oa) \times \R^{> 0} \silo (X_{\eta_0})_{\oa} = r^{-1}_0 (\Q^{> 0}_0 \oa) \times_{\Q^{> 0}_0} \R^{> 0} \; , \; (f_0 , u) \mapsto [f_0 , u] 
\end{equation}
shows that $(X_{0 \eta_0})_{\oa}$ is almost path-connected. This also follows from the fact that $(X_{0 \eta_0})_{\oa} = (X_{\eta})_a / G$. In particular, $X_{0 \eta_0}$ is the disjoint union of the $d_{\mu}$ almost path-connected subspaces $(X_{0 \eta_0})_{\oa}$ for $\oa \in \hZ^{\times} / \kappa (G)$. This shows the second assertion of Theorem \ref{t14}. In order to prove that $X_{\eta}$ and hence $X_{0 \eta_0} = X_{\eta} / G$ are connected, we first show that the restriction map $r$ of \eqref{eq:39} is open.

Recall the splitting $\alpha$ of the exact sequence \eqref{eq:75n} and let $\beta : K^{\times} / \mu (K) \to K^{\times}$ be the corresponding splitting $\beta (\overline{r}) = r \alpha (r)^{-1}$. We obtain a homeomorphism
\begin{equation}
\label{eq:83n}
\Hom (K^{\times} , \C^{\times}) \silo \Hom (\mu (K) , \C^{\times}) \times \Hom (K^{\times} / \mu (K) , \C^{\times}) \; ,
\end{equation}
sending $\chi$ to $(\chi \, |_{\mu (K)} , \chi \verk \beta)$. Here as usual the $\Hom$-spaces carry the topology of pointwise convergence. The inverse homeomorphism sends $(\chi_1 , \chi_2)$ to $(\chi_1 \verk \alpha) (\chi_2 \verk \pi)$. Straightforeward arguments show that \eqref{eq:83n} restricts to a homeomorphism
\begin{equation}
\label{eq:84n}
\Hom_{\Eh_f} (K^{\times} , \C^{\times}) \silo \Hom_f (\mu (K) , \C^{\times}) \times \Hom_{\inj} (K^{\times} / \mu (K) , \C^{\times}) \; .
\end{equation}
In particular the restriction map from $\Hom_{\Eh_f} (K^{\times} , \C^{\times})$ to $\Hom_f (\mu (K) , \C^{\times})$ is a projection and therefore open. We claim that more generally, for $\Eh \supset \Eh_f$ the restriction map
\begin{equation}
\label{eq:85n}
r : \Hom_{\Eh} (K^{\times} , \C^{\times}) \longrightarrow \Hom_f (\mu (K) , \C^{\times})
\end{equation}
is open. Let $\chi_1 \in r (O)$ where $O$ is open. Then $\chi_1 = r (\chi)$ with $\chi \in O$. Via \eqref{eq:83n}, there are open subsets $O_1 \subset \Hom_f (\mu (K) , \C^{\times})$ and $O_2 \subset \Hom (K^{\times} / \mu (K) , \C^{\times})$ with
\[
\chi \ent (\chi_1 , \chi_2) \in (O_1 \times O_2) \cap \Hom_{\Eh} (K^{\times} , \C^{\times}) \subset O \; .
\]
Lemma \ref{t13} implies that $O'_2 = O_2 \cap \Hom_{\inj} (K^{\times} / \mu (K) , \C^{\times})$ is non-empty. We have
\[
O_1 \times O'_2 \subset (O_1 \times O_2) \cap \Hom_{\Eh_f} (K^{\times} , \C^{\times}) \subset O
\]
and therefore $\chi_1 \in O_1 = r (O_1 \times O'_2) \subset r (O)$. Hence $r (O)$ is open. Since the map $r$ of \eqref{eq:85n} or equivalently the map \eqref{eq:38} are open, the induced map $r$ in \eqref{eq:39} obtained by passing to colimits is open as well. Note here that for all $\nu \in \Nh_0$ the subspaces $F^{-1}_{\nu} \pr^{-1} (\eta)$ and $\nu^{-1} \hZ^{\times}$ are open in $\pr^{-1}_{\eX} (\eta)$ resp. in $\Q^{> 0} \hZ^{\times}$ with the subspace topology of $\hZ \otimes \Q_0$. Since $r$ is open, the composition
\[
\pr^{-1}_{\eX} (\eta) \times \R^{> 0} \twoheadrightarrow \pr^{-1}_{\eX} (\eta) \overset{r}{\twoheadrightarrow} \Q^{> 0}_0 \hZ^{\times}
\]
is open too. Since $\Q^{> 0}_0$ acts by homeomorphisms on these spaces, we obtain a continuous open and surjective map
\[
\pi : X_{\eta} \twoheadrightarrow Y := \Q^{> 0}_0 \hZ^{\times} / \Q^{> 0}_0 \; .
\]
As an abstract group, we may identify $Y$ with $\hZ^{\times}$ and using \eqref{eq:81n} it follows that all fibres of $\pi$ are almost path connected. We will show below that the only open sets of $Y$ are $\emptyset$ and $Y$. This implies that $X_{\eta}$ is connected. Namely, let $\emptyset \neq O \subset X_{\eta}$ be open and closed. Then $O \cap \pi^{-1} (y)$ is open and closed in $\pi^{-1} (y)$ as well, and since $\pi^{-1} (y)$ is connected it follows that $O \cap \pi^{-1} (y) = \emptyset$ or $\pi^{-1} (y)$. Hence $O$ has the form
\[
O = \bigcup_{y \in T} \pi^{-1} (y)
\]
for some subset $T \subset Y$. Since $\pi$ is open, the non-empty subset $T = \pi (O)$ of $Y$ is open and hence $T = Y$ which implies $O = X_{\eta}$. The fact that $Y$ carries the coarse topology follows from strong approximation for $\Q$: Since $\eX_0$ is flat of finite type over $\spec \Z$ its image in $\spec \Z$ is open. Hence $\car \eX_0$ consists of all but finitely many prime numbers. By our standing assumption $\car \Nh_0 \supset \car \eX_0$, it follows that there is a finite set $S$ of prime numbers such that $\Nh_0$ is generated by all $p \notin S$. Multiplication by an idele is a homeomorphism in the adele topology. Hence multiplication by an element of $Y$ is a homeomorphism on $Y$ as well. For proving that $Y$ carries the coarse topology it therefore suffices to show that if $U \subset Y$ is open with $1 \in U$, then $U = Y$. Identifying $Y$ with $\hZ^{\times}$ as abstract groups, there exist a finite set of prime numbers $S' \neq \emptyset$ and some $0 < \varepsilon < 1$ such that $U$ contains a set of the form
\begin{align*}
U' & = \{ a \in \hZ^{\times} \mid \text{ex.} \, r \in \Q^{>0}_0 \, \text{s.t.}\, |r a_p - 1|_p < \varepsilon \; \text{for} \; p  \in S' \; \text{and} \; |ra_p -1 |_p \le 1 \; \text{for} \; p \notin S' \} \\
 & = \{ a \in \hZ^{\times} \mid \text{ex.} \, r \in \Q^{>0}_0 \, \text{s.t.}\, |r - a^{-1}_p|_p < \varepsilon \; \text{for} \; p  \in S' \; \text{and} \; |r|_p \le 1 \; \text{for} \; p \notin S' \} \; .
\end{align*}
Given any $a \in \hZ^{\times}$, strong approximation (excluding the infinite place, i.e. the Chinese remainder theorem) gives an $r \in \Q$ with
\begin{equation} 
\label{eq:100n}
|r - a^{-1}_p|_p < \varepsilon \; \text{for} \; p \in S' \, , \, |r - 1|_p < 1 \; \text{for} \; p \in S \; \text{and} \; |r|_p \le 1 \; \text{for} \; p \notin S' \cup S \; .
\end{equation}
This implies that $r \in \pm \Nh_0$. Replacing $r$ by $r + \prod_{p \in S \cup S'} p^N$ for some large enough $N$ we can achieve that $r \in \Nh_0$ in \eqref{eq:100n}. Hence every $a \in \hZ^{\times}$ is contained in $U'$ and therefore $U' = Y$ and $U = Y$. 
\end{proof}

\begin{cor} \label{t15}
Let $\eX_0$ be an integral normal scheme which is flat of finite type over $\spec \Z$ and let $\Eh$ be an admissible condition with $\Eh \supset \Eh_f$. Then the spaces $X = \ceX (\C)_{\Eh} \times_{\Q^{> 0}} \R^{> 0}$ and $X_0 = \ceX_0 (\C)_{\Eh} \times_{\Q^{> 0}} \R^{> 0}$ are connected.
\end{cor}

\begin{proof}
By Theorem \ref{t14} the spaces $X_{\eta}$ and $X_{0 \eta_0}$ are connected. By Theorem \ref{t11} they are dense in $X$ resp. $X_0$. Hence $X$ and $X_0$ are connected as well.  
\end{proof}

\begin{rem} \em
For $\eX_0$ as in the corollary and $\Eh = \Eh_{\tors}$ the spaces $X_{\eta} , X , X_{0 \eta_0}$ and $X_0$ are connected. This follows by the same arguments as before, except for Lemma \ref{t13} which is not needed. However, by Theorem \ref{t6n} the space $X_0$ has far too many periodic orbits.
\end{rem}

The following result is due to Kucharczyk and Scholze.

\begin{theorem} \label{t98n}
Consider the map $r$ of \eqref{eq:35}. Let $\Hom_{\inj} (\mu (K) , \C^{\times})$ be the set of injective homomorphisms. Then the Galois group $G$ acts freely on
\[
r^{-1} (\Hom_{\inj} (\mu (K) , \C^{\times})) \subset \pr^{-1} (\eta) \; .
\]
\end{theorem}

\begin{proof}
$P = P^{\sigma}$ implies that $r (P) = r (P^{\sigma}) = r (P)^{\sigma} = \kappa (\sigma) r (P)$ under the identification $\Hom_{\inj} (\mu (K) , \C^{\times}) = \hZ^{\times} \subset \hZ$ which follows from \eqref{eq:34}. Hence $\kappa (\sigma) = 1$ and therefore $\sigma \in G_{\infty} = \ker (\kappa : G \to \hZ^{\times})$. Now the claim follows from \cite[Lemma 4.12]{KS} since $K_0$ is of characteristic zero. 
\end{proof}

\section{The $0$-th leafwise cohomology of the $\R^{> 0}$-dynamical system} \label{sec:10n}
The following definitions are modeled on a cohomology theory for foliated spaces called the leafwise cohomology in \cite{alvarez1} and the tangential cohomology in \cite{mooreschochet}. Let $Q \subset \Q^{> 0}$ be a subgroup and let $M$ be a topological space with a $Q$-action. We do not assume $M$ to be connected. Consider the projection:
\[
\pi : \tX = M \times \R^{> 0} \longrightarrow X = M \times_Q \R^{> 0} \; .
\]
For an open subset $\tU \subset \tX$ we let $\Rh_{\tX} (\tU)$ be the $\R$-algebra of continuous real-valued functions $\tf : \tU \to \R$ which are locally constant in the first variable in the following sense: For every point $(m,u) \in \tU$ there is a neighborhood $V = V_u$ of $m$ in $M$ with $V \times \{ u \} \subset \tU$ such that $\tf (\_ , u)$ is constant on $V$. In this way we obtain a sheaf of $\R$-algebras $\Rh_{\tX}$ on $\tX$. For a topological space $Y$, let $\Ch^0_Y$ denote the sheaf of real-valued continuous functions on $Y$. By construction we have $\Rh_{\tX} \subset \Ch^0_{\tX}$. Consider the following subsheaf $\Rh_X$ of $\Ch^0_X$:
\[
\Rh_X = (\pi_* \Rh_{\tX})^Q \subset (\pi_* \Ch^0_{\tX})^Q = \Ch^0_X \; .
\]
We view $\Rh_X$ as the sheaf of continuous functions on $X$ which are locally constant along the leaves of the $1$-codimensional ``foliation'' $\Fh$ of $X$ by the images of the spaces $M \times \{ u \}$ in $X$. Note that in general the continuous bijection
\[
\pi \, |_{M \times \{ u \} } : M \times \{ u \} \silo \pi (M \times \{ u \})
\]
will not be a homeomorphism if $\pi (M \times \{ u \} )$ is equipped with the subspace topology of $X$. If $Q$ acts properly discontinuously on $M \times \R^{> 0}$ and if $M$ is a manifold, then $\Fh$ is an actual $1$-codimensional foliation. In general however the partition of $X$ into the disjoint spaces $\pi (M \times \{ u \})$ for $u \in \R^{> 0} \mod Q$ will not be locally trivial.

\begin{defn}
\label{t101}
The leafwise cohomology of $X$ is the graded commutative unital cohomology $\R$-algebra
\[
H^{\hullet}_{\Fh} (X) := H^{\hullet} (X , \Rh_X) \; .
\]
The natural $\R^{> 0}$-actions on $X$ and $\tX$ induce an $\R^{> 0}$-action on $H^{\hullet}_{\Fh} (X)$. Any homeomorphism of $M$ which commutes with the $Q$-action induces an isomorphism of $H^{\hullet}_{\Fh} (X)$. The constant functions on $X$ embed as $\R \cdot 1$ into $H^0_{\Fh} (X)$. 
\end{defn}

In $C^{\infty}$-foliation theory one usually assumes the sections of $\Rh$ to be smooth in the transversal direction (here: in the $\R^{> 0}$-variable). In favourable cases the maximal Hausdorff quotient $\overline{H}^{\hullet}_{\Fh} (X)$ in a suitable Fr\'echet topology then has an interpretation as the global $i$-forms along $\Fh$ which are harmonic along the leaves \cite{alvarez2}.

We apply Definition \ref{t101} to the following spaces. Let the notations and assumptions be as in the previous section. In particular $\car \Nh_0 \supset \car \eX_0$ consists of almost all prime numbers. We let $Q = \Q^{> 0}_0$ and consider the $\Q^{> 0}_0$-spaces $M = \pr^{-1}_{\eX} (\eta) , \ceX (\C) , \pr^{-1}_{\eX_0} (\eta_0)$ and $\ceX_0 (\C)$. The corresponding $\R^{> 0}$-systems are denoted by $X_{\eta} , X , X_{0 \eta_0}$ and $X_0$. We then have natural $G \times \R^{> 0}$-actions on $H^{\hullet}_{\Fh} (X_{\eta})$ and $H^{\hullet}_{\Fh} (X)$, and $\R^{> 0}$-actions on $H^{\hullet}_{\Fh} (X_{0 \eta_0})$ and $H^{\hullet}_{\Fh} (X_0)$. 

\begin{theorem}
\label{t102}
In the situation of Corollary \ref{t15}, the vector spaces $H^0_{\Fh} (X_{\eta}) , H^0_{\Fh} (X) , H^0_{\Fh} (X_{0\eta_0})$ and $H^0_{\Fh} (X_0)$ consist of constant functions only. Thus they are canonically isomorphic to $\R$ with trivial action of $G \times \R^{> 0}$ resp. $\R^{> 0}$. 
\end{theorem}

\begin{proof}
We have $H^0_{\Fh} (X_0) = H^0_{\Fh} (X)^G$ and $H^0_{\Fh} (X_{0 \eta_0}) = H^0_{\Fh} (X_{\eta})^G$. Hence it suffices to treat $X_{\eta}$ and $X$. The restriction map $H^0_{\Fh} (X) \to H^0_{\Fh} (X_{\eta})$ is injective since by Theorem \ref{t11} the subspace $X_{\eta}$ is dense in $X$. Hence it suffices to show that $H^0_{\Fh} (X_{\eta}) = \R$. By definition a section $f \in H^0_{\Fh} (X_{\eta})$ is a continuous $\Q^{> 0}_0$-invariant function $f : \pr^{-1}_{\eX} (\eta) \times \R^{> 0} \to \R$ such that for all $u \in \R^{> 0}$ the function $f ( \_ , u)$ is locally constant on $\pr^{-1}_{\eX} (\eta)$. Hence $f (\_ , u)$ is constant on the (connected!) fibres of the continuous map $r : \pr^{-1}_{\eX} (\eta) \twoheadrightarrow \Q^{> 0}_0 \hZ^{\times}$ of \eqref{eq:39}. Hence $f$ factors over a continuous $\Q^{> 0}_0$-invariant map
\[
\of : \Q^{> 0}_0 \hZ^{\times} \times \R^{> 0} \longrightarrow \R \; 
\]
and hence over a continuous map
\[
\tf : \Q^{> 0}_0 \hZ^{\times} \times_{\Q^{> 0}_0} \R^{> 0} \longrightarrow \R \; .
\]
By the next proposition, $\tf$ and hence $f$ are constant.
\end{proof}

Recall that a topological space $Y$ is called irreducible if it is not the union of two proper closed subsets or equivalently if any two non-empty open subsets have non-empty intersection. All continuous maps from an irreducible topological space $Y$ to a Hausdorff space are constant. In particular, all continuous real valued functions on $Y$ are constant.

\begin{prop}
\label{t103}
Assume that $\car \Nh_0$ contains almost all prime numbers. If $\Q^{> 0}_0 \hZ^{\times} \times \R^{> 0}$ carries the adele topology i.e. the subspace topology of $\Q^{> 0}_0 \hZ \times \R$, then the quotient space $Y = \Q^{> 0}_0 \hZ^{\times} \times_{\Q^{> 0}_0} \R^{> 0}$ is irreducible.
\end{prop}

\begin{rem} \em
By \cite[Lemma 3.1]{laca}, the orbits of the $\Q^{> 0}$-action on $\Q^{> 0} \hZ^{\times} \times \R^{> 0}$ are closed. The same argument works for $\Q^{> 0}_0$ instead of $\Q^{> 0}$ and it follows that the points of $Y$ are closed, i.e. $Y$ is a $T_1$-space. 
\end{rem}

\begin{proof}
The continuous homomorphism of topological groups
\[
\hZ^{\times} \times \R^{> 0} \longrightarrow \Q^{> 0}_0 \hZ^{\times} \times_{\Q^{> 0}_0} \R^{> 0} \; , \; (a,u) \mapsto [a,u]
\]
is an isomorphism of abstract groups. We write $(\hZ^{\times} \times \R^{> 0})^{\adele}$ for the group $\hZ^{\times} \times \R^{> 0}$ equipped with the topology induced by the right hand side. We have to show that \\
$(\hZ^{\times} \times \R^{> 0})^{\adele}$ is irreducible. Since multiplication by an element of $\hZ^{\times} \times \R^{> 0}$ is a homeomorphism of $(\hZ^{\times} \times \R^{> 0})^{\adele}$, it suffices to show that for any open neighborhood $U$ of $(1,1) \in (\hZ^{\times} \times \R^{> 0})^{\adele}$ and any $(b,v) \in \hZ^{\times} \times \R^{> 0}$ we have $U \cap (b,v) U \neq \emptyset$. For a finite set $S'$ of prime numbers and $0 < \varepsilon < 1$ consider the set $U_{S' , \varepsilon}$ consisting of all $(a,u) \in \hZ^{\times} \times \R^{> 0}$ such that there exists some $r \in \Q^{> 0}_0$ with
\[
|r a_p - 1|_p < \varepsilon \; \text{for} \; p \in S' \; , \; |ra_p-1|_p \le 1 \; \text{for} \; p \notin S' \; \text{and} \; |r^{-1} u -1 |_{\infty} < \varepsilon \; .
\]
The subsets $U_{S' , \varepsilon}$ form a basis of neighborhoods of $(1,1) \in (\hZ^{\times} \times \R^{> 0})^{\adele}$, and we may assume that $U = U_{S' , \varepsilon}$ for some $S' , \varepsilon$. We have
\[
U_{S' , \varepsilon} = \{ (a,u) \in \hZ^{\times} \times \R^{> 0} \mid \text{ex.} \; \nu \in \Nh_0 \; \text{s.t.} \; |\nu a_p -1 |_p < \varepsilon \; \text{for} \; p \in S' , |\nu^{-1} u-1|_{\infty} < \varepsilon \} \; . 
\]
We will now construct an element $(a,u) \in U_{S' , \varepsilon}$ such that $(ab , uv) \in U_{S' , \varepsilon}$, and hence $U_{S' , \varepsilon} \cap (b,v) U_{S' , \varepsilon} \neq \emptyset$. Let $S$ be the finite set of prime numbers which are not in $\car \Nh_0$ and choose $\nu_0 \in \Z$ such that $|\nu_0 - 1|_p < \varepsilon$ for all $p \in S' \cup S$. It follows that $\nu_0 \in \pm \Nh_0$. There are positive integers $M_0$ and $A_0$ such that for all integers $A \ge A_0$ the numbers
\[
\nu = \nu_0 + A \prod_{p \in S' \cup S} p^{M_0}
\]
are in $\Nh_0$ and satisfy $|\nu - 1|_p < \varepsilon$ for all $p \in S' \cup S$. It follows that $(1 , \nu) \in U_{S' , \varepsilon}$ for all $A \ge A_0$. The desired element $(a,u)$ will be of the form $(1, \nu)$ for suitable $A$. It needs to satisfy the condition $(b , \nu v) \in U_{S' , \varepsilon}$, i.e. there should be some $\nu' \in \Nh_0$ such that
\[
|\nu' b_p -1 |_p < \varepsilon \quad \text{for} \; p \in S' \; \text{and} \; |\nu^{'-1} \nu v-1|_{\infty} < \varepsilon \; .
\]
Choose some $\nu'_0 \in \Z$ such that $|\nu'_0 b_p - 1|_p < \varepsilon$ for $p \in S' \cup S$. Then for all integers $A' \ge A_0$ the numbers
\[
\nu' = \nu'_0 + A' \prod_{p \in S' \cup S} p^{M_0}
\]
are in $\Nh_0$ and satisfy $|\nu' b_p -1|_p < \varepsilon$ for all $p \in S' \cup S$. Choose sequences $(A_n) , (A'_n)$ of positive integers $\ge A_0$ with $A_n \to \infty , A'_n \to \infty$ such that $A'_n A^{-1}_n \to v$ for $n \to \infty$. For the corresponding numbers $\nu_n , \nu'_n \in \Nh_0$ we then have $\nu^{' -1}_n \nu_n \to v^{-1}$ and hence for large enough $n$, we have
\[
|\nu'_n b_p - 1|_p < \varepsilon \; \text{for} \; p \in S' \quad \text{and} \quad |\nu^{'-1}_n \nu_n v-1 |_{\infty} < \varepsilon \; .
\]
Hence $(1 , \nu_n) \in U_{S' , \varepsilon} \cap (b,u) U_{S' , \varepsilon}$ for large enough $n$. 
\end{proof}

\begin{rems} \em
1) We have
\[
U_{S' , \varepsilon} = \bigcup_{\nu \in \Nh_0} \{ (a,u) \in \hZ^{\times} \times \R^{> 0} \mid |\nu a_p - 1| < \varepsilon \; \text{for} \; p \in S' , |\nu^{-1} u-1|_{\infty} < \varepsilon \} \; .
\]
Thus the topology of $(\hZ^{\times} \times \R^{> 0})^{\adele}$ is reminiscient of the subspace topology induced by the ambient space on a dense leaf of a foliation. The foliation of the torus by lines of irrational slope is the simplest example. The ordinary topology on $\hZ^{\times} \times \R^{> 0}$ could then be viewed as the leaf topology which is finer.\\
2) The inclusion of sheaves $\uR \subset \Rh_X$ implies that $H^0 (X , \uR) \subset H^0 (X , \Rh_X) = H^0_{\Fh} (X) = \R$. Hence $H^0 (X , \uR) = \R$ and similarly for $X_{\eta} , X_{0 \eta_0}$ and $X_0$. It follows again that these spaces are connected, c.f. Theorem \ref{t14}.\\
3) The assertion of Theorem \ref{t102} also holds for $\Eh = \Eh_{\tors}$ with the same proof, except that the connectedness of the fibres of the map $r$ in \eqref{eq:39} is easier to see since Lemma \ref{t13} is not needed. 
\end{rems}

\section{(Generalized) functions on $\ceX (\C)$ and $\ceX_0 (\C)$} \label{sec11n}
Let $\C$ denote the field of complex numbers. As in section \ref{sec:4n}, let $\eX_0$ be an arithmetic scheme i.e. integral normal and with a countable function field $K_0$. Via the map \eqref{eq:11*}, functions on $\eX$ and $\eX_0$ give rise to complex valued functions on $W_{\rat} (\eX) (\C) = \deX (\C)$ resp. $W_{\rat} (\eX_0) (\C) = \deX_0 (\C) = \deX (\C) / G$. By the definition of the topologies on $\deX (\C)$ and $\deX_0 (\C)$, these functions are continuous and we obtain multiplicative injective maps (use Lemma \ref{t3})
\begin{equation}
\label{eq:11n1}
[\;] : \Gamma (\eX , \Oh) \longrightarrow C^0 (\deX (\C) , \C) \quad \text{and} \quad [\;] : \Gamma (\eX_0 , \Oh) \longrightarrow C^0 (\deX_0 (\C) , \C) \; .
\end{equation}
Consider the induced $\C$-algebra map on the reduced monoid algebra of $(\Gamma (\eX , \Oh) , \cdot)$ with $\C$-coefficients
\[
\uC \Gamma (\eX, \Oh) \longrightarrow C^0 (\deX (\C) , \C) \; .
\]
Passing to the $G$-fixed points gives a $\C$-algebra map
\begin{equation}
\label{eq:11n2}
W_{\rat} (\Gamma (\eX_0 , \Oh)) \otimes_{\Z} \C = \uC (\Gamma (\eX , \Oh))^G \longrightarrow C^0 (\deX_0 (\C) , \C) \; . 
\end{equation}
Note here that $W_{\rat} (\Gamma (\eX_0 , \Oh)) = \uZ (\Gamma (\eX , \Oh))^G$ by Propositions \ref{t21nn} and \ref{t21n} since $\Gamma (\eX , \Oh)$ is the integral closure of $\Gamma (\eX_0 , \Oh)$ in $K$. Moreover the natural inclusion
\[
\uZ \Gamma (\eX , \Oh)^G \otimes_{\Z} \C \longrightarrow \uC \Gamma (\eX , \Oh)^G
\]
is an isomorphism since $G$ acts with finite orbits on $\Gamma (\eX , \Oh)$. Since $\ceX (\C)$ and $\ceX_0 (\C)$ carry the inductive limit topologies, from \eqref{eq:11n1} we get continuous multiplicative injective maps
\begin{equation}
\label{eq:11n3}
\varprojlim_{\Nh_0} \Gamma (\eX , \Oh) \longrightarrow C^0 (\ceX (\C) , \C) \quad \text{and} \quad \varprojlim_{\Nh_0} \Gamma (\eX_0 , \Oh) \longrightarrow C^0 (\ceX_0 (\C) , \C) \; .
\end{equation}
For simplicity, from now on we only consider the monoid $\Nh_0 = \Nh$. Then the latter map is without interest if $\eX_0$ is of finite type over $\spec \Z$ since then $\varprojlim_{\Nh} \Gamma (\eX_0 , \Oh) = \{ 0,1 \}$ as monoids. One gets interesting algebras of functions on $\ceX (\C)$ and $\ceX_0 (\C)$ via the monoid algebra of the topological monoid $\varprojlim_{\Nh} \Gamma (\eX , \Oh)$ with its pro-discrete topology. Here is the construction: The topological group $\overleftarrow{K}^{\times} = \varprojlim_{\Nh} K^{\times}$ fits into an exact sequence
\begin{equation}
\label{eq:11n4}
1 \longrightarrow T \mu_K \longrightarrow \overleftarrow{K}^{\times} \xrightarrow{\pr_1} K^{\times} \longrightarrow 1 \;. 
\end{equation}
Here $T \mu_K = \varprojlim_n \mu_n (K)$ is the Tate-module of $K$ and $\pr_1$ is the projection onto the first component. The compact subgroup $T \mu_K$ is open and hence every point of $\overleftarrow{K}^{\times}$ has an open compact neighborhood. Thus $\overleftarrow{K}^{\times}$ is zero-dimensional and locally compact. Let $\mu$ be the Haar-measure on $\overleftarrow{K}^{\times}$ with $\mu (T \mu_K) = 1$. Let $C_c (\overleftarrow{K}^{\times})$ be the non-unital $\C$-algebra of compactly supported continuous complex valued functions on $\overleftarrow{K}^{\times}$ with the convolution product
\begin{equation}
\label{eq:11n5}
(f \ast g) (r) = \int_{\overleftarrow{K}^{\times} } f (s) g (s^{-1} r) \, d\mu (s) \quad \text{for} \; r \in \overleftarrow{K}^{\times} \; .
\end{equation}
The subring $\Gamma (\eX , \Oh)$ of $K$ contains $\mu (K)$. Hence the submonoid
\begin{equation}
\label{eq:11n6}
\overleftarrow{\Gamma}^{\times} := \varprojlim_{\Nh} (\Gamma (\eX , \Oh) \setminus 0) = \pr^{-1}_1 (\Gamma (\eX , \Oh) \setminus 0) 
\end{equation}
of $\overleftarrow{K}^{\times}$ contains $T \mu_K$. Since $\overleftarrow{\Gamma}^{\times}$ is open and closed in $\overleftarrow{K}^{\times}$ its characteristic function $\chi = \chi_{\overleftarrow{\Gamma}^{\times}}$ is continuous on $\overleftarrow{K}^{\times}$. We can extend continuous functions on $\overleftarrow{\Gamma}^{\times}$ by zero to continuous functions on $\overleftarrow{K}^{\times}$. This gives an injective map
\begin{equation}
\label{eq:11n7}
C_c (\overleftarrow{\Gamma}^{\times}) \hookrightarrow C_c (\overleftarrow{K}^{\times}) \; , \; f \mapsto f \chi \; ,
\end{equation}
where $C_c (\overleftarrow{\Gamma}^{\times})$ denotes the $\C$-vector space of $\C$-valued compactly supported continuous functions on $\overleftarrow{\Gamma}^{\times}$. 

\begin{lemma} \label{t11n1}
The vector subspace $C_c (\overleftarrow{\Gamma}^{\times})$ becomes a non-unital subalgebra of $C_c (\overleftarrow{K}^{\times})$ via the inclusion \eqref{eq:11n7}. Explicitely, for $f,g \in C_c (\overleftarrow{\Gamma}^{\times})$, the induced product is given by the formula
\begin{equation}
\label{eq:11n8}
(f \ast g) (r) = \int_{\overleftarrow{\Gamma}^{\times}} f (s) g (s^{-1} r) \chi (s^{-1} r) \, d\mu (s) \; .
\end{equation}
\end{lemma}

\begin{proof}
For $f,g \in C_c (\overleftarrow{\Gamma}^{\times})$ and $r \in \vK^{\times}$, we have
\begin{equation}
\label{eq:11n9}
(f \chi \ast g \chi) (r) = \int_{\vG^{\times}} f (s) g (s^{-1} r) \chi (s^{-1} r) \, d\mu (s) \; .
\end{equation}
If the integral is non-zero for some $r \in \vK^{\times}$, then there existes some $s \in \vG^{\times}$ with $s^{-1} r \in \vG^{\times}$, and hence $r = ss^{-1} r \in \vG^{\times}$. Thus there is a uniquely determined function $h : \vG^{\times} \to \C$ with
\begin{equation}
\label{eq:11n10}
f\chi \ast g\chi = h \chi \; .
\end{equation}
Since $h \chi$ is in $C_c (\vK^{\times})$, it follows that $h \in C_c (\vG^{\times})$. Identifying $f$ with $f \chi$ etc, formula \eqref{eq:11n8} follows from \eqref{eq:11n9} and \eqref{eq:11n10}.
\end{proof}

Consider the map
\begin{equation}
\label{eq:11n11}
\varphi : C_c (\vG^{\times}) \longrightarrow C^0 (\ceX (\C) , \C)
\end{equation}
defined by 
\begin{equation}
\label{eq:11n12}
\varphi (f) = \int_{\vG^{\times}} f (r) r \, d\mu (r) \; .
\end{equation}
Here we view $r \in \vG^{\times} = \varprojlim_{\Nh} (\Gamma (\eX , \Oh) \setminus 0)$ as a function on $\ceX (\C)$ via the map \eqref{eq:11n3} which we integrate pointwise, i.e.
\begin{equation}
\label{eq:11n13}
\varphi (f) (\cP) = \int_{\vG^{\times}} f (r) \cP (r) \, d\mu (r) \quad \text{for} \; \cP \in \ceX (\C) \; .
\end{equation}
Note that we have $r (\cP) :=\cP (r)$. The group $G = \Aut_{K_0} (K)$ acts continuously and measure preservingly on $\vK^{\times}$ and $\vG^{\times}$. We define a left-action of $G$ on $C_c (\vK^{\times})$ and $C_c (\vG^{\times})$ by setting $(\sigma f) (r) = f (\sigma^{-1} (r))$ for $\sigma \in G$ and $r \in \vK^{\times}$ resp. $\vG^{\times}$. The right action of $G$ on $\ceX (\C)$ gives a natural left action on $C^0 (\ceX (\C) , \C)$. 

\begin{lemma} \label{t11n2}
The map $\varphi$ in \eqref{eq:11n11} is a $G$-equivariant $\C$-algebra homomorphism.
\end{lemma}

\begin{proof}
For $f,g \in C_c (\vG^{\times})$ and $\cP \in \ceX (\C)$ we have with $\chi = \chi_{\vG^{\times}}$
\begin{align*}
\varphi (f \ast g) (\cP) & = \int_{\vK^{\times}} (f \ast g) (r) \chi (r) \cP (r) \, d \mu (r) \\
& = \int_{\vK^{\times}} \int_{\vK^{\times}} f(s) \chi (s) g (s^{-1} r) \chi (s^{-1} r) \chi (r) \cP (r) \, d \mu (s) \, d\mu (r) \\
& = \int_{\vK^{\times}} \int_{\vK^{\times}} f (s) \chi (s) g (r) \chi (r) \chi (sr) \cP (sr) \, d\mu (r) \, d\mu(s) \\
& = \int_{\vG^{\times}} \int_{\vG^{\times}} f(s) g(r) \cP (s) \cP (r) \, d\mu (r) \, d\mu (s) \\
& = \varphi (f) (\cP) \varphi (g) (\cP) = (\varphi (f) \varphi (g)) (\cP) \; .
\end{align*}
$G$-equivariance follows because $G$ preserves the measure $\mu$:
\begin{align*}
(\sigma \varphi (f)) (\cP) & = \varphi (f) (\cP^{\sigma}) = \int_{\vG^{\times}} f (r) \cP (\sigma (r)) \, d\mu (r) \\
& = \int_{\vG^{\times}} f (\sigma^{-1} (r)) \cP (r) \, d \mu (r) = \varphi (\sigma f) (\cP) \; .
\end{align*}
\end{proof}

Using the lemma we obtain a natural homomorphism of $\C$-algebras
\begin{equation}
\label{eq:11n14}
\varphi_0 : C_c (\vG^{\times})^G \longrightarrow C^0 (\ceX (\C) , \C)^G = C^0 (\ceX_0 (\C) , \C) \; .
\end{equation}
This is the analogue for $\ceX_0 (\C)$ of the map \eqref{eq:11n2}. It would be worthwhile to find an intrinsic description of $C_c (\vG^{\times})^G$ in terms of the scheme $\eX_0$ just like $\uC (\Gamma (\eX , \Oh))^G$ could be expressed as $W_{\rat} (\Gamma (\eX_0 , \Oh)) \otimes_{\Z} \C$.

In the rest of this section let $\eX_0$ be an integral normal scheme of finite type over $\spec \Z$. 

We would like to view the elements of $\Gamma (\eX_0 , \Oh)$ as continuous functions not only on $\deX_0 (\C)$ but also on $\ceX_0 (\C)$. Since $\deX_0 (\C)$ is open and closed in $\ceX_0 (\C)$ we can achieve this by extending functions on $\deX (\C)$ by zero. This gives a multiplicative map
\[
\Gamma (\eX_0 , \Oh) \longrightarrow C^0 (\ceX_0 (\C) , \C) \; .
\]
It does not send $1$ to $1$ and it is not compatible with taking divisors. As it turns out, functions on $\eX_0$ do not correspond naturally to functions on $\ceX_0 (\C)$ but to the more general objects in Definition \ref{t11n4} below. For a function $f$ on a topological space let $\supp f = \overline{\{ f \neq 0 \} }$ be its support.

\begin{defn} \label{t11n3}
Let $C (\ceX)$ be the multiplicative monoid consisting of $f = 0$ and all $f \in C^0 (\ceX (\C) , \C)$ such that\\
a) $\supp f = \ceX (\C)$\\
b) $F^*_{\nu}  f = f^{\nu}$ for all $\nu \in \Nh$\\
c) The $G$-orbit of $f_n = f \, |_{F^{-1}_n \deX (\C)}$ is finite for every $n \in \Nh$.\\
\end{defn}
Similarly, we define $C (\ceX_0)$ as the monoid of continuous functions $f_0 : \ceX_0 (\C) \to \C$ with $\supp f_0 = \ceX_0 (\C)$ and $F^*_{\nu} f_0 = f^{\nu}_0$ for all $\nu \in \Nh$. The natural projection $\pi : \ceX (\C) \to \ceX_0 (\C)$ induces an isomorphism of monoids
\begin{equation}
\label{eq:11n15}
\pi^* : C (\ceX_0) \silo C (\ceX)^G \; .
\end{equation}
Let $f \in C (\ceX)$ be a function with $f \, |_{\deX (\C)} = 1$. Then, on $F^{-1}_n \deX (\C)$ the function $f$ takes values in $\mu_n (\C)$ and hence being continuous, $f$ is locally constant. Since $\ceX (\C)$ is the union of the open (and closed) subsets $F^{-1}_n \deX (\C)$ for $n \in \Nh$, it follows that $f$ is a locally constant function $f : \ceX (\C) \to \mu (\C)$. 

Let $\kappa : G \to \hZ^{\times}$ be the cyclotomic character from \eqref{eq:57}. We define a subgroup of $C (\ceX)$ by setting
\begin{align}
\label{eq:11n18}
T_{\kappa} (\ceX) & = \{ f \in C (\ceX) \mid f (\deX (\C)) = 1 \; \text{and} \; \sigma (f) = f^{\kappa (\sigma)} \; \text{for all} \; \sigma \in G \} \\
& = \{ f : \ceX (\C) \to \mu (\C) \mid f \; \text{locally constant} , F^*_{\nu} f = f^{\nu} \; \text{for} \; \nu \in \Nh \; ,  \nonumber \\ 
& f (\deX (\C)) = 1 , \sigma (f) = f^{\kappa (\sigma)} \; \text{for} \; \sigma \in G \} \nonumber 
\end{align}
Note that since $\eX_0$ is of finite type over $\spec \Z$ we have
\begin{equation}
\label{eq:11n19}
T_{\kappa} (\ceX)^G = 1 \; .
\end{equation}
Namely, since the index $d$ of $\kappa (G)$ in $\hZ^{\times}$ is finite, any $f \in T_{\kappa} (\ceX)^G$ takes values in $\mu (\C)^{\kappa (G)} = \mu_d (\C)$. Hence for every $\cP \in \ceX (\C)$ \eqref{eq:11n19} follows:
\[
f (\cP) = (F^*_d f) (F^{-1}_d \cP) = f (F^{-1}_d \cP)^d = 1 \; .
\]
The group $T_{\kappa} (\ceX)$ acts on $C (\ceX)$ by multiplication and on $C (\ceX) \setminus 0$ this action is free because of the condition $\supp f = \ceX (\C)$ for $0 \neq f \in C (\ceX)$. For $n \in \Nh$ define groups
\[
T_n (\ceX) = \{ f : F^{-1}_n \deX (\C) \to \mu_n (\C) \mid f \; \text{locally constant}, F^*_{\nu} f = f^{\nu} \; \text{for} \; \nu \in \Nh \}
\]
and
\[
T_{\kappa , n} (\ceX) = \{ f \in T_n (\ceX) \mid \sigma (f) = f^{\kappa_n (\sigma)} \; \text{for all} \; \sigma \in G \} \; .
\]
Here $\kappa_n = \kappa \mod n : G \to (\Z / n)^{\times}$ is the $\mod n$ reduction of the cyclotomic character. The restriction maps induce a canonical isomorphism
\begin{equation}
\label{eq:11n20}
T_{\kappa} (\ceX) = \varprojlim_n T_{\kappa , n} (\ceX) \; .
\end{equation}
We give $T_{\kappa , n} (\ceX)$ the discrete topology. Then $G$ acts continuously on $T_{\kappa , n} (\ceX)$ because the stabilizers of points contain the open subgroup $\Ker \kappa_n$ of $G$. We give $T_{\kappa} (\ceX)$ the pro-discrete topology resulting from \eqref{eq:11n20}. Then $G$ acts continuously on the Hausdorff topological group $T_{\kappa} (\ceX)$ and we may form the continuous cohomology group $H^1_{\cont} (G , T_{\kappa} (\ceX))$. 

\begin{defn}
\label{t11n4}
We define a multiplicative monoid by setting
\[
\Eh (\ceX_0) = (C (\ceX) / T_{\kappa} (\ceX))^G \; .
\]
\end{defn}
This is the natural target for $\Gamma (\eX_0 , \Oh)$ on $\ceX_0 (\C)$ as we will see. 

{\bf Construction} of a multiplicative map
\[
\delta : \Eh (\ceX_0) \setminus 0 \longrightarrow H^1_{\cont} (G , T_{\kappa} (\ceX)) \; .
\]
For an element $0 \neq \varepsilon = e T_{\kappa} (\ceX) \in \Eh (\ceX_0)$ and $\sigma \in G$, we have $\sigma (e) = e\, c_{\sigma}$ for a uniquely determined element $c_{\sigma} \in T_{\kappa} (\ceX)$. Recall here that $T_{\kappa} (\ceX)$ acts freely on $C (\eX) \setminus 0$. It follows that $c_{\sigma \tau} = c_{\sigma} \sigma (c_{\tau})$ for all $\sigma , \tau \in G$. If $\varepsilon' = e' T_{\kappa} (\ceX)$, writing $\sigma (e') = e' c'_{\sigma}$ and $e' = et$ for uniquely determined elements $c'_{\sigma} , t \in T_{\kappa} (\ceX)$ we get $c'_{\sigma} = c_{\sigma} \sigma (t) t^{-1}$. For a function $f$ on $\ceX (\C)$ let $f_n$ be its restriction to $F^{-1}_n \deX (\C)$. By property c) in Definition \ref{t11n3} the $G$-orbit of $e_n$ is finite. Hence the map $G \to C (\ceX) , \sigma \mapsto \sigma (e_n) = e_n c_{\sigma , n}$ is locally constant. Since $\supp e_n = F^{-1}_n \deX (\C)$, it follows that the map $G \to T_{\kappa , n} (\ceX) , \sigma \mapsto c_{\sigma , n}$ is locally constant. Hence $G \to T_{\kappa} (\ceX) , \sigma \mapsto c_{\sigma}$ is continuous and therefore $(c_{\sigma})$ is a continuous cocycle whose class $\delta (\varepsilon)$ in $H^1_{\cont} (G , T_{\kappa} (\ceX))$ depends only on $\varepsilon$ and not on the choice of $e$.

\begin{prop}
\label{t11n5}
The map $\pi^* : C (\ceX_0) \to \Eh (\ceX_0)$ is injective and an element $\varepsilon \in \Eh (\ceX_0)$ is in the image of $\pi^*$ if and only if $\delta (\varepsilon) = 0$ in $H^1_{\cont} (G , T_{\kappa} (\ceX))$.
\end{prop}

\begin{proof}
If $\pi^* (e_0) = \pi^* (e_1)$ in $\Eh (\ceX_0)$ then $\pi^* (e_0) = \pi^* (e_1) t$ in $C (\ceX)$ for a unique $t \in T_{\kappa} (\ceX)$. Since $\pi^* (e_0)$ and $\pi^* (e_1)$ are $G$-invariant, $t \in T_{\kappa} (\ceX)^G = 1$ by \eqref{eq:11n19}, and therefore $e_0 = e_1$. If $\varepsilon$ is in the image of $\pi^*$ then $\varepsilon = \pi^* (e_0) T_{\kappa} (\ceX)$ for some $e_0 \in C (\ceX_0)$. The cocycle corresponding to $\pi^* (e_0)$ is trivial and hence $\delta (\varepsilon) = 0$. If on the other hand $\delta (\varepsilon) = 0$ for some $\varepsilon = e T_{\kappa} (\ceX) \in \Eh (\ceX_0)$, then $\sigma (e) = e \sigma (t) t^{-1}$ for some $t \in T_{\kappa} (\ceX)$ and all $\sigma \in G$. Then $e' = et^{-1} \in C (\ceX)$ represents $\varepsilon$ and $e' \in C (\ceX)^G$ i.e. $e' = \pi^* (e'_0)$ for some $e'_0 \in C (\ceX_0)$ by \eqref{eq:11n15}. Thus $\varepsilon = \pi^* (e'_0)$ is in the image of $\pi^*$. 
\end{proof}

We will now construct a natural injective multiplicative map
\[
[\;] : \Gamma (\eX_0 , \Oh) \longrightarrow \Eh (\ceX_0) \; .
\]
Recall the multiplicative map
\begin{equation}
\label{eq:11n24}
[\;] : \varprojlim_{\Nh} \Gamma (\eX , \Oh) \longrightarrow \varprojlim_{\Nh} C^0 (\deX (\C) , \C) = C^0 (\ceX (\C) , \C) \; .
\end{equation}
The domain of $[\;]$ is only a monoid whereas the target is a $\C$-algebra. Namely $(\;)^{\nu}$ is multiplicative but not additive on $\Gamma (\eX, \Oh)$ whereas $F^*_{\nu}$ is an algebra endomorphism of $C^0 (\deX (\C) , \C)$. Explicitely the map \eqref{eq:11n24} is given as follows. For $P \in \deX (\C)$ and $n \ge 1$ we have
\begin{equation}
 \label{eq:11n25}
 [(\alpha_{\nu})] (F^{-1}_n P) = [\alpha_n] (P) = \oP (\alpha_n (\ex)) \; .
\end{equation}
Here $\oP : \kappa (\ex) \to \C$ is the extension by $0$ of the character $\oP^{\times} : \kappa (\ex)^{\times} \to \C^{\times}$ and $\alpha (\ex) = \alpha \mod \emm_{\eX , \ex}$. The map $[\;]$ is $G$-equivariant. Recall that the monoid $\varprojlim_{\Nh} \Gamma (\eX , \Oh)$ contains the Tate module $T \mu_K$ as a subgroup. The natural maps
\begin{equation}
\label{eq:11n26}
T\mu_K \longrightarrow \varprojlim_{\Nh} \Gamma (\eX , \Oh) \xrightarrow{\pr_1} \Gamma (\eX , \Oh)
\end{equation}
turn $\varprojlim_{\Nh} \Gamma (\eX , \Oh) \setminus 0$ into a principal homogeneous $T \mu_K$-set over $\Gamma (\eX , \Oh) \setminus 0$. 

\begin{prop}
\label{t11n6}
The map \eqref{eq:11n24} gives rise to a commutative diagram of $\Nh$- and $G$-equivariant injective multiplicative maps, where the lower horizontal map is continuous
\[
\xymatrix{
\varprojlim_{\Nh} \Gamma (\eX , \Oh) \ar@{^{(}->}[r]^-{[\;]} & C (\ceX)\\
T \mu_K \ar@{^{(}->}[u] \ar@{^{(}->}[r]^{[\;]} & T_{\kappa} (\ceX) \ar@{^{(}->}[u]
}
\]
The induced map of continuous cohomology groups is injective as well
\[
H^1_{\cont} (G , T\mu_K) \hookrightarrow H^1_{\cont} (G , T_{\kappa} (\ceX)) \; .
\]
Finally, if $\calpha \in \varprojlim_{\Nh} \Gamma (\eX , \Oh) \setminus 0$ satisfies $[\calpha] \in T_{\kappa} (\ceX)$, then $\calpha \in T\mu_K$. 
\end{prop}

\begin{proof}
Let $\eta = \spec K$ be the generic point of $\eX$ and choose an injective character \\
$\oP^{\times} : \kappa (\eta)^{\times} = K^{\times} \hookrightarrow \C^{\times}$ which exists by Lemma \ref{t3}. Then $(\eta , \oP^{\times}) \in \deX (\C)$ and the map $\alpha \mapsto [\alpha] (\eta, \oP^{\times}) = \oP (\alpha)$ is injective. Hence the map $[\;]$ in \eqref{eq:11n24} is injective. We now check that $[\;]$ takes values in $C (\ceX)$. We may assume that $\calpha = (\alpha_{\nu}) \in \varprojlim_{\Nh} \Gamma (\eX , \Oh) \setminus 0$. For the points $\cP \in \ceX_{\eta} (\C)$ we have $\cP = F^{-1}_n P$ for some $P = (\eta , \oP^{\times})$ where $\oP^{\times} : \kappa (\eta)^{\times} = K^{\times} \to \C^{\times}$ is a character. Hence
\[
[\calpha] (\cP) = [\alpha_n] (P) = \oP^{\times} (\alpha_n) \neq 0 \; ,
\]
and it follows that
\[
\ceX_{\eta} (\C) \subset \supp [\calpha] \; .
\]
By Theorem \ref{t93n} the fibre $\ceX_{\eta} (\C)$ is dense in $\ceX (\C)$ and therefore $\supp [\calpha] = \ceX (\C)$. Thus $f = [\calpha]$ satisfies condition a) in Definition \ref{t11n3}. The $\Nh$- and $G$-equivariance of the maps $[\;]$ and condition b) is clear. For $\cP = F^{-1}_n P$ with $P \in \deX (\C)$ we therefore find
\begin{equation}
\label{eq:11n28}
(\sigma [\calpha]) (\cP) = [\sigma (\alpha_n)] (P) \; .
\end{equation}
Since the $G$-orbit of $\alpha_n$ is finite, the $G$-orbit of $[\calpha] \, |_{F^{-1}_n \deX (\C)}$ is finite as well. Hence condition c) in Definition \ref{t11n3} is satisfied, and $[\calpha] \in C (\ceX)$ follows. For $\czeta = (\zeta_{\nu})$ in $T \mu_K$ we have for $P \in \deX (\C)$ that
\[
[\czeta] (F^{-1}_n P) = [\zeta_n] (P) = \oP^{\times} (\zeta_n) \in \mu_n (\C) \; .
\]
It follows that $[\czeta] \, |_{F^{-1}_n \deX (\C)} \in T_n (\ceX)$.

For $\sigma \in G$ we have
\[
\sigma [\czeta] = [\sigma (\czeta)] = [\czeta^{\kappa (\sigma)}]
\]
and hence
\[
(\sigma [\czeta]) (F^{-1}_n P) = \oP^{\times} (\zeta^{\kappa_n (\sigma)}_n) = \oP^{\times} (\zeta_n)^{\kappa_n (\sigma)} = ([\czeta] (F^{-1}_n P))^{\kappa_n (\sigma)} \; .
\]
It follows that
\[
[\czeta] \, |_{F^{-1}_n \deX (\C)} \in T_{\kappa, n} (\ceX) \quad \text{and} \quad [\czeta] \in T_{\kappa} (\ceX) \; .
\]
By the definition of the topology on $T_{\kappa} (\ceX)$ the map $[\;] : T\mu_K \to T_{\kappa} (\ceX)$ is therefore continuous. Next, we show that the map on cohomology is injective. Assume that for the continuous cocycle $c : G \to T \mu_K$ we have $[c_{\sigma}] = \sigma (t) t^{-1}$ for some $t \in T_{\kappa} (\ceX)$. Choose a point $P = (\eta , \oP^{\times})$ of $\ceX_{\eta} (\C)$ with injective character $\oP^{\times} : K^{\times} \hookrightarrow \C^{\times}$. Writing $c_{\sigma} = (c_{\sigma , \nu})$ with $c_{\sigma , \nu} \in \mu_{\nu} (K)$, we have
\[
[c_{\sigma}] (F^{-1}_n P) = c_{\sigma , n} (P) = \oP^{\times} (c_{\sigma ,n}) \in \mu_n (\C)
\]
and since $\sigma (t) = t^{\kappa (\sigma)}$, setting $\eta_n := t (F^{-1}_n P) \in \mu_n (\C)$ we find
\[
\sigma (t) (F^{-1}_n P) = \eta^{\kappa_n (\sigma)}_n \; .
\]
For $n$ prime to $\car K$ there is a unique $\zeta_n \in \mu_n (K)$ such that $\oP^{\times} (\zeta_n) = \eta_n$ and hence
\[
\eta^{\kappa_n (\sigma)}_n = \oP^{\times} (\zeta_n)^{\kappa_n (\sigma)} = \oP^{\times} (\zeta^{\kappa_n (\sigma)}_n) = \oP^{\times} (\sigma (\zeta_n)) \; .
\]
Evaluating the function $[c_{\sigma}] = \sigma (t) t^{-1}$ on $F^{-1}_n P$ we therefore get
\[
\oP^{\times} (c_{\sigma, n}) = \oP^{\times} (\sigma (\zeta_n)) \oP^{\times} (\zeta^{-1}_n)
\]
and since $\oP^{\times}$ is injective
\[
c_{\sigma , n} = \sigma (\zeta_n) \zeta^{-1}_n \quad \text{for all $n$ prime to} \; \car K \; .
\]
Since
\[
\oP^{\times} (\zeta^{\nu}_{n\nu}) = t (F^{-1}_{n \nu} P)^{\nu} = (F^*_{\nu} t) (F^{-1}_{n \nu} P) = t (F_{\nu} F^{-1}_{n\nu} P) = \oP^{\times} (\zeta_n)
\]
we find $\zeta^{\nu}_{n \nu} = \zeta_n$ for all $n , \nu$ prime to $\car K$. Hence $\czeta = (\zeta_{\nu}) \in T\mu_K$ satisfies $c_{\sigma} = \sigma (\czeta) \czeta^{-1}$ for all $\sigma \in G$. Thus the cohomology class of the cocycle $c$ in $H^1_{\cont} (G , T\mu_K)$ is trivial. Consider $\calpha = (\alpha_{\nu}) \in \varprojlim_{\Nh} \Gamma (\eX , \Oh) \setminus 0$ with $[\calpha] \in T_{\kappa} (\ceX)$. Arguing as above for $t = [\calpha]$, we find $\czeta = (\zeta_{\nu}) \in T\mu_K$ with $\oP^{\times} (\alpha_n) = \oP^{\times} (\zeta_n)$ for all $n \ge 1$, and hence $\calpha = \czeta \in T\mu_K$. 
\end{proof}

From the proposition, we get an injective $G$-equivariant map of monoids
\begin{equation}
\label{eq:11n29}
\Gamma (\eX , \Oh) \xleftarrow{\overset{\pr_1}{\sim}} (\varprojlim_{\Nh} \Gamma (\eX , \Oh)) / T\mu_K \hookrightarrow C (\ceX) / T_{\kappa} (\ceX) \; .
\end{equation}
Passing to $G$-fixed submonoids we get the desired injective multiplicative map
\begin{equation}
\label{eq:11n30}
[\;] : \Gamma (\eX_0 , \Oh) \hookrightarrow \Eh (\ceX_0) \; .
\end{equation}
The Kummer sequence gives an isomorphism, where $n$ is prime to $\car K$
\begin{equation} 
\label{eq:11n31}
\widehat{K^{\times}_0} := \varprojlim_n K^{\times}_0 / (K^{\times}_0)^n \silo \varprojlim_n H^1 (G , \mu_n (K)) = H^1_{\cont} (G , T\mu_K) \; .
\end{equation}
Since $K_0$ is finitely generated we have a canonical inclusion $K^{\times}_0 \subset \hK^{\times}_0$.
Here the second isomorphism holds since the groups $H^0 (G , \mu_n (K))$ are finite. A straightforeward calculation shows that our maps fit into a commutative diagram
\[
\xymatrix{
\Gamma (\eX_0 , \Oh) \setminus 0 \ar@{^{(}->}[r]^{[\;]} \ar@{^{(}->}[d]^{\eqref{eq:11n31}} & \Eh (\ceX_0) \setminus 0 \ar[d]^{\delta} \\
H^1_{\cont} (G , T\mu_K) \ar@{^{(}->}[r] & H^1_{\cont} (G , T_{\kappa} (\ceX)) \; .
}
\]
Together with Proposition \ref{t11n5} it follows that
\[
[\alpha_0] \notin \pi^* C (\ceX_0) \quad \text{for all} \; 0 \neq \alpha_0 \in \Gamma (\eX_0 , \Oh) \; .
\]
We define the divisor $\ddiv \varepsilon_0 \subset \ceX_0 (\C)$ of an element $\varepsilon_0 \in \Eh (\ceX_0)$ with $\varepsilon_0 = e T_{\kappa} (\ceX) , e \in C (\ceX)$ as
\begin{equation}
\label{eq:11n32}
\ddiv \varepsilon_0 = \{ \cP \in \ceX (\C) \mid e (\cP) = 0 \} / G \; .
\end{equation}
Note that since $\varepsilon_0$ is fixed by $G$ the zero set of $e$ is $G$-invariant. It is clear that $\ddiv \varepsilon_0$ does not depend on the choice of representative $e$ for $\varepsilon_0$. 

\begin{prop}
\label{t11n7}
Let $\pr_{\eX_0} : \ceX_0 (\C) \to \eX_0$ be the natural projection, and consider the multiplicative map \eqref{eq:11n30},
\[
[\;] : \Gamma (\eX_0 , \Oh) \hookrightarrow \Eh (\ceX_0) \; .
\]
Then as sets we have
\[
\ddiv [\beta] = \pr^{-1}_{\eX_0} (\ddiv \beta) \quad \text{for all} \; 0 \neq \beta \in \Gamma (\eX_0 , \Oh) \; .
\]
\end{prop}

\begin{proof}
Choose $\calpha = (\alpha_{\nu}) \in \varprojlim_{\Nh} \Gamma (\eX , \Oh) \setminus 0$ with $\alpha_1 = \pi^* (\beta)$. Then $[\beta] \in \Eh (\ceX_0)$ is represented by $[\calpha] \mod T_{\kappa} (\ceX)$. Hence
\begin{equation}
\label{eq:11n33}
\ddiv [\beta] = \{ \cP \in \ceX (\C) \mid [\calpha] (\cP) = 0 \} / G \; .
\end{equation}
For $\cP = F^{-1}_n P$ with $P = (\ex , \oP^{\times}) \in \deX (\C)$ we have
\[
[\calpha] (\cP) = [\alpha_n] (P) = \oP (\alpha_n (\ex)) \; .
\]
Hence $[\calpha] (\cP) = 0$ if and only if $\alpha_n \in \emm_{\eX , \ex}$ i.e. $\alpha_1 \in \emm_{\eX , \ex}$ or equivalently $\ex \in \ddiv \alpha_1$ since $\alpha^n_n = \alpha_1$. Using the projection $\pr_{\eX} : \ceX (\C) \to \eX$ we find 
\begin{align*}
\ddiv [\beta] & = \{ F^{-1}_n (\ex , \oP^{\times}) \mid \ex \in \ddiv \alpha_1 \; , \; n \ge 1 \} / G \\
& = (\pr^{-1}_{\eX} (\ddiv \pi^* (\beta))) / G = (\pr^{-1}_{\eX} \pi^{-1} (\ddiv \beta)) / G \\
& = \pr^{-1}_{\eX_0} (\ddiv \beta) \; .
\end{align*}
Here we have used the commutative diagram
\[
\xymatrix{
\ceX (\C) \ar[r] \ar[d]_{\pr_{\eX}}& \ceX (\C) / G \ar@{=}[r] & \ceX_0 (\C) \ar[d]^{\pr_{\eX_0}} \\
\eX \ar[rr]^{\pi} && \eX_0 \; .
}
\]
\end{proof}

\begin{rem}
More generally, given a monoid $\Nh_0$ generated by a set of prime numbers $\car \Nh_0 \supset \car \eX_0$, all assertions in this section continue to hold if we require $n \in \Nh_0$ instead of $n \in \Nh$ in our definitions of $\overleftarrow{K}^{\times} , T\mu_K , C (\ceX) , T (\ceX) , T_{\kappa} (\ceX)$. We can also restrict to subsystems $\ceX (\C)_{\Eh}$ etc. as long as $\Eh$ is admissible with $\Eh \supset \Eh_f$, so that the fibre over $\eta$ is dense by Theorem \ref{t11} (this denseness was needed in the proof of Proposition \ref{t11n6}). 
\end{rem}

\section{Invertible functions on $\eX , \eX_0$ as sections on $X , X_0$} \label{sec:12}
We refer to \cite[Ch. 8]{claus} for the concept and basic theory of $G$-sheaves for a topological group $G$. Given a topological space $X$ with a continuous right $G$-action, let $\Ch^{sm} = \Ch^{sm}_X$ denote the maximal $G$-subsheaf of the $G^{\delta}$-sheaf $\Ch_X$ of $\C$-valued continuous functions on $X$. For an open subset $U \subset X$ the $\C$-algebra $\Ch^{sm}_X (U)$ consists of all continuous functions $f : U \to \C$ such that for every point $x \in X$ there are neighborhoods $x \in V \subset U$ and $e \in N \subset G$ such that $VN \subset U$ and $f (y^{\sigma}) = f(y)$ for all $y \in V$ and $\sigma \in N$. 

Let $\C , \eX_0 , K_0 , \eX , K$ and $G$ be as in the last section and let $\Nh_0$ be the monoid generated by a set of prime numbers $\car \Nh_0 \supset \car \eX_0$. In this section we explain a way to view invertible regular functions on $\eX$ and $\eX_0$ as ``twisted invertible functions'' on $X$ and $X_0$ -- at least up to $\Nh_0$-torsion. We could also lift the monoid formalism of the last section to $X$ and $X_0$, which would associate ``twisted generalized functions'' on $X$ resp. $X_0$ to all regular functions on $\eX$ resp. $\eX_0$. However, for simplicity we restrict ourselves to the groups of invertible regular functions on $\eX_0$ and $\eX$. Set
\[
\ceX (\C) = \colim_{\Nh_0} \deX (\C) \quad \text{and} \quad \ceX_0 (\C) = \colim_{\Nh_0} \deX_0 (\C) = \ceX (\C) / G \; ,
\]
and consider $\tX = \ceX (\C) \times \R^{> 0}$ with the continuous right $\tG = G \times \Q^{> 0}_0$-action
\begin{equation}
\label{eq:12:32}
(\cP , u) (\sigma , q) := (F_q (\cP^{\sigma}) , q^{-1} u) \; .
\end{equation}
Here $\Q^{> 0}_0$ carries the discrete topology, so that $\tG$ is a totally disconnected locally compact group. For $X_0 = \ceX_0 (\C) \times_{\Q^{> 0}_0} \R^{> 0}$ we have a natural continuous and open projection
\[
\pi : \tX \longrightarrow X_0 = \tX / \tG \; .
\]
The unit group of the ring
\[
\Q_0 = \Z [ p^{-1} \mid p \in \car \Nh_0 ] \subset \Q
\]
is $\Q^{\times}_0 = \pm \Q^{> 0}_0$. For $n \in \Z$ let $\Q_0 (n)$ be the $\Q_0 [\Q^{> 0}_0]$-module given by $\Q_0$ as an abelian group and where $q \in \Q^{> 0}_0$ acts by multiplication with $q^{-n}$. We give $\Q_0 (n)$ the discrete topology. Similarly, for $s \in \R$ resp. $s \in \C$ we define $\R (s)$ resp. $\C (s)$ to be $\R$ resp. $\C$ with their natural topologies and with $q \in \Q^{> 0}_0$ acting by multiplication with $q^{-s}$. As $\Q_0 [\Q^{> 0}_0]$-modules we have inclusions
\[
\Q_0 (n) \subset \R (n) \subset \C (n) \; .
\]
Consider the $\tG$-bundle $\uQ_0 (n)_{\tX}$ of free rank one $\Q_0$-modules on the $\tG$-space $\tX$ defined by
\begin{equation}
\label{eq:12:33}
\uQ_0 (n)_{\tX} = \tX \times \Q_0 (n) \longrightarrow \tX \; .
\end{equation}
Here $\tG = G \times \Q^{> 0}_0$ acts via the second factor on $\Q_0 (n)$ and as in \eqref{eq:12:32} on $\tX$. By $\uQ_0 (n)$ we also denote the $\tG$-sheaf of locally constant sections of $\uQ_0 (n)_{\tX}$. 

Let $\Oh_{\Fh c}$ be the $\tG$-subsheaf of $\Ch^{sm}_{\tX}$ consisting of local sections of $\Ch^{sm}_{\tX}$ which are locally constant in the $\R^{> 0}$-variable. The first multiplicative injective map in \eqref{eq:11n3} induces an injective homomorphism by pullback from $\ceX (\C)$ to $\tX$
\begin{equation}
\label{eq:12:36}
[\;] : \varprojlim_{\Nh_0} \Gamma (\eX , \Oh^{\times}) \longrightarrow \{ \alpha \in \Gamma (\tX , \Oh^{\times}_{\Fh c}) \mid F^*_{\nu} \alpha = \alpha^{\nu} \quad \text{for all} \; \nu \in \Nh_0 \} \; .
\end{equation}
Note here that for affine $\eX_0$, a point $\cP = F^{-1}_n P$ with $P \in \deX (\C)$ and $f = (f_{\nu}) \in \varprojlim_{\Nh_0} \Gamma (\eX , \Oh^{\times})$ the neighborhood $V = F^{-1}_n \deX (\C)$ of $\cP$ is invariant under the open stabilizer $N$ of $f_n$ in $G$. Moreover $[f]^{\sigma} = [f]$ on $V$ for $\sigma \in N$ since $[f] (F^{-1}_n Q) = Q (f_n)$ for $Q \in \deX (\C)$. Hence we see that $[f] \in \Ch^{sm} (\ceX (\C))^{\times}$ and hence $[f]$ pulls back to a function in $\Gamma (\tX , \Oh^{\times}_{\Fh_c})$. The formula $F^*_{\nu} [f] = [f]^{\nu}$ for $\nu \in \Nh_0$ is clear. 

Since the left hand side of \eqref{eq:12:36} is a $\Q_0$-module we have an induced homomorphism where $\otimes = \otimes_{\uZ}$ in the category of abelian sheaves
\begin{equation}
\label{eq:12:37}
[\;] : \varprojlim_{\Nh_0} \Gamma (\eX , \Oh^{\times}) \longrightarrow \{ \alpha \in \Gamma (\tX , \Oh^{\times}_{\Fh c} \otimes \uQ_0 (0)) \mid  F^*_{\nu} \alpha = \alpha^{\nu} \; \text{for} \; \nu \in \Nh_0 \} \; .
\end{equation}
Here we have written $\nu$-multiplication on $\Oh^{\times}_{\Fh c} \otimes \uQ_0 (0)$ as exponentiation. Since $\Q^{> 0}_0 \subset \Q^{\times}_0$, we may replace the condition $F^*_{\nu} \alpha = \alpha^{\nu}$ for $\nu \in \Nh_0$ in \eqref{eq:12:37} by $F^*_q \alpha = \alpha^q$ for $q \in \Q^{> 0}_0$. Replacing $\uQ_0 (0)$ by $\uQ_0 (1)$ which differs only by the $\tG$-action, we may therefore rewrite \eqref{eq:12:37} as a homomorphism
\begin{equation}
\label{eq:12:38}
[\;] : \varprojlim_{\Nh_0} \Gamma (\eX , \Oh^{\times}) \longrightarrow \Gamma (\tX , \Oh^{\times}_{\Fh c} \otimes \uQ_0 (1))^{\Q^{> 0}_0} \; .
\end{equation}
Consider the exact sequence
\[
1 \longrightarrow T_{\Nh_0} \mu (K) \longrightarrow \varprojlim_{\Nh_0} \Gamma (\eX , \Oh^{\times}) \xrightarrow{\pr_1} \Gamma (\eX , \Oh^{\times}) \longrightarrow 1 \; .
\]
The map \eqref{eq:12:36} sends elements of $T_{\Nh_0} \mu (K)$ to sections over $\tX$ of 
\[
\umu_{\Nh_0} (\C) = \varinjlim_{n \in \Nh_0} \umu_n (\C) \subset \Oh^{\times}_{\Fh c} \; .
\]
Since $\umu_{\Nh_0} (\C)$ is the kernel of the map $\Oh^{\times}_{\Fh c} \to \Oh^{\times}_{\Fh c} \otimes \uQ_0 (0)$, it follows that $\varprojlim_{\Nh_0} \mu_{\Nh_0} (K)$ which contains $T_{\Nh_0} \mu (K)$ is the kernel of the maps \eqref{eq:12:37} and hence \eqref{eq:12:38}. Hence \eqref{eq:12:38} induces a homomorphism
\begin{equation}
\label{eq:12:39}
[\;] : \Gamma (\eX , \Oh^{\times}) \longrightarrow \Gamma (\tX , \Oh^{\times}_{\Fh c} \otimes \uQ_0 (1))^{\Q^{> 0}_0}
\end{equation}
whose kernel is $\mu_{\Nh_0} (K)$. 

For a better understanding of the map \eqref{eq:12:39} note that the image of $T_{\Nh_0} \mu (K)$ under $[\;]$ in $\Gamma (\tX , \Oh^{\times}_{\Fh_c})$ is {\it not} a torsion group. Hence we had to tensor with $\uQ (0)$ on the level of sheaves to get a map which is defined on $\Gamma (\eX , \Oh^{\times})$. 

Passing to $\tG$-invariants and noting that the image of the map \eqref{eq:12:39} is a $\Q_0$-module we get an injective homomorphism
\begin{equation}
\label{eq:12:40}
[\;] : \Gamma (\eX_0 , \Oh^{\times}) \otimes \Q_0 \longrightarrow \Gamma (\tX , \Oh^{\times}_{\Fh c} \otimes \uQ_0 (1))^{\tG} \; .
\end{equation}
Hence, up to roots of unity, invertible regular functions on $\eX_0$ can be viewed as sections of the sheaf $\pi^{\tG}_* (\Oh^{\times}_{\Fh_c} \otimes \uQ (1))$ on $X_0$. This achieves the first aim of this section. Next, we define a ``regulator'' map on $\Gamma (\eX_0 , \Oh^{\times})$ and show that it is essentially injective. For simplicity we work with $\tG^{\delta}$-sheaves which is sufficient for our purpose. 

Set $e (z) = \exp (2 \pi i z)$ and consider the exact sequence of $\tG^{\delta}$-sheaves on $\tX$
\[
0 \longrightarrow \uZ \longrightarrow \Ch \xrightarrow{\; e \; } \Ch^{\times} \longrightarrow 1 \; .
\] 
Tensoring with $\uQ_0 (1)$ we get an exact sequence of abelian $\tG^{\delta}$-sheaves
\begin{equation}
\label{eq:12:42}
0 \longrightarrow \uQ_0 (1) \longmapsto \Ch \otimes \uQ_0 (1) \xrightarrow{e \otimes \id} \Ch^{\times} \otimes \uQ_0 (1) \longrightarrow 1 \; .
\end{equation}

From \eqref{eq:12:42} we obtain a canonical $\tG$-equivariant map
\begin{equation}
\label{eq:12:43}
\delta : \Gamma (\tX , \Ch^{\times} \otimes \uQ_0 (1)) \longrightarrow H^1 (\tX , \uQ_0 (1)) \; .
\end{equation}
Using the inclusion $\Oh^{\times}_{\Fh_c} \hookrightarrow \Ch^{\times}$ and composing with \eqref{eq:12:40} we obtain a ``regulator'' map
\begin{equation}
\label{eq:12:44}
r : \Gamma (\eX_0 , \Oh^{\times}) \otimes \Q_0 \longrightarrow H^1 (\tX , \uQ_0 (1))^{\tG} \; .
\end{equation}
The group of the right may be viewed as a replacement for $H^1 (X_0 , \uQ_0 (1))$ which one would expect as the target group of a ``regulator'' map on $\Gamma (\eX_0 , \Oh^{\times})$. It appears because the action of $\Q^{> 0}_0$ on $\check{X}_0 (\C) \times \R^{> 0}$ is not properly discontinuous if the rank of $\Q_0$ is greater than one. Note that we have a canonical isomorphism of abelian $\tG^{\delta}$-sheaves on $\tX$
\[
\Ch \silo \Ch \otimes \uQ_0 (1) \quad \text{where} \; \alpha \mapsto u^{-1} \alpha \otimes 1 \; .
\]
Here $u$ is the coordinate function of $\R^{> 0}$. Thus we can rewrite the sequence \eqref{eq:12:42} on $\tX$ as follows:
\[
0 \longrightarrow \Q_0 (1) \longrightarrow \Ch \xrightarrow{\; E \;} \Ch^{\times} \otimes_{\uZ} \uQ_0 (1) \longrightarrow 1
\]
where $E (\alpha) = e (u^{-1} \alpha) \otimes 1$. Incidentally, we obtain a map
\begin{equation}\label{eq:138n}
E : \Ch (X_0) = \Ch (\tX)^{\tG} \longrightarrow \Gamma (\tX , \Ch^{\times} \otimes \uQ_0 (1))^{\tG} \; ,
\end{equation}
whose image is contained in $\Ker \delta$ contrary to the image of $\Oh^{\times} (\eX_0)$ under the map $r$ of \eqref{eq:12:44}, as we will now see. Let $\mu_{(\Nh_0)} (K_0)$ be the group of roots of unity in $K_0$ of order prime to $\Nh_0$. 

\begin{prop}
\label{t12.1}
The kernel of the regulator map \eqref{eq:12:44} 
\[
r : \Gamma (\eX_0 , \Oh^{\times}) \otimes \Q_0 \longrightarrow H^1 (\tX , \uQ_0 (1))^{\tG}
\]
is contained in $\mu_{(\Nh_0)} (K_0) = \mu (K) \otimes \Q_0$. 
\end{prop}

\begin{rem}
I expect $r$ to be injective if $\eX_0$ is of finite type over $\Z$. For this, it remains to show that $r (\zeta) \neq 0$ for $1 \neq \zeta \in \mu_{(\Nh_0)} (K_0)$. I think this can be done working with suitable periodic orbits on $X_0 = \tX / \tG$. 
\end{rem}

\begin{proof}
Let $\pi : K^{\times} \to K^{\times} \otimes \Q = K^{\times} / \mu (K)$ be the projection and consider the natural continuous inclusion where $\eta = \spec K$
\[
i : Y = \Hom (K^{\times} \otimes \Q , S^1) \overset{\pi^*}{\hookrightarrow} \Hom (K^{\times} , \C^{\times}) = \overset{_{\,\hullet}}{\eta} (\C) \subset \deX (\C) \hookrightarrow \tX
\]
where the last map sends $P$ to $(P, 1)$. We have a commutative diagram
\[
\xymatrix{
\Gamma (\eX_0 , \Oh^{\times}) \otimes \Q_0 \ar[r]^r \ar@{^{(}->}[d] & H^1 (\tX , \uQ_0) \ar[d]^{i^*} \\
K^{\times} \otimes \Q_0 \ar[r]^{\delta \verk [\;]} & H^1 (Y , \uQ_0) \; .
}
\]
Here $\delta$ and $[\;]$ are defined as follows. For $f \otimes 1 \in K^{\times} \otimes \Q$ let $[f \otimes 1] \in \Gamma (Y , \Ch^{\times})$ be defined by $[f \otimes 1] (\chi) = \chi (f \otimes 1)$. We get a homomorphism
\[
[\;] : K^{\times} \otimes \Q_0 \longrightarrow K^{\times} \otimes \Q \xrightarrow{[\;]} \Gamma (Y , \Ch^{\times}) \longrightarrow \Gamma (Y , \Ch^{\times} \otimes \uQ_0) \; .
\]
The map
\[
\delta : \Gamma (Y , \Ch^{\times} \otimes \uQ_0) \longrightarrow H^1 (Y , \uQ_0)
\]
is the connecting homomorphism for the exact sequence on $Y$
\[
0 \longrightarrow \uQ_0 \longrightarrow \Ch \otimes \uQ_0 \xrightarrow{e \otimes \id} \Ch^{\times} \otimes \uQ_0 \longrightarrow 0 \; .
\]
The topological space $Y$ is a pro-torus and it follows from \cite[Proposition 3.8]{KS} that $H^{\hullet} (Y , \uQ_0)$ is canonically isomorphic to the exterior algebra $\Lambda^{\hullet} (K^{\times} \otimes \Q)$ over $\Q$. Under this identification the map $\delta \verk [\;]$ becomes the map
\[
K^{\times} \otimes \Q_0 \longrightarrow K^{\times} \otimes \Q = H^1 (Y , \uQ_0 ) \; .
\]
Hence the kernel of $r$ is contained in $\mu_{(\Nh_0)} (K_0)$. We can prove this conclusion more directly. For $f \in K^{\times} \setminus \mu (K)$ the element $f \otimes 1$ is non-zero in $K^{\times} \otimes \Q$. We want to see that $\delta ([f \otimes 1]) \neq 0$ in $H^1 (Y , \uQ_0)$. Choose a $\Q$-linear splitting $\lambda : K^{\times} \otimes \Q \to \Q (f \otimes 1)$ of the inclusion $\Q (f \otimes 1) \hookrightarrow K^{\times} \otimes \Q$. It gives a continuous inclusion:
\[
\omega : \Sa^1 := \varprojlim_{\Nh} S^1 \hookrightarrow Y \; \text{via} \; (z_n) \mapsto (K^{\times} \otimes \Q \xrightarrow{\lambda} \Q (f \otimes 1) \xrightarrow{\mu} \C^{\times})
\]
where $\mu (f \otimes \frac{n}{m}) = z^n_m$. By construction, we have $\omega ((z_n)) (f \otimes 1) = z_1$. Consider the commutative diagram where in the lower sequence $[f \otimes \frac{n}{m}] ((z_n)) := z^n_m$
\[
\xymatrix{
K^{\times} \otimes \Q \ar[r]^-{[\;]} \ar[d]_{\lambda} & \Gamma (Y , \Ch^{\times} \otimes \Q_0) \ar[r]^{\delta} \ar[d]_{\omega^*} & H^1 (Y , \Q_0) \ar[d]^{\omega^*} \\
\Q (f \otimes 1) \ar[r]^-{[\;]} & \Gamma (\Sa^1 , \Ch^{\times} \otimes \uQ_0) \ar[r]^{\delta} & H^1 (\Sa^1 , \Q_0) \; .
}
\]
We find that
\[
(\omega^* \verk \delta \verk [\;]) (f \otimes 1) = \delta (\pr_1) \in H^1 (\Sa^1 , \Q_0) \; ,
\]
where $\pr_1 : \Sa^1 \to S^1$ is the projection onto the first component. It remains to show that $\delta (\pr_1) \neq 0$ in
\[
H^1 (\Sa^1 , \Q_0) = \colim_{\Nh} H^1 (S^1 , \Q_0) \silo \colim_{\Nh} \Q_0 = \Q \; .
\]
This follows from the commutative diagram
\[
\xymatrix{
\Gamma (S^1 , \Ch^{\times}) \ar[r]^{\delta} \ar[d]^{\pr^*_1} & H^1 (S^1 , \Z) \ar[r]^-{\sim} \ar[d]^{\pr^*_1} & \Z \ar@{^{(}->}[d] \\
\Gamma (\Sa^1 , \Ch^{\times} \otimes \uQ_0) \ar[r]^{\delta} & H^1 (\Sa^1 , \Q_0) \ar[r]^-{\sim} & \Q \; .
}
\]
Namely, for the inclusion $z : S^1 \hookrightarrow \C^{\times}$ in $\Gamma (S^1 , \Ch^{\times})$ we have $\pr_1 = \pr^*_1 (z)$ and hence $\delta (\pr_1) = \delta (z) = 1 \neq 0$, the obstruction to the existence of a continuous logarithm of $z$ on $S^1$, and hence of $\pr_1$ on $\Sa^1$. Incidentally, note that continuous $n$-th roots of $\pr_1$ exist on $\Sa^1$ by definition since $(\pr_n)^n = \pr_1$. 
\end{proof}

Here is the reason for our notation $\Oh_{\Fh c}$. There should be a notion of analytic function on $\ceX (\C)$ so that we could redefine $\Oh_{\Fh c}$ on $\tX$ by additionally imposing an analyticity condition on the $\ceX (\C)$-coordinate i.e. ``along the leaves''. The above considerations should hold for the new $\Oh_{\Fh c}$. 

\section{$\eo$-valued points of $W_{\rat} (X)$} \label{sec:4gn}
In this section we describe points of $W_{\rat} (X)$ with values in certain valuation rings. Let $\eo$ be a rank one valuation ring with quotient field $\C$, maximal ideal $\emm$ and residue field $k = \eo / \emm$. We have
\[
S := \spec \eo = \{ s , \eta \} \quad \text{where} \; s \ent \emm \; \text{and} \; \eta \ent (0) \; .
\]
Let $X$ be a scheme. Via $f (s) = y , f (\eta) = x$, continuous maps $f : S_{\top} \to X_{\top}$ correspond to pairs of points $y,x$ of $X$ with $y \in \overline{\{ x \}}$. We now discuss sheaf maps
\[
f^{\sharp} : \Fh = f^{-1} W_{\rat} (\Oh_X) \longrightarrow \Oh_S \; .
\]
Every presheaf on $S$ is a sheaf. For open $V \subset S$ we therefore have
\[
\Fh (V) = \colim_{U \supset f (V)} W_{\rat} (\Oh_X) (U) \quad \text{where} \; U \subset X \; \text{is open} \; .
\]
The non-empty open subsets $V \subset S$ are $V = \{ \eta \}$ and $V = S$. We get
\[
\Fh (\{ \eta \}) = \colim_{U \ni x} W_{\rat} (\Oh_X) (U) = W_{\rat} (\Oh_{X,x}) \; .
\]
Since $U \supset \{ x,y \}$ is equivalent to $U \ni y$ because $y \in \overline{\{ x \}}$ we find
\[
\Fh (S) = \colim_{U \supset \{ x,y \}} W_{\rat} (\Oh_X) (U) = \colim_{U \ni y} W_{\rat} (\Oh_X) (U) = W_{\rat} (\Oh_{X,y}) \; .
\]
The only non-trivial restriction map of $\Fh$ is the map
\[
\Fh (S) = W_{\rat} (\Oh_{X,y}) \longrightarrow \Fh (\{ \eta \}) = W_{\rat} (\Oh_{X,x})
\]
induced by the map $\Oh_{X,y} \to \Oh_{X,x}$ due to $y \in \overline{\{ x \}}$. Noting that
\[
\Oh_S (\{ \eta \} ) = \C = \Oh_{S, \eta} \quad \text{and} \quad \Oh_S (S) = \eo = \Oh_{S,s} \; 
\]
we see that $f^{\sharp}$ is given by a commutative diagram of ring-homomorphisms:
\begin{equation}
\label{eq:10nn}
\xymatrix{
W_{\rat} (\Oh_{X,y}) \ar[r]^-{f^{\sharp}_s} \ar[d] & \eo \ar@{_{(}->}[d] \\
W_{\rat} (\Oh_{X,x}) \ar[r]^-{f^{\sharp}_{\eta}} & \C \; .
}
\end{equation}
Here $f^{\sharp}_{\eta}$ and $f^{\sharp}_s$ are the stalks of $f^{\sharp}$.

For $(f , f^{\sharp})$ to define a morphism $f : S \to W_{\rat} (X)$, the maps $f^{\sharp}_{\eta} , f^{\sharp}_s$ have to induce maps
\[
\tf^{\sharp}_{\eta} : W_{\rat} (\kappa (x)) \longrightarrow \kappa (\eta) = \C \quad \text{and} \quad \tf^{\sharp}_s : W_{\rat} (\kappa (y)) \longrightarrow \kappa (s) = k
\]
such that the following diagrams commute: 
\begin{equation}
\label{eq:11nn}
\xymatrix{
W_{\rat} (\Oh_{X,x}) \ar[r]^-{f^{\sharp}_{\eta}} \ar@{->>}[d] & \C \ar@{=}[d] \\
W_{\rat} (\kappa (x)) \ar[r]^-{\tf^{\sharp}_{\eta}} & \C
} 
\qquad
\xymatrix{
W_{\rat} (\Oh_{X,y}) \ar[r]^-{f^{\sharp}_s} \ar@{->>}[d] & \eo \ar@{->>}[d] \\
W_{\rat} (\kappa (y)) \ar[r]^-{\tf^{\sharp}_s} & k \; .
}
\end{equation}

\begin{prop}
\label{t211n}
Let $X$ be a scheme and $\eo$ a rank one valuation ring. With notations as above, the preceeding discussion sets up a bijection between $W_{\rat} (X) (\eo)$ and tuples $(x,y, \tf^{\sharp}_{\eta})$. Here $x,y \in X$ are points with $y \in \overline{\{ x \}}$ and $\tf^{\sharp}_{\eta}$ is a homomorphism which fits into a commutative diagram as follows (and where $f^{\sharp}_s , \tf^{\sharp}_s$ and $f^{\sharp}_{\eta}$ are uniquely determined by $\tf^{\sharp}_{\eta}$)
\[
\xymatrix{
W_{\rat} (\kappa (y)) \ar[r]^-{\tf^{\sharp}_s} & k \\
W_{\rat} (\Oh_{X,y}) \ar@{->>}[u] \ar[r]^-{f^{\sharp}_s} \ar[d] & \eo \ar@{->>}[u] \ar@{_{(}->}[d] \\
W_{\rat} (\Oh_{X,x}) \ar[r]^-{f^{\sharp}_{\eta}} \ar@{->>}[d] & \C \ar@{=}[d] \\
W_{\rat} (\kappa (x)) \ar[r]^-{\tf^{\sharp}_{\eta}} & \C \; .
}
\]
\end{prop}

As in the discussion of $W_{\rat} (X) (\C)$ we will now give a different description of $W_{\rat} (X) (\eo)$ for certain schemes $X$. 

Let $X = \eX_0$ be an integral normal scheme with function field $K_0$. Let $\eX$ be the integral closure of $\eX_0$ in an algebraic closure $K$ of $K_0$ and set $G = \Aut_{K_0} (K)$. 

\begin{defn} \label{t42nn}
Let $\deX (\eo)$ be the set of triples $(\ex , \ey , \oP^{\times})$ with $\ex \in \eX , \ey \in \overline{\{ \ex \} }$ and \\
$\oP^{\times} : \kappa (\ex)^{\times} \to \C^{\times}$ a homomorphism whose extension by zero $\oP$ fits into a commutative diagram of multiplicative maps which send $1$ to $1$ and $0$ to $0$
\begin{equation}
\label{eq:29n}
\vcenter{\xymatrix{
\kappa (\ey) \ar[r]^{\tP_{\ey}} & k \\
\Oh_{\eX , \ey} \ar[r]^{P_{\ey}} \ar@{->>}[u] \ar@{^{(}->}[d] & \eo \ar@{->>}[u] \ar@{_{(}->}[d] \\
\Oh_{\eX , \ex} \ar[r]^P \ar@{->>}[d] & \C \ar@{=}[d] \\
\kappa (\ex) \ar[r]^{\oP} & \C
} }
\quad \text{or equivalently} \quad 
\vcenter{\xymatrix{
\kappa (\ey) \ar[r]^{\tP_{\ey}} & k \\
\Oh_{\overline{\{\ex \} } , \ey} \ar[r]^{P'_{\ey}} \ar@{->>}[u] \ar@{_{(}->}[d] & \eo \ar@{->>}[u] \ar@{_{(}->}[d] \\
\kappa (\ex) \ar[r]^{\oP} & \C \; .
}}
\end{equation}
\end{defn}

Here $\overline{ \{ \ex \} }$ is considered as a closed reduced subscheme of $\eX$. Note that
\[
\Oh_{\overline{ \{ \ex \} } , \ey} = \Imm ( \Oh_{\eX , \ey} \to \Oh_{\eX , \ex} \to \kappa (\ex) ) \; .
\]
The maps $P_{\ey}$ and $P'_{\ey}$ determine each other by the factorization
\[
P_{\ey} : \Oh_{\eX , \ey} \twoheadrightarrow \Oh_{\overline{ \{ \ex \} } , \ey} \xrightarrow{P'_{\ey} } \eo \; .
\]
It is clear that the maps $P , P_{\ey} , P'_{\ey} , \tP_{\ey}$ and hence the diagrams \eqref{eq:29n} are uniquely determined by $\oP$. We have
\begin{equation}
\label{eq:148n}
P^{-1}_{\ey} (0) = \Ker (\Oh_{\eX , \ey} \longrightarrow \kappa (\ex))
\end{equation}
since $\oP (\kappa (\ex)^{\times}) \subset \C^{\times}$ which follows from $\oP (1) = 1 , \oP (0) = 0$. Note that since $\kappa (\ey)$ is a field and $\tP_{\ey} (1) = 1$ we have $\tP_{\ey} (\kappa (\ey)^{\times}) \subset k^{\times}$. It follows that
\begin{equation}
\label{eq:35n}
P^{-1}_{\ey} (\emm) = \emm_{\eX , \ey} \; .
\end{equation}

The group $G$ and the monoid $\Nh$ act on $\deX (\eo)$ by setting
\[
(\ex , \ey , \oP^{\times})^{\sigma} = (\ex^{\sigma} , \ey^{\sigma} , \oP^{\times} \verk \sigma) \quad \text{for} \; \sigma \in G 
\]
and
\[
F_{\nu} (\ex , \ey , \oP^{\times}) = (\ex , \ey , \oP^{\times} \verk (\;)^{\nu}) \quad \text{for} \; \nu \in \Nh \; .
\]
The two actions commute and
\begin{equation}
\label{eq:30n}
\deX_0 (\eo) := \deX (\eo) / G
\end{equation}
inherits an $\Nh$-action. For each $\nu \in \Nh$ the Frobenius $F_{\nu}$ is injective on $\deX (\eo)$ and hence on $\deX_0 (\eo)$ since $(\;)^{\nu} : \kappa (\ex) \to \kappa (\ex)$ is surjective for all $\ex \in \eX$. Combining Propositions \ref{t21nn} b) and \ref{t211n} we get a canonical $G$- and $\Nh$-equivariant bijection
\begin{equation}
\label{eq:31n}
W_{\rat} (\eX) (\eo) = \deX (\eo) \; .
\end{equation}
The morphism $\eX \to \eX_0$ induces a canonical map
\begin{equation}
\label{eq:32}
W_{\rat} (\eX) (\eo) \longrightarrow W_{\rat} (\eX_0) (\eo) \; .
\end{equation}

\begin{theorem} \label{t53}
In the above situation, assume that $K_0$ is perfect and that $\C$ and hence $k$ are algebraically closed. Then the following map induced by \eqref{eq:32} 
\begin{equation}
\label{eq:33n}
W_{\rat} (\eX) (\eo) / G = \deX_0 (\eo) \silo W_{\rat} (\eX_0) (\eo)
\end{equation}
is an $\Nh$-equivariant bijection. More generally, if $K_1 \subset K$ is a normal extension of $K_0$ and $G_1 = \Aut_{K_0} (K_1)$ and if $\eX_1$ is the normalization of $\eX_0$ in $K_1$ the morphism $\eX_1 \to \eX_0$ induces an $\Nh$-equivariant bijection
\[
W_{\rat} (\eX_1) (\eo) / G_1 \silo W_{\rat} (\eX_0) (\eo) \; .
\]
\end{theorem}
The proof of \eqref{eq:33n} will be given after the next Proposition and Corollary. The second assertion of Theorem \ref{t53} follows formally from the isomorphism \eqref{eq:33n} as in the proof of Corollary \ref{t36}. Here is a different description of $\deX (\eo)$ focusing on the map $P_{\ey}$ instead of $\oP$.

\begin{prop}
\label{t44}
Mapping $(\ex , \ey , \oP^{\times})$ to $(\ex , \ey , P_{\ey})$ resp. $(\ex , \ey , P'_{\ey})$ identifies $\deX (\eo)$ with the set of triples $(\ex , \ey , P_{\ey})$ resp. $(\ex , \ey , P'_{\ey})$ where $\ey \in \overline{\{ \ex \} }$ and $P_{\ey} : \Oh_{\eX , \ey} \to \eo$ resp. $P'_{\ey} : \Oh_{ \overline{ \{ \ex \} } , \ey} \to \eo$ is a multiplicative map sending $1$ to $1$ and $0$ to $0$ and such that the following conditions hold:\\
1) For $f \in \Oh_{\eX, \ey}$ with $f (\ex) \neq 0$ in $\kappa (\ex)$ we have $P_{\ey} (f) \neq 0$.\\
2) For $f \in \Oh_{\eX , \ey}$ the value $P_{\ey} (f)$ depends only on $f (\ex) \in \kappa (\ex)$. \\
3) For $f \in \Oh_{\eX , \ey}$ the value of $P_{\ey} (f) \mod \emm$ in $k$ depends only on $f (\ey) \in \kappa (\ey)$.\\
resp.\\
1') For $0 \neq f' \in \Oh_{\overline{\{ \ex \} } , \ey}$ we have $P'_{\ey} (f') \neq 0$.\\
3') For $f' \in \Oh_{\overline{ \{ \ex \} } , \ey}$ the value of $P'_{\ey} (f') \mod \emm$ in $k$ depends only on $f' (\ey) \in \kappa (\ey)$. 
\end{prop}

Here we have written $f (\ex)$ for the image of $f$ under the composition $\Oh_{\eX , \ey} \hookrightarrow \Oh_{\eX , \ex} \to \kappa (\ex)$. 

\begin{proof}
For $(\ex , \ey , \oP^{\times}) \in \deX (\eo)$ the triples $(\ex , \ey , P_{\ey})$ resp. $(\ex , \ey , P'_{\ey})$ have the properties 1)--3) resp. 1'), 3'). Conversely, it follows from 1) and 2) that $P^{-1}_{\ey} (0) = \Ker (\Oh_{\eX , \ey} \longrightarrow \kappa (\ex))$. Noting that $\Oh_{\eX , \ex}$ is the localization of $\Oh_{\eX , \ey}$ with respect to the multiplicative subset of $f$'s with $f (\ex) \neq 0$, conditions 1), 2) therefore allow to build the two lower squares in the left diagram of \eqref{eq:29n}. Condition 3) gives the top square. The argument for $(\ex , \ey , P'_{\ey})$ is similar.
\end{proof}

Here is a description of $\deX (\eo)$ in the affine case.

\begin{cor}
\label{t45}
For $\eX = \spec R$, we may identify $\deX (\eo)$ with the set of multiplicative maps $Q : R \to \eo$ sending $1$ to $1$ and $0$ to $0$ which satisfy the following conditions:\\
a) $\ep = Q^{-1} (0)$ is additively closed, hence a prime ideal of $R$ and there is a factorization $Q : R \to R / \ep \xrightarrow{\overline{Q}} \eo$.\\
b) $\eq  = Q^{-1} (\emm)$ is additively closed, hence a prime ideal of $R$ and there is a factorization $Q \mod \emm : R \to R / \eq \xrightarrow{\tQ} k$. 

Conditions a) resp. b) are equivalent to a') resp. b') where

a') For $f , f' \in R$ with $Q (f') = 0$ we have $Q (f + f') = Q (f)$.\\
b') For $f , f' \in R$ with $Q (f') \in \emm$ we have $Q (f + f') \equiv Q (f) \mod \emm$. 
\end{cor}

\begin{proof}
The equivalence of a) with a') and of b) with b') is evident. We define a bijection $Q \mapsto (\ex , \ey , P_{\ey})$ sending $Q$'s with a), b) to triples as in Proposition \ref{t44}. Let $\ex , \ey$ be the points of $\eX = \spec R$ corresponding to $\ep$ and $\eq$. Since $\ep \subset \eq$, we have $\ey \in \overline{\{ \ex \} }$. For $s \in R \setminus \eq$, we have $Q (s) \in \eo \setminus \emm = \eo^{\times}$. Hence $Q$ extends uniquely to a multiplicative map $Q_{\eq} : R_{\eq} \to \eo$ with $Q^{-1}_{\eq} (0) = \ep R_{\eq}$ and $Q^{-1}_{\eq} (\emm) = \eq R_{\eq}$. In particular, for $f \in R_{\eq}$ with $f (\ex) = f \mod \ep R_{\eq} \neq 0$ we have $Q_{\eq} (f) \neq 0$. Assume that $f (\ex) = f' (\ex)$ for $f , f' \in R_{\eq}$, i.e. $f - f' \in \ep R_{\eq}$. Writing $f - f' = rs^{-1}$ for $r \in \ep$ and $s \in R \setminus \eq$ where $sf, sf' \in R$ we have $sf - sf' \in \ep$ and hence by a)
\[
Q (sf) = \overline{Q} (sf \mod \ep) = \overline{Q} (sf' \mod \ep) = Q (sf') \; .
\]
Thus we have $Q_{\eq} (f) = Q_{\eq} (f')$ by multiplicativity, since $Q (s) \in \eo^{\times} \subset \C^{\times}$. Next assume that $f (\ey) = f' (\ey)$ for $f , f' \in R_{\eq}$, i.e. $f - f' \in \eq R_{\eq}$. Writing $f - f' = rs^{-1}$ for $r \in\eq$ and $s \in R \setminus \eq$ such that $sf , sf' \in R$ we have $sf - sf' \in \eq$ and therefore by b),
\[
Q (sf) \mod \emm = \tQ (sf \mod \eq) = \tQ (sf' \mod \eq) = Q (sf') \mod \emm \; .
\]
By multiplicativity, and since $Q (s) \in \eo^{\times}$ this implies that
\[
Q_{\eq} (f) \mod \emm = Q_{\eq} (f') \mod \emm \; .
\]
For the map $P_{\ey} = Q_{\eq}$ on $\Oh_{\eX , \ey} = R_{\eq}$ we have therefore verified conditions 1)--3) in Proposition \ref{t44} and hence defined our map $Q \mapsto (\ex , \ey , P_{\ey})$. We define a map in the other direction by sending $(\ex , \ey , P_{\ey})$ to $Q := P_{\ey} \, |_R$ with $R \subset \Oh_{\eX , \ey}$. If $\ep$ and $\eq$ are the prime ideals of $R$ corresponding to $\ex$ and $\ey$ we have by \eqref{eq:148n}
\[
P^{-1}_{\ey} (0) = \Ker (\Oh_{\eX , \ey} \to \kappa (\ex)) = \ep R_{\eq} \; .
\]
Hence $Q^{-1} (0) = P^{-1}_{\ey} (0) \cap R = \ep R_{\eq} \cap R = \ep$. Moreover \eqref{eq:35n} asserts that $P^{-1}_{\ey} (\emm) = \emm_{\eX , \ey}$ and we get
\[
Q^{-1} (\emm) = P^{-1}_{\ey} (\emm) \cap R = \eq R_{\eq} \cap R = \eq \; .
\]
Straightforeward arguments show that $Q$ satisfies the conditions a) and b) of Corollary \ref{t45} and that our maps $Q \mapsto (\ex , \ey , P_{\ey})$ and $(\ex , \ey , P_{\ey}) \mapsto Q$ are inverse to each other.
\end{proof}

We will now prove Theorem \ref{t36}. In order to do so we first introduce a variant $\ddeX (\eo)$ of $\deX (\eo)$ on which the $G$-action is more transparent. 

For a point $\ex_0 \in \eX_0$ consider the base change diagram
\[
\xymatrix{
\pi^{-1} (\spec \Oh_{\eX_0 , \ex_0}) \ar[r] \ar[d]_{\pi_{\ex_0}} & \eX \ar[d]^{\pi} \\
\spec \Oh_{\eX_0 , \ex_0} \ar[r] & \eX_0 \; .
}
\]
Since $\pi$ is integral and surjective, $\pi_{\ex_0}$ has these properties too and in particular $\pi^{-1} (\spec \Oh_{\eX_0 , \ex_0})$ is affine,
\[
\pi^{-1} (\spec \Oh_{\eX_0 , \ex_0}) = \spec \Oh_{\eX , \ex_0} \quad \text{where} \; \Oh_{\eX , \ex_0} := \Gamma (\pi^{-1} (\spec \Oh_{\eX_0 , \ex_0}) , \Oh) \; .
\]
The $\Oh_{\eX_0 , \ex_0}$-algebra $\Oh_{\eX , \ex_0}$ is a normal domain with quotient field $K$.

For every point $\ex \in \eX$ with $\pi (\ex) = \ex_0$ there are natural maps
\[
\Oh_{\eX, \ex_0} \hookrightarrow \Oh_{\eX , \ex} \twoheadrightarrow \kappa (\ex) \; .
\]
For a point $\ey_0 \in \eX_0$ with $\ey_0 \in \overline{\{ \ex_0 \} }$ we have a commutative diagram (of inclusions)
\[
\xymatrix{
\Oh_{\eX , \ey_0} \ar[r] & \Oh_{\eX , \ex_0} \\
\Oh_{\eX_0 , \ey_0} \ar[u]\ar[r] & \Oh_{\eX_0  , \ex_0} \ar[u] \; .
}
\]

\begin{defn}
\label{t217n}
In the above situation we define $\ddeX (\eo)$ to be the set of triples $(\ex , \ey_0 , \oP^{\times})$ where $\ex \in \eX , \ey_0 \in \overline{\{ \ex_0 \} } \subset \eX_0$ with $\ex_0 = \pi (\ex)$ and $\oP^{\times} : \kappa (\ex)^{\times} \to \C^{\times}$ a homomorphism whose extension by zero $\oP$ fits into a commutative diagram of multiplicative maps where $1 \mapsto 1$ and $0 \mapsto 0$ and $\ey \in \pi^{-1} (\ey_0)$
\begin{equation}
\label{eq:13n}
\xymatrix{
\kappa (\ey) \ar[r]^{\tP_\ey} & k \\
\Oh_{\eX , \ey_0} \ar[u] \ar[r]^{P_{\ey_0}} \ar[d] & \eo \ar@{->>}[u] \ar@{_{(}->}[d]  \\
\Oh_{\eX , \ex_0} \ar[r]^P \ar[d] & \C \ar@{=}[d] \\
\kappa (\ex) \ar[r]^{\oP} & \C \; .
}
\end{equation}
\end{defn}

\begin{supple} \label{t218n}
The point $\ey \in \pi^{-1} (\ey_0)$ and the diagram \eqref{eq:13n} are uniquely determined by $(\ex , \ey_0 , \oP^{\times})$. In particular given $(\ex , \ey_0 , \oP^{\times})$, the maps $P , P_{\ey_0}$ and $\tP_{\ey}$ are unique. We have $\ey \in \overline{\{ \ex \} }$. 
\end{supple}

\begin{proof}
It is clear that $P$ and $P_{\ey_0}$ are uniquely determined by $\oP$. Set
\[
\eq = \Ker (\Oh_{\eX , \ey_0} \longrightarrow \kappa (\ey)) \; .
\]
All maps in \eqref{eq:13n} send $1$ to $1$ and $0$ to $0$. Hence $\tP_\ey (\kappa (\ey)^{\times}) \subset k^{\times}$ and $\tP^{-1}_\ey (0) = 0$. With the canonical map $\lambda : \Oh_{\eX , \ey_0} \to \kappa (\ey)$ we get
\begin{equation} \label{eq:30-x}
\eq = \lambda^{-1} (0) = \lambda^{-1} (\tP^{-1}_\ey (0)) = P^{-1}_{\ey_0} (\emm) \; .
\end{equation}
Hence $\eq$ is uniquely determined by $P_{\ey_0}$ and hence by $\oP$. Setting $\kappa = \Quot (\Oh_{\eX , \ey_0} / \eq)$, the map $\Oh_{\eX , \ey _0} \to \kappa (\ey)$ induces an isomorphism $\kappa \silo \kappa (\ey)$. Therefore, the image of the composition
\[
\spec \kappa \longrightarrow \spec \Oh_{\eX , \ey_0} / \eq  \longrightarrow \spec \Oh_{\eX , \ey_0} = \pi^{-1} (\spec \Oh_{\eX_0 , \ey_0}) \longrightarrow \eX
\]
is the point $\ey$. Moreover, since $\tP_\ey$ induces a homomorphism $\kappa (\ey)^{\times} \to k^{\times}$, the map $\tP_{\ey}$ is uniquely determined by its values on the image of $\Oh_{\eX , \ey_0}$ in $\kappa \equiv \kappa (\ey)$. Hence $P_{\ey_0}$ and hence $\oP$ determine $\tP_\ey$ uniquely. Finally, set $\ep = \emm_{\eX , \ex} \cap \Oh_{\eX , \ey_0}$. Using \eqref{eq:148n} and \eqref{eq:30-x} we find
\[
\ep = P^{-1} (0) \cap \Oh_{\eX , \ey_0} = P^{-1}_{\ey_0} (0) \subset P^{-1}_{\ey_0} (\emm) = \eq \; .
\]
Thus $\eq \in \overline{\{ \ep \} }$ in $\spec \Oh_{\eX , \ey_0}$. Under the natural map $\spec \Oh_{\eX , \ey_0} \to \eX$ we have $\ep \mapsto \ex$ and $\eq \mapsto \ey$, and therefore $\ey \in \overline{\{ \ex \} }$. 

\begin{prop}
\label{t311n}
Mapping $(\ex, \ey_0 , \oP^{\times})$ to $(\ex , \ey_0 , P_{\ey_0})$ identifies $\ddeX (\eo)$ with the set of triples $(\ex , \ey_0 , P_{\ey_0})$ where $\ey_0 \in \overline{\{ \ex_0\} }$ and $P_{\ey_0} : \Oh_{\eX , \ey_0} \to \eo$ is a multiplicative map sending $1$ to $1$ and $0$ to $0$ and such that the following three conditions hold:\\
1) For all $f \in \Oh_{\eX , \ey_0}$ with $f (\ex) \neq 0$ in $\kappa (\ex)$ we have $P_{\ey_0} (f) \neq 0$.\\
2) For all $f \in \Oh_{\eX , \ey_0}$ the value $P_{\ey_0} (f)$ depends only on $f (\ex) \in \kappa (\ex)$.\\
3) There are a point $\ey \in \pi^{-1} (\ey_0)$ and a multiplicative map $\tP_{\ey}$, both uniquely determined such that the following diagram commutes
\[
\xymatrix{
\kappa (\ey) \ar[r]^{\tP_{\ey}} & k \\
\Oh_{\eX, \ey_0} \ar[u] \ar[r]^{P_{\ey_0}} & \eo \; .\ar@{->>}[u]
}
\]
\end{prop}

\begin{proof}
For $(\ex , \ey_0 , \oP^{\times}) \in \ddeX (\eo)$ the triple $(\ex, \ey_0 , P_{\ey_0})$ has the properties 1)--3). For the converse note that the ring $\Oh_{\eX , \ex_0}$ is the localization of $\Oh_{\eX , \ey_0}$ with respect to the multiplicative set of $f_0 \in \Oh_{\eX_0 , \ey_0}$ with $f_0 (\ex_0) \neq 0$ in $\kappa (\ex_0)$. Hence condition 1) implies that $P_{\ey_0}$ can be extended to a multiplicative map $P : \Oh_{\eX , \ex_0} \to \C$. By conditions 1) and 2), $P$ factors over a multiplicative map $\oP : \kappa (\ex) \to \C$. Together with 3) we obtain a diagram as in \eqref{eq:13n} and hence a point of $\ddeX (\eo)$. The uniqueness of $\ey$ and $\tP_{\ey}$ were shown in Supplement \ref{t218n}.
\end{proof}

The group $G$ acts on $\ddeX (\eo)$ by setting
\[
(\ex , \ey_0 , \oP^{\times})^{\sigma} = (\ex^{\sigma} , \ey_0 , \oP^{\times} \verk \sigma) \quad \text{for} \; \sigma \in G \; .
\]
The corresponding point in $\pi^{-1} (\ey_0)$ is $\ey^{\sigma}$ and the corresponding maps in \eqref{eq:30-x} are $\tP_\ey \verk \sigma , P_{\ey_0} \verk \sigma$ and $P \verk \sigma$. We set
\begin{equation}
\label{eq:14n}
\ddeX_0 (\eo) = \ddeX (\eo) / G \; .
\end{equation}
The monoid $\Nh$ acts on $\ddeX (\eo)$ via
\[
F_{\nu} (\ex , \ey_0 , \oP^{\times}) = (\ex , \ey_0 , \oP^{\times} \verk (\;)^{\nu}) \quad \text{for} \; \nu \in \Nh \; .
\]
The point $\ey \in \pi^{-1} (\ey_0)$ is invariant under $F_{\nu}$ and the maps in \eqref{eq:13n} are $\tP_\ey \verk (\;)^{\nu} , P_{\ey_0} \verk (\;)^{\nu}$ and $P \verk (\;)^{\nu}$. The $G$- and $\Nh$-actions on $\ddeX (\eo)$ commute and $\ddeX_0 (\eo)$ inherits an $\Nh$-action. For each $\nu \in \Nh$ the Frobenius $F_{\nu}$ is injective on $\ddeX (\eo)$ and hence on $\ddeX_0 (\eo)$ since $(\;)^{\nu} : \kappa (\ex)^{\times} \to \kappa (\ex)^{\times}$ is surjective for all $\ex \in \eX$.

We now define maps
\[
\deX (\eo) \overset{\displaystyle \xrightarrow{\;\;\alpha\;\;}}{\underset{\beta}{\longleftarrow}} \ddeX (\eo)
\]
using the descriptions of $\deX (\eo)$ and $\ddeX (\eo)$ in Propositions \ref{t44} and \ref{t311n}. The map $\alpha$ sends $(\ex , \ey , P_{\ey})$ to $(\ex , \ey_0 , P_{\ey_0})$ where $\ey_0 = \pi (\ey)$ and $P_{\ey_0}$ is the restriction of $P_{\ey}$ to $\Oh_{\eX , \ey_0} \subset \Oh_{\eX , \ey}$. The map $\beta$ sends $(\ex , \ey_0 , P_{\ey_0})$ to $ (\ex , \ey , P_{\ey})$ where $\ey$ is the unique point specified in Proposition \ref{t311n}, 3) and $P_{\ey}$ is the unique multiplicative extension of $P_{\ey_0}$ to a map $P_{\ey} : \Oh_{\eX , \ey} \to \eo$ (see below).

\begin{prop}
\label{t49}
The maps $\alpha$ and $\beta$ are well defined mutually inverse $G$- and $\Nh$-equivariant bijections between $\deX (\eo)$ and $\ddeX (\eo)$. They induce $\Nh$-equivariant bijections between $\deX_0 (\eo)$ and $\ddeX_0 (\eo)$. 
\end{prop}

\begin{proof}
The map $\alpha$ is well defined since conditions 1)--3) of Proposition \ref{t44} on $P_{\ey}$ imply the corresponding conditions of Proposition \ref{t311n} on $P_{\ey_0} = P_{\ey} \, |_{\Oh_{\eX , \ey_0}}$. As for $\beta$, given $(\ex , \ey_0 , P_{\ey_0})$ first note that $\Oh_{\eX , \ey}$ is the localization of $\Oh_{\eX , \ey_0}$ with respect to the multiplicative set
\[
S = \Oh_{\eX , \ey_0} \setminus (\Oh_{\eX , \ey_0} \cap \emm_{\eX , \ey}) = \varphi^{-1} (\kappa (\ey)^{\times})
\]
where $\varphi$ is the map $\Oh_{\eX , \ey_0} \to \kappa (\ey)$ sending $f$ to $f (\ey)$. For $f \in \Oh_{\eX , \ey_0}$ we have by condition 3) of Proposition \ref{t311n}
\[
P_{\ey_0} (f) \mod \emm = \tP_{\ey} (f (\ey)) \quad \text{in} \; k = \eo / \emm \; .
\]
For $f \in S$ it follows that $P_{\ey_0} (f) \mod \emm \in k^{\times}$ i.e. $P_{\ey_0} (f) \in \eo^{\times}$. Hence the multiplicative map $P_{\ey_0} : \Oh_{\eX , \ey_0} \to \eo$ has a unique multiplicative extension to a map $P_{\ey} : \Oh_{\eX , \ey} \to \eo$. It is straightforeward to check conditions 1)--3) of Proposition \ref{t44} for $P_{\ey}$. Note here that besides $f (\ey) \in \kappa (\ey)^{\times}$ for $f \in S$ we also have $f (\ex) \in \kappa (\ex)^{\times}$ because $\ey \in \overline{\{ \ex \} }$ implies that
\[
\Oh_{\eX , \ey_0} \cap \emm_{\eX , \ex} \subset \Oh_{\eX , \ey_0} \cap \emm_{\eX , \ey} \; .
\]
It is clear that the maps $\alpha$ and $\beta$ are inverse to each other and $G$- and $\Nh$-equivariant. 
\end{proof}

We will now construct a canonical $\Nh$-equivariant map
\begin{equation}
\label{eq:15n}
\varphi : \ddeX_0 (\eo) \longrightarrow W_{\rat} (\eX_0) (\eo) \; .
\end{equation}
Given a point $(\ex , \ey_0 , \oP^{\times}) \in \ddeX (\eo)$, consider the corresponding uniquely determined diagram \eqref{eq:13n}. Using Proposition \ref{t211n} for $X = \eX_0$, the point
\[
\varphi (\ex , \ey_0 , \oP^{\times}) \in W_{\rat} (\eX_0) (\eo)
\]
is determined by $\ex_0 = \pi (\ex) , \ey_0$ and the outer square of the following commutative diagram determined by \eqref{eq:13n}. Here we have used Proposition \ref{t21nn} b) for the identifications in the middle
\[
\xymatrix{
W_{\rat} (\kappa (\ey_0)) \ar[r] & W_{\rat} (\kappa (\ey)) \ar@{=}[r] & \uZ \kappa (\ey) \ar[r]^{\tP_\ey} & k \\
W_{\rat} (\Oh_{\eX_0 , \ey_0}) \ar@{->>}[u] \ar[d] \ar[r] & W_{\rat} (\Oh_{\eX , \ey_0}) \ar[u] \ar[d] \ar@{=}[r] & \uZ \Oh_{\eX , \ey_0} \ar[u] \ar[d] \ar[r]^{P_{\ey_0}} & \eo \ar@{_{(}->}[d] \ar@{->>}[u]\\
W_{\rat} (\Oh_{\eX_0 , \ex_0}) \ar[r] \ar@{->>}[d] & W_{\rat} (\Oh_{\eX , \ex_0}) \ar@{=}[r] \ar[d] & \uZ \Oh_{\eX , \ex_0} \ar[r]^{P} \ar[d] & \C \ar@{=}[d] \\
W_{\rat} (\kappa (\ex_0)) \ar[r] & W_{\rat} (\kappa (\ex)) \ar@{=}[r] & \uZ \kappa (\ex) \ar[r]^{\oP} & \C \; .
}
\]
It is clear that the map $\varphi : \ddeX (\eo) \to W_{\rat} (\eX_0) (\eo)$ is $\Nh$-equivariant and that it factors over $\ddeX_0 (\eo)$. 

There is also a canonical $\Nh$-equivariant map in the other direction
\begin{equation}
\label{eq:16n}
\psi : W_{\rat} (\eX_0) (\eo) \longrightarrow \ddeX_0 (\eo) \; .
\end{equation}
For simplicity we construct it only in the case where $K_0$ is perfect, so that $K / K_0$ is Galois with Galois group $G$. In this case we have
\begin{equation}
 \label{eq:17n}
\Oh_{\eX_0 , \ez_0} = \Oh^G_{\eX , \ez_0} \quad \text{for any} \; \ez_0 \in \eX_0 \; .
\end{equation}
Using Propositions \ref{t21nn} and \ref{t21n} we get identifications
\begin{equation}
\label{eq:18n}
W_{\rat} (\Oh_{\eX_0 , \ex_0}) = W_{\rat} (\Oh_{\eX , \ex_0})^G = (\uZ \Oh_{\eX , \ex_0})^G
\end{equation}
and similarly
\begin{equation}
\label{eq:19n}
W_{\rat} (\Oh_{\eX_0 , \ey_0}) = (\uZ \Oh_{\eX , \ey_0})^G \; .
\end{equation}
Let us now assume, that we are given a point $(\ex_0 , \ey_0 , \oP_0)$ in $W_{\rat} (\eX_0) (\eo)$ with corresponding diagram c.f. Proposition \ref{t211n}
\[
\xymatrix{
W_{\rat} (\kappa (\ey_0)) \ar[r]^-{\tP_{0 \ey_0}} & k \\
W_{\rat} (\Oh_{\eX_0 , \ey_0}) \ar@{->>}[u] \ar[d] \ar[r]^-{P_{0 \ey_0}} & \eo \ar@{_{(}->}[d] \ar@{->>}[u]\\
W_{\rat} (\Oh_{\eX_0 , \ex_0}) \ar[r]^-{P_0} \ar[d] & \C \ar@{=}[d] \\
W_{\rat} (\kappa (\ex_0)) \ar[r]^-{\oP_0} & \C \; .
}
\]
Choose a point $\ex \in \eX$ with $\pi (\ex) = \ex_0$. By Corollary \ref{t29n} there is a multiplicative map $\oP : \kappa (\ex) \to \C$ with $\oP (1) = 1$ and $\oP (0) = 0$ such that $\oP_0$ is the composition
\begin{equation}
\label{eq:20n}
\oP_0 : W_{\rat} (\kappa (\ex_0)) \longrightarrow W_{\rat} (\kappa (\ex)) = \uZ \kappa (\ex) \xrightarrow{\oP} \C \; .
\end{equation}
The map $\oP$ is unique $\mod G_{\ex}$. Define $P$ as the composition
\[
P : \Oh_{\eX , \ex_0} \longrightarrow \kappa (\ex) \xrightarrow{\oP} \C \; .
\]
Then $P_0$ is the composition
\begin{equation}
\label{eq:21nn}
P_0 : W_{\rat} (\Oh_{\eX_0 , \ex_0}) \longrightarrow W_{\rat} (\Oh_{\eX , \ex_0}) = \uZ \Oh_{\eX , \ex_0} \xrightarrow{P} \C \; .
\end{equation}
Let $P_{\ey_0}$ be the composition 
\[
P_{\ey_0} : \Oh_{\eX , \ey_0} \longrightarrow \Oh_{\eX , \ex_0} \xrightarrow{P} \C \; .
\]
Then using \eqref{eq:17n} for $\ez_0 = \ey_0$, we have a commutative diagram
\[
\xymatrix{
W_{\rat} (\Oh_{\eX_0, \ey_0}) \ar@{=}[r] \ar[d]_{P_{0 \ey_0}} & W_{\rat} (\Oh^G_{\eX , \ey_0}) \ar[r] & W_{\rat} (\Oh_{\eX , \ey_0}) \ar@{=}[r] & \uZ \Oh_{\eX , \ey_0} \ar[d]^{P_{\ey_0}} \\
\eo \ar@{^{(}->}[rrr] &&& \C \; .
}
\]
Part 1) of Corollary \ref{t213n} for $A = \Oh_{\eX , \ey_0}$ now implies that $P_{\ey_0}$ takes values in $\eo$. Clearly $P_{0 \ey_0}$ is the composition
\begin{equation}
\label{eq:22nn}
P_{0 \ey_0} : W_{\rat} (\Oh_{\eX_0 , \ey_0}) \longrightarrow W_{\rat} (\Oh_{\eX , \ey_0}) = \uZ \Oh_{\eX , \ey_0} \xrightarrow{P_{\ey_0}} \eo \; .
\end{equation}
By construction the following diagram commutes
\begin{equation}
\label{eq:23nn}
\xymatrix{
\Oh_{\eX , \ey_0} \ar[r]^{P_{\ey_0}} \ar[d] & \eo \ar@{_{(}->}[d] \\
\Oh_{\eX , \ex_0} \ar[r]^P \ar[d] & \C \ar@{=}[d] \\
\kappa (\ex) \ar[r]^{\oP} & \C \; .
}
\end{equation}
Choose a point $\ey \in \eX$ with $\pi (\ey) = \ey_0$. Using Corollary \ref{t29n}, there is a multiplicative map $\tP_\ey : \kappa (\ey) \to k$ with $\tP_\ey (1) = 1$ and $\tP_\ey (0) = 0$ such that $\tP_{ 0 \ey_0}$ is the composition
\begin{equation}
\label{eq:24n}
\tP_{0 \ey_0} : W_{\rat} (\kappa (\ey_0)) \longrightarrow W_{\rat} (\kappa (\ey)) = \uZ \kappa (\ey) \xrightarrow{\tP_\ey} k \; .
\end{equation}
Hence the outer rectangle and the left square of the following diagram commute
\[
\xymatrix{
\tP_{0\ey_0} : W_{\rat} (\kappa (\ey_0)) \ar@{^{(}->}[r] & \uZ \kappa (\ey) \ar[r]^-{\tP_\ey} & k \\
P_{0\ey_0} : W_{\rat} (\Oh_{\eX_0 , \ey_0}) \ar[u] \ar@{^{(}->}[r] & \uZ \Oh_{\eX , \ey_0} \ar[r]^-{P_{\ey_0}} \ar[u] & \eo \ar@{->>}[u] \; .
}
\]
Hence the compositions
\[
\vcenter{
\xymatrix{
\uZ \kappa (\ey) \ar[r]^-{\tP_\ey} & k \\
\uZ \Oh_{\eX , \ey_0} \ar[u] & 
}} \qquad \text{and} \qquad
\vcenter{
\xymatrix{ 
& k \\
\uZ \Oh_{\eX , \ey_0} \ar[r]^-{P_{\ey_0}} & \eo \ar@{->>}[u]
}}
\]
agree on
\[
W_{\rat} (\Oh_{\eX_0 , \ey_0}) = W_{\rat} (\Oh_{\eX , \ey_0})^G = (\uZ \Oh_{\eX , \ey_0})^G \; .
\]
Using Theorem \ref{t27n} for $A = \uZ \Oh_{\eX , \ey_0}$ and $\C = k$ it follows that there is some $\sigma \in G$ such that the following diagram commutes
\begin{equation}
\label{eq:25n}
\xymatrix{
\uZ \Oh_{\eX , \ey_0} \ar[r] & \uZ \kappa (\ey) \ar[r]^{\tP_\ey} & k \\
\uZ \Oh_{\eX , \ey_0} \ar[u]^{\sigma} \ar[rr]^{P_{\ey_0}} & & \eo \ar@{->>}[u] \; .
}
\end{equation}
Thus the diagram
\begin{equation}
\label{eq:26n}
\xymatrix{
\uZ \kappa (\ey^{\sigma}) \ar[r]^{\sigma} & \uZ \kappa (\ey) \ar[r]^{\tP_\ey} & k \\
\uZ \Oh_{\eX , \ey_0} \ar[u] \ar[rr]^{P_{\ey_0}} && \eo \ar@{->>}[u] \; .
}
\end{equation}
commutes as well. Combining \eqref{eq:23nn} and \eqref{eq:26n} and replacing $\ey \in \pi^{-1} (\ey_0)$ by $\ey^{\sigma} \in \pi^{-1} (\ey_0)$ and $\tP_\ey$ by $\tP_\ey \verk \sigma$ we obtain a commutative diagram \eqref{eq:13n} as in Definition \ref{t217n} and hence a point $(\ex , \ey_0 , \oP^{\times}) \in \ddeX (\eo)$. Because of the uniqueness assertion \ref{t218n} it depends only on $\ey_0$ and the choices of $\ex$ over $\ex_0$ and of $\oP^{\times}$. Noting that $G$ acts transitively on $\pi^{-1} (\ex_0)$ and that in the construction $\oP^{\times}$ was uniquely determined $\mod G_{\ex}$, it follows that the class
\[
\psi (\ex_0 , \ey_0 , \oP_0) := (\ex , \ey_0 , \oP^{\times}) \mod G \in \ddeX_0 (\eo)
\]
is well defined. This defines the map \eqref{eq:16n}.
\end{proof}

Theorem \ref{t53} is a consequence of Proposition \ref{t49} and the next result.

\begin{theorem}
\label{t219n}
In the above situation, the maps
\[
\ddeX_0 (\eo) \overset{\displaystyle \xrightarrow{\;\;\varphi\;\;}}{\underset{\psi}{\longleftarrow}} W_{\rat} (\eX_0) (\eo)
\]
are mutually inverse $\Nh$-equivariant bijections. 
\end{theorem}

\begin{proof}
Like $\varphi$, the map $\psi$ is $\Nh$-equivariant by construction. Consider the factorizations \eqref{eq:20n}, \eqref{eq:21nn}, \eqref{eq:22nn} and the following one which results from \eqref{eq:24n} since $\kappa (\ey_0) \to \kappa (\ey)$ equals $\kappa (\ey_0) \to \kappa (\ey^{\sigma}) \xrightarrow{\sigma} \kappa (\ey)$
\[
\tP_{0 \ey_0} : W_{\rat} (\kappa (\ey_0)) \longrightarrow W_{\rat} (\kappa (\ey^{\sigma})) = \uZ \kappa (\ey^{\sigma}) \xrightarrow{\tP_\ey \verk \sigma} k \; .
\]
By definition of $\varphi$, these factorizations show that $\varphi \verk \psi = \id$. It is clear that $\psi \verk \varphi = \id$.
\end{proof}

\begin{rem}
In the definition of $\psi$ we used that $K / K_0$ was Galois to get \eqref{eq:18n}, so that Corollary \ref{t213n} could be applied to get \eqref{eq:26n}. However, with more effort one should be able to show that the definition of $\psi$ and Theorem \ref{t219n} and hence Theorem \ref{t53} are valid in general. Note that for fields Corollaries \ref{t28n} or \ref{t29n} do not assume that $\kappa / \kappa_0$ is Galois, and those arguments might extend to rings. 
\end{rem}
We end this section with some remarks on the map $W_{\rat} (\eX) (\eo) \to W_{\rat} (\eX) (\C)$.

Let $\eo$ be a rank one valuation ring with algebraically closed quotient field $\C$. Let $\eX_0$ be an integral normal scheme with function field $K_0$ and $\eX$ its normalization in an algebraic closure $K$ of $K_0$.

\begin{cor}
\label{t46}
a) For $\eX = \spec R$, the natural map
\[
W_{\rat} (\eX) (\eo) \longrightarrow W_{\rat} (\eX) (\C)
\]
is injective.\\
b) Let $(\ex , \ey , \oP^{\times})$ and $(\ex' , \ey' , \oP^{'\times})$ be two points of $W_{\rat} (\eX) (\eo) = \deX (\eo)$ with the same image in $W_{\rat} (\eX) (\C) = \deX (\C)$, i.e. $\ex = \ex' , \oP^{\times} = \oP^{'\times}$. If $\ey$ and $\ey'$ lie in a common open affine subscheme of $\eX$ then $\ey = \ey'$. 
\end{cor}

\begin{proof}
Recall from Remark \ref{t34} that for $\eX_0 = \spec R_0$ and $\eX = \spec R$ the set \\
$W_{\rat} (\eX) (\C) = \deX (\C)$ can be described as the multiplicative maps $Q : R \to \C$ satisfying the analogue of a) in Corollary \ref{t45}. The natural map $W_{\rat} (\eX) (\eo) \to W_{\rat} (\eX) (\C)$ corresponds to the map $\deX (\eo) \to \deX (\C)$ sending $Q$ to $\iota \verk Q$ where $\iota : \eo \hookrightarrow \C$ is the inclusion. Hence part a) of the result follows. Part b) is a consequence of a), noting that every open subset of $\eX$ containing $\ey$ also contains $\ex$ since $\ey \in \overline{\{ \ex \} }$. 
\end{proof}

\begin{cor}
\label{t47}
For $\eX_0$ as above, assume that $\eX_0$ is affine or more generally that there exists an ample invertible sheaf on $\eX_0$. Then the map $W_{\rat} (\eX) (\eo) \longrightarrow W_{\rat} (\eX) (\C)$ is injective. If $K_0$ is perfect, the map $W_{\rat} (\eX_0) (\eo) \to W_{\rat} (\eX_0) (\C)$ is injective as well. 
\end{cor}

\begin{proof}
By \cite[Lemma 28.29.5, (3)]{stacks}, any two points of $\eX_0$ lie in a common open affine subscheme of $\eX_0$. Since the normalization morphism $\eX \to \eX_0$ is affine, the same is true for pairs of points of $\eX$. Therefore, by Corollary \ref{t46}, b) the first map is injective. Combining this with Theorem \ref{t53} and Corollary \ref{t210n} we see that the second map is injective as well. 
\end{proof}

\section{$A_{\inf}$-rings as $\eo$-valued functions on subspaces of $\cW_{\rat} (\eX) (\eo)$} \label{sec:9}
Let $\eo$ be a $p$-adically complete rank one valuation ring with quotient field $\C$, maximal ideal $\emm$ and residue field $k$ of characteristic $p$. Let $\eX_0$ be an integral normal $\Z_p$-scheme which maps surjectively to $\spec \Z_p$. Let $K_0$ be the function field of $\eX_0$ and as usual let $K$ be an algebraic closure of $K_0$ and let $\eX$ be the normalization of $\eX_0$ in $K$. The extension $K / K_0$ is Galois since $\car K_0 = 0$ and we let $G$ be its Galois group. In the following we only consider the monoid $\Nh_0$ generated by $p$ i.e. the $F_p = (\;)^p$ action. In section \ref{sec:4gn} we have seen different descriptions of $\deX (\eo) = W_{\rat} (\eX) (\eo)$. In this section we will modify $\deX (\eo)$ and $\ceX (\eo) = \colim_{F_p} \deX (\eo)$ by first imposing certain continuity conditions on the multiplicative maps that make up $\deX (\eo)$ and $\ceX (\eo)$. In a second step the resulting subspaces will be completed. For these modifications it matters which definition of $\deX (\eo)$ we take as our starting point. We will use the equivalent versions of Definition \ref{t42nn} resp. Proposition \ref{t44} involving multiplicative maps $P'_{\ey} : \Oh_{ \overline{\{ \ex \}}  , \ey} \to \eo$. To simplify the notation, we write $P_{\ey}$ instead of $P'_{\ey}$ though. For clarity let us write down the two ways to view $\deX (\eo)$ again, that we will use in the sequel. 


\begin{defn}
\label{t121}
Let $\deX (\eo)$ be the set of triples $(\ex , \ey , \oP^{\times})$ with $\ex \in \eX, \ey \in \overline{\{ \ex \} }$ and $\oP^{\times} : \kappa (\ex)^{\times} \to \C^{\times}$ a homomorphism whose extension by zero $\oP$ fits into a commutative diagram of multiplicative maps
\[
\xymatrix{
\kappa (\ey) \ar[r]^{\tP_{\ey}} & k \\
\Oh_{\overline{\{\ex \} } , \ey} \ar[r]^{P_{\ey}} \ar@{->>}[u] \ar@{_{(}->}[d] & \eo \ar@{->>}[u] \ar@{_{(}->}[d] \\
\kappa (\ex) \ar[r]^{\oP} & \C \; .
}
\]
Equivalently we view $\deX (\eo)$ as the set of triples $(\ex , \ey , P_{\ey})$ with $\ex , \ey$ as above and $P_{\ey} : \Oh_{\overline{\{ \ex \} } , \ey} \to \eo$ a multiplicative map sending $1$ to $1$ and $0$ to $0$ and such that for $f \in \Oh_{\overline{ \{ \ex \} } , \ey }$ we have \\
1) $P_{\ey} (f) \neq 0$ if $f \neq 0$\\
2) the value of $P_{\ey} (f) \mod \emm$ in $k$ depends only on $f (\ey) \in \kappa (\ey)$. 
\end{defn}

For a set $X$ let $C (X , \eo)$ be the $\eo$-algebra of maps from $X$ to $\eo$. 

We have a multiplicative $F_p$- and $G$-equivariant map of monoids
\begin{equation}
\label{eq:81}
[\;] : \Gamma (\eX , \Oh) \longrightarrow C (\deX (\eo) , \eo) \quad \text{where} \; [\alpha] (\ex , \ey, \oP^{\times}) = \oP (\alpha (\ex)) \; .
\end{equation}
Here $\alpha (\ex) = \alpha \mod \emm_{\eX , \ex}$ is the evaluation of $\alpha \in \Gamma (\eX, \Oh)$ in $\ex$. Note here that if $\alpha$ also denotes the image of $\alpha$ in $\Oh_{\overline{\{ \ex \} } , \ey}$, we have
\[
\oP (\alpha (\ex)) = P_{\ey} (\alpha)
 \in \eo \; .
\]
Using \eqref{eq:81} we get a multiplicative map from $\varprojlim_{(\,)^p} \Gamma (\eX , \Oh)$ to the space of functions on $\ceX (\eo)$. Let $\wedge$ denote $p$-adic completion. We can interpret the elements of $\Gamma (\eX , \Oh)^{\wedge}$ and of the monoid $\varprojlim_{(\;)^p} \Gamma (\eX , \Oh)^{\wedge}$ as functions on sub-dynamical systems of $\deX (\eo)$ resp. $\ceX (\eo)$ as follows.

\begin{defn}
\label{t91}
a) In the above situation let $\deX_c (\eo)$ be the $F_p$- and $G$-invariant subspace of $\deX (\eo)$ consisting of triples $(\ex , \ey , \oP^{\times})$ as in Definition \ref{t121} for which the multiplicative maps $P_{\ey}$ factor (necessarily uniquely) over $p$-adically continuous (necessarily multiplicative) maps $\hP_{\ey}$ as follows
\[
P_{\ey} : \Oh_{\overline{\{ \ex \} } , \ey} \longrightarrow \hOh_{\overline{\{ \ex \} } , \ey} \xrightarrow{\hP_{\ey}} \eo \; .
\]
b) We set
\[
\ceX_c (\eo) = \colim_{F_p} \deX_c (\eo) \subset \ceX (\eo) \; .
\]
\end{defn}

Note that since $P_{\ey}$ maps $1$ to $1$ and $0$ to $0$ we must have $1 \neq 0$ in the $p$-adic completion $\hOh_{\overline{\{ \ex \} } , \ey}$ of $\Oh_{\overline{\{ \ex \} } , \ey}$. This is equivalent to $\car \kappa (\ey) = p$. 

A {\it ring} homomorphism $\Oh_{\overline{\{ \ex \} }, \ey} \to \eo$ extends to a continuous homomorphism $\hOh_{\overline{\{ \ex \} } , \ey} \to \eo$ since $\eo$ is $p$-adically complete, but I do not see why this should be the case for the maps $P_{\ey}$ which are only multiplicative. More precisely, if $\car \kappa (\ey) = p$ then the commutative diagram
\[
\xymatrix{
\kappa (\ey) \ar[r]^{\tP_{\ey}} & k \\
\Oh_{\overline{\{ \ex \} } , \ey} \ar[u] \ar[r]^{P_{\ey}} & \eo \ar@{->>}[u]
}
\]
shows that $\omega = P_{\ey} (p)$ lies in the maximal ideal of $\eo$, i.e. $|\omega| < 1$. Using multiplicativity of $P_{\ey}$ it follows that
\[
P_{\ey} (p^n \Oh_{\overline{\{ \ex \} } , \ey}) \subset \omega^n \eo \; .
\]
Hence $P_{\ey}$ is continuous at $0$ for the $p$-adic topologies on $\Oh_{\overline{\{ \ex \} } , \ey}$ and $\eo$ but as far as I can see this does not imply continuity at the other points of $\Oh_{\overline{\{ \ex \} } , \ey}$ if $P_{\ey}$ is not additive. 

\begin{prop}
\label{t123}
Let $\eX_0$ be an integral normal $\Z_p$-scheme of finite type. In the situation of Definition \ref{t91} with $\car \kappa (\ey) = p$ the $p$-adic topology on $\Oh_{\overline{ \{ \ex \} } , \ey}$ is separated i.e. the natural map $\Oh_{\overline{ \{ \ex \} } , \ey} \to \hOh_{\overline{ \{ \ex \} } , \ey}$ is injective.
\end{prop}

\begin{proof}
For $\car \kappa (\ex) = p$ the assertion is trivial, so assume that $\car \kappa (\ex) = 0$. Let $\ex_0$ and $\ey_0$ be the images of $\ex$ and $\ey$ under the morphism $\pi : \eX \to \eX_0$. Then $\ey_0 \in \overline{\{ \ex_0 \} }$ and since $\Oh_{\eX , \ey}$ is integral over $\Oh_{\eX_0 , \ey_0}$ it follows that $\Oh_{\overline{ \{ \ex \} } , \ey}$ is integral over $\Oh_{\overline{ \{ \ex_0 \} } , \ey_0}$. For $f \in \Oh_{\overline{ \{ \ex \} } , \ey}$ let
\[
P (t) = t^d + a_{d-1} t^{d-1} + \ldots + a_0
\]
be its minimal polynomial over the quotient field $\kappa (\ex_0)$ of $\Oh_{\overline{ \{ \ex_0 \} } , \ey_0}$. Then $a_0 , \ldots , a_{d-1}$ are in the normalization $\tilde{\Oh}_0$ of $\Oh_{\overline{ \{ \ex_0 \} } , \ey_0}$ in $\kappa (\ex_0)$. The minimal polynomial of $p^{-n} f \in \kappa (\ex)$ is the polynomial
\[
P_n (t) = t^d + p^{-n} a_{d-1} t^{d-1} + \ldots + p^{-nd} a_0 \; .
\]
If $p^{-n} f \in \Oh_{\overline{ \{ \ex_0 \} } , \ey_0}$ then $p^{-n} a_{d-1} , \ldots , p^{-nd} a_0$ are in $\tilde{\Oh}_0$. Hence, in order to see that $\bigcap_{n \ge 0} p^n \Oh_{\overline{ \{ \ex \} } , \ey} = 0$, it suffices to show that
\begin{equation} \label{eq:171n}
\bigcap_{n \ge 0} p^n \tilde{\Oh}_0 = 0 \; .
\end{equation}
Since $\eX_0$ is of finite type over the Nagata ring $\Z_p$, it follows that the domain $\tilde{\Oh}_0$ is Noetherian. We have $p \tilde{\Oh}_0 \neq \tilde{\Oh}_0$ since $\spec \tilde{\Oh}_0$ contains a point over the characteristic $p$ point $\ey_0$ of $\spec \Oh_{ \overline{ \{ \ex_0 \} } , \ey_0}$. Hence \eqref{eq:171n} follows from Krull's intersection theorem \cite{AM} Corollary 10.18.
\end{proof}

Because of Proposition \ref{t123} we will assume that $\eX_0 \twoheadrightarrow \spec \Z_p$ is of finite type in the following. Any morphism $\spec \eo \to \eX$ lands in an open affine subscheme $U = \spec A$ of $\eX$ and therefore corresponds to a homomorphism $\varphi : A \to \eo$. As a ring homomorphism, $\varphi$ is $p$-adically continuous and since $A$ is a $\Z_p$-algebra, $\varphi$ is therefore $\Z_p$-linear. Thus the $\eo$-valued points $\eX (\eo)$ of $\eX$ over $\spec \Z$ and over $\spec \Z_p$ are the same. Moreover the following assertion holds. 

\begin{prop} 
\label{t124}
Under the natural inclusion $\eX (\eo) \subset W_{\rat} (\eX) (\eo) = \deX (\eo)$ we have $\eX (\eo) \subset \deX_c (\eo) \subset \deX (\eo)$.
\end{prop}

\begin{proof}
The points of $\eX (\eo)$ correspond to those triples $(\ex , \ey, P_{\ey})$ where $P_{\ey}$ is a $\Z_p$-linear ring homomorphism. As mentioned above, a ring homomorphism $P_{\ey}$ extends to a continuous ring homomorphism
\[
\hP_{\ey} : \hOh_{\overline{ \{ \ex \} } , \ey} = \varprojlim_n \Oh_{\overline{ \{ \ex \} } , \ey} / p^n \xrightarrow{\varprojlim (P_{\ey} \mod p^n)} \varprojlim_n \eo / p^n = \eo \; .
\]
By definition $P_{\ey}$ factors over $\hP_{\ey}$. 
\end{proof}

The restriction $\Gamma (\eX , \Oh) \to \Oh_{\overline{\{ \ex \} } , \ey}$ induces a map $\Gamma (\eX , \Oh)^{\wedge} \to \hOh_{\overline{\{ \ex \} } , \ey}$. Hence the composition
\[
[ \; ] : \Gamma (\eX , \Oh) \xrightarrow{\text{\eqref{eq:81}}} C (\deX (\eo) , \eo) \xrightarrow{\res} C (\deX_c (\eo) , \eo)
\]
extends to a multiplicative map
\begin{equation}
\label{eq:82}
[\;] : \Gamma (\eX , \Oh)^{\wedge} \longrightarrow C (\deX_c (\eo) , \eo) \quad \text{where} \; [\hat{\alpha}] (\ex , \ey, \oP^{\times}) = \hP_{\ey} (\hat{\alpha}) \; .
\end{equation}
We have an isomorphism of $\eo$-modules
\begin{equation}
\label{eq:83}
C (\ceX_c (\eo) , \eo) = \varprojlim_{F_p} C (\deX_c (\eo) , \eo)
\end{equation}
and hence a multiplicative map
\begin{equation}
\label{eq:84}
\varprojlim_{(\;)^p} \Gamma (\eX , \Oh)^{\wedge} \longrightarrow C (\ceX_c (\eo) , \eo) \; .
\end{equation}
Using Proposition \ref{t44} we may view the elements of $\ceX_c (\eo)$ as triples $(\ex , \ey , F^{-i}_p \hP_{\ey})$ where $i \ge 0$ and $(\ex , \ey , P_{\ey}) \in \deX_c (\eo)$ and where by definition $F^{-i}_p \hP_{\ey} = F^{-j}_p \hP'_{\ey}$ if and only if $F^j_p \hP_{\ey} = F^i_p \hP'_{\ey}$. In the latter equation, $F_p \hP := \hP \verk (\;)^p = \hP^p$ in $C (\hOh_{\overline{\{ \ex \} } , \ey}, \eo)$. The $p$-power map $(\;)^p$ is surjective on $\Oh_{\overline{\{ \ex \} } , \ey}$ which is dense in $\hOh_{\overline{\{ \ex \} } , \ey}$. Hence $F_p$ is injective on $C (\hOh_{\overline{\{ \ex \} } , \ey} , \eo)$ and if $j \ge i$ the equation $F^j_p \hP_{\ey} = F^i_p \hP'_{\ey}$ is therefore equivalent to $F^{j-i}_p \hP_{\ey} = \hP'_{\ey}$. We may view $\cP_{\ey} = F^{-i}_p \hP_{\ey}$ as a multiplicative function:
\begin{equation}
\label{eq:85}
\cP_{\ey} : \varprojlim_{(\;)^p} \hOh_{\overline{\{ \ex \} } , \ey} \longrightarrow \eo \quad \text{by setting} \; \cP_{\ey} ((f_n)_{n \ge 0}) = \hP_{\ey} (f_i) \; .
\end{equation}
This is well defined since with notations as above, if $j \ge i$ 
\[
\hP_{\ey} (f_j) = (F^{j-i}_p \hP_{\ey}) (f_j) = \hP_{\ey} (f^{p^{j-i}}_j) = \hP_{\ey} (f_i) \; .
\]
Note that $\cP_{\ey}$ sends $1$ to $1$ and $0$ to $0$. By definition the maps $\cP_{\ey}$ in \eqref{eq:85} are the ones which factor as follows
\[
\cP_{\ey} : \varprojlim_{(\;)^p} \hOh_{\overline{\{ \ex \} } , \ey} \xrightarrow{\pr_i} \hOh_{\overline{\{ \ex \} } , \ey} \xrightarrow{\chi} \eo \; , 
\]
for some $i \ge 0$ and some continuous multiplicative map $\chi$ (namely $\hP_{\ey}$ if $\cP_{\ey} = F^{-i}_p \hP_{\ey}$). 

In this description of $\ceX_c (\eo)$ the map \eqref{eq:84} is given as follows:
\begin{equation}
\label{eq:86}
(f_n)_{n \ge 0} \longmapsto ((\ex , \ey , \cP_{\ey}) \mapsto \cP_{\ey} ((f_n)) \quad \text{with $\cP_{\ey}$ as in \eqref{eq:85}} \; .
\end{equation}
Here we have tacitly used the canonical map
\[
\varprojlim_{(\;)^p} \Gamma (\eX , \Oh)^{\wedge} \longrightarrow \varprojlim_{(\;)^p} \hOh_{\overline{\{ \ex \} } , \ey} \; .
\]
The following facts are due to Fontaine and very well known. For any $p$-adically complete ring $A$ the natural map
\begin{equation}
\label{eq:87}
\varprojlim_{(\;)^p} A \silo A^b := \varprojlim_{(\;)^p} A / p \; , \; (a_n) \longmapsto (a_n \mod p)
\end{equation}
is a multiplicative topological isomorphism, sending $1$ to $1$ and $0$ to $0$. The inverse map sends $(\oa_n)$ to $(a_n)$ where
\[
a_n = \lim_{i \to \infty} \ta^{p^i}_{n+i} \; .
\]
Here $\ta_{n+i} \in A$ is a lift of $\oa_{n+i} \in A / p$. On $A^b$ the multiplicative monoid structure is clearly part of the richer structure of a perfect $\F_p$-algebra. The automorphism $F_p$ on the monoid $\varprojlim_{(\,)^p} A$ defined by $F_p ((a_n)) = (a^p_n) = (a_{n-1})$, (the left shift) corresponds to the Frobenius $F_p = (\,)^p$ on the $\F_p$-algebra $A^b$. 

We apply the Fontaine isomorphism to $A = \Gamma (\eX , \Oh)^{\wedge}$ and $A = \hOh_{\overline{\{ \ex \} } , \ey}$. Then the multiplicative map \eqref{eq:84} induces a ring homomorphism from the monoid algebra of the perfect $\F_p$-algebra $\Gamma (\eX , \Oh)^{\wedge b}$ with multiplication to the ring of $\eo$-valued functions on $\ceX_c (\eo)$
\begin{equation}
\label{eq:88}
\Z \Gamma (\eX , \Oh)^{\wedge b} \longrightarrow C (\ceX_c (\eo) , \eo) \; .
\end{equation}
Consider the $p$-typical Witt ring of the perfect $\F_p$-algebra $\Gamma (\eX , \Oh)^{\wedge b}$
\[
A_{\inf} (\eX) := W_p (\Gamma (\eX , \Oh)^{\wedge b}) \; .
\]
The Teichm\"uller map induces a ring homomorphism $\Z \Gamma (\eX , \Oh)^{\wedge b} \to A_{\inf} (\eX)$ and we will now define natural $F^{\Z}_p$- and $G$-invariant subsystems of $\ceX_c (\eo)$ on which even the elements of the ring $A_{\inf} (\eX)$ may be viewed as $\eo$-valued functions. We recall the description of $W_p (R)$ for a perfect $\F_p$-algebra $R$ given in \cite{CD}. Consider the natural exact sequence
\[
0 \longrightarrow I \longrightarrow \Z R \xrightarrow{\pi} R \longrightarrow 0 \quad \text{where} \; \pi \big( \textstyle{\sum}_r n_r [r] \big) = \textstyle{\sum}_r n_r r \; .
\]
The ring automorphism $F_p$ on $\Z R$ is defined by setting $F_p ([r]) = [r^p]$. It satisfies $F_p (I) = I$ and hence it induces automorphisms $F_p$ of $\Z R / I^n$ for all $n \ge 0$ and of the $I$-adic completion $\varprojlim_n \Z R / I^n$ of $\Z R$. In \cite{CD} it is shown that the natural map $\Z R \to W_p (R)$ induces $F_p$-equivariant isomorphisms
\[
\Z R / I^n \silo W_p (R) / p^n W_p (R) \quad \text{for all} \; n \ge 1 \quad \text{and} \quad \varprojlim_n \Z R / I^n \silo W_p (R) \; .
\]
It is not difficult to see that as a $\Z$-module the ideal $I$ is generated by the elements of the form $[r+s] - [r] - [s]$ with $r,s \in R$. Returning to our situation, set
\[
I = \Ker (\Z \Gamma (\eX , \Oh)^{\wedge b} \longrightarrow \Gamma (\eX , \Oh)^{\wedge b})
\]
and
\[
I_{\ey} = \Ker (\Z \hOh^b_{\overline{\{ \ex \} } , \ey} \to \hOh^b_{\overline{\{ \ex \} } , \ey}) \; .
\]

\begin{defn}
\label{t92}
For any real number $0 < \alpha < 1$ define the $G$-invariant subset $\cY_{\alpha}$ of $\ceX_c (\eo)$ as follows, where $f (\cP_{\ey}) := \cP_{\ey} (f) \in \eo$ for $f \in \Z \hOh^b_{\overline{ \{ x \} } , \ey}$,
\begin{align*}
\cY_{\alpha} & = \{ (\ex , \ey , \cP_{\ey}) \in \ceX_c (\eo) \mid |f (\cP_{\ey})| \le \alpha \quad \text{for all} \; f \in I_{\ey} \} \\
& = \{ (\ex , \ey , \cP_{\ey}) \in \ceX_c (\eo) \mid |\cP_{\ey} (r+s) - \cP_{\ey} (r) - \cP_{\ey} (s)| \le \alpha \quad \text{for} \; r,s \in \hOh^b_{\overline{\{ \ex \} } , \ey} \}  \; .
\end{align*}
\end{defn}

By definition, we have $\cY_{\alpha_1} \subset \cY_{\alpha_2}$ if $\alpha_1 \le \alpha_2$. For $\alpha = 1 / p$ we are looking at multiplicative maps $\cP_{\ey}$ which $\mod p$ are also additive. Set $\cY = \cY_{1/p}$. 

\begin{remark} \label{t126}
We have canonical inclusions $\eX (\eo) \subset \cY \subset \ceX_c (\eo)$. Hence by Proposition \ref{t93} below, $\cY$ contains at least the $F^{\Z}_p$-orbits of the classical $\eo$-valued points of $\eX$. 
\end{remark}

Choose an element $\omega_{\alpha} \in \eo$ with $\alpha \le |\omega_{\alpha}| < 1$. Then $(\ex , \ey , \cP_{\ey}) \in \cY_{\alpha}$ gives a ring homomorphism
\[
\cP_{\ey} : \Z \hOh^b_{\overline{\{ \ex \} } , \ey} \longrightarrow \eo \quad \text{with} \; \cP_{\ey} (I_{\ey}) \subset \omega_{\alpha} \eo
\]
and hence an induced ring homomorphism
\begin{equation}
\label{eq:89}
W_p (\cP_{\ey}) : W_p (\hOh^b_{\overline{\{ \ex \} } , \ey}) = \varprojlim_n \Z \hOh^b_{\overline{\{ \ex \} } , \ey} / I^n_{\ey} \longrightarrow \varprojlim_n \eo / \omega^n_{\alpha} = \eo \; .
\end{equation}
Restricting functions on $\ceX_c (\eo)$ to $\cY_{\alpha}$, the ring homomorphism \eqref{eq:88} gives a ring homomorphism
\begin{equation}
\label{eq:90}
e : \Z \Gamma (\eX , \Oh)^{\wedge b} \longrightarrow C (\cY_{\alpha} , \eo) \; .
\end{equation}
It extends to $A_{\inf} (\eX)$ as the following map
\begin{equation}
\label{eq:91}
e : A_{\inf} (\eX) \longrightarrow C (\cY_{\alpha} , \eo) \; , \; a \longmapsto ((\ex , \ey , \cP_{\ey}) \mapsto W_p (\cP_{\ey}) (a)) \; .
\end{equation}
Here we have tacitly used the maps
\[
A_{\inf} (\eX) = W_p (\Gamma (\eX , \Oh)^{\wedge b}) \longrightarrow W_p (\hOh^b_{\overline{\{ \ex \} } , \ey}) \; .
\]
Recall that $\cY = \cY_{1/p}$.

\begin{prop}
\label{t93}
For $0 < \alpha < 1$ we have $F_p (\cY_{\alpha}) = \cY_{\alpha}$ and $\cY_{\alpha} \subset \cY$, and hence $\cY_{\alpha} = \cY$ for $\alpha \ge 1 / p$. 
\end{prop}

\begin{proof}
For $(\ex , \ey , \cP_{\ey}) \in \cY_{\alpha}$ and $f \in I_{\ey}$ we have
\[
f (F_p (\cP_{\ey})) = F_p (f) (\cP_{\ey}) \; .
\]
Since the isomorphism $F_p$ of $\Z \hOh^b_{\overline{\{ \ex \} } , \ey}$ satisfies $F_p (I_{\ey}) = I_{\ey}$ it follows that the condition
\[
|f (\cP_{\ey})| \le \alpha \quad \text{for all} \; f \in I_{\ey}
\]
is equivalent to the condition that
\[
|F_p (f) (\cP_{\ey})| \le \alpha \quad \text{for all} \; f \in I_{\ey} \; .
\]
This implies that $F_p (Y_{\alpha}) = Y_{\alpha}$. On the other hand, note that for $r,s \in \hOh^b_{\overline{\{ \ex \} } , \ey}$ and $\nu \ge 0$ we have
\begin{align*}
 & |F^{\nu}_p (\cP_{\ey}) (r+s) - F^{\nu}_p (\cP_{\ey}) (r) - F^{\nu}_p (\cP_{\ey}) (s)| \\
 = & |\cP_{\ey} (r+s)^{p^{\nu}} - \cP_{\ey} (r)^{p^{\nu}} - \cP_{\ey} (s)^{p^{\nu}}| \\
 = & |(\cP_{\ey} (r+s) - \cP_{\ey} (r) - \cP_{\ey} (s))^{p^{\nu}} + pc| \quad \text{for some} \; c \in \eo \\
\le & \max (|\cP_{\ey} (r+s) - \cP_{\ey} (r) - \cP_{\ey} (s)|^{p^{\nu}} , |p|) \\
\le & \max (\alpha^{p^{\nu}} , 1 / p) \; .
\end{align*}
Thus we find
\[
F^{\nu}_p (\cY_{\alpha}) \subset \cY_{\max (\alpha^{p^{\nu}} , 1 / p)} \; .
\]
Combining this with $F_p (\cY_{\alpha}) = \cY_{\alpha}$ which we saw above, we get
\[
\cY_{\alpha} \subset \cY_{\max (\alpha^{p^{\nu}} , 1 / p)} \quad \text{for all} \; \nu \ge 0 \; .
\]
Since $\alpha^{p^{\nu}} < 1 / p$ if $\nu$ is large enough, we get $\cY_{\alpha} \subset \cY_{1 / p}$. For $\alpha \ge 1 / p$ we have $\cY_{\alpha} \supset \cY_{1 / p}$ by definition and therefore $\cY_{\alpha} = \cY_{1 / p}$. 

Here is a different way to see that $\cY_{\alpha} \subset \cY_{1 /p}$. For $(\ex , \ey , \cP_{\ey}) \in \cY_{\alpha}$ consider the commutative diagram
\begin{equation} \label{eq:93}
\xymatrix{
\cP_{\ey} : \hOh^b_{\overline{\{ \ex \} } , \ey} \ar[r]^{[\;]} \ar[dr]^{\sim} & W_p (\hOh^b_{\overline{\{ \ex \} } , \ey}) \ar@{->>}[d] \ar[r]^-{W_p (\cP_{\ey})} & \eo \ar@{->>}[d] \\
 & W_p (\hOh^b_{\overline{\{ \ex \} } , \ey}) / p \ar[r] & \eo / p \; .
}
\end{equation}
All maps in the diagram are ring homomorphisms except for the Teichm\"uller map \cite{}. It follows that the composition
\begin{equation}
\label{eq:94}
\hOh^b_{\overline{\{ \ex \} } , \ey} \xrightarrow{\cP_{\ey}} \eo \longrightarrow \eo / p
\end{equation}
is additive i.e. that $\cP_{\ey} \in \cY_{1/p}$. 
\end{proof}

In the case $\eX_0 = \spec \Z_p$ we will calculate $\cY$ and it turns out to be very small. The ultimate reason is this: in the definition of $\ceX_c (\eo)$ we considered only those continuous multiplicative maps $\cP_{\ey} : \varprojlim_{(\,)^p} \hOh_{\overline{\{ \ex \} } , \ey} \to \eo$ which factored as $\cP_{\ey} = \chi \verk \pr_i$ for some $i \ge 0$ and some continuous multiplicative map $\chi : \hOh_{\overline{\{ \ex \} } , \ey} \to \eo$. Let us call such maps $\cP_{\ey}$ ``$\varprojlim$-locally constant''. However there are many more continuous maps $\cP_{\ey}$ than only the $\varprojlim$-locally constant ones. If we include them, the quotient $\hcY\!_0 = \hcY / G$ of the corresponding space $\hcY \supset \cY$ for $\eX_0 = \spec \Z_p$ turns out to be in natural bijection with the points of the Fontaine-Fargues curve. Before introducing these ``completions'' $\hceX_c (\eo)$ of $\ceX_c (\eo)$ and $\hcY\!_{\alpha}$ of $Y_{\alpha}$ we note the following simple facts.

For a ring $C$ consider $T_p \mu (C) = \varprojlim_n \mu_{p^n} (C)$ with transition maps $(\,)^p$.

\begin{prop} \label{t125}
Let $A$ be an integral domain in which every element is a $p$-th power. Let $S \subset A$ be a multiplicative subset with $a \in S \iff a^p \in S$ for $a \in A$, e.g. $S = A \setminus \ep$ for a prime ideal $\ep$. Then we have:\\
a) The image of the natural map $\varprojlim_{(\,)^p} A \to \varprojlim_{(\,)^p} \hA$ is dense.\\
b) Assume that $\mu_{p^{\infty}} (A) = \mu_{p^{\infty}} (K)$ where $K = \Quot A$. Then every element $x$ in $\varprojlim_{(\,)^p} S^{-1} A$ has the form $x = yz^{-1}$ for some $x \in \varprojlim_{(\,)^p} A$ and $z \in \varprojlim_{(\,)^p} S$. \\
c) Assume that the map $T_p \mu (A) \to T_p \mu (\kappa)$ is surjective where $\kappa = \Quot (A / \ep)$. Then the map
\[
\varprojlim_{(\,)^p} A \longrightarrow \varprojlim_{(\,)^p} A / \ep 
\]
is also surjective.
\end{prop}


\begin{proof}
a) Let $\hpr_i : \varprojlim_{(\,)^p} \hA \to \hA$ be the $i$-th projection. Any non-empty open set of $\varprojlim_{(\,)^p} \hA$ contains a set of the form $\hpr^{-1}_i (U)$ for some $i$ where $\emptyset \neq U \subset \hA$ is open. Since $A$ is dense in $\hA$ there is some $a \in U \cap A$. Since $(\,)^p$ is surjective on $A$, the $i$-th projection $\pr_i : \varprojlim_{(\,)^p} A \to A$ is surjective and hence $\emptyset \neq \pr^{-1}_i (a) \subset \hpr^{-1}_i (U)$ which implies a). \\
b) We may assume that $x \neq 0$. Writing $x = (a_n s^{-1}_n)$ with $a_n \in A \setminus 0$ and $s_n \in S$, choose $y' = (a'_n) \in \varprojlim_{(\,)^p} A $ and $z = (s'_n) \in \varprojlim_{(\,)^p} S$ with $a'_0 = a_0$ and $s'_0 = s_0$. Since $x$ and $y' z^{-1}$ have the same zeroeth component $a_0 s^{-1}_0 \neq 0$ there is an element $\zeta = (\zeta_n) \in T_p \mu (K) = T_p \mu (A)$ with $x = \zeta y' z^{-1}$. For $y = \zeta y'$ we get $x = y z^{-1}$ as desired.\\
c) Let $\alpha = (\alpha_n)$ be a non-zero element of $\varprojlim_{(\,)^p} A / \ep$. Choose an element $(a_n)$ of $\varprojlim_{(\,)^p} A$ with $a_0 \mod \ep = \alpha_0$. Then we have $(a_n \mod \ep) = \ozeta \alpha$ in $\varprojlim_{(\,)^p} \kappa$ for some $\ozeta \in T_p \mu (\kappa)$. Choose a lift $\zeta \in T_p \mu (A)$ of $\ozeta$. Then the element $\zeta (a_n) \in \varprojlim_{(\,)^p} A$ maps to $\alpha$. 
\end{proof}

We now define ``completed'' versions of our spaces $\ceX_c (\eo)$ and $\cY$. Instead of $\overset{\wedge}{\scriptstyle\vee}$ we write $\diamond$. 

\begin{defn} \label{t105}
Let $\hceX_c (\eo)$ be the set of triples $(\ex , \ey , \oP^{\times})$ where $\ex \in \eX , \ey \in \overline{\{ \ex \}}$ and $\oP^{\times} : \varprojlim_{(\,)^p} \kappa (\ex)^{\times} \to \C^{\times}$ is a homomorphism whose extension by zero $\oP$ fits into a commutative diagram of multiplicative maps where $\hP_{\ey}$ is continuous, and $\lambda$ is the natural map
\begin{equation}
\label{eq:97}
\xymatrix{
\varprojlim_{(\,)^p} \kappa (\ey) \ar[rr]^{\tP_{\ey}} & & k \\
\varprojlim_{(\,)^p} \Oh_{\overline{\{ \ex \} } , \ey} \ar[u]^{\lambda} \ar@{_{(}->}[d] \ar[r] & \varprojlim_{(\,)^p} \hOh_{\overline{\{ \ex \} } , \ey} \ar[r]^-{\hP_{\ey}} & \eo \ar@{->>}[u] \ar@{_{(}->}[d] \\
\varprojlim_{(\,)^p} \kappa (\ex) \ar[rr]^{\oP} && \C \; .
}
\end{equation}
\end{defn}

Note that since $\oP (0) = 0$ and $\oP (1) = 1$, we have $\hP_{\ey} (0) = 0$ and $\hP_{\ey} (1) = 1$ hence $1 \neq 0$ in $\varprojlim_{(\,)^p} \hOh_{\overline{\{ \ex \} } , \ey}$ hence $1 \neq 0$ in $\hOh_{\overline{\{ \ex \} } , \ey}$ and therefore $\car \kappa (\ey) = p$. It follows that the map $\Oh_{\overline{\{ \ex \} } , \ey} \to \kappa (\ey)$ factors naturally over the induced map $\hOh_{\overline{\{ \ex \} } , \ey} \twoheadrightarrow \kappa (\ey)$ (which is continuous for the discrete topology on $\kappa (\ey)$). Hence $\lambda$ factors over the induced map
\[
\hlambda : \varprojlim_{(\,)^p} \hOh_{\overline{\{ \ex \} } , \ey} \longrightarrow \varprojlim_{(\,)^p} \kappa (\ey) \; .
\]
Our notation is not consistent since previously $\oP$ etc. denoted maps on $\kappa (\ex)$ etc. Note however that an old $\oP$ gives a new $\oP$ via $\oP \verk \pr_0$ etc. There are obvious commuting operations by $G$ and $F^{\Z}_p$ on $\hceX_c (\eo)$ and canonical $F_p , G$-equivariant inclusions
\begin{equation}
\label{eq:98}
\deX_c (\eo) \hookrightarrow \ceX_c (\eo) \hookrightarrow \hceX_c (\eo) \; .
\end{equation}

\begin{supple} \label{t106}
The diagram \eqref{eq:97} is uniquely determined by $(\ex , \ey , \oP^{\times})$. Thus, given $(\ex , \ey , \oP^{\times})$ the maps $\hP_{\ey}$ and $\tP_{\ey}$ are unique. The top square in \eqref{eq:97} factors as follows:
\begin{equation}
\label{eq:109}
\xymatrix{
\varprojlim_{(\,)^p} \kappa (\ey) \ar@{=}[r] & \varprojlim_{(\,)^p} \kappa (\ey) \ar[r]^-{\tP_{\ey}} & k \\
\varprojlim_{(\,)^p} \Oh_{\overline{\{ \ex \} } , \ey} \ar[u]^{\lambda} \ar[r] & \varprojlim_{(\,)^p} \hOh_{\overline{\{ \ex \} } , \ey} \ar[u]^{\hlambda} \ar[r]^-{\hP_{\ey}} & \eo \; . \ar@{->>}[u]
}
\end{equation}
\end{supple}

\begin{proof}
Clearly $\oP^{\times}$ determines the composition
\[
P_{\ey} : \varprojlim_{(\,)^p} \Oh_{\overline{\{ \ex \} } , \ey} \xrightarrow{i} \varprojlim_{(\,)^p} \hOh_{\overline{\{ \ex \} } , \ey} \xrightarrow{\hP_{\ey}} \eo
\]
uniquely. The image of $i$ is dense by Proposition \ref{t125} a) and hence $\hP_{\ey}$, being continuous is uniquely determined by $\oP^{\times}$ as well. 
Consider the diagram
\begin{equation}
\label{eq:99}
\xymatrix{
\varprojlim_{(\,)^p} \kappa (\ey) \ar[r]^-{\tP_{\ey}} & k \\
\varprojlim_{(\,)^p} \Oh_{\overline{\{ \ex \} } , \ey} \ar[u]^{\lambda} \ar[r]^-{P_{\ey}} & \eo  \; . \ar@{->>}[u]
}
\end{equation}
Using Proposition \ref{t125} c), it follows that in \eqref{eq:99} the map $\lambda$ is surjective. It follows that $\tP_{\ey}$ is uniquely determined by $P_{\ey}$ and hence by $\oP^{\times}$. Actually, in our situation, we have $\car \kappa (\ey) = p$ as mentioned above, so that $\pr_0 : \varprojlim_{(\,)^p} \kappa (\ey) \silo \kappa (\ey)$. 

The square on the right in diagram \eqref{eq:109} commutes: If $\kappa (\ey)$ and $k$ carry the discrete topology, the composed maps
\[
\varprojlim_{(\,)^p} \hOh_{\overline{\{ \ex \} } , \ey} \xrightarrow{\hP_{\ey}} \eo \longrightarrow k \quad \text{and} \quad \varprojlim_{(\,)^p} \hOh_{\overline{\{ \ex \} } , \ey} \xrightarrow{\hlambda} \varprojlim_{(\,)^p} \kappa (\ey) \xrightarrow{\tP_{\ey}} k
\]
are continuous and agree on the dense subspace $\varprojlim_{(\,)^p} \Oh_{\overline{\{ \ex \} } , \ey}$.
\end{proof}

Since $\tP_{\ey} (1) = 1$ the multiplicative map $\tP_{\ey}$ is non-zero on
\[
\big( \varprojlim_{(\,)^p} \kappa (\ey)\big) \setminus 0 = \varprojlim_{(\,)^p} \kappa (\ey)^{\times} \; .
\]
Hence we find:
\begin{equation}
\label{eq:111}
P^{-1}_{\ey} (\emm) = \varprojlim_{(\,)^p} \emm_{\overline{ \{ \ex \} } , \ey} \; . 
\end{equation}
This will be needed later. 

Here is a simpler description of $\hceX (\eo)$ using $\hP_{\ey}$ instead of $\oP^{\times}$.

\begin{prop} \label{t117}
Mapping $(\ex , \ey , \oP^{\times})$ to $(\ex , \ey , \hP_{\ey})$ we get a bijection of $\hceX (\eo)$ with the set of triples $(\ex , \ey , \hP_{\ey})$ where $\ey \in \overline{ \{ \ex \} }$ and $\hP_{\ey} : \varprojlim_{(\,)^p} \hOh_{\overline{\{ \ex \} } , \ey} \to \eo$ is a continuous multiplicative map sending $1$ to $1$ and $0$ to $0$ and such that the following conditions hold for 
all $f \in \varprojlim_{(\,)^p} \Oh_{\overline{\{ \ex \} } , \ey}$ \\
1) $\hP_{\ey} (f) \neq 0$ if $f \neq 0$\\
2) the value of $\hP_{\ey} (f) \mod \emm$ in $k$ depends only on $f (\ey) = (f_n (\ey))$ in $\varprojlim_{(\,)^p} \kappa (\ey)$.\\
\end{prop}

\begin{proof}
The inverse map $(\ex , \ey , \hP_{\ey}) \mapsto (\ex , \ey , \oP^{\times}) \in \hceX (\eo)$ is obtained as follows. Since $\lambda$ is surjective, condition 2) means that with $\hP_{\ey}$ we get a uniquely determined diagram \eqref{eq:99}. By Proposition \ref{t125} every element of $\varprojlim_{(\,)^p} \kappa (\ex)$ is of the form $f s^{-1}$ with $f ,s \in \varprojlim_{(\,)^p} \Oh_{\overline{\{ \ex \} } , \ey}$ and $s \neq 0$.
Therefore, by condition 1) the composition
\[
\varprojlim_{(\,)^p} \Oh_{\overline{\{ \ex \} } , \ey} \hookrightarrow \varprojlim_{(\,)^p} \hOh_{\overline{\{ \ex \} } , \ey} \xrightarrow{\hP_{\ey}} \eo \hookrightarrow \C
\]
factors uniquely over a map $\oP : \multmap \kappa (\ex) \to \C$. Hence we have obtained a diagram \eqref{eq:97} and therefore a well defined point $(\ex , \ey , \oP^{\times})$ of $\hceX_c (\eo)$. 
\end{proof}

Consider the natural $F_p$- and $G$-equivariant multiplicative map
\[
e : \Gamma (\eX , \Oh)^{\wedge b} \cong \varprojlim_{(\,)^p} \Gamma (\eX , \Oh)^{\wedge} \longrightarrow C (\hceX_c (\eo) , \eo) \; , \; f \longmapsto ((\ex , \ey , \oP^{\times}) \mapsto \hP_{\ey} (f)) \; .
\]
Here we write $f$ also for the image of $f$ under the canonical map
\[
\varprojlim_{(\,)^p} \Gamma (\eX , \Oh)^{\wedge} \longrightarrow \varprojlim_{(\,)^p} \hOh_{\overline{\{ \ex \} } , \ey} \; .
\]
The resulting ring homomorphism fits into a commutative diagram of $F_p$- and $G$-equivariant maps
\begin{equation}
\label{eq:113}
\xymatrix{
\Z \Gamma (\eX , \Oh)^{\wedge b} \ar[dr]_{\eqref{eq:93}} \ar[rr]^{e} && C ( \hceX_c (\eo) , \eo) \ar[dl]^{\res} \\
 & C (\ceX_c (\eo) , \eo) \; .
}
\end{equation}

\begin{defn} \label{t1512} 
For any real number $0 < \alpha < 1$ we define a $G$-invariant subspace $\hcY\!_{\alpha}$ of $\hceX_c (\eo)$ just like we defined $\cY_{\alpha} \subset \ceX_c (\eo)$ in Definition \ref{t92}
\begin{align}
\hcY\!_{\alpha} & = \{ (\ex , \ey , \hP_{\ey}) \in \hceX_c (\eo) \mid | f(\hP_{\ey})| \le \alpha \quad \text{for all} \; f \in I_{\ey} \} \label{eq:114} \\
& = \{ (\ex , \ey , \hP_{\ey}) \in \hceX_c (\eo) \mid |\hP_{\ey} (r+s) - \hP_{\ey} (r) - \hP_{\ey} (s)| \le \alpha \quad \text{for} \; r,s \in \hOh^b_{\overline{\{ \ex \} } , \ey} \} \; . \nonumber
\end{align}
\end{defn}

\begin{rem}
We have defined $\hcY\!_{\alpha}$ in terms of $\hOh^b_{\overline{ \{ \ex \} } , \ey}$. Instead, we could work with Fontaine's formula for addition on $\varprojlim_{(\,)^p} \hOh_{ \overline{\{ \ex \} }, \ey}$. In those terms, $\hcY\!_{\alpha}$ consist of triples $(\ex , \ey , \hP_{\ey}) \in \hceX_c (\eo)$ such that
\[
\big| \hP_{\ey} \big( \big( \lim_{\nu \to \infty} (a_{\nu + n} + b_{\nu + n})^{p^{\nu}} \big)_n \big) - \hP_{\ey} ((a_n)) - \hP_{\ey} ((b_n)) \big| \le \alpha
\]
for all $(a_n) , (b_n) \in \varprojlim_{(\,)^p} \hOh_{ \overline{ \{ \ex \} } , \ey}$. I do not know how to transport such conditions to the points of $\ceX (\C)$, where $\eX$ is a scheme of finite type over $\spec \Z$ and $\C$ is the complex number field.
\end{rem}

For $0 < \alpha_1 \le \alpha_2 < 1$ we have $\hcY\!_{\alpha_1} \subset \hcY\!_{\alpha_2}$. The map $e$ restricted to functions on $\hcY\!_{\alpha}$ extends to an $F_p$- and $G$-equivariant ring homomorphism
\begin{equation}
\label{eq:115}
e : A_{\inf} (\eX) \longrightarrow C (\hcY\!_{\alpha} , \eo) \; .
\end{equation}
It is the analogue of \eqref{eq:91} and defined as before, replacing $\cP_{\ey}$ by $\hP_{\ey}$. By construction we have a commutative diagram
\begin{equation}
\label{eq:116}
\xymatrix{
A_{\inf} (\eX) \ar[rr]^e \ar[dr]_{\eqref{eq:91}} && C (\hcY\!_{\alpha} , \eo) \ar[dl]^{\res} \\
 & C (\cY_{\alpha} , \eo) \; .
}
\end{equation}

The following analogue of Proposition \ref{t93} holds with the same proofs, just replacing $\cP_{\ey} , \cY_{\alpha}$ by $\hP_y , \hcY\!_{\alpha}$. We set $\hcY := \hcY\!_{1/p}$.

\begin{prop} \label{t119}
For $0 < \alpha < 1$ we have $F_p (\hcY\!_{\alpha}) = \hcY\!_{\alpha}$ and $\hcY\!_{\alpha} \subset \hcY$ and hence $\hcY\!_{\alpha} = \hcY$ for $\alpha \ge 1 / p$. 
\end{prop}

The spaces $\ceX_c (\eo) \subset \hceX_c (\eo)$ are hard to describe because we do not have a good understanding of the multiplicative continuous maps $\multmap \hOh_{\overline{\{ \ex \} } , \ey} \to \eo$. The subspaces $\cY_{\alpha} \subset \hcY\!_{\alpha}$ are more accessible because of the following fundamental observations which we will apply to $A = \hOh_{\overline{\{ \ex \} } , \ey}$.

\begin{prop} \label{t1110}
Let $A$ be a $p$-adically complete ring. Then there is a natural bijection between (continuous) multiplicative maps $\chi : \multmap A \to \eo$ with $\chi (1) = 1 , \chi (0) = 0$ for which the composition 
\begin{equation}
\label{eq:117}
\ochi^b : A^b \cong \multmap A \xrightarrow{\chi} \eo \longrightarrow \eo / p
\end{equation}
is additive and (continuous) ring homomorphisms $\chi^b : A^b \to \eo^b$. Explicitely, $\chi$ corresponds to the composition
\begin{equation}
 \label{eq:118}
 \chi^b : A^b \cong \multmap A \xrightarrow{\hat{\ochi}} \eo^b \, , 
\end{equation}
where $\hochi ((a_{\nu})) = (\ochi ((a_{\nu + n})_{\nu}))_n$ and $\ochi = \chi \mod p$. On the other hand, given $\chi^b$, the map $\chi$ is the composition 
\[
\chi : \multmap A \cong A^b \xrightarrow{\chi^b} \eo^b \xrightarrow{\sharp} \eo \; ,
\]
where $\sharp ((x_n \mod p)_n) = \lim_{n \to \infty} x^{p^n}_n$. For such $\chi$'s the following diagram commutes
\begin{equation} \label{eq:119n}
\xymatrix{
A^b \ar[r]^{[\,]} \ar@{=}[d]^{\wr} & W_p (A^b) \ar[r]^{W_p (\chi^b)} & W_p (\eo^b) \ar[d]^{\theta} \\
\multmap A \ar[rr]^{\chi} && \eo
}
\end{equation}
Here $\theta$ is Fontaine's ring homomorphism which sends $\sum^{\infty}_{\nu = 0} [x_{\nu}] p^{\nu}$ to $\sum^{\infty}_{\nu = 0} x^{\sharp}_{\nu} p^{\nu}$. Moreover, there is a unique ring homomorphism $W_p (\chi)$ such that the map $\chi$ factors as follows
\begin{equation}
\label{eq:120n}
\chi : \multmap A \cong A^b \xrightarrow{[\,]} W_p (A^b) \xrightarrow{W_p (\chi)} \eo  \; .
\end{equation}
We have
\begin{equation}
\label{eq:121n}
W_p (\chi) = \theta \verk W_p (\chi^b) \; .
\end{equation}
Alternatively, let $I = \Ker (\Z A^b \xrightarrow{\pi} A^b)$. Then $\chi (I) \subset p \eo$ for the natural extension of $\chi$ to a ring homomorphism $\chi : \Z A^b \to \eo$. In these terms $W_p (\chi)$ is the induced map
\begin{equation}
\label{eq:122n}
W_p (\chi) : W_p (A^b) = \varprojlim_n \Z A^b / I^n \longrightarrow \varprojlim \eo / p^n = \eo \; .
\end{equation}
\end{prop}

\begin{rem}
In a special case the map $W_p (\chi)$ has been introduced before, in \eqref{eq:89}. 
\end{rem}

\begin{proof}
We write $\Hom$ for sets of (continuous) multiplicative maps sending $1$ to $1$ and $0$ to $0$. Since $A^b$ is perfect we have canonical bijections
\begin{equation}
\label{eq:119}
\Hom (A^b , \eo / p) = \Hom (\varinjlim_{(\,)^p} A^b , \eo / p) = \Hom (A^b , \multmap \eo / p) = \Hom (A^b , \eo^b) 
\end{equation}
and similarly
\begin{equation}
\label{eq:120}
\Hom (\multmap A , \eo) = \Hom (A^b , \eo) = \Hom (A^b , \multmap \eo) = \Hom (A^b , \eo^b) \; .
\end{equation}
Note that the bijections \eqref{eq:119} are also valid if $\Hom$ denotes (continuous) {\it ring} homomorphisms. 

Consider the following diagram:
\begin{equation}
\label{eq:121}
\xymatrix{
\Hom (\multmap A , \eo) \ar@{=}[r]^-{\eqref{eq:120}} \ar@{=}[d]^{\mod p} & \Hom (A^b , \eo^b) \ar@{}[r]|{\supset} \ar@{=}[d]_{\eqref{eq:119}} & \Hom_{\ring} (A^b , \eo^b) \ar@{=}[d]_{\eqref{eq:119}} \\
\Hom (\multmap A , \eo / p) \ar@{=}[r]_-{\eqref{eq:87}} & \Hom (A^b , \eo / p) \ar@{}[r]|{\supset} &\Hom_{\ring} (A^b , \eo / p) \; .
}
\end{equation}
Defining $\ochi = \chi \mod p$, and $\chi^b , \ochi^b$ by \eqref{eq:118}, \eqref{eq:117}, the maps in the left square are
\[
\xymatrix{
\chi \ar@{|->}[r]\ar@{|->}[d] & \chi^b \ar@{|->}[d] \\
\ochi \ar@{|->}[r] & \ochi^b
}
\]
Given $\chi \in \Hom (\varprojlim_{(\,)^p} A, \eo)$, the map $\chi^b : A^b \to \eo^b$ is mapped by \eqref{eq:119} to the composition $A^b \xrightarrow{\chi^b} \eo^b \xrightarrow{\pr_0} \eo / p$ and this is equal to $\ochi^b : A^b \cong \multmap A \xrightarrow{\ochi} \eo / p$. Hence diagram \eqref{eq:121} commutes. It also shows that for $\chi$, the map $\ochi^b$ is additive i.e. a ring homomorphism if and only if $\chi^b$ is a ring homomorphism. As for \eqref{eq:121n}, let $(a_{\nu}) \in \multmap A$, and calculate:
\begin{align*}
\theta (W_p (\chi^b)) [(a_{\nu} \mod p)] & = \theta ([\chi^b (a_{\nu} \mod p)]) \\
& = \theta ([\hochi ((a_{\nu}))]) \quad \text{by definition of} \; \chi^b \\
& = \hochi ((a_{\nu}))^{\sharp} \quad \text{by definition of} \; \theta \\
& = \lim_{n\to \infty} \chi ((a_{\nu+n})_{\nu})^{p^n} \quad \text{by definition of} \; \hochi \; \text{and} \; \sharp \\
& = \chi ((a_{\nu})) \quad \text{since} \; \chi \; \text{is multiplicative and} \; a^{p^n}_{\nu + n} = a_{\nu} \; .
\end{align*}
The composition
\[
\Z A^b \longrightarrow W_p (A^b) \longrightarrow W_p (A^b) / p = A^b
\]
is the map $\pi$ and hence $\pi (I) \subset p W_p (A^b)$. The induced map
\[
\Z A^b / I^n \silo W_p (A^b) / p^n
\]
is an isomorphism for all $n \ge 1$ by \cite{CD}. Since $\chi : A^b \to \eo$ is additive $\mod p$, we have $\chi (I) \subset p \eo$ and hence we get the induced map $W_p (\chi)$ in \eqref{eq:122n}. It satisfies the factorization \eqref{eq:120n} by definition. A ring homomorphism $\alpha : W_p (A^b) \to \eo$ with $\chi = \alpha \verk [\,]$ is uniquely determined because $\alpha$ is the projective limit of the maps
\[
\Z A^b / I^n = W_p (A^b) / p^n \xrightarrow{\alpha \mod p^n} \eo / p^n
\]
which are determined by their values on Teichm\"uller representatives. Hence $W_p (\chi)$ is uniquely determined in the factorization \eqref{eq:120n}. Moreover formula \eqref{eq:121n} now follows from the commutative diagram \eqref{eq:119n}. 
\end{proof}

\begin{rem}
If $A$ is a $p$-adically complete integral domain with $\mu_{p^{\infty}} (A) = \mu_{p^{\infty}} (K)$ where $K = \Quot A$, any continuous multiplicative map
\[
\ochi : \multmap A \longrightarrow \eo / p
\]
factors
\[
\ochi : \multmap A \xrightarrow{\pr_{\nu}} A \xrightarrow{\ochi_{\nu}} \eo / p
\]
for some $\nu$. Namely, the compact subgroup $T_p \mu (A)$ of the multiplicative monoid $\multmap A$ is sent to a compact, hence finite subgroup of the discrete multiplicative monoid $\eo / p$. If $p^{\nu}$ is its order the map $\ochi$ factors over $\pr_{\nu}$. However, the map $\chi : \multmap A \to \eo$ corresponding to $\ochi$ does not factor over $\pr_{\nu}$ for any $\nu$ in general if $\car \C = 0$.
\end{rem}

We also need the following simple facts.

\begin{prop}
\label{t1213}
For a local ring $A$ with maximal ideal $\emm$ containing $p$ consider the local ring $\oA = A / pA$ with maximal ideal $\oemm = \emm / pA$ and residue field $\oA / \oemm = A / \emm$. \\
1) The $p$-adic completion $\hA$ of $A$ is a local ring with maximal ideal $\hemm := \varprojlim_n \emm / p^n A$. The natural maps $\oA \silo \ohA , \oemm \silo \overline{\hemm}$ and $A / \emm \silo \hA / \hemm$ are isomorphisms.\\
2) The tilt $A^b = \multmap \oA$ is a local ring with maximal ideal $\emm^b := \multmap \oemm$, and $(A^b , \emm^b) = (\hA^b , \hemm^b)$. If the $p$-power map is surjective on $\oA$, then the projection onto the zeroeth component, $\pr_0 : A^b \to \oA$ induces an isomorphism
\[
\pr_0 : A^b / \emm^b \silo \oA / \oemm = A / \emm \; .
\]
\end{prop}

\begin{proof}
1) The sequence $0 \to \emm / p^n A \to A / p^n A \to A / \emm \to 0$ is exact and $(\emm / p^n A)_n$ is Mittag-Leffler. Hence we have $A / \emm = \hA / \hemm$. If $a = (a_n \mod p^n A) \in \hA$ is not in $\hemm$ then $a_n \notin \emm$ for some and hence all $n$. Note here that all $a_n$'s are congruent $\mod pA \subset \emm$. Hence $a_n \in A^{\times}$ for all $n$ and therefore $a \in \hA^{\times}$. The rest is equally easy.\\
2) It is clear that $\emm^b$ is an ideal in $A^b$. If $x = (x_n) \in A^b \setminus \emm^b$ then $x_n \notin \oemm$ for some and hence all $n$. Note here that for $y \in \oA$ we have $y \in \oemm$ if and only if $y^p \in \oemm$ since $\oemm$ is maximal and hence prime in $\oA$. It follows that $x_n \in \oA^{\times}$ for all $n$ and hence $x \in (A^b)^{\times}$. The isomorphism $A^b / \emm^b \silo \oA / \oemm$ follows by a similar argument. The rest is clear.
\end{proof}

\begin{example}
For each local ring $\Oh_{\overline{\{ \ex \} } , \ey}$ as in Definition \ref{t91} the completion $\hOh_{\overline{\{ \ex \} } , \ey}$ and its tilt $\Oh^b_{\overline{\{ \ex \} } , \ey} = \hOh^b_{\overline{\{ \ex \} } , \ey}$ are both local rings with algebraically closed residue field $\kappa (\ey)$ of characteristic $p$. 
\end{example}

By definition, $\hcY = \hcY\!_{1/p}$ consists of those points $(\ex , \ey , \hP_{\ey}) \in \hceX_c (\eo)$ for which $\hP_{\ey} : \hOh^b_{\overline{\{ \ex \} } , \ey} \to \eo$ is additive $\mod p$. Using the preceeding propositions, we get the following descriptions of $\hcY$:

\begin{prop}
\label{t1214}
With notations as in Definitions \ref{t105} and \ref{t1512} we have
\[
\hcY = \{ (\ex , \ey , \hP_{\ey}) \in \hceX_c (\eo) \mid \hP^b_{\ey} : \Oh^b_{\overline{\{ \ex \} } , \ey} \longrightarrow \eo^b \; \text{is a ring homomorphism} \, \}  \; .
\]
We may identify $\hcY$ with the set of triples $(\ex , \ey , \hP^b_{\ey})$ where $\ey \in \overline{\{ \ex \} }$ with $\car \kappa (\ey) = p$ and where $\hP^b_{\ey} : \hOh^b_{\overline{\{ \ex \} } , \ey} \to \eo^b$ is a continuous local homomorphism of local rings such that $\hP^b_{\ey} (f) \neq 0$ for all $0 \neq f \in \multmap \Oh_{\overline{\{ \ex \} } , \ey} \subset \multmap \hOh_{\overline{\{ \ex \} } , \ey} \cong \hOh^b_{\overline{\{ \ex \} } , \ey}$.
\end{prop}

\begin{proof}
The first assertion follows from Proposition \ref{t1110}. As for the second note that for $(\ex , \ey , \hP_{\ey}) \in \hcY$ the induced ring homomorphism $\hP^b_{\ey}$ is continuous by definition. Using the above example, locality of $\hP^b_{\ey}$ follows from diagrams \eqref{eq:97} and \eqref{eq:109} since $\pr_0 : \multmap \kappa (\ey) \silo \kappa (\ey)$ is an isomorphism. The formula after \eqref{eq:118} implies that $\hP_{\ey}$ is the composition
\begin{equation}
\label{eq:201n}
\hP_{\ey} : \multmap \hOh_{ \overline{ \{ \ex \} } , \ey} \cong \hOh^b_{ \overline{ \{ \ex \} } , \ey} \xrightarrow{\hP^b_{\ey}} \eo^b \xrightarrow{\sharp} \eo \; .
\end{equation}
By Proposition \ref{t117} we know that $\hP_{\ey} (f) \neq 0$ for $0 \neq f \in \multmap \hOh_{ \overline{ \{ \ex \} } , \ey}$. Hence the factorization \eqref{eq:201n} implies that $\hP^b_{\ey} (f) \neq 0$. Conversely, let $\hP^b_{\ey}$ be a continuous local homomorphism which is non-zero on $(\multmap \hOh_{ \overline{ \{ \ex \} } , \ey}) \setminus 0$. Let $\hP_{\ey}$ be the continuous multiplicative and $\mod p$ also additive map \eqref{eq:201n} with $1 \mapsto 1$ and $0 \mapsto 0$, which corresponds to $\hP^b_{\ey}$ via Proposition \ref{t1110}. Since $\sharp^{-1} (0) = 0$, condition 1) in Proposition \ref{t117} is satisfied for $\hP_{\ey}$. The factorization \eqref{eq:201n} fits into a diagram with evident vertical maps:
\begin{equation}
\label{eq:202n}
\xymatrix{
\hP_{\ey} : \multmap \hOh_{ \overline{ \{ \ex \} } , \ey} \ar@{->>}[d] \ar@{=}[r]^-{\sim} & \hOh^b_{ \overline{ \{ \ex \} } , \ey} \ar@{->>}[d] \ar[r]^{\hP^b_{\ey}} & \eo^b \ar@{->>}[d] \ar[r]^{\sharp} & \eo \ar@{->>}[d] \\
{}\quad \multmap \kappa (\ey) \ar@{=}[r]^-{\sim} & \kappa (\ey) \ar[r] & k \ar@{=}[r] & k
}
\end{equation}
Using e.g. Proposition \ref{t1213} it follows that the left and right squares are commutative. The commutative middle square exists because $\hP^b_{\ey}$ is a local homomorphism. Hence the diagram commutes and it follows that $\hP_{\ey}$ also satisfies 2) of Proposition \ref{t117}. Hence we see that $(\ex , \ey, \hP_{\ey}) \in \hcY$. 
\end{proof}

\begin{rem}
The condition on $\hP^b_{\ey}$ to vanish only in the zero element of the non-complete subring $\multmap \Oh_{ \overline{ \{ \ex \} } , \ey}$ of $\hOh^b_{ \overline{ \{ \ex \} } , \ey}$ looks a bit unnatural to me. As we will see below, the condition is fine for $\eX_0 = \spec \eo_{K_0}$ with $K_0 / \Q_p$ finite, but in higher dimensions, perhaps it has to be amended. The guesswork in defining $\hceX (\eo)$ and $\hcY$ starting from $W_{\rat} (\eX) (\eo)$ may not be complete.
\end{rem}

\begin{remark} \label{t1215}
In Remark \ref{t37} we have noted that $\Aut (X) \times \Aut (S)$ operates naturally and $\Nh$-equivariantly on $W_{\rat} (X) (S)$. In particular $G \times \Aut (\eo)$ operates $F_p$-equivariantly on $\deX (\eo) = W_{\rat} (\eX) (\eo)$ and $\ceX (\eo)$. Since automorphisms of $\eo$ are $p$-adically continuous we have compatible $G \times \Aut (\eo)$-operations on $\deX_c (\eo) , \ceX_c (\eo), \hceX_c (\eo)$ and $\hcY$ in the obvious way. 
\end{remark}

\section{A relation to the Fargues-Fontaine curve} \label{sec:13}
For a finite extension $K_0$ of $\Q_p$ and $\eX_0 = \spec \eo_{K_0}$ we will now establish a canonical Frobenius-equivariant bijection between (the generic fibre of) $\hcY_0 = \hcY / G$ and the points of the Fargues-Fontaine curve. Here $\eX = \spec \eo_K$ where $K$ is an algebraic closure of $K_0$ with Galois group $G = \Gal (K / K_0)$ and $\hcY = \hcY_{1/p}$ was defined in \eqref{eq:114}. In order to construct the bijection we have to recall well known results by Fontaine-Wintenberger and Scholze on tilting for which a wonderful source is \cite[I.3, I.4, I.6]{schneider2}. Choose a prime element $\pi_0$ of $\eo_{K_0}$ and let $\kappa_0$ be the residue field of $\eo_{K_0}$. We normalize the absolute value on $K_0$ by $| \pi_0| = q^{-1}$ where $q = |\kappa_0|$. Let $\phi \in \eo_{K_0} [[X]]$ be a Frobenius power series for $\pi_0$ i.e. $\phi (X) = \pi_0 X$ + higher terms and $\phi (X) \equiv X^q \mod \pi_0$. Let $F_{\phi}$ be the corresponding Lubin-Tate formal group law. Then we have a canonical ring homomorphism
\[
\eo_{K_0} \longrightarrow \End_{\eo_{K_0}} (F_{\phi}) \; , \; a \longmapsto [a]_{\phi} (X) \quad \text{with} \; [\pi_0]_{\phi} = \phi \; .
\]
Let $\Mh$ and $\hMh$ be the maximal ideals of $\eo_K$ and $\heo_K = \eo_{\hK}$ respectively. They become $\eo_{K_0}$-modules via $z_1 + z_2 = F_{\phi} (z_1 , z_2)$ and $a \cdot z = [a]_{\phi} (z)$ for $a \in \eo_{K_0}$ and $z_1 , z_2 , z \in \Mh$ resp. $\hMh$. The kernel $\Fh_n$ of $\pi^n_0$-multiplication on $\Mh$ is a free rank-one $\eo_{K_0} / \pi^n_0$-module and the Galois extension fields
\[
K_n = K_0 (\Fh_n) \quad \text{and} \quad K_{\infty} = \bigcup_{n \ge 1} K_n
\]
depend only on $\pi_0$ and not on $\phi$. The fields $K_n$ are totally ramified over $K_0$. The completion $\hK_{\infty}$ is a perfectoid field. The Galois group $G = \Gal (K / K_0)$ acts $\eo_{K_0} / \pi^n_0$-linearly on $\Fh_n$, and there is a unique character
\[
\chi_n : G \longrightarrow (\eo_{K_0} / \pi^n_0)^{\times}
\]
such that $\sigma (z) = \chi_n (\sigma) z$ for $z \in \Fh_n$ and $\sigma \in G$. It induces an isomorphism
\[
\chi_n : \Gal (K_n / K_0) \silo (\eo_{K_0} / \pi^n_0)^{\times} \; .
\]
Any generator $z$ of the $\eo_{K_0} / \pi^n_0$-module $\Fh_n$ generates $\eo_{K_n}$ as an $\eo_{K_0}$-algebra and $z$ is a prime element of $\eo_{K_n}$. Taking the projective limit, the $\chi_n$ induce a continuous character
\[
\chi : G \longrightarrow \eo^{\times}_{K_0}
\]
which factors over an isomorphism
\begin{equation} \label{eq:121a}
\chi : \Gamma := \Gal (K_{\infty} / K_0) \silo \eo^{\times}_{K_0} \; .
\end{equation}
Let
\[
T (\Fh) := \varprojlim_n \Fh_n
\]
with $\pi_0$-multiplications as transition maps, a free $\eo_{K_0}$-module of rank one with a continuous $G$-action. Let
\[
\heo^b_{K_{\infty}} := \multmap \heo_{K_{\infty}} / p \overset{(!)}{=} \varprojlim_{(\,)^q} \heo_{K_{\infty}} / \pi_0
\]
(and similarly for $\heo^b_K$) be the tilts of $\heo_{K_{\infty}}$ (and $\heo_K$). These rings are naturally $\kappa_0 = \eo_{K_0} / \pi_0$-algebras. Their quotient fields $\hK^b_{\infty}$ and $\hK^b$ are perfect of characteristic $p$ and complete for a rank one valuation $|\,|_b$ ultimately coming from the one on $K_0$ specified above. The valuation topology and the pro-discrete topology on $\heo^b_{K_{\infty}}$ resp. $\heo^b_K$ agree. The $G$-operations by transport of structure on these tilted rings and fields are continuous and preserve $|\,|_b$. The subgroup $G_{\infty} = \Gal (K / K_{\infty})$ of $G$ acts trivially on $\hK^b_{\infty}$ and we get an induced action by $\Gamma = G / G_{\infty}$ on $\hK^b_{\infty}$. Consider the $G$- and hence $\Gamma$-equivariant injection
\begin{equation} \label{eq:122}
i : T (\Fh) \hookrightarrow \heo^b_{K_{\infty}} \; , \; i ((y_n)) = (y_n \mod \pi_0) \; .
\end{equation}
Fix a generator $t$ of the $\eo_{K_0}$-module $T (\Fh)$ and let $t^b = i (t) \in \heo^b_{K_{\infty}}$ be its image. Then $|t^b|_b = |\pi_0|^{q / q-1} = q^{-(q/q-1)} < 1$ and we get a continuous homomorphism of $\kappa_0$-algebras
\[
\kappa_0 [[X]] \longrightarrow \heo^b_{K_{\infty}} \; , \; f \longmapsto f (t^b) \; .
\]
Since $t^b$ is invertible in $\hK^b_{\infty}$, we obtain an injection of fields
\[
\kappa_0 (( X)) \hookrightarrow \hK^b_{\infty} \; .
\]
We denote its image by $E_0$. With the induced valuation $|\,|_b$ it is a complete discretely valued field with residue class field $\kappa_0$, and $t^b$ is a prime element of its ring of integers $\eo_{E_0}$, the image of $\kappa_0 [[X]]$. The field $E_0$ does not depend on the choice of generator $t$ of $T (\Fh)$ and $E_0$ is preserved by the action of $\Gamma$ on $\hK^b_{\infty}$. This follows from the formula
\begin{equation} \label{eq:122a}
\gamma (t^b) = \overline{[\chi (\gamma)]}_{\phi} (t^b) \quad \text{for} \; \gamma \in \Gamma
\end{equation}
where
\[
\overline{[a]}_{\phi} (X) := [a]_{\phi} (X) \mod \pi_0 \in \kappa_0 [[X]] \quad \text{for} \; a \in \eo_{K_0} \; .
\]
The field $\hK^b$ is algebraically closed and complete. Let $E^{\sep}_0$ be the separable closure of $E_0$ in $\hK^b$. The topological closure of $E^{\sep}_0$ in $\hK^b$ can be identified with the completion $\widehat{E^{\sep}_0}$ of $E^{\sep}_0$ and we have
\[
\widehat{E^{\sep}_0} = \hK^b \; .
\]
The $G$-action on $\hK$ induces a $G$-action on $\hK^b$ and $G_{\infty} \subset G$ acts trivially on $\hK^b_{\infty} \subset \hK^b$ and hence on $E_0$. Thus $G_{\infty}$ map $E^{\sep}_0$ into itself. A main result of the theory asserts that the resulting homomorphism
\begin{equation} \label{eq:123}
G_{\infty} = \Gal (K / K_{\infty}) \silo \Gal (E^{\sep}_0 / E_0) 
\end{equation}
is a topological isomorphism. This concludes our review of the tilting correspondence. 

From now on let $\eo$ be a $p$-adically complete rank one valuation ring with (complete) algebraically closed quotient field $\C$ of characteristic zero. Let $\emm$ be its maximal ideal and $k$ its algebraically closed residue field of characteristic $p$. Then $\eo^b$ is a complete rank one valuation ring of equicharacteristic $p$ with complete algebraically closed quotient field $\C^b$ of characteristic $p$, maximal ideal $\emm^b = \multmap \emm / p \eo$ and residue field $k$. Then $k$ is also a subfield of $\eo^b$ such that $k \subset \eo^b \to k$ is the identity. One example is $\C = \C_p := \hoQ_p$ etc. The pro-discrete topology on $\eo^b = \multmap \eo / p$ agrees with the valuation topology on $\eo^b$. 

Let $H_{\eo_{K_0}} = \Spf \eo_{K_0} [[X]]$ be the formal $\eo_{K_0}$-module over $\spec \eo_{K_0}$ corresponding to $F_{\phi}$ and $[a]_{\phi}$ for $a \in \eo_K$. It is defined using the maps
\[
\eo_{K_0} [[X]] \longrightarrow \eo_{K_0} [[X]] \hotimes \eo_{K_0} [[X]] = \eo_{K_0} [[X_1 , X_2]]
\]
sending $X$ to $F_{\phi} (X_1 , X_2)$ and
\[
\eo_{K_0} [[X]] \longrightarrow \eo_{K_0} [[X]]
\]
sending $X$ to $[a]_{\phi} (X)$. Consider the reduction $\mod \pi_0$
\[
H_{\kappa_0} = H_{\eo_{K_0}} \hotimes_{\eo_{K_0}} \kappa_0 = \Spf \kappa_0 [[X]] \; .
\]
It is a formal $\eo_{K_0}$-module over $\spec \kappa_0$. Viewing $\spec \eo^b$ as a formal scheme over $\F_p$, the points
\[
H_{\kappa_0} (\eo^b) := \Mor_{\F_p} (\spec \eo^b , H_{\kappa_0}) = \Hom_{\cont} (\kappa_0 [[X]] , \eo^b)
\]
are a union of $\eo_{K_0}$-modules. In particular, the set $H_{\kappa_0} (\eo^b)$ carries an action by the group $\eo^{\times}_{K_0}$. More explicitly we have
\begin{equation}
\label{eq:124}
H_{\kappa_0} (\eo^b) = \coprod_{\tau_0 \in \Hom (\kappa_0 , k)} \emm^b_{\tau_0} = \emm^b \times \Hom (\kappa_0 , k) \; .
\end{equation}
Here $\emm^b_{\tau_0}$ is $\emm^b$ as a set, equipped with addition and $\eo_{K_0}$-multiplication via $\tau_0 \overline{F}_{\phi} \in k [[ X_1 , X_2 ]]$ and $\tau_0 \overline{[a]}_{\phi} \in k [[X]]$ where $\overline{F}_{\phi} = F_{\phi} \mod \pi_0$ and $\overline{[a]}_{\phi} = [a]_{\phi} \mod \pi_0$. On $\emm^b \times \Hom (\kappa_0 , k)$ the $\eo_{K_0}$-multiplication is given by the formula
\begin{equation}
\label{eq:124a}
a \cdot (x , \tau_0) = ((\tau_0 \overline{[a]}_{\phi}) (x) , \tau_0) \quad \text{for} \; a \in \eo_{K_0} \; .
\end{equation}
Note that $\Aut (\eo)$ also acts naturally on $H_{\kappa_0} (\eo^b)$ via $\Aut (\eo) \to \Aut_{\cont} (\eo^b)$. 

Fix a generator $t$ of the $\eo_{K_0}$-module $T (\Fh)$. We will now define a map
\begin{equation}
\label{eq:125}
\Psi_t : \Hom_{\cont} (\heo^b_K , \eo^b) / G_{\infty} \longrightarrow H_{\kappa_0} (\eo^b) \; .
\end{equation}
Here $G$ and hence $G_{\infty}$ act from the right on the set of continuous ring homomorphisms $\varphi : \heo^b_K \to \eo$ by setting $\varphi^{\sigma} := \varphi \verk \sigma$ for $\sigma \in G$. Since the extension $K_{\infty} / K_0$ is totally ramified, the residue field of $K_{\infty}$ is $\kappa_0$ and the natural map $G_{\infty} \to \Gal (\kappa / \kappa_0)$ is surjective. Here $\kappa$ is the common residue field of $\eo_K , \heo_K$ and $\heo^b_K$, an algebraic closure of $\kappa_0$. It is also a subfield of $\heo^b_K$, namely the algebraic closure of $\kappa_0$ in $\hK^b$, such that the composition $\kappa \subset \heo^b_K \to \kappa$ is the identity. Using the element $t^b = i (t) \in \heo^b_{K_{\infty}} \subset \heo^b_K$ on which $G_{\infty}$ acts trivially, we now define
\[
\Psi_t (\varphi G_{\infty}) = (\varphi (t^b) , \varphi \, |_{\kappa_0}) \in \emm^b \times \Hom (\kappa_0 , k) = H_{\kappa_0} (\eo^b) \; .
\]
Note that since $\varphi$ is continuous it maps topologically nilpotent elements to topologically nilpotent elements. Hence we have $\varphi (t^b) \in \emm^b$ recalling that $|t^b|_b < 1$. 

\begin{theorem}
\label{t131}
For any $\eo_{K_0}$-generator $t \in T (\Fh)$ the map $\Psi_t$ is a bijection:
\[
\Psi_t : \Hom_{\cont} (\heo^b_K , \eo^b) / G_{\infty} \silo H_{\kappa_0} (\eo^b) \; .
\]
\end{theorem}

\begin{rem}
Since $\heo^b_K$ is a rank one valuation ring, its only ideals are $0$ and $\hMh^b$. Hence a ring homomorphism $\varphi : \heo^b_K \to \eo^b$ is either injective or it factors over $\heo^b_K / \hMh^b = \kappa$, in which case it is a composition
\[
\varphi : \heo^b_K \longrightarrow \kappa \xrightarrow{\tau} k \subset \eo^b \; .
\]
Under the bijection $\Psi_t$ these two cases correspond to the elements in $(\emm^b \setminus 0) \times \Hom (\kappa_0 , k)$ resp. $\{ 0 \} \times \Hom (\kappa_0 , k)$ of $H_{\kappa_0} (\eo^b)$. 
\end{rem}

\begin{proof}
We construct a map in the other direction:
\[
\Phi_t : \emm^b \times \Hom (\kappa_0 , k) \longrightarrow \Hom_{\cont} (\heo^b_K , \eo^b) / G_{\infty} \; .
\]
For expliciteness we distinguish two cases although this is not really necessary. For an element
\[
(0 , \tau_0) \in \{ 0 \} \times \Hom (\kappa_0 , k)
\]
choose an extension $\tau : \kappa \to k$ of $\tau_0$ and set
\[
\Phi_t (0 , \tau_0) = (\heo^b_K \to \kappa \xrightarrow{\tau} k \subset \eo^b) \mod G_{\infty} \; .
\]
This is well-defined since $G_{\infty} \to \Gal (\kappa / \kappa_0)$ is surjective.

We will now define $\Phi_t$ on the elements
\[
(x , \tau_0) \in (\emm^b \setminus 0) \times \Hom (\kappa_0 , k) \; .
\]
Let
\[
\varphi_{t , x , \tau_0} : E_0 \hookrightarrow \C^b
\]
be the unique continuous homomorphism which equals $\tau_0$ on $\kappa_0 \subset E_0$ and sends $t^b = i (t) \in E_0$ to $x$. It exists because $0 < |x| < 1$. Let
\[
\tvarphi_{t , x, \tau_0} : E^{\sep}_0 \hookrightarrow \C^b
\]
be an extension of $\varphi_{t , x , \tau_0}$ to the separable closure of $E_0$ in $\hK^b$. Because of the fundamental isomorphism \eqref{eq:123} the extension is unique up to conjugation by an element of $G_{\infty}$. We have
\begin{equation}
\label{eq:127}
|\tvarphi_{t , x , \tau_0} (e)| = |e|^{\alpha}_b \quad \text{for all} \; e \in E^{\sep}_0 \; .
\end{equation}
Here
\[
\alpha = \big( 1 - \frac{1}{q} \big) \log_q |x|^{-1} > 0 \; .
\]
This holds because both sides of \eqref{eq:127} give (the unique) valuation on $E^{\sep}_0$ extending the valuation $|\;|^{\alpha}_b$ on the complete field $E_0$, since they agree on the generating prime element $t^b$ of $E_0$. In particular, $\tvarphi_{t , x , \tau_0}$ is equicontinuous and extends uniquely to a continuous embedding of complete and algebraically closed fields
\[
\htvarphi_{t , x, \tau_0} : \hK^b = \widehat{E^{\sep}_0} \hookrightarrow \C^b \; .
\]
We have
\begin{equation}
 \label{eq:128}
 |\htvarphi_{t , x , \tau_0} (e)| = |e|^{\alpha}_b \quad \text{for all} \; e \in \hK^b \; .
\end{equation}
Any continuous homomorphism $\hK^b \hookrightarrow \C^b$ sends $\eo_{\hK^b}$ to $\eo^b$. For $\htvarphi_{t , x , \tau_0}$ this is also evident from \eqref{eq:128}. Restriction therefore allows us to view $\htvarphi_{t , x , \tau_0}$ as a continuous injective homomorphism
\[
\htvarphi_{t , x , \tau_0} : \heo^b_K \hookrightarrow \eo^b \; .
\]
By construction its $G_{\infty}$-orbit is well defined, and we set
\[
\Phi_t (x, \tau_0) = \htvarphi_{t , x , \tau_0} \mod G_{\infty} \in \Hom_{\cont , \inj} (\heo^b_K , \eo^b) / G_{\infty} \; .
\]
It is straightforward to check that $\Phi_t$ is inverse to $\Psi_t$, which proves the theorem and the subsequent remark. Instead of working with the separable closure of $E_0$ and the formula $\hK^b = \widehat{E^{\sep}_0}$, we could also have used the isomorphism $G_{\infty} \silo \Gal (\oE_0 / E^{\perf}_0)$ and the equality $\hK^b = \hoE_0$ where $\oE_0$ is the algebraic closure of $E_0$ in $\hK^b$. Since $\C^b$ is perfect, the embedding $\varphi_{t , x, \tau_0} : E_0 \hookrightarrow \C^b$ has a unique extension $\varphi^{\perf}_{t , x, \tau_0} : E^{\perf}_0 \hookrightarrow \C$ and one extends to $\oE_0$ and then to $\hoE_0 = \hK^b$ as before. We could also have defined $\varphi_{t, x , \tau_0} : \eo_{E_0} = \kappa_0 [[t^b]] \to \eo^b$ to be $\tau_0$ on $\kappa_0$ and send $t^b$ to $x$. This works for {\it all} $x \in \emm^b$, and extending to a continuous homomorphism $\htvarphi_{t , x , \tau_0} : \heo^b_K \to \eo^b$ similarly as before one has $\Phi_t (x , \tau_0) = \htvarphi_{t , x , \tau_0} G_{\infty}$. In this way, at least in the definition of $\Phi_t$ one does not need to distinguish the cases $x = 0$ and $x \neq 0$. 
\end{proof}

Since $G_{\infty} \subset G$ is a normal subgroup, $G$ acts via $\Gamma = G / G_{\infty}$ on $\Hom_{\cont} (\heo^b_K , \eo^b) / G_{\infty}$. Via the isomorphism $\chi : \Gamma \silo \eo^{\times}_{K_0}$, and the $\eo^{\times}_{K_0}$-action on $H_{\kappa_0} (\eo^b)$, the group $\Gamma$ acts on $H_{\kappa_0} (\eo^b)$ as well. The absolute Frobenius $(\,)^p$ on $\kappa_0 [[X]]$ acts on $H_{\kappa_0}$ over $\spec \F_p$ and induces an automorphism $F_p$ on $H_{\kappa_0} (\eo^b)$. Under the bijections \eqref{eq:124} it has the following descriptions:
\begin{equation}
\label{eq:129}
F_p (x , \tau_0) = (x^p , \tau_0 \verk (\;)^p) \quad \text{for} \; (x , \tau_0) \in \emm^b \times \Hom (\kappa_0 , k)
\end{equation}
and
\begin{equation}
\label{eq:130}
F_p (x) = x^p \in \emm_{\tau_0 \verk (\,)^p} \quad \text{for} \; x \in \emm^b_{\tau_0}  \; .
\end{equation}
By definition
\[
\overline{[\pi_0]}_{\phi} (X) = X^q \in \kappa_0 [[X]] \; ,
\]
and hence we have
\[
\overline{[\pi_0]}_{\phi} = (\;)^q \quad \text{on} \; \kappa_0 [[X]] \; .
\]
It follows that $F_q := F^r_p$ for $q = p^r$ acts on $H_{\kappa_0} (\eo^b)$ by multiplication with $\pi_0$. Since $F_p$ is a bijection of $H_{\kappa_0} (\eo^b)$, it follows that the action of $\eo_{K_0}$ on $H_{\kappa_0} (\eo^b)$ by multiplication extends to an action of $K_0$ by multiplication. In particular, the group $K^{\times}_0 = \eo^{\times}_{K_0} \times \pi^{\Z}_0$ acts on $H_{\kappa_0} (\eo^b)$. We define $F_p$ and $F_q = F^r_p$ on $\Hom_{\cont} (\eo^b_K , \eo^b)$ by $F_p (\varphi) = \varphi \verk (\;)^p = \varphi^p$. Then $F_p$ is an isomorphism which commutes with the $G$-action and hence passes to the quotients by $G_{\infty}$ and $G$. 

\begin{prop}
\label{t132}
The bijections in Theorem \ref{t131}
\[
\Psi_t : \Hom_{\cont} (\heo^b_K , \eo^b) / G_{\infty} \silo H_{\kappa_0} (\eo^b)
\]
are $\Gamma \times \Aut (\eo) \times \langle F_p \rangle$-equivariant. For $\gamma \in \Gamma$ we have
\begin{equation}
\label{eq:131}
\Psi_t \verk \gamma = \Psi_{\gamma (t)} \; .
\end{equation}
Hence the induced $\Aut (\eo) \times \langle F_p \rangle$-equivariant bijection
\begin{equation}
\label{eq:132}
\Psi = \Psi_t / \Gamma : \Hom_{\cont} (\heo^b_K , \eo^b) / G \silo H_{\kappa_0} (\eo^b) / \Gamma = H_{\kappa_0} (\eo^b) / \eo^{\times}_{K_0}
\end{equation}
is independent of the choice of $t$.
\end{prop}

\begin{rem}
Dividing out the $F_q$- resp. $F_p$-actions in \eqref{eq:132} we get bijections
\begin{equation}
\label{eq:133}
\Hom_{\cont} (\heo^b_K , \eo^b) / (G \times \langle F_q \rangle) \silo H_{\kappa_0} (\eo^b) / K^{\times}_0 = \coprod_{\tau_0 \in \Hom (\kappa_0 , k)} \emm^b_{\tau_0} / K^{\times}_0
\end{equation}
and after choosing an embedding $\tau_0 : \kappa_0 \hookrightarrow k$,
\begin{equation}
\label{eq:134}
\Hom_{\cont} (\heo^b_K , \eo^b) / (G \times \langle F_p \rangle ) \overset{\sim}{\longleftarrow} \emm^b_{\tau_0} / K^{\times}_0 \; .
\end{equation}
\end{rem}

\begin{proof}
For $\gamma \in \Gamma$ we have $\gamma (t^b) = \overline{[\chi (\gamma)]}_{\phi} (t^b)$ by \eqref{eq:122a}. For $\varphi \in \Hom_{\cont} (\heo^b_K , \eo^b)$, since $\Gamma$ acts trivially on $\kappa_0$ we therefore get
\begin{align*}
\Psi_t (\varphi G_{\infty} \gamma) & = (\varphi (\gamma (t^b)) , \varphi \, |_{\kappa_0}) \\
& = (\varphi (\overline{[\chi (\gamma)]}_{\phi} (t^b)) , \varphi \, |_{\kappa_0}) \\
& = (( \varphi \, |_{\kappa_0} \overline{[\chi (\gamma)]}_{\phi}) (\varphi (t^b)) , \varphi \, |_{\kappa_0}) \\
& = \chi (\gamma) \cdot (\varphi (t^b) , \varphi \, |_{\kappa_0}) \quad \text{by \eqref{eq:124a}} \\
& = \chi (\gamma) \cdot \Psi_t (\varphi G_{\infty}) \; .
\end{align*}
Since the $\Gamma$-operation on $H_{\kappa_0} (\eo^b)$ was defined via $\chi$ it follows that $\Psi_t$ is $\Gamma$-equivariant. We have
\begin{align*}
\Psi_t (F_p (\varphi G_{\infty})) & = (F_p (\varphi) (t^b) , F_p (\varphi) \, |_{\kappa_0}) \\
& = (\varphi (t^b)^p , \varphi \, |_{\kappa_0} \verk (\;)^p) \\
& = F_p (\varphi (t^b) , \varphi \, |_{\kappa_0}) \quad \text{by \eqref{eq:129}.}
\end{align*}
Hence $\Psi_t$ commutes with the $F_p$-action. For $\gamma \in \Gamma$ we have $\gamma (t^b) = \gamma (t)^b$ and therefore \eqref{eq:131} follows from the equations:
\begin{align*}
(\Psi_t \verk \gamma) (\varphi G_{\infty}) & = \Psi_t (\varphi G_{\infty} \gamma) = (\varphi (\gamma (t^b)) , \varphi \, |_{\kappa_0}) \\
& = (\varphi (\gamma (t)^b) , \varphi \, |_{\kappa_0}) \\
& = \Psi_{\gamma (t)} (\varphi G_{\infty}) \; .
\end{align*}
Compatibility with the left action by $\Aut (\eo)$ on both sides via $\Aut (\eo) \to \Aut_{\cont} (\eo^b)$ follows similarly.
\end{proof}

\begin{exmp}  \label{t133}
Let us specialize to the case $K_0 = \Q_p$ where $K = \oQ_p$ and $\heo_K = \eo_p := \eo_{\C_p}$, $\pi_0 = p$ and $\phi (X) = (1 + X)^p - 1$. Under the $\Z_p$-equivariant bijection
\[
\alpha : \emm^b \silo D^b := 1 + \emm^b \cong \multmap (1 + \emm) \; , \; x \longmapsto 1 +x
\]
the formal $\Z_p$-module (even $\Q_p$-vector space) $\emm^b$ becomes the group $D^b$ of $1$-units in $\eo^b$ with exponentiation by elements in $\Z_p$ (even in $\Q_p)$. Moreover $G = G_{\Q_p}$, and $G_{\infty} = G_{\Q_p (\mu_{p^{\infty}})}$ and 
\[
\chi : \Gamma = \Gal (\Q_p (\mu_{p^{\infty}}) / \Q_p) \silo \Z^{\times}_p
\]
is the cyclotomic character. By Theorem \ref{t131}, for every $\Z_p$-generator $\varepsilon = (\varepsilon_n) \in T \mu = \varprojlim \mu_{p^n} (K)$ setting $t = (\varepsilon_n - 1) \in T \Fh$ we get a bijection $\tPsi_{\varepsilon} := \alpha \verk \Psi_t$:
\[
\tPsi_{\varepsilon} : \Hom_{\cont} (\eo^b_p , \eo^b) / G_{\infty} \silo D^b \quad \text{where} \; \tPsi_{\varepsilon} (\varphi G_{\infty}) = \varphi (\varepsilon^b) \quad \text{with} \; \varepsilon^b = (\varepsilon_n \mod p) \; .
\]
By Proposition \ref{t132} we have $\tPsi_{\varepsilon} \verk \gamma = \tPsi_{\gamma (\varepsilon)} $ for $\gamma \in \Gamma$ and $\tPsi_{\varepsilon}$ is $\Gamma$- and $F_p$-equivariant. The resulting bijection
\begin{equation}
\label{eq:135}
\tPsi : \Hom_{\cont} (\eo^b_p , \eo^b) / G \silo D^b / \Z^{\times}_p
\end{equation}
is independent of $\varepsilon$ and dividing by the $F^{\Z}_p$-action gives a bijection
\begin{equation}
\label{eq:136}
\Hom_{\cont} (\eo^b_p , \eo^b) / (G \times \langle F_p \rangle) \silo D^b / \Q^{\times}_p \; .
\end{equation}
\end{exmp}

\begin{rem}
For $\eo = \eo_p$, the map \eqref{eq:136} is a bijection
\begin{equation}
\label{eq:137}
\End_{\cont} (\eo^b_p) / (G \times \langle F_p \rangle) \silo D^b / \Q^{\times}_p \; .
\end{equation}
By the remark after Theorem \ref{t131} it restricts to a bijection
\begin{equation}
\label{eq:138}
\End_{\cont} (\C^b_p) / (G \times \langle F_p \rangle) \silo (D^b \setminus 0) / \Q^{\times}_p \; .
\end{equation}
Note here that continuous ring endomorphisms of $\C_p$ preserve $|\;|_p$ hence $\eo_p$. 
It follows from \cite{kedlaya} that there exist continuous embeddings $\C^b_p \hookrightarrow \C^b_p$ which are not isomorphisms. I do not know which elements in $(D^b \setminus 0) / \Q^{\times}_p$ correspond to the classes of {\em automorphisms} of $\C^b_p$ under \eqref{eq:138}.
\end{rem}

For an $\eo_{K_0}$-algebra $B$ the ring of ramified $p$-typical Witt vectors will be denoted by $W_{\eo_{K_0}} (B)$. If $B$ is a perfect $\kappa_0$-algebra, there is a natural isomorphism which can be taken as a definition
\[
W_{\eo_{K_0}} (B) = W_p (B) \otimes_{W (\kappa_0)} \eo_{K_0} \; .
\]
Here the Witt vector Frobenius $F_q$ on the left corresponds to $F^r_p \otimes \id$ on the right if $q = p^r$. Given an embedding $\tau_0 : \kappa_0 \hookrightarrow k$, we may view $\eo^b$ and $\C^b$ as perfect $\kappa_0$-algebras which we denote by $\eo^b_{\tau_0}$ and $\C^b_{\tau_0}$. The element $\xi \in W_{\eo_{K_0}} (\eo^b_{\tau_0})$ is called primitive of degree one if in the representation
\[
\xi = \sum_{n \ge 0} [\xi_n] \pi^n_0 \quad \text{with} \quad \xi_n \in \eo^b_{\tau_0}
\]
we have $\xi_0 \neq 0$ and $\xi_1 \in (\eo^b_{\tau_0})^{\times}$. Let $|Y_{\tau_0}|^{\deg=1}$ denote the set (actually a space) of principal ideals $\ep_{\xi}$ of $W_{\eo_{K_0}} (\eo^b_{\tau_0})$ generated by the primitive elements $\xi$ of degree one, cf. \cite[Ch. II, 3]{FF}. The ideals $\ep = \ep_{\xi}$ are prime. A $K_0$-untilt $(C , \iota)$ of $\C^b_{\tau_0}$ consists of a perfectoid field $C$ which is also a $K_0$-algebra, together with a $\kappa_0$-linear isomorphism $\iota : C^b \silo \C^b_{\tau_0}$ of the tilts. Two untilts $(C, \iota)$ and $(C' , \iota')$ are equivalent if there is an isomorphism $\alpha : C \silo C'$ of $K_0$-algebras such that $\iota' \verk \alpha^b = \iota$. One obtains a bijection of $|Y_{\tau_0}|^{\deg = 1}$ with the space of equivalence classes of $K_0$-untilts of $\C^b_{\tau_0}$ as follows: The prime ideal $\ep$ is mapped to the perfectoid field
\[
C = (W_{\eo_{K_0}} (\eo^b_{\tau_0}) / \ep) [\opi^{-1}_0] \quad \text{where} \; \opi_0 = \pi_0 \mod \ep \]
and $\iota : C^b \silo \C^b_{\tau_0}$ is the induced isomorphism, see \cite[Ch. II, 3]{FF} for details.

Fontaine's map $\theta_{\hK} : W_{\eo_{K_0}} (\eo^b_{\hK}) \to \eo_{\hK}$ sends $\sum_{n\ge 0} [\xi_n] \pi^n_0$ to $\sum_{n \ge 0} \xi^{\sharp}_n \pi^n_0$. It is a surjective homomorphism of $\eo_{K_0}$-algebras whose kernel $\ker \theta_{\hK}$ is generated by a primitive element of degree one. Now let $\varphi : \eo^b_{\hK} = \heo^b_K \to \eo^b$ be a continuous injective homomorphism and set $\tau_0 = \varphi \, |_{\kappa_0}$. The map $\varphi$ may be viewed as a map of $\kappa_0$-algebras $\varphi : \eo^b_{\hK} \to \eo^b_{\tau_0}$. It therefore induces an injective homomorphism of $\eo_{K_0}$-algebras
\[
W_{\eo_{K_0}} (\varphi) : W_{\eo_{K_0}} (\eo^b_{\hK}) \longrightarrow W_{\eo_{K_0}} (\eo^b_{\tau_0}) \; .
\]

\begin{theorem} \label{t134}
We obtain a canonical bijection
\begin{equation} \label{eq:141n}
\arah : \Hom_{\cont, \inj} (\heo^b_K , \eo^b) / G \silo \coprod_{\tau_0 \in \Hom (\kappa_0 , k)} |Y_{\tau_0}|^{\deg = 1}
\end{equation}
by mapping $\varphi G$ to the ideal $\arah (\varphi G) \in |Y_{\tau_0}|^{\deg = 1}$ which is generated by \\
$W_{\eo_{K_0}} (\varphi) (\ker \theta_{\hK})$ in $W_{\eo_{K_0}} (\eo^b_{\tau_0})$. Here as above $\tau_0 = \varphi \, |_{\kappa_0}$. 
\end{theorem}

\begin{proof} Consider the map
\begin{equation}
\label{eq:139}
\Lambda : (\emm^b_{\tau_0} \setminus 0) / \eo^{\times}_{K_0} \longrightarrow |Y_{\tau_0}|^{\deg = 1}
\end{equation}
which sends $x \eo^{\times}_{K_0}$ to the principal ideal of $W_{\eo_{K_0}} (\eo^b_{\tau_0})$ generated by $y_x := [x]_{\phi} / [x^{1/q}]_{\phi}$. Here
\[
[\,]_{\phi} : \emm^b_{\tau_0} \longrightarrow W_{\eo_{K_0}} (\eo^b_{\tau_0})
\]
is the Teichm\"uller map for the Frobenius power series $\phi$, c.f. \cite[Ch. I, Proposition 1.2.7]{FF} or \cite{schneider2} where $[\,]_{\phi}$ is the map $\tau_{\phi}$ before Proposition 2.1.12. In \cite[Ch. I, Proposition 2.3.10]{FF} it is shown that the map \eqref{eq:139} is a bijection if $\phi \in \eo_{K_0} [[X]]$ is a polynomial, but the result is true in general. Theorem \ref{t134} is proved by checking that $\arah$ is the composition of the bijections $\Lambda$ in \eqref{eq:139} and $\Psi$ in \eqref{eq:132}
\begin{equation}
\label{eq:140}
\arah : \Hom_{\cont , \inj} (\heo^b_K , \eo^b) / G \overset{\Psi}{\silo} \coprod_{\tau_0} (\emm^b_{\tau_0} \setminus 0) / \eo^{\times}_{K_0} \overset{\Lambda}{\silo} \coprod_{\tau_0 \in \Hom (\kappa_0 , k)} |Y_{\tau_0}|^{\deg = 1} \; .
\end{equation}
In \cite[Proposition 2.1.19]{schneider2} it is proved that for any $\eo_{K_0}$-generator $t$ of $T \Fh$ and corresponding element $t^b \in \heo^b_{K_{\infty}} \subset \heo^b_K$, the element $y_{t^b} = [t^b]_{\phi} / [(t^b)^{1/q}]_{\phi}$ of $W_{\eo_{K_0}} (\heo^b_K)$ satisfies the conditions of \cite[Lemma 1.4.19]{schneider2} to be a generator of $\ker \theta_{\hK}$. For $\varphi  \in \Hom_{\cont, \inj} (\heo^b_K , \eo^b)$ we have
\[
W_{\eo_{K_0}} (\varphi) (y_{t^b}) = y_{\varphi (t^b)} \; .
\]
Hence the ideal of $W_{\eo_{K_0}} (\eo^b_{\tau_0})$ generated by $W_{\eo_{K_0}} (\varphi) (\ker \theta_{\hK}) = W_{\eo_{K_0}} (\varphi) ((y_{t^b}))$ is the principal ideal $(y_{\varphi (t^b)})$. On the other hand we have
\[
\Lambda (\Psi (\varphi G)) = \Lambda (\varphi (t^b) \eo^{\times}_{K_0}) = (y_{\varphi (t^b)}) \; .
\]
This shows that $\arah (\varphi G)$ as defined in Theorem \ref{t134} equals $\Lambda (\Psi (\varphi G))$. Hence $\arah$ is well defined and $\arah = \Lambda \verk \Psi$, so that $\arah$ is a bijection which factors as in \eqref{eq:140}. 
\end{proof}

The bijection \eqref{eq:139} is $F_q$-equivariant where on the left the $F_q$-action equals multiplication by $\pi_0$. Dividing out the $F_q$-action in \eqref{eq:139} and composing with the bijection \eqref{eq:134} we obtain the following result:

\begin{cor} \label{t135}
For any embedding $\tau_0 : \kappa_0 \to k$ the map $\arah$ in Theorem \ref{t134} induces an $\Aut (\eo)$-equivariant bijection
\[
\Hom_{\cont , \inj} (\heo^b_K , \eo^b) / G \times \langle F_p \rangle \silo |Y_{\tau_0}|^{\deg = 1} / \langle F_q \rangle \; .
\]
\end{cor}

\begin{example}
The case $K_0 = \Q_p$. With notations as in Example \ref{t133}, $|Y|^{\deg = 1}$ is the set of principal ideals in $W_p (\eo^b)$ generated by primitive elements of degree one. The bijection
\[
\arah : \Hom_{\cont , \inj} (\eo^b_p , \eo^b) / G \silo |Y|^{\deg = 1}
\]
maps $\varphi G$ to the extension ideal of $W_p (\varphi) (\Ker \theta_{\C_p})$ in $W_p (\eo^b)$. The map $\arah$ equals the composition of bijections
\[
\arah : \Hom_{\cont, \inj} (\eo^b_p , \eo^b) / G \overset{\tPsi}{\silo} (D^b \setminus \{ 1 \} ) / \Z^{\times}_p \overset{\tLambda}{\silo} |Y|^{\deg = 1} \; .
\]
Here $D^b = 1 + \emm^b = \multmap 1 + \emm$ on which $\Z^{\times}_p$ acts by exponentiation, and $\tPsi$ is the map \eqref{eq:135}. For $\phi (X) = (1 + X)^p - 1$ we have
\[
[x]_{\phi} = [1 + x] - 1 \quad \text{for} \; x \in \emm^b \setminus 0 \quad \text{c.f. \cite[Ch. I, Example 1.2.8]{FF}} \; .
\]
The bijection $\tLambda$ is the composition $\tLambda := \Lambda \verk \alpha^{-1}$ where $\alpha (x) = 1 + x$ and $\Lambda (x \Z^{\times}_p) = ([x]_{\phi} / [x^{1/p}]_{\phi})$. Hence we have
\[
\tLambda (u^{\Z^{\times}_p}) = \big( \frac{[u]-1}{[u^{1/p}]-1} \big) = (1 + [u^{1/p}] + \ldots + [u^{(p-1)/p}])
\]
for $u \in D^b \setminus \{ 1 \}$. In the proof of Theorem \ref{t134} we needed to know that $y_{t^b}$ is a primitive element of degree one which generates $\ker \theta_{\hK}$. In the case $K_0 = \Q_p$ this amounts to the following fact. For any $\Z_p$-generator $\varepsilon$ of $T\mu = \varprojlim_n \mu_{p^n} (\oQ_p)$ let $\varepsilon^b$ be its image under the canonical injective map $T\mu \hookrightarrow \multmap \eo_p / p = \eo^b_p$. Then the element
\[
\frac{[\varepsilon^b]-1}{[\varepsilon^{b^{1/p}}]-1} = 1 + [\varepsilon^{b^{1 / p}}] + \ldots + [\varepsilon^{b (p-1)/p}] \quad \text{in} \; W_p (\eo^b_p)
\]
is primitive of degree one and generates $\ker \theta_{\C_p}$. Note that $t \ent \varepsilon - 1$ and $t^b \ent \varepsilon^b -1$. Viewing $\arah (\varphi G)$ as an untilt of $\C^b$, it is given by the perfectoid field
\[
C = (W_p (\eo^b) / (W_p (\varphi) (\Ker \theta_{\C_p}))) [p^{-1}] \; ,
\]
together with the induced isomorphism $\iota : C^b \silo \C^b$. 
\end{example}

\begin{rem}
The bijection $\arah$ in Theorem \ref{t134} is only defined on (classes of) {\em injective} maps $\varphi : \heo^b_K \to \eo^b$. Viewing $\arah$ as a map to untilts, we can extend $\arah$ to all of $\Hom_{\cont} (\heo^b_K , \eo^b) / G$ by generalizing the notion of an untilt as follows: We consider pairs $(C , \iota)$ with $C$ a perfectoid field as before but with an $\eo_{K_0}$-algebra structure instead of a $K_0$-algebra structure. The equivalence classes of such untilts with $\car C = p$ correspond to classes $\varphi G$ for the non-injective maps $\varphi$.
\end{rem}

After these preparations it is easy to describe the $F_p$-dynamical system $\hcY_0 = \hcY / G$ of the previous section in the case $\eX_0 = \spec \eo_{K_0}$. We use the characterization of $\hcY = \hcY_{1 / p}$ in Proposition \ref{t1214} as a set of triples $(\ex , \ey , \hP^b_{\ey})$. There are only two pairs $(\ex , \ey)$ of points of $\eX = \spec \eo_K$ with $\ey \in \overline{ \{ \ex \} }$ and $\car \kappa (\ey) = p$. These are $(\ex , \ey) = (\eta , s)$ and $(\ex , \ey) = (s,s)$ where $\eta \ent (0)$ and $s \ent \emm_K$ are the generic and the closed point of $\spec \eo_K$. The corresponding local rings $\Oh_{\overline{ \{ \ex \} } , \eta}$ are $\eo_K$ resp. $\kappa = \eo_K / \emm_K$ with $p$-adic completions $\hOh_{ \overline{ \{ \ex \} } , \eta}$ given by $\heo_K$ resp. $\kappa$ and tilts $\hOh^b_{ \overline{ \{ \ex \} } , \ey}$ equal to $\heo^b_K$ and $\multmap \kappa \overset{\pr_0}{\silo} \kappa$. We now determine the maps $\hP^b_{\ey} : \hOh^b_{ \overline{ \{ \ex \} } , \ey} \to \eo^b$ in Proposition \ref{t1214}. For $(\ex , \ey) = (\eta , s)$ they are the continuous local ring homomorphisms $\hP^b_{\ey} : \heo^b_K \to \eo^b$ such that $\hP^b_{\ey} (f) \neq 0$ for all $0 \neq f \in \multmap \eo_K \subset \heo^b_K$. A continuous ring homomorphism from $\heo^b_K$ to $\eo^b$ sends topologically nilpotent elements to topological nilpotent elements. It is therefore local. If $\ker \hP^b_{\ey} = 0$ then the above non-vanishing condition is trivially satisfied. If $\ker \hP^b_{\ey} = \emm^b_{\hK}$, choose some $1 \neq u \in \multmap 1 + \emm_K \subset 1 + \emm^b_{\hK}$. Then $f := u - 1 \neq 0$ in $\heo^b_K$ and $\hP^b_{\ey} (f) = 0$. Hence the non-vanishing condition is violated if $\hP^b_{\ey}$ is not injective. For $(\ex , \ey) = (\eta , s)$, the relevant maps $\hP^b_{\ey}$ are therefore the continuous injective ring homomorphisms $\heo^b_K \to \eo^b$. For $(\ex , \ey) = (s,s)$ the continuous local ring homomorphisms $\hP^b_{\ey} : \kappa \to \eo^b$ with $\hP^b_{\ey} (f) \neq 0$ for all $0 \neq f \in \multmap \kappa \cong \kappa$ are simply the ring homomorphisms $\hP^p_{\ey} : \kappa \hookrightarrow k \subset \eo^b$, or equivalently the non-injective continuous homomorphisms $\hP^b_{\ey} : \heo^b_K \to \eo^b$. Consider the map
\begin{equation}
\label{eq:141}
\pr_{\eX} : \hcY \longrightarrow \eX_{\top} \quad \text{with} \; \pr_{\eX} (\ex , \ey , \hP_{\ey}) = \ex \; .
\end{equation}
It is $G$- and $F_p$-equivariant where $F_p$ acts trivially on $\eX_{\top}$. Hence it induces a map
\begin{equation}
\label{eq:142}
\pr_{\eX_0} : \hcY\!_0 \longrightarrow \eX_{0 \top} \; .
\end{equation}
The next result sums up what we know about $\hcY$. Note that $G \times \Aut (\eo)$ operates on $\Hom_{\cont} (\heo^b_K , \eo^b)$ and by Remark \ref{t1215} also on $\hcY$. 

\begin{theorem}
\label{t136}
Let $\eX_0 = \spec \eo_{K_0}$ with generic point $\eta_0$ and closed point $s_0$. \\
1) There is a natural $G \times \Aut (\eo) \times \langle F_p \rangle$-equivariant identification 
\[
\hcY = \Hom_{\cont} (\heo^b_K , \eo^b) \quad \text{sending} \; (\ex , \ey , \hP_{\ey}) \; \text{to} \; \hP^b_{\ey} \; .
\]
Under the projection $\pr_{\eX} : \hcY \to \eX_{\top}$ we have
\[
\hcY\!_{\eta} := \pr^{-1}_{\eX} (\eta) = \Hom_{\cont , \inj} (\heo^b_K , \eo^b)
\]
and
\[
\hcY\!_s := \pr^{-1}_{\eX} (s) = \Hom (\kappa , k ) \; . 
\]
2) For any choice of $\eo_{K_0}$-generator $t$ of $T\Fh$ there is the $\Gamma \times \Aut (\eo) \times \langle F_p \rangle$- equivariant bijection of Proposition \ref{t132}
\begin{equation}
\label{eq:143}
\Psi_t : \hcY / G_{\infty} \silo H_{\kappa_0} (\eo^b) = \coprod_{\tau_0 \in \Hom (\kappa_0 , k)} \emm^b_{\tau_0} \; .
\end{equation}
3) Dividing by $\Gamma$ in \eqref{eq:143}, the induced $F_p$-equivariant bijection
\begin{equation}
\label{eq:144} 
\Psi : \hcY\!_0 = \hcY / G \silo H_{\kappa_0} (\eo^b) / \eo^{\times}_{K_0} = \coprod_{\tau_0} \emm^b_{\tau_0} / \eo^{\times}_{K_0}
\end{equation}
is independent of $t$. We have
\begin{equation}
\label{eq:145}
\hcY\!_{0 \eta_0} := \pr^{-1}_{\eX_0} (\eta_0) \silo \coprod_{\tau_0} (\emm^b_{\tau_0} \setminus 0) / \eo^{\times}_{K_0}
\end{equation}
and
\begin{equation}
\label{eq:146}
\hcY\!_{0 s_0} := \pr^{-1}_{\eX_0} (s_0) \silo \coprod_{\tau_0} \{ 0 \} / \eo^{\times}_{K_0} = \Hom (\kappa_0 , k) \; .
\end{equation}
4) The map $\arah$ of Theorem \ref{t134} provides a bijection
\begin{equation}
\label{eq:147}
\arah : \hcY\!_{0 \eta_0} \silo \coprod_{\tau_0} |Y_{\tau_0}|^{\deg = 1} \; .
\end{equation}
Setting $\tau_0 = \hP^b_{\ey} \, |_{\kappa_0}$ it sends $(\ex, \ey , \hP_{\ey}) G$ to the ideal generated by $W_{\eo_{K_0}} (\hP^b_{\ey}) (\Ker \theta_{\hK})$ in $W_{\eo_{K_0}} (\eo^b_{\tau_0})$.\\
5) For any fixed embedding $\tau_0 : \kappa_0 \hookrightarrow k$, the bijection $\arah$ of 4) induces a bijection
\begin{equation}
\label{eq:148}
\arah : \hcY\!_{0 \eta_0} / \langle F_p \rangle \silo |Y_{\tau_0}|^{\deg = 1} / \langle F_q \rangle \; .
\end{equation}
Here the right hand side can be identified with the Frobenius-equivalence classes of untilts of $\C^b_{\tau_0}$. \\
6) The only periodic (i.e. finite) orbit of the $F_p$-action on $\hcY\!_0$ is $\hcY\!_{0 s_0}$. It has order $\log_p N (\pi_0) = r$ if $q = p^r$.
\end{theorem}

\begin{rem}
I think the bijection $\arah$ in \eqref{eq:147} is remarkable for two reasons. Firstly, $\hcY\!_{0\eta_0}$ has a purely characteristic zero definition if we take into account the remark after Definition \ref{t1512}. In fact $\hcY\!_{0 \eta_0}$ is the ``completion at $p$'' of a global construction. Secondly, on both sides $A_{\inf} (\eX)$ appears as a ring of functions. But on the left the functions take values in $\C_{\tau_0}$ whereas on the right they take values in the untilts of $\C^b_{\tau_0}$ which may not even be isomorphic to $\C_{\tau_0}$. 
\end{rem}

\begin{proof}
Apart from 6) everything has been proved. It is clear that $\hcY\!\!_{0 s_0} \silo \Hom (\kappa_0 , k)$ is a periodic orbit of order $r$ for the $F_p$-action. Using the description \eqref{eq:145} of $\hcY\!\!_{0  \eta_0}$ it remains to show that $F_q$ has no periodic orbits on $(\emm^b_{\tau_0} \setminus 0) / \eo^{\times}_K$. The Frobenius $F_q$-acts on $\emm^b_{\tau_0}$ by (Lubin-Tate-)multiplication with $\pi_0$. Assume that there is some $x \in \emm^b_{\tau_0} \setminus 0$ and some $n \ge 1$ with
\[
F^n_q (x \eo^{\times}_{K_0}) = x \eo^{\times}_{K_0} \; .
\]
Then we would have $\pi^n_0 x = ux$ for some $u \in \eo^{\times}_{K_0}$. Since $x \neq 0$ and since $\emm^b_{\tau_0}$ is a $K_0$-vector space this would imply $\pi^n_0 = u$ in $\eo^{\times}_{K_0}$ which is a contradiction. 
\end{proof}

\begin{prop}
\label{t137}
For $\eX_0 = \spec \eo_{K_0}$, recall the homomorphism of rings \eqref{eq:115}
\[
e : A_{\inf} (\eX) = W_p (\heo^b_K) \longrightarrow C (\hcY , \eo) \; .
\]
Under the identification of Theorem \ref{t136}, 1)
\begin{equation}
\label{eq:151}
\hcY = \Hom_{\cont} (\heo^b_K , \eo^b) \; ,
\end{equation}
it has the following description:\\
For $f \in A_{\inf} (\eX)$ and $\varphi \in \hcY$, the element $e (f) (\varphi) \in \eo$ is the image of $f$ under the composition
\begin{equation}
\label{eq:152}
W_p (\heo^b_K) \xrightarrow{W_p (\varphi)} W_p (\eo^b) \xrightarrow{\theta} \eo \; .
\end{equation}
\end{prop}

\begin{proof}
For $(\ex , \ey , \hP_{\ey}) \in \hcY$ consider the composition
\[
\chi : \multmap \heo_K = \multmap \Gamma (\eX , \Oh)^{\wedge} \longrightarrow \multmap \hOh_{\overline{\{ \ex \} } , \ey} \xrightarrow{\hP_{\ey}} \eo \; .
\]
The map $\chi$ is multiplicative and $\mod p$ also additive (on $\heo^b_K$). Therefore $\chi$ induces a ring homomorphism as in \eqref{eq:120n}--\eqref{eq:122n}
\[
W_p (\chi) : W_p (\heo^b_K) = A_{\inf} (\eX) \longrightarrow \eo \; .
\]
In formula \eqref{eq:115} which extends \eqref{eq:91} we defined the map $e$ above by setting (with the present notations)
\[
e (f) ((\ex , \ey , \hP_{\ey})) := W_p (\chi) (f) \; .
\]
Under the identification \eqref{eq:151} the element $(\ex , \ey , \hP_{\ey})$ corresponds to the ring homomorphism
\[
\chi^b : \heo^b_K = \Gamma (\eX , \Oh)^{\wedge b} \longrightarrow \hOh^b_{\overline{\{ \ex \} } , \ey} \xrightarrow{\hP^b_{\ey}} \eo^b \; .
\]
The maps $\chi$ and $\chi^b$ correspond under the bijection in Proposition \ref{t1110} and by formula \eqref{eq:121n} we therefore have
\[
W_p (\chi) = \theta \verk W_p (\chi^b) \; .
\]
This gives
\[
e (f) ((\ex , \ey , \hP_{\ey})) = (\theta \verk W_p (\chi^b)) (f) \quad \text{for} \; (\ex , \ey , \hP_{\ey}) \in \hcY
\]
and hence
\[
e (f) (\varphi) = (\theta \verk W_p (\varphi)) (f) \quad \text{for} \; \varphi \in \Hom_{\cont} (\heo^b_K , \eo^b) \equiv \hcY \; .
\]
\end{proof}

The following proposition shows that for $\eX_0 = \spec \Z_p$ the sub-system $\cY_0$ of $\ceX_0 (\eo_p)$ is quite small. Recall that by Theorem \ref{t136}, 1) we have an identification
\[
\hcY\!_0 = \End_{\cont} (\eo^b_p) / G
\]
where $\sigma \in G$ acts on $\varphi \in \End_{\cont} (\eo^b_p)$ via $\varphi^{\sigma} = \varphi \verk \sigma$.

\begin{prop}
\label{t138}
For $\eX_0 = \spec \Z_p$ and $\eo = \eo_p$, the set $\cY_0 \subset \hcY\!_0$ consists of the $F_p$-fixed point $s_0 = (\eo^b_p \to \oF_p \to \eo^b_p) G$ and the (infinite) $F^{\Z}_p$-orbit of $\eta_0 = (\eo^b_p \xrightarrow{\id} \eo^b_p) G$.
\end{prop}

\begin{proof}
Recalling the correspondence $\chi \leftrightarrow \chi^b$ in Proposition \ref{t1110} and the definition of $\cY$, we see the following: A point $\varphi = \chi^b \in \hcY = \End_{\cont} (\eo^b_p)$ is in $\cY$ if and only if the corresponding continuous multiplicative map
\[
\chi : \multmap \eo_p \cong \eo^b_p \xrightarrow{\varphi} \eo^b_p \xrightarrow{\sharp} \eo_p
\]
factors over the $i$-th projection $\pr_i : \multmap \eo_p \to \eo_p$ for some $i \ge 0$. This means that $\chi$ is constant on each fibre of $\pr_i$. Since $\pr^{-1}_i (1) = \varepsilon^{p^i \Z_p}$ for any $\Z_p$-generator $\varepsilon$ of $T_p \mu (\eo_p)$ we must have $\chi (\varepsilon^{p^i}) =  1$ and since $\chi$ is multiplicative and continuous this condition is also sufficient for $\chi$ to be constant on the fibres of $\pr_i$. In terms of $\varphi = \chi^b$, the condition asserts that $(\varphi (\varepsilon^b)^{p^i})^{\sharp} = 1$ where $\varepsilon^b \in \eo^b_p$ is the image of $\varepsilon$ under the canonical map $T_p \mu (\eo_p) \hookrightarrow \eo^b_p$. By Example \ref{t133} we have a $\Gamma \times \langle F_p \rangle$-equivariant bijection
\[
\tPsi_{\varepsilon} : \End_{\cont} (\eo^b_p) / G_{\infty} \silo 1 + \emm^b_p \; \, \quad \text{where} \; \tPsi_{\varepsilon} (\varphi G_{\infty}) = \varphi (\varepsilon^b) \; .
\]
Under the canonical bijection of $\Q_p$-vector spaces
\[
1 + \emm^b_p = \multmap 1 + \emm_p \; .
\]
The elements $u \in 1 + \emm^b_p$ with $(u^{p^i})^{\sharp} = 1$ correspond to the elements $(u_n) \in \multmap 1 + \emm_p$ with $u^{p^i}_0 = 1$ i.e. with $(u^{p^i}_n) \in T_p \mu (\eo_p)$. Thus we have
\[
\{ u \in 1 + \emm^b_p \mid (u^{p^i})^{\sharp} = 1 \} = (\varepsilon^b)^{p^{-i} \Z_p} \; ,
\]
and therefore
\[
\{ u \in 1 + \emm^b_p \mid (u^{p^i})^{\sharp} = 1 \; \text{for some} \; i \ge 0 \} = (\varepsilon^b)^{\Q_p} \; .
\]
Hence we have a commutative diagram induced by $\tPsi_{\varepsilon}$
\[
\xymatrix{
\hcY_0 \ar@{=}[r] & \End_{\cont} (\eo^b_p) / G \ar[r]^{\overset{\tPsi}{\sim}} & (1 + \emm^b_p) / \Z^{\times}_p \\
\cY_0 \ar@{^{(}->}[u] \ar[rr]^{\sim} & & (\varepsilon^b)^{\Q_p} / \Z^{\times}_p \ar@{^{(}->}[u]
}
\]
The natural bijection $\{ 0 \} \dcup p^{\Z} \silo \Q_p / \Z^{\times}_p$ shows that
\[
\cY_0 = \{ \varphi G \mid \varphi (\varepsilon^b) = 0 \quad \text{or} \quad \varphi (\varepsilon^b) = (\varepsilon^b)^{p^i} \; \text{for some} \; i \in \Z \} \; .
\]
The map $\varphi_0 = (\eo^b_p \to \oF_p \to \eo^b_p)$ satisfies $\varphi_0 (\varepsilon^b) = 0$ and the map $F^i_p (\id_{\eo^b_p})$ satisfies $(F^i_p (\id_{\eo^b_p})) (\varepsilon^b) = (\varepsilon^b)^{p^i}$. Hence we have
\[
\cY_0 = \{ \varphi_0 G \} \dcup \{ (F^i_p (\id_{\eo^b_p})) G \mid i \in \Z \}
\]
as claimed. Finally, $F_p (\varphi_0 G) = \varphi_0 G$ since $G \to \Gal (\oF_p / \F_p)$ is surjective. 
\end{proof}

\end{document}